\definecolor{darkgreen}{cmyk}{1,0,1,.2}
\definecolor{m}{rgb}{1,0.1,1}
\numberwithin{equation}{section}
\newdimen\theight
\def\TeXref#1{%
             \leavevmode\vadjust{\setbox0=\hbox{{\tt
                     \quad\quad  {\small \textrm #1}}}%
             \theight=\ht0
             \advance\theight by \lineskip
             \kern -\theight \vbox to
             \theight{\rightline{\rlap{\box0}}%
             \vss}%
             }}%
\theoremstyle{plain}
\newtheorem{thm}{Theorem}[section]
\newtheorem{lem}[thm]{Lemma}
\newtheorem{cor}[thm]{Corollary}
\newtheorem{prop}[thm]{Proposition}
\theoremstyle{definition}
\newtheorem{ex}[thm]{Example}
\theoremstyle{remark}
\newtheorem{rem}[thm]{Remark}
\newtheorem{claim}[thm]{Claim}
\crefname{thm}{Theorem}{Theorems}
\crefname{lem}{Lemma}{Lemmas}
\crefname{cor}{Corollary}{Corollaries}
\crefname{prop}{Proposition}{Propositions}
\crefname{defn}{Definition}{Definitions}
\crefname{conj}{Conjecture}{Conjectures}
\crefname{ex}{Example}{Examples}
\crefname{exs}{Examples}{Examples}
\crefname{prob}{Problem}{Problems}
\crefname{quest}{Question}{Questions}
\crefname{rem}{Remark}{Remarks}
\crefname{claim}{Claim}{Claims}
\crefname{case}{Case}{Cases}
\crefname{hyp}{Hypothesis}{Hypotheses}
\crefname{notation}{Notation}{Notations}
\newcommand{\C}{\mathbb{C}}
\newcommand{\K}{\mathbb{K}}
\newcommand{\N}{\mathbb{N}}
\newcommand{\R}{\mathbb{R}}
\renewcommand{\S}{\mathbb{S}}
\newcommand{\Z}{\mathbb{Z}}
\renewcommand{\AA}{\mathcal{A}}
\newcommand{\FF}{\mathcal{F}}
\newcommand{\KK}{\mathcal{K}}
\renewcommand{\SS}{\mathcal{S}}
\newcommand{\VV}{\mathcal{V}}
\newcommand{\fX}{\mathfrak{X}}
\newcommand{\bfg}{\boldsymbol{g}}
\newcommand{\bfx}{\boldsymbol{x}}
\newcommand{\bfM}{\boldsymbol{M}}
\newcommand{\bfT}{\boldsymbol{T}}
\newcommand{\bfpi}{\boldsymbol{\pi}}
\newcommand{\supp}{\operatorname{supp}}
\newcommand{\esssup}{\operatorname{ess\,sup}}
\newcommand{\id}{\operatorname{id}}
\newcommand{\Diff}{\operatorname{Diff}}
\newcommand{\GL}{\operatorname{GL}}
\newcommand{\inj}{\operatorname{inj}}
\newcommand{\sing}{\operatorname{sing}}
\newcommand{\bOmega}{{}^{\text{\rm b}}\Omega}
\newcommand{\bT}{{}^{\text{\rm b}}T}
\newcommand{\Diffb}{\Diff_{\text{\rm b}}}
\newcommand{\Hb}{H_{\text{\rm b}}}
\newcommand{\Diffub}{\Diff_{\text{\rm ub}}}
\newcommand{\Cinftyub}{C^\infty_{\text{\rm ub}}}
\newcommand{\Cinftyc}{C^\infty_{\text{\rm c}}}
\newcommand{\Cinftycv}{C^\infty_{\text{\rm cv}}}
\newcommand{\fXub}{\fX_{\text{\rm ub}}}
\newcommand{\fXb}{\fX_{\text{\rm b}}}
\title{Topology of the space of conormal distributions}
\author[J.A. \'Alvarez L\'opez]{Jes\'us A. \'Alvarez L\'opez\,\orcidlink{0000-0001-6056-2847}}
\address{Department of Mathematics and CITMAga\\
         University of Santiago de Compostela\\
         15782 Santiago de Compostela\\ Spain}
\email{jesus.alvarez@usc.es}
\thanks{The authors are partially supported by the grants MTM2017-89686-P and PID2020-114474GB-I00 (AEI/FEDER, UE) and ED431C 2019/10 (Xunta de Galicia, FEDER)}
\author[Y.A. Kordyukov]{Yuri A. Kordyukov\,\orcidlink{0000-0003-2957-2873}}
\address{Institute of Mathematics\\ Ufa Federal Research Center\\ Russian Academy of Sciences\\
112 Chernyshevsky street\\ 450008 Ufa\\ Russia}
\email{yurikor@matem.anrb.ru}
\author[E. Leichtnam]{Eric Leichtnam\,\orcidlink{0000-0002-5058-5508}}
\address{Institut de Math\'ematiques de Jussieu-PRG\\ CNRS\\ Batiment Sophie Germain (bureau 740)\\ Case~7012\\ 75205 Paris Cedex 13, France}
\email{eric.leichtnam@imj-prg.fr}
\date{\today}
\subjclass[2020]{46F05, 46A13, 46M40}
\keywords{Conormal distributions, dual-conormal distributions, barreled, ultrabornological, webbed, acyclic, Montel space, complete, boundedly retractive, reflexive}
\begin{document}

\begin{abstract}
Given a closed manifold $M$ and a closed regular submanifold $L$, consider the corresponding locally convex space $I=I(M,L)$ of conormal distributions, with its natural topology, and the strong dual $I'=I'(M,L)=I(M,L;\Omega)'$ of the space of conormal densities. It is shown that $I$ is a barreled, ultrabornological, webbed, Montel, acyclic LF-space, and $I'$ is a complete Montel space, which is a projective limit of bornological barreled spaces. In the case of codimension one, similar properties and additional descriptions are proved for the subspace $K\subset I$ of conormal distributions supported in $L$ and for its strong dual $K'$. We construct a locally convex Hausdoff space $J$ and a continuous linear map $I\to J$ such that the sequence $0\to K\to I\to J\to 0$ as well as the transpose sequence $0\to J'\to I'\to K'\to 0$ are short exact sequences in the category of continuous linear maps between locally convex spaces. Finally, it is shown that $I\cap I'=C^\infty(M)$ in the space of distributions. In another publication, these results are applied to prove a Lefschetz trace formula for a simple foliated flow $\phi=\{\phi^t\}$ on a compact foliated manifold $(M,\mathcal F)$. It describes a Lefschetz distribution $L_{\text{\rm dis}}(\phi)$ defined by the induced action $\phi^*=\{\phi^{t\,*}\}$ on the reduced cohomologies $\bar H^\bullet I(\mathcal F)$ and $\bar H^\bullet I'(\mathcal F)$ of the complexes of leafwise currents that are conormal and dual-conormal at the leaves preserved by $\phi$.
\end{abstract}

\maketitle

\tableofcontents

\section{Introduction}\label{s: intro}

Given a smooth manifold $M$ and a closed regular submanifold $L\subset M$, the corresponding space $I=I(M,L)$ of conormal distributions was considered in \cite{KohnNirenberg1965,Hormander1971}, \cite[Section~18.2]{Hormander1985-III}, \cite[Chapters~3--5]{Simanca1990}, \cite[Chapters~4 and~6]{Melrose1996}, \cite[Chapters~3 and~9]{MelroseUhlmann2008}. But its study was mainly oriented to the important role played in the analysis of pseudodifferential operators. This space was also used as a tool to get appropriate generalizations of those operators to manifolds with boundary or corners, stratified spaces, etc. For instance, the existence of asymptotic expansions of their symbols was well analyzed. However we are not aware of any publication with a deep study of its natural topology; actually, this project was begun in \cite[Chapters~4 and~6]{Melrose1996}, but that publication remains incomplete. The main objective of this paper is to fill in that gap of study.

Assume $M$ is compact for the sake of simplicity. Let $\Diff(M,L)\subset\Diff(M)$ be the subalgebra of differential operators generated by the vector fields on $M$ tangent to $L$, and let $H^s(M)$ be the Sobolev space of order $s\in\R$. The space $I^{(s)}=I^{(s)}(M,L)$ of conormal distributions of Sobolev order $s$ consists of the distributions $u\in C^{-\infty}(M)$ satisfying $\Diff(M,L)u\subset H^s(M)$, endowed with the projective topology given by the maps $P : I^{(s)}\to H^s(M)$ ($P\in\Diff(M,L)$). 

By definition, $I=\bigcup_sI^{(s)}$ with the corresponding locally convex inductive topology, which is continuously contained in $C^{-\infty}(M)$. There is another expression $I=\bigcup_mI^m$, using spaces $I^m=I^m(M,L)$ of conormal distributions with symbol order $m$, which can be locally described in terms of symbol spaces with a partial Fourier transform.

We show that every $I^{(s)}$ is a totally reflexive Fr\'echet space (\Cref {p: I^(s)(M L) is a totally reflexive Frechet sp}), and the LF-space $I$ is a barreled, ultrabornological, webbed, acyclic Montel space, and therefore complete, boundedly retractive and reflexive (\Cref{c: I(M L) is barreled,c: I(M L) is acyclic and Montel}). As a first step, these properties are established for symbol spaces.

All notions and properties considered here have straightforward extensions for distributional sections of vector bundles. In particular, for the density bundle $\Omega=\Omega M$, the strong dual $I'(M,L)=I(M,L;\Omega)'$, simply denoted by $I'$, is also continuously contained in $C^{-\infty}(M)$. We prove that $I'$ is a complete Montel space and $I'=\varprojlim I^{\prime\,(s)}$, where $I^{\prime\,(s)}=I^{\prime\,(s)}(M,L)=I^{(-s)}(M,L;\Omega)'$ is bornological and barreled (\Cref{c: I'^(s)(M L) is bornological,c: I'(M L) is complete and Montel,c: I'(M L) equiv varprojlim I'^(s)(M L)}).

Now assume $L$ is of codimension one. For simplicity reasons, consider the case where $L$ is transversely oriented. Then cut $M$ along $L$ to obtain a compact manifold with boundary $\bfM$ and a projection $\bfpi:\bfM\to M$. In this way, we can take advantage of the machinery developed in \cite{Hormander1985-III,Melrose1996} to study conormal distributions at the boundary; in particular, some notions from small b-calculus are used. For instance, with the terminology and notation of \cite{Hormander1985-III,Melrose1996}, let $\AA(\bfM)$ (respectively, $\dot\AA(\bfM)$) be the locally convex space of extendable (respectively, supported) conormal distributions at the boundary. Then, via the push-down map $\bfpi_*$, the image of $\dot\AA(\bfM)$ is $I$, and $\AA(\bfM)$ becomes isomorphic to another locally convex space $J=J(M,L)$. Let $K=K(M,L)\subset I$ be the subspace of conormal distributions supported in $L$. Like in the definition of $I'$, consider also the strong dual spaces $K'(M,L)=K(M,L;\Omega)'$ and $J'(M,L)=J(M,L;\Omega)'$, simply denoted by $K'$ and $J'$. It is proved that $K$ is a limit subspace (\Cref{c: K(M L) = bigcup_s K^{(s)}(M L)}), the spaces $K$, $J$, $K'$ and $J'$ satisfy the properties stated for $I$ and $I'$ (\Cref{c: K(M L) is barreled,c: K(M L) is acyclic and Montel,c: K'(M L) and J'(M L) are complete and Montel,c: K^prime [s](M L) J^prime [s](M L) are bornological,c: J'(M L) equiv varprojlim J^prime [s](M L)}), and there are short exact sequences, $0\to K\to I\to J\to0$ and its transpose $0\leftarrow K'\leftarrow I'\leftarrow J'\leftarrow0$, in the category of continuous linear maps between locally convex spaces (\Cref{p: R: I(M L) -> J(M L) is a surj top hom,p: dual-conormal seq of (M L) is exact}). These sequences are relevant because $J$, $K$, $J'$ and $K'$ have better descriptions than $I$ and $I'$ (\Cref{c: AA(M) equiv bigcup_m x^m Hb^infty(M),c: dot AA'(M) equiv bigcap_m x^m Hb^-infty(M),p: bigoplus_m C^1_m -> K(M L)}). Finally, it is shown that $I\cap I'=C^\infty(M)$ (\Cref{t: bfpi^*(I(M L) cap I'(M L)) subset C^infty(bfM)}), extending a result of \cite{Hormander1985-III,Melrose1996} for the boundary case.  Most of these properties are first established in the boundary case (\Cref{s: conormal distribs at the boundary}).

Besides the extensions for distributional sections of vector bundles, some results are extended to non-compact manifolds. We also analyze the action of differential operators on these spaces, as well as the pull-back and push-forward homomorphisms induced by maps on these spaces (\Cref{s: conormal distribs,s: dual-conormal distribs,s: conormal seq,s: dual-conormal seq}).   

Via the Schwartz kernel theorem, the spaces of pseudodifferential and differential operators can be described as $\Psi(M)\equiv I(M^2,\Delta)$ and $\Diff(M)\equiv K(M^2,\Delta)$, where $\Delta$ is the diagonal of $M^2$. Thus $\Psi(M)$ and $\Diff(M)$ become examples of locally convex spaces satisfying the above properties.

The wave front set of any $u\in I(M,L)$ satisfies $\operatorname{WF}(u)\subset N^*L\setminus0_L$ (considering $N^*L\subset T^*M$) \cite[Chapter~VIII]{Hormander1983-I}, \cite[Chapter~XVIII]{Hormander1985-III}; this is the reason of the term ``conormal distribution.'' The larger space of all distributions whose wave front set is contained in any prescribed closed cone of $T^*M\setminus0_M$, like $N^*L\setminus0_L$, also has a natural topology which was studied in \cite{DabrowskiBrouder2014}.

Our results for codimension one can be clearly extended to arbitrary codimension. We only consider codimension one for simplicity reasons. It is also clear that there are further extensions to manifolds with corners, stratified spaces, etc.

The case of codimension one is also enough for our application in a trace formula for simple foliated flows \cite{AlvKordyLeichtnam-atffff}. These are simple flows $\phi=\{\phi^t\}$ that preserve the leaves of a foliation $\FF$ on  $M$. C.~Deninger conjectured the existence of a ``Lefschetz distribution''  $L_{\text{\rm dis}}(\phi)$ on $\R$ for the induced pull-back action $\phi^*=\{\phi^{t\,*}\}$ on the leafwise reduced cohomology $\bar H^\bullet(\FF)$, and predicted a formula for $L_{\text{\rm dis}}(\phi)$ involving data from the closed orbits and fixed points \cite{Deninger2008}. Here, $\bar H^\bullet(\FF)$ is the maximal Hausdorff quotient of the leafwise cohomology $H^\bullet(\FF)$, defined by the de~Rham derivative of the leaves acting on leafwise differential forms smooth on $M$, equipped with the $C^\infty$ topology. But we can not use leafwise forms smooth on $M$ if there are leaves preserved by $\phi$; they do not work well. Instead, we consider the spaces $I(\FF)$ and $I'(\FF)$ of distributional leafwise currents that are conormal and dual-conormal at the preserved leaves, giving rise to reduced cohomologies, $\bar H^\bullet I(\FF)$ and $\bar H^\bullet I'(\FF)$, with actions $\phi^*$. The spaces $K(\FF)$, $J(\FF)$, $K'(\FF)$ and $J'(\FF)$ are similarly defined, obtaining short exact sequences, $0\to\bar H^\bullet K(\FF)\to\bar H^\bullet I(\FF)\to\bar H^\bullet J(\FF)\to0$ and $0\leftarrow\bar H^\bullet K'(\FF)\leftarrow\bar H^\bullet I'(\FF)\leftarrow\bar H^\bullet J'(\FF)\leftarrow0$. In this way, the definition of $L_{\text{\rm dis}}(\phi)$ for both $\bar H^\bullet I(\FF)$ and $\bar H^\bullet I'(\FF)$ together can be reduced to the cases of $\bar H^\bullet K(\FF)$, $\bar H^\bullet J(\FF)$, $\bar H^\bullet K'(\FF)$ and $\bar H^\bullet J'(\FF)$. This can be done by using the descriptions of $K(\FF)$, $J(\FF)$, $K'(\FF)$ and $J'(\FF)$, and some additional ingredients. Using these ideas, we define in \cite{AlvKordyLeichtnam-atffff} the Lefschetz distribution $L_{\text{\rm dis}}(\phi)$, which has the desired expression plus a zeta invariant produced by the use of the b-trace of R.~Melrose \cite{Melrose1993}. However the ingredients can be chosen so that the zeta invariant vanishes \cite{AlvKordyLeichtnam-ziomf}, and the predicted formula becomes correct. We hope that this method will provide useful tools in future developments of Deninger's project.

\section{Preliminaries}\label{s: prelim}

\subsection{Topological vector spaces}\label{ss: TVS}

The field of coefficients is $\K=\R,\C$. For the general theory of topological vector spaces (TVSs), we follow the references \cite{Edwards1965,Horvath1966-I,Kothe1969-I,Schaefer1971,NariciBeckenstein2011}, assuming the following conventions. We always consider locally convex spaces (LCSs), which are not assumed to be Hausdorff (contrary to the definition of \cite{Schaefer1971}); the abbreviation LCHS is used in the Hausdorff case. Local convexity is preserved by all operations we use. For any inductive/projective system (or spectrum) of continuous linear maps between LCSs, we have its (locally convex) inductive/projective limit; in particular, when the inductive/projective spectrum consists of a sequence of continuous inclusions, their union/intersection is endowed with the inductive/projective limit topology. This applies to the locally convex direct sum and the topological product of LCSs. LF-spaces are not assumed to be strict. For any LCS $X$, its (continuous) dual $X'$ is always endowed with the strong topology; i.e., we write $X'=X'_\beta$ with the usual notation.

Some homological theory of LCSs will be used (see \cite{Wengenroth2003} and references therein) For instance, for an inductive spectrum of LCSs of the form $(X_k)=(X_0\subset X_1\subset\cdots)$, the condition of being \emph{acyclic} can be described as follows \cite[Theorem~6.1]{Wengenroth2003}: for all $k$, there is some $k'\ge k$ such that, for all $k''\ge k'$, the topologies of $X_{k'}$ and $X_{k''}$ coincide on some $0$-neighborhood of $X_k$. In this case, $X:=\bigcup_kX_k$ is Hausdorff if and only if all $X_k$ are Hausdorff \cite[Proposition~6.3]{Wengenroth2003}. It is said that $(X_k)$ is \emph{regular} if any bounded $B\subset X$ is contained and bounded in some step $X_k$. If moreover the topologies of $X$ and $X_k$ coincide on $B$, then $(X_k)$ is said to be \emph{boundedly retractive}. The conditions of being \emph{compactly retractive} or \emph{sequentially retractive} are similarly defined, using compact sets or convergent sequences. 

If the steps $X_k$ are Fr\'echet spaces, the above properties of $(X_k)$ depend only on the LF-space $X$ \cite[Chapter~6, p.~111]{Wengenroth2003}; thus it may be said that they are properties of $X$. In this case, $X$ is acyclic if and only if it is boundedly/compactly/sequentially retractive \cite[Proposition~6.4]{Wengenroth2003}. As a consequence, acyclic LF-spaces are complete and regular \cite[Corollary~6.5]{Wengenroth2003}. 

A topological vector subspace $Y\subset X$ is called a \emph{limit subspace} if $Y\equiv\bigcup_kY_k$, where $Y_k=X\cap Y_k$. This condition is satisfied if and only if the spectrum consisting of the spaces $X_k/Y_k$ is acyclic \cite[Chapter~6, p.~110]{Wengenroth2003}.

Assume the steps $X_k$ are LCHSs. It is said that $(X_k)$ is \emph{compact} if the inclusion maps are compact operators. In this case, $(X_k)$ is clearly acyclic, and so $X$ is Hausdorff. Moreover $X$ is a complete bornological DF Montel space \cite[Theorem~6']{Komatsu1967}.

The above concepts and properties also apply to an inductive/projective spectrum consisting of continuous inclusions $X_r\subset X_{r'}$ for $r<r'$ in $\R$ because $\bigcap_rX_r=\bigcap_kX_{r_k}$ and $\bigcup_rX_r=\bigcup_kX_{s_k}$ for sequences $r_k\downarrow-\infty$ and $s_k\uparrow\infty$.

In the category of continuous linear maps between LCSs, the exactness of a sequence $0\to X \to Y \to Z\to0$ means that it is exact as a sequence of linear maps and consists of topological homomorphisms \cite[Sections~2.1 and~2.2]{Wengenroth2003}.

\subsection{Smooth functions on open subsets of $\R^n$}\label{ss: smooth functions on U}

For an open $U\subset\R^n$ ($n\in\N_0=\N\cup\{0\}$), we will use the Fr\'echet space $C^\infty(U)$ of smooth ($\K$-valued) functions on $U$ with the topology of uniform approximation of all partial derivatives on compact subsets, which is described by the semi-norms
\begin{equation}\label{| u |_K C^k}
\|u\|_{K,C^k}=\sup_{x\in K,\ |\alpha|\le k}|\partial^\alpha u(x)|\;,
\end{equation}
for any compact $K\subset U$, $k\in\N_0$ and $\alpha\in\N_0^n$, using standard multi-index notation. (Sometimes the notation $C^\infty_{\text{\rm loc}}(U)$ is used for this space, and $C^\infty(U)$ is used for the uniform space denoted by $\Cinftyub(U)$ in this paper.) For any $S\subset U$, the notation $C^\infty_S(U)$ is used for the subspace of smooth functions supported in $S$ (with the subspace topology). (The common notation $C^\infty(S)=C^\infty_S(U)$ would be confusing when extended to other function spaces.) Recall also the strict LF-space of compactly supported functions,
\begin{equation}\label{Cinftyc(U)}
\Cinftyc(U)=\bigcup_KC^\infty_K(U)\;,
\end{equation}
for compact subsets $K\subset U$ (an exhausting increasing sequence of compact subsets is enough).

The above definitions have straightforward generalizations to the case of functions with values in $\K^l$ ($l\in\N$), obtaining 
\begin{equation}\label{C^infty_./c(U K^l) equiv C^infty_./c(U) otimes K^l}
C^\infty_{{\cdot}/\text{\rm c}}(U,\K^l)\equiv C^\infty_{{\cdot}/\text{\rm c}}(U)\otimes\K^l\;.
\end{equation}
(The notation $C^\infty_{{\cdot}/\text{\rm c}}$ or $C^\infty_{\text{\rm c}/{\cdot}}$ refers to both $C^\infty$ and $C^\infty_{\text{\rm c}}$.)

\subsection{Vector bundles}\label{ss: vector bundless}

The notation $M$ will be used for a smooth manifold of dimension $n$, and $E$ for a ($\K$-) vector bundle over $M$. The fibers of $E$ are denoted by $E_x$ ($x\in M$), the zero in every $E_x$ by $0_x$, and the image of the zero section by $0_M$. Let $\Omega^aE$ \index{$\Omega^aE$} ($a\in\R$) denote the line bundle of $a$-densities of $E$, let $\Omega E=\Omega^1E$, \index{$\Omega E$} and let $o(E)$ be the flat line bundle of orientations of $E$. We may use the notation $E_L=E|_L$ for the restriction of $E$ to a submanifold $L\subset M$. As particular cases, we have the tangent and cotangent $\R$-vector bundles, $TM$ and $T^*M$, and the associated $\K$-vector bundles $o(M)=o(TM)$, $\Omega^aM=\Omega^aTM$ and $\Omega M=\Omega TM$. \index{$\Omega^aM$} \index{$\Omega M$}

\subsection{Smooth and distributional sections}\label{ss: smooth/distributional sections}

Our notation for spaces of distributional sections mainly follows \cite{Melrose1996}, with some minor changes to fit our application in \cite{AlvKordyLeichtnam-atffff}. Some notation from \cite{Hormander1983-I,Hormander1985-III} is also used.

Generalizing $C^\infty(U,\K^l)$, we have the Fr\'echet space $C^\infty(M;E)$ of smooth sections of $E$, whose topology is described by semi-norms $\|{\cdot}\|_{K,C^k}$ defined as in~\eqref{| u |_K C^k} via charts $(U,x)$ of $M$ and diffeomorphisms of triviality $E_U\equiv U\times\K^l$, with $K\subset U$. This procedure is standard and will be used again with other section spaces.

Redundant notation will be removed as usual. For instance, we write $C^\infty(M)$ (respectively, $C^\infty(M,\K^l)$) in the case of the trivial vector bundle of rank $1$ (respectively, $l$). We also write $C^\infty(L,E)=C^\infty(L,E_L)$ and $C^\infty(M;\Omega^a)=C^\infty(M;\Omega^aM)$. We may write $C^\infty(E)=C^\infty(M;E)$ if $M$ is fixed, but this may also mean the space of smooth functions on $E$. In particular, $\fX(M)=C^\infty(M;TM)$ is the Lie algebra of vector fields. The subspace $C^\infty_S(M;E)$ \index{$C^{\pm\infty}_S(M;E)$} is defined like in \Cref{ss: smooth functions on U}. Similar notation will be used with any LCHS and $C^\infty(M)$-module continuously included in $C^\infty(M;E)$, or in the space $C^{-\infty}(M;E)$ defined below.

The notation $C^\infty(M;E)$, or $C^\infty(E)$, is also used with any smooth fiber bundle $E$, obtaining a completely metrizable topological space with the weak $C^\infty$ topology.

The strict LF-space $\Cinftyc(M;E)$ \index{$C^{\pm\infty}_{{\cdot}/\text{\rm c}}(M;E)$} of compactly supported smooth sections is defined like in~\eqref{Cinftyc(U)}, using compact subsets $K\subset M$. There is a continuous inclusion $\Cinftyc(M;E)\subset C^\infty(M;E)$. If $M$ is a fiber bundle, the LCHS $\Cinftycv(M;E)$ \index{$\Cinftycv(M;E)$} of smooth sections with compact support in the vertical direction is similarly defined using~\eqref{| u |_K C^k} and~\eqref{Cinftyc(U)} with closed subsets $K\subset M$ whose intersection with the fibers is compact (now an exhaustive increasing sequence of such subsets $K$ is not enough).

The space of distributional sections with arbitrary/compact support is
\begin{equation}\label{C^-infty_cdot/c(M;E)}
C^{-\infty}_{{\cdot}/\text{\rm c}}(M;E)=C^\infty_{\text{\rm c}/{\cdot}}(M;E^*\otimes\Omega)'\;.
\end{equation}
(In \cite{Hormander1983-I}, these dual spaces are endowed with the weak topology, contrary to our convention.) Integration of smooth densities on $M$ and the canonical pairing of $E$ and $E^*$ define a continuous dense inclusion $C^\infty_{{\cdot}/\text{\rm c}}(M;E)\subset C^{-\infty}_{{\cdot}/\text{\rm c}}(M;E)$. If $U\subset M$ is open, the extension by zero defines a TVS-embedding $C^{\pm\infty}_{\text{\rm c}}(U;E)\subset C^{\pm\infty}_{\text{\rm c}}(M;E)$. 

The above spaces of distributional sections can be also described in terms of the corresponding spaces of distributions as the algebraic tensor product as $C^\infty(M)$-modules
\begin{equation}\label{C^infty(M)-tensor product description of C^pm infty_./c(M E)}
C^{-\infty}_{{\cdot}/\text{\rm c}}(M;E)\equiv C^{-\infty}_{{\cdot}/\text{\rm c}}(M)\otimes_{C^\infty(M)}C^\infty(M;E)\;.
\end{equation}
To show this identity, $E$ can be realized as a vector subbundle of a trivial vector bundle $F=M\times\K^{l'}$ \cite[Theorem~4.3.1]{Hirsch1976}. Then, like in~\eqref{C^infty_./c(U K^l) equiv C^infty_./c(U) otimes K^l},
\begin{align*}
C^{-\infty}_{{\cdot}/\text{\rm c}}(M;F)
%\equiv C^{-\infty}_{{\cdot}/\text{\rm c}}(M,\K^{l'})
&\equiv C^{-\infty}_{{\cdot}/\text{\rm c}}(M)\otimes\K^{l'}
\equiv C^{-\infty}_{{\cdot}/\text{\rm c}}(M)\otimes_{C^\infty(M)}C^\infty(M)\otimes\K^{l'}\\
%\equiv C^{-\infty}_{{\cdot}/\text{\rm c}}(M)\otimes_{C^\infty(M)}C^\infty(M,\K^{l'})
&\equiv C^{-\infty}_{{\cdot}/\text{\rm c}}(M)\otimes_{C^\infty(M)}C^\infty(M;F)\;,
\end{align*}
and the spaces of~\eqref{C^infty(M)-tensor product description of C^pm infty_./c(M E)} clearly correspond by these identities. Expressions like~\eqref{C^infty(M)-tensor product description of C^pm infty_./c(M E)} hold for most of the LCSs of distributional sections we will consider, which are also $C^\infty(M)$-modules. Thus, from now on, we will mostly define and study  those spaces for the trivial line bundle or density bundles, and then the notation for arbitrary vector bundles will be used without further comment, and the properties have straightforward extensions.

Consider also the Fr\'echet space $C^k(M)$ ($k\in\N_0$) of $C^k$ functions, with the semi-norms $\|{\cdot}\|_{K,C^k}$ given like in~\eqref{| u |_K C^k}, the LF-space $C^k_{\text{\rm c}}(M)$ of $C^k$ functions with compact support, defined like in~\eqref{Cinftyc(U)}, and the space $C^{\prime\,-k}_{{\cdot}/\text{\rm c}}(M)$ \index{$C^{\prime\,-k}_{{\cdot}/\text{\rm c}}(M)$} of distributions of order $k$ with arbitrary/compact support, defined like in~\eqref{C^-infty_cdot/c(M;E)}. (A prime is added to this notation to distinguish $C^{\prime\,0}_{{\cdot}/\text{\rm c}}(M)$ from $C^0_{{\cdot}/\text{\rm c}}(M)$.) There are continuous dense inclusions
\begin{equation}\label{C^prime -k'(M E) supset C^prime -k(M E)}
C^{k'}_{{\cdot}/\text{\rm c}}(M)\subset C^k_{{\cdot}/\text{\rm c}}(M)\;,\quad 
C^{\prime\,-k'}_{\text{\rm c}/{\cdot}}(M)\supset C^{\prime\,-k}_{\text{\rm c}/{\cdot}}(M)\quad(k<k')\;,
\end{equation}
with \cite[Exercise~12.108]{NariciBeckenstein2011}
\begin{equation}\label{bigcap_k C^k_./c(M) = C^infty_./c(M)}
\bigcap_kC^k_{{\cdot}/\text{\rm c}}(M)=C^{\infty}_{{\cdot}/\text{\rm c}}(M)\;,\quad
\bigcup_kC^{\prime\,-k}_{\text{\rm c}}(M)=C^{-\infty}_{\text{\rm c}}(M)\;.
\end{equation}
The space $\bigcup_kC^{\prime\,-k}(M)$ consists of the distributions with some order; it is $C^{-\infty}(M)$ just when $M$ is compact.

Let us recall some properties of the spaces we have seen. In addition of the fact that $C^{\infty}(M)$ and $C^{k}(M)$ are Fr\'echet spaces \cite[Example~2.9.3]{Horvath1966-I}, $C^{\infty}_{\text{\rm c}}(M)$ and $C^{k}_{\text{\rm c}}(M)$ are complete and Hausdorff \cite[Examples~2.12.6 and~2.12.8]{Horvath1966-I}. $C^{\infty}_{{\cdot}/\text{\rm c}}(M)$ and $C^{k}_{{\cdot}/\text{\rm c}}(M)$ are ultrabornological because this property is satisfied by Fr\'echet spaces and preserved by inductive limits \cite[Example~13.2.8~(d) and Theorem~13.2.11]{NariciBeckenstein2011}, and therefore they are barreled \cite[Observation~6.1.2~(b)]{PerezCarrerasBonet1987}. $C^{\pm\infty}_{{\cdot}/\text{\rm c}}(M)$ is a Montel space (in particular, barreled) \cite[Examples~3.9.3,~3.9.4 and~3.9.6 and Proposition~3.9.9]{Horvath1966-I}, \cite[Section~8.4.7, Theorem~8.4.11 and Application~8.4.12]{Edwards1965}, \cite[the paragraph before~IV.5.9]{Schaefer1971}, and therefore reflexive \cite[Section~8.4.7]{Edwards1965}, \cite[6.27.2~(1)]{Kothe1969-I}, \cite[IV.5.8]{Schaefer1971}. $C^{\infty}_{{\cdot}/\text{\rm c}}(M)$ is a Schwartz space \cite[Examples~3.15.2 and~3.15.3]{Horvath1966-I}, and therefore $C^{-\infty}_{{\cdot}/\text{\rm c}}(M)$ is ultrabornological \cite[Exercise~3.15.9~(c)]{Horvath1966-I}. $C^\infty(M)$ is distinguished \cite[Examples~3.16.1]{Horvath1966-I}. $C^{\pm\infty}_{{\cdot}/\text{\rm c}}(M)$ is webbed because this property is satisfied by LF-spaces and strong duals of strict inductive limits of sequences of metrizable LCSs \cite[Proposition~IV.4.6]{DeWilde1978}, \cite[7.35.1~(4) and~7.35.4~(8)]{Kothe1979-II}, \cite[Theorem~14.6.5]{NariciBeckenstein2011}.

\subsection{Linear operators on section spaces}\label{ss: ops}

Let $E$ and $F$ be vector bundles over $M$, and let $A:\Cinftyc(M;E)\to C^\infty(M;F)$ be a continuous linear map. Recall that the \emph{transpose} of $A$ is the continuous linear map \index{$A^t$}
\begin{gather*}
A^t:C^{-\infty}_{\text{\rm c}}(M;F^*\otimes\Omega)\to C^{-\infty}(M;E^*\otimes\Omega)\;,\\
\langle A^tv,u\rangle=\langle v,Au\rangle\;,\quad u\in\Cinftyc(M;E)\;,\quad v\in C^{-\infty}_{\text{\rm c}}(M;F^*\otimes\Omega)\;.
\end{gather*}
For instance, the transpose of $\Cinftyc(M;E^*\otimes\Omega)\subset C^\infty(M;E^*\otimes\Omega)$ is a continuous dense injection $C^{-\infty}_{\text{\rm c}}(M;E)\subset C^{-\infty}(M;E)$. If $A^t$ restricts to a continuous linear map $\Cinftyc(M;F^*\otimes\Omega)\to C^\infty(M;E^*\otimes\Omega)$, then $A^{tt}:C^{-\infty}_{\text{\rm c}}(M;E)\to C^{-\infty}(M;F)$ is a continuous extension of $A$, also denoted by $A$.

There are versions of the construction of $A^t$ and $A^{tt}$ when both the domain and codomain of $A$ have compact support, or no support restriction. For example, for any open $U\subset M$, the transpose of the extension by zero $\Cinftyc(U;E^*\otimes\Omega)\subset\Cinftyc(M;E^*\otimes\Omega)$ is the restriction map $C^{-\infty}(M;E)\to C^{-\infty}(U,E)$, $u\mapsto u|_U$, and the transpose of the restriction map $C^\infty(M;E^*\otimes\Omega)\to C^\infty(U,E^*\otimes\Omega)$ is the extension by zero $C^{-\infty}_{\text{\rm c}}(U;E)\subset C^{-\infty}_{\text{\rm c}}(M;E)$. In the whole paper, inclusion maps may be denoted by $\iota$ \index{$\iota$} and restriction maps by $R$, \index{$R$} without further comment.

Other related concepts and results, like singular support, Schwartz kernel and the Schwartz kernel theorem, can be seen e.g.\ in \cite{Melrose1993}.

\subsection{Pull-back and push-forward of distributional sections}\label{ss: pull-back and push-forward of distrib sections}

Recall that any smooth map $\phi:M'\to M$ induces the continuous linear pull-back map
\begin{equation}\label{phi^*: C^infty(M E) -> C^infty(M' phi^*E)}
\phi^*:C^\infty(M;E)\to C^\infty(M';\phi^*E)\;.
\end{equation}
Suppose that moreover $\phi$ is a submersion. Then it also induces the continuous linear push-forward map
\begin{equation}\label{phi_*: C^infty_c(M' phi^*E otimes Omega_fiber) -> C^infty_c(M E)}
\phi_*:C^\infty_{\text{\rm c}}(M';\phi^*E\otimes\Omega_{\text{\rm fiber}})\to C^\infty_{\text{\rm c}}(M;E)\;,
\end{equation}
where $\Omega_{\text{\rm fiber}}=\Omega_{\text{\rm fiber}}M'=\Omega\VV$ for the vertical subbundle $\VV=\ker\phi_*\subset TM'$. Since $\phi^*\Omega M\equiv\Omega(TM/\VV)\equiv\Omega^{-1}_{\text{\rm fiber}}\otimes\Omega M'$, the transposes of the versions of~\eqref{phi^*: C^infty(M E) -> C^infty(M' phi^*E)} and~\eqref{phi_*: C^infty_c(M' phi^*E otimes Omega_fiber) -> C^infty_c(M E)} with $E^*\otimes\Omega M$ are continuous extensions of~\eqref{phi_*: C^infty_c(M' phi^*E otimes Omega_fiber) -> C^infty_c(M E)} and~\eqref{phi^*: C^infty(M E) -> C^infty(M' phi^*E)} \cite[Theorem~6.1.2]{Hormander1983-I},
\begin{gather}
\phi_*:C^{-\infty}_{\text{\rm c}}(M';\phi^*E\otimes\Omega_{\text{\rm fiber}})\to C^{-\infty}_{\text{\rm c}}(M;E)\;,
\label{phi_*: C^-infty_c(M' phi^*E otimes Omega_fiber) -> C^-infty_c(M E)}\\
\phi^*:C^{-\infty}(M;E)\to C^{-\infty}(M';\phi^*E)\;,
\label{phi_*: C^-infty(M E) -> C^-infty(M' phi^*E)}
\end{gather}
also called push-forward and pull-back maps. The term integration along the fibers is also used for $\phi_*$.

If $\phi:M'\to M$ is a proper local diffeomorphism, then we can omit $\Omega_{\text{\rm fiber}}$ and the compact support condition in~\eqref{phi_*: C^infty_c(M' phi^*E otimes Omega_fiber) -> C^infty_c(M E)} and~\eqref{phi_*: C^-infty_c(M' phi^*E otimes Omega_fiber) -> C^-infty_c(M E)}, and therefore the compositions $\phi_*\phi^*$ and $\phi^*\phi_*$ are defined on smooth/distributional sections.

The space $C^\infty(M';\phi^*E)$ becomes a $C^\infty(M)$-module via the algebra homomorphism $\phi^*:C^\infty(M)\to C^\infty(M')$, and we have
\begin{equation}\label{C^infty(M)-tensor product description of C^pm infty_./c(M' phi^*E)}
C^{\pm\infty}_{{\cdot}/\text{\rm c}}(M';\phi^*E)=C^{\pm\infty}_{{\cdot}/\text{\rm c}}(M')\otimes_{C^\infty(M)}C^\infty(M;E)\;.
\end{equation}
Using~\eqref{C^infty(M)-tensor product description of C^pm infty_./c(M E)} and~\eqref{C^infty(M)-tensor product description of C^pm infty_./c(M' phi^*E)}, we can describe~\eqref{phi^*: C^infty(M E) -> C^infty(M' phi^*E)}--\eqref{phi_*: C^-infty(M E) -> C^-infty(M' phi^*E)} as the $C^\infty(M)$-tensor products of their trivial-line-bundle versions with the identity map on $C^\infty(M;E)$. Thus, from now on, only pull-back and push-forward of distributions will be considered.

\subsection{Differential operators}\label{ss: diff ops}

Let $\Diff(M)$ be the filtered algebra and $C^\infty(M)$-module of differential operators, filtered by the order. Every $\Diff^m(M)$ ($m\in\N_0$) is spanned as $C^\infty(M)$-module by all compositions of up to $m$ elements of $\fX(M)$, considered as the Lie algebra of derivations of $C^\infty_{{\cdot}/\text{\rm c}}(M)$. In particular, $\Diff^0(M)\equiv C^\infty(M)$.

For vector bundles $E$ and $F$ over $M$, the above concepts can be extended by taking the $C^\infty(M)$-tensor product with $C^\infty(M;F\otimes E^*)$, obtaining $\Diff^m(M;E,F)$ ($\Diff^m(M;E)$ \index{$\Diff^m(M;E)$} being obtained if $E=F$); here, redundant notation is simplified like in the case of $C^{\pm\infty}(M;E)$ (\Cref{ss: smooth/distributional sections}). If $E$ is a line bundle, then
\begin{align}
\Diff^m(M;E)
&\equiv\Diff^m(M)\otimes_{C^\infty(M)}C^\infty(M;E\otimes E^*)\notag\\
&\equiv\Diff^m(M)\otimes_{C^\infty(M)}C^\infty(M)\equiv\Diff^m(M)\;.
\label{Diff^m(M E) equiv Diff^m(M)}
\end{align}

Any $A\in\Diff^m(M;E)$ defines a continuous linear endomorphism $A$ of $C^\infty_{{\cdot}/\text{\rm c}}(M;E)$. We get $A^t\in\Diff^m(M;E^*\otimes\Omega)$ using integration by parts. So $A$ has continuous extensions to a continuous endomorphism $A$ of $C^{-\infty}_{{\cdot}/\text{\rm c}}(M;E)$ (\Cref{ss: ops}). A similar map is defined when $A\in\Diff^m(M;E,F)$. 

Other related concepts like symbols and ellipticity can be seen e.g.\ in \cite{Melrose1993}.

\subsection{$L^2$ sections}\label{ss: L^2}

Recall that the Hilbert space $L^2(M;\Omega^{1/2})$ of square-integrable half-densities is the completion of $\Cinftyc(M;\Omega^{1/2})$ with the scalar product $\langle u,v\rangle=\int_Mu\bar v$. The induced norm is denoted by $\|{\cdot}\|$.

If $M$ is compact, the space $L^2(M;E)$ \index{$L^2(M;E)$} of square-integrable sections of $E$ can be described as the $C^\infty(M)$-tensor product of $L^2(M;\Omega^{1/2})$ and $C^\infty(M;\Omega^{-1/2}\otimes E)$. It becomes a Hilbert space with the scalar product $\langle u,v\rangle=\int_M(u,v)\,\omega$ determined by the choice of a Euclidean/Hermitian structure $({\cdot},{\cdot})$ on $E$ and a non-vanishing $\omega\in C^\infty(M;\Omega)$. The equivalence class of its norm $\|{\cdot}\|$ is independent of those choices; in this sense, $L^2(M;E)$ is called a \emph{Hilbertian space} if no norm is distinguished. 

When $M$ is not assumed to be compact, any choice of $({\cdot},{\cdot})$ and $\omega$ can be used to define $L^2(M;E)$ and $\langle{\cdot},{\cdot}\rangle$. Now $L^2(M;E)$ and the equivalence class of $\|{\cdot}\|$ depends on the choices involved. The independence still holds for sections supported in any compact $K\subset M$, obtaining the Hilbertian space $L^2_K(M;E)$. Then the strict LF-space $L^2_{\text{\rm c}}(M;E)$ is defined like in~\eqref{Cinftyc(U)}. On the other hand, let
\begin{equation}\label{L^2_loc(M E)}
L^2_{\text{\rm loc}}(M;E)=\{\,u\in C^{-\infty}(M;E)\mid\Cinftyc(M)\,u\subset L^2_{\text{\rm c}}(M;E)\,\}\;,
\end{equation}
which is a Fr\'echet space with the semi-norms $u\mapsto\|f_ku\|$, for a countable partition of unity $\{f_k\}\subset\Cinftyc(M)$. If $M$ is compact, then $L^2_{\text{\rm loc/c}}(M;E)\equiv L^2(M;E)$ \index{$L^2_{\text{\rm loc/c}}(M;E)$} as TVSs. The spaces $L^2_{\text{\rm loc/c}}(M;E)$ satisfy the obvious version of~\eqref{C^-infty_cdot/c(M;E)}.

Any $A\in\Diff^m(M;E)$ can be considered as a densely defined operator in $L^2(M;E)$. Integration by parts shows that the adjoint $A^*$ is defined by an element $A^*\in\Diff^m(M;E)$ (the \emph{formal adjoint} of $A$).

\subsection{$L^\infty$ sections}\label{ss: L^infty}

A Euclidean/Hermitian structure can be also used to define the Banach space $L^\infty(M;E)$ of its essentially bounded sections, with the norm $\|u\|_{L^\infty}=\esssup_{x\in M}|u(x)|$. There is a continuous injection $L^\infty(M;E)\subset L^2_{\text{\rm loc}}(M;E)$. \index{$L^\infty(M;E)$} If $M$ is compact, then the equivalence class of $\|{\cdot}\|_{L^\infty}$ is independent of $({\cdot},{\cdot})$.

\subsection{Sobolev spaces}\label{ss: Sobolev sps}

\subsubsection{Local and compactly supported versions}\label{sss: Sobolev sps - loc/c}

Recall that the Fourier transform, $f\mapsto\hat f$, defines a TVS-automorphism of the Schwartz space $\SS(\R^n)$, which extends to a TVS-automorphism of the space $\SS(\R^n)'$ of tempered distributions \cite[Section~7.1]{Hormander1983-I}. In turn, for every $s\in\R$, this automorphism of $\SS(\R^n)'$ restricts to unitary isomorphism
\begin{equation}\label{H^s(R^n) cong L^2(R^n (1+|xi|^2)^s d xi)}
H^s(\R^n)\xrightarrow{\cong} L^2(\R^n,(1+|\xi|^2)^s\,d\xi)\;,\quad f\mapsto\hat f\;,
\end{equation}
for some Hilbert space $H^s(\R^n)$, called the Sobolev space of order $s$ of $\R^n$. There is a canonical continuous inclusion $H^s(\R^n)\subset C^{-\infty}(\R^n)$.

For any compact $K\subset \R^n$, we have the Hilbert subspace $H^s_K(\R^n)\subset H^s(\R^n)$ of elements supported in $K$. Then the LCHSs $H^s_{\text{\rm c/loc}}(U)$ are defined like in~\eqref{C^-infty_cdot/c(M;E)} and~\eqref{L^2_loc(M E)}, using the spaces $H^s_K(\R^n)$ for compact subsets $K\subset U$. They are continuously included in $C^{-\infty}_{\text{\rm c}/{\cdot}}(U)$.

For a manifold $M$, the definition of the LCHSs $H^s_{\text{\rm c/loc}}(M)$ \index{$H^s_{\text{\rm c/loc}}(M)$} can be extended in a standard way, by using a locally finite atlas and a partition of unity consisting of compactly supported smooth functions. These are the compactly supported and local versions of the Sobolev space of order $s$ of $M$. They are continuously included in $C^{-\infty}_{\text{\rm c}/{\cdot}}(M)$.

\subsubsection{Case of compact manifolds}\label{sss: Sobolev sps - compact}

Suppose for a while that $M$ is compact. Then $H^s(M):=H_{\text{\rm loc}}^s(M)=H^s_{\text{\rm c}}(M)$ \index{$H^s(M)$} is a Hilbertian space called the \emph{Sobolev space} of order $s$ of $M$. We have
\begin{equation}\label{H^-s(M) = H^s(M  Omega)'}
H^{-s}(M)=H^s(M;\Omega)'\;,
\end{equation}
given by~\eqref{C^-infty_cdot/c(M;E)}. Moreover there are continuous dense inclusions,
\begin{gather}
H^s(M)\subset H^{s'}(M)\;,
\label{H^s(M) subset H^s'(M)}\\
\intertext{for $s'<s$, and}
H^s(M)\subset C^k(M)\subset H^k(M)\;,
\label{H^s(M) subset C^k(M) subset H^k(M)}\\
H^{-s}(M)\supset C^{\prime\,-k}(M)\supset H^{-k}(M)\;,
\label{H^-s(M) supset C^prime -k(M) supset H^-k(M)}
\end{gather}
for $s>k+n/2$. The first inclusion of~\eqref{H^s(M) subset C^k(M) subset H^k(M)} is the Sobolev embedding theorem, and~\eqref{H^-s(M) supset C^prime -k(M) supset H^-k(M)} is the transpose of the version of~\eqref{H^s(M) subset C^k(M) subset H^k(M)} with $\Omega M$. Moreover the inclusions~\eqref{H^s(M) subset H^s'(M)} are compact (Rellich theorem). So the spaces $H^s(M)$ form a compact spectrum with
\begin{equation}\label{C^infty(M) = bigcap_s H^s(M)}
C^\infty(M)=\bigcap_sH^s(M)\;\quad 
C^{-\infty}(M)=\bigcup_sH^s(M)\;.
\end{equation}

Any $A\in\Diff^m(M;E)$ defines a bounded operator $A:H^{s+m}(M;E)\to H^s(M;E)$. It can be considered as a densely defined operator in $H^s(M;E)$, which is closable because, after fixing a scalar product in $H^s(M;E)$, the adjoint of $A$ in $H^s(M;E)$ is densely defined since it is induced by $\bar A^t\in\Diff^m(M;\bar E^*\otimes\Omega)$ via the identity $H^s(M;E)\equiv H^s(M;\bar E)'=H^{-s}(M;\bar E^*\otimes\Omega)$, where the bar stands for the complex conjugate. In the case $s=0$, the adjoint of $A$ is induced by the formal adjoint $A^*\in\Diff^m(M;E)$.

By the elliptic estimate, a scalar product on $H^s(M)$ can be defined by $\langle u,v\rangle_s=\langle(1+P)^su,v\rangle$, for any choice of a nonnegative symmetric elliptic $P\in\Diff^2(M)$,  where $\langle{\cdot},{\cdot}\rangle$ is defined like in \Cref{ss: L^2} and $(1+P)^s$ is given by the spectral theorem for all $s\in\R$. The corresponding norm $\|{\cdot}\|_s$ is independent of the choice of $P$. For a vector bundle $E$, a precise scalar product on $H^s(M;E)$ can be defined as above, using any choice of a Euclidean/Hermitian structure $({\cdot},{\cdot})$ on $E$ and a non-vanishing $\omega\in C^\infty(M;\Omega)$ (\Cref{ss: L^2}), besides a nonnegative symmetric elliptic $P\in\Diff^2(M;E)$. If $E=\Omega^{1/2}M$, then $\langle{\cdot},{\cdot}\rangle_s$ can be defined independently of $({\cdot},{\cdot})$ and $\omega$ (\Cref{ss: L^2}).

If $s\in\N_0$, we can also describe
\begin{align}
H^s(M)&=\{\,u\in C^{-\infty}(M)\mid\Diff^s(M)\,u\subset L^2(M)\,\}\;,\label{H^s(M) = ...}\\
H^{-s}(M)&=\Diff^s(M)\,L^2(M)\;,\label{H^-s(M) = ...}
\end{align}
with the respective projective and injective topologies given by the maps $A:H^s(M)\to L^2(M)$ and $A:L^2(M)\to H^{-s}(M)$ ($A\in\Diff^s(M)$).

\subsubsection{Extension to non-compact manifolds}\label{sss: Sobolev sps - non-compact}

If $M$ is not assumed to be compact, then $H^s(M;E)$ can be defined as the completion of $\Cinftyc(M;E)$ with respect to the scalar product $\langle{\cdot},{\cdot}\rangle_s$ defined by the above choices of $({\cdot},{\cdot})$, $\omega$ and $P$; in this case, $H^s(M;E)$ and the equivalence class of $\|{\cdot}\|_s$ depends on the choices involved. For instance, in~\eqref{H^s(R^n) cong L^2(R^n (1+|xi|^2)^s d xi)}, $H^s(\R^n)$ can be also described with the Laplacian of $\R^n$ and the standard density and Euclidean/Hermitian structure. The version of~\eqref{H^-s(M) = H^s(M  Omega)'} with $E$ can be used to define $H^{-s}(M;E)$. With this generality, the versions of~\eqref{H^s(M) = ...},~\eqref{H^-s(M) = ...}, the right-hand side inclusions of~\eqref{H^s(M) subset C^k(M) subset H^k(M)} and~\eqref{H^-s(M) supset C^prime -k(M) supset H^-k(M)}, and the inclusions ``$\subset$'' of~\eqref{C^infty(M) = bigcap_s H^s(M)} are wrong, but the versions of~\eqref{H^s(M) subset H^s'(M)}, the left-hand side continuous inclusions of~\eqref{H^s(M) subset C^k(M) subset H^k(M)} and~\eqref{H^-s(M) supset C^prime -k(M) supset H^-k(M)}, and the continuous inclusion ``$\supset$'' of~\eqref{C^infty(M) = bigcap_s H^s(M)} are true. Thus the intersection and union of~\eqref{C^infty(M) = bigcap_s H^s(M)} define new LCHSs $H^{\pm\infty}(M)$, which are continuously included in $C^{\pm\infty}(M)$. Any $A\in\Diff^m(M;E)$ defines continuous linear maps $A:H^s_{\text{\rm c/loc}}(M;E)\to H^{s-m}_{\text{\rm c/loc}}(M;F)$.

\subsection{Weighted spaces}\label{ss: weighted sps}

\subsubsection{Case of compact manifolds}\label{sss: weighted sps - compact}

Assume first that $M$ is compact. Take any $h\in C^\infty(M)$ which is positive almost everywhere; for instance, $\{h=0\}$ could be any countable union of submanifolds of positive codimension. Then the \emph{weighted Sobolev space} $hH^s(M;E)$ is a Hilbertian space; a scalar product $\langle{\cdot},{\cdot}\rangle_{hH^s}$ is given by $\langle u,v\rangle_{hH^s}=\langle h^{-1}u,h^{-1}v\rangle_s$, depending on the choice of a scalar product $\langle{\cdot},{\cdot}\rangle_s$ on $H^s(M;E)$ like in \Cref{ss: Sobolev sps}. The corresponding norm is denoted by $\|{\cdot}\|_{hH^s}$. In particular, we get the \emph{weighted $L^2$ space} $hL^2(M;E)$. We have $h>0$ just when $hH^m(M;E)=H^m(M;E)$; in this case, $\langle{\cdot},{\cdot}\rangle_{hH^s}$ can be described like $\langle{\cdot},{\cdot}\rangle_s$ using $h^{-2}\omega$ instead of $\omega$. Thus the notation $hH^m(M;E)$ for $h>0$ is used when changing the density; e.g.,  if it is different from a distinguished choice, say a Riemanian volume.

\subsubsection{Extension to non-compact manifolds}\label{sss: weighted sps - non-compact}

If $M$ is not compact, $hH^s(M;E)$ and $\langle u,v\rangle_{hH^s}$ depend on $h$ and the chosen definitions of $H^s(M;E)$ and $\langle u,v\rangle_s$ (\Cref{ss: Sobolev sps}). We also get the weighted spaces $hH^s_{\text{\rm c/loc}}(M;E)$, \index{$hH^s_{\text{\rm c/loc}}(M;E)$} and the weighted Banach space $hL^\infty(M;E)$ \index{$hL^\infty(M;E)$} with the norm $\|u\|_{hL^\infty}=\|h^{-1}u\|_{L^\infty}$. There is a continuous injection $hL^\infty(M;E)\subset hL^2_{\text{\rm loc}}(M;E)$.

\subsection{Bounded geometry}\label{s: bd geom}

Concerning this topic, we follow \cite{Eichhorn1991,Roe1988I,Shubin1992,Schick1996,Schick2001}; see also \cite{AlvKordyLeichtnam2014} for the way we present it and examples. 

\subsubsection{Manifolds and vector bundles of bounded geometry}\label{sss: mfds and vector bdls of bd geom}

The concepts recalled here become relevant when $M$ is not compact. Equip $M$ with a Riemannian metric $g$, and let $\nabla$ denote its Levi-Civita connection, $R$ its curvature and $\inj_M\ge0$ its injectivity radius (the infimum of the injectivity radius at all points). If $M$ is connected, we have an induced distance function $d$. If $M$ is not connected, we can also define $d$ taking $d(p,q)=\infty$ if $p$ and $q$ belong to different connected components. Observe that $M$ is complete if $\inj_M>0$. For $r>0$ and $p\in M$, let $B(p,r)$ and $\overline B(p,r)$ denote the open and closed $r$-balls centered at $p$.

Recall that $M$ is said to be of \emph{bounded geometry} if $\inj_M>0$ and $\sup|\nabla^mR|<\infty$ for every $m\in\N_0$. This concept has the following chart description. 

\begin{thm}[Eichhorn \cite{Eichhorn1991}; see also \cite{Roe1988I,Schick1996,Schick2001}]\label{t: mfd of bd geom}
$M$ is of bounded geometry if and only if, for some open ball $B\subset\R^n$ centered at $0$, there are normal coordinates at every $p\in M$ defining a diffeomorphism $y_p:V_p\to B$ such that the corresponding Christoffel symbols $\Gamma^i_{jk}$, as a family of functions on $B$ parametrized by $i$, $j$, $k$ and $p$, lie in a bounded set of the Fr\'echet space $C^\infty(B)$. This equivalence holds as well replacing the Cristoffel symbols with the metric coefficients $g_{ij}$.
\end{thm}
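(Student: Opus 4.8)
The plan is to prove the two implications separately. Throughout, fix $p\in M$, identify $T_pM$ with $\R^n$ by means of an orthonormal basis, and recall that $\exp_p$ is defined and restricts to a diffeomorphism of the Euclidean ball $B(0,r)\subset\R^n$ onto its image precisely when $r\le\inj_p$; by \emph{normal coordinates at $p$} we mean the inverse chart $y_p=(\exp_p|_{B(0,r)})^{-1}$, in which $g_{ij}(0)=\delta_{ij}$ and $\Gamma^i_{jk}(0)=0$, and along which the radial geodesics $t\mapsto\exp_p(tx)$ are the straight segments $t\mapsto tx$. We write $\{\Gamma^i_{jk}\}$ and $\{g_{ij}\}$ for the families of functions on a ball $B$ indexed by $i,j,k$ and $p$.

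\emph{Bounded geometry implies the chart condition.} Assume $\inj_M>0$ and $\sup_M|\nabla^mR|<\infty$ for all $m$, and fix $r_0<\inj_M$, so that the normal chart $y_p$ is defined on $B=B(0,r_0)$ for every $p$. The point is that the metric in normal coordinates is built from the Jacobi fields along the radial geodesics $\gamma(t)=\exp_p(tx)$: written in a parallel orthonormal frame along $\gamma$, the Jacobi equation $J''+R(J,\dot\gamma)\dot\gamma=0$ has a fundamental solution (suitably normalized at the origin) from which $g_{ij}(x)$ is read off, and differentiating the equation repeatedly in the parameter $x$ produces linear ODEs for the $x$-derivatives of the Jacobi fields whose coefficients and inhomogeneous terms involve only $R$ and finitely many of its covariant derivatives along radial geodesics. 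By hypothesis these inputs are bounded uniformly in $p$ and in $x\in B$, so Gronwall-type estimates bound the Jacobi fields and all of their $x$-derivatives uniformly in $p$; hence $\{g_{ij}\}$ lies in a bounded subset of $C^\infty(B)$. Since $g_{ij}(0)=\delta_{ij}$ and the first derivatives of the $g_{ij}$ are uniformly bounded, after shrinking $r_0$ once and for all the matrix $(g_{ij}(x))$ has eigenvalues bounded below by a fixed positive constant on $B$; therefore $(g^{ij})$ also lies in a bounded subset of $C^\infty(B)$, and so do the Christoffel symbols $\Gamma^i_{jk}=\frac12 g^{il}(\partial_jg_{kl}+\partial_kg_{jl}-\partial_lg_{jk})$.

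\emph{The chart condition implies bounded geometry.} Conversely, suppose that for a fixed ball $B=B(0,r_0)$ there are normal charts $y_p:V_p\to B$ at every $p$ with $\{\Gamma^i_{jk}\}$ (respectively $\{g_{ij}\}$) contained in a bounded subset of $C^\infty(B)$. The existence of such a chart means exactly that $\exp_p$ is a diffeomorphism on $B(0,r_0)\subset T_pM$, i.e.\ $\inj_p\ge r_0$; hence $\inj_M\ge r_0>0$ (and so $M$ is complete). The two normalizations are interchangeable: if $\{g_{ij}\}$ is bounded in $C^\infty$ then, as in the previous paragraph (using $g_{ij}(0)=\delta_{ij}$ and the uniform bound on the first derivatives, after shrinking $B$), $(g_{ij})$ is uniformly positive definite, so $(g^{ij})$ and hence $\{\Gamma^i_{jk}\}$ are bounded in $C^\infty(B)$; conversely, if $\{\Gamma^i_{jk}\}$ is bounded, then the identities $\partial_kg_{ij}=g_{il}\Gamma^l_{jk}+g_{jl}\Gamma^l_{ik}$ with $g_{ij}(0)=\delta_{ij}$ form, along the rays from $0$, a linear ODE system with coefficients bounded in $C^\infty$, whose solution $\{g_{ij}\}$ is then bounded in $C^\infty(B)$ as well (again possibly after shrinking $B$). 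Finally, in these coordinates $R^i_{jkl}=\partial_k\Gamma^i_{jl}-\partial_l\Gamma^i_{jk}+\Gamma^i_{km}\Gamma^m_{jl}-\Gamma^i_{lm}\Gamma^m_{jk}$, and each $\nabla^mR$ is a universal polynomial in the $g_{ij}$, the $g^{ij}$, the $\Gamma^i_{jk}$ and finitely many of their partial derivatives; since all of these lie in bounded subsets of $C^\infty(B)$ and $(g_{ij})$ is uniformly equivalent to the Euclidean metric on $B$, the pointwise norm $|\nabla^mR|$ is bounded on each $V_p$ uniformly in $p$, and since the $V_p$ cover $M$ we get $\sup_M|\nabla^mR|<\infty$ for every $m$.

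\emph{Main obstacle.} The genuinely delicate step is the first implication: turning the pointwise bounds $\sup|\nabla^mR|<\infty$ into uniform $C^\infty$ bounds for the metric on a ball of \emph{fixed} radius. This relies on the Jacobi-field representation of the metric in normal coordinates together with careful Gronwall estimates for the Jacobi equation and for all of its parameter-derivatives, and on controlling the degeneration of the Jacobi solutions at the origin; this is essentially the content of Eichhorn's theorem. The remaining ingredients — the lower bound on $\inj_M$ coming for free from the existence of the charts, the passage between the $g_{ij}$- and $\Gamma^i_{jk}$-formulations, and the coordinate formula for $\nabla^mR$ — are then routine.
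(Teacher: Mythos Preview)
The paper does not prove this theorem: it is stated with attribution to Eichhorn (and references to Roe and Schick) and then used as a black box, with no \texttt{proof} environment following it. So there is no ``paper's own proof'' to compare against.

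Your sketch is a correct outline of the standard argument found in those references. The forward implication via the Jacobi-field representation of the metric in normal coordinates, with Gronwall estimates on the Jacobi equation and its parameter derivatives, is exactly Eichhorn's method; you correctly flag this as the substantive step. The converse direction is indeed routine: the mere existence of the normal chart on a ball of fixed radius gives $\inj_M\ge r_0>0$, and the coordinate formulas for $R^i_{jkl}$ and $\nabla^mR$ as universal polynomials in $g_{ij}$, $g^{ij}$, $\Gamma^i_{jk}$ and their partial derivatives yield the curvature bounds. Your treatment of the equivalence between the $g_{ij}$- and $\Gamma^i_{jk}$-formulations (via $\partial_k g_{ij}=g_{il}\Gamma^l_{jk}+g_{jl}\Gamma^l_{ik}$ with $g_{ij}(0)=\delta_{ij}$, solved as an ODE along rays) is also the standard one. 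Nothing is missing at the level of a sketch; turning it into a full proof requires only writing out the Gronwall bookkeeping carefully, which is what the cited papers do.
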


From now on in this subsection, assume $M$ is of bounded geometry and consider the charts $y_p:V_p\to B$ given by \Cref{t: mfd of bd geom}. The radius of $B$ is denoted by $r_0$. 

\begin{prop}[{Schick \cite[Theorem~A.22]{Schick1996}, \cite[Proposition~3.3]{Schick2001}}]\label{p: |partial_I(y_q y_p^-1)|}
For every $\alpha\in\N_0^n$, the function $|\partial^\alpha(y_qy_p^{-1})|$ is bounded on $y_p(V_p\cap V_q)$, uniformly on $p,q\in M$.
\end{prop}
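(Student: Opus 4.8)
The plan is to deduce the uniform bound on $|\partial^\alpha(y_qy_p^{-1})|$ from the uniform control on the Christoffel symbols provided by \Cref{t: mfd of bd geom}, via an ODE argument. First I would recall that the transition map $y_qy_p^{-1}$ is defined on the open set $y_p(V_p\cap V_q)\subset B$, and that in normal coordinates centered at $q$ the coordinate rays $t\mapsto ty_q(x)$ are geodesics; pulling back to the $y_p$-chart, the curve $t\mapsto y_py_q^{-1}(ty_q(x))$ solves the geodesic equation $\ddot\gamma^i+\Gamma^i_{jk}(\gamma)\dot\gamma^j\dot\gamma^k=0$ with Christoffel symbols $\Gamma^i_{jk}$ expressed in the $y_p$-chart. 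Since by \Cref{t: mfd of bd geom} these symbols, together with all their derivatives, lie in a bounded subset of $C^\infty(B)$ uniformly in $p$, standard smooth dependence of solutions of ODEs on initial conditions and parameters — applied on the fixed ball $B$ and with bounds depending only on the uniform $C^\infty(B)$-bounds on the coefficients — gives that the solution depends smoothly on its initial data with all derivative-bounds uniform in $p,q$. Evaluating at $t=1$ expresses $y_py_q^{-1}$ in terms of this flow and yields the uniform bounds on $|\partial^\alpha(y_py_q^{-1})|$; swapping the roles of $p$ and $q$ gives the statement as written.

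The key steps, in order: (i) identify the overlap map with the time-one map of the geodesic flow read in normal coordinates, using that $y_q$-coordinate lines are geodesics and that $y_p$ is a diffeomorphism onto $B$ so the flow stays in $B$ for the relevant times; (ii) write the geodesic equation as a first-order system on $TB\cong B\times\R^n$ whose right-hand side is polynomial in the velocities with coefficients the $\Gamma^i_{jk}$ in the $y_p$-chart; (iii) invoke the uniform $C^\infty(B)$-boundedness of the family $\{\Gamma^i_{jk}\}$ from \Cref{t: mfd of bd geom}, together with a uniform a priori bound on the initial velocities (which follows from the metric coefficients $g_{ij}$ also lying in a bounded set of $C^\infty(B)$, controlling the Riemannian norm of vectors in terms of their Euclidean norm on $B$), to apply the quantitative theorem on smooth dependence of ODE solutions on initial data; (iv) read off the conclusion by evaluating at $t=1$ and differentiating in the initial point $x=y_q(\cdot)$.

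The main obstacle is step (iii): one needs the dependence-on-initial-conditions estimates for the geodesic ODE to be genuinely \emph{uniform} in the parameters $p,q$, which requires (a) a uniform lower bound on the time of existence of the flow — here guaranteed because the geodesics in question terminate inside the fixed ball $B$ at $t=1$, and $\inj_M>0$ keeps the exponential maps nondegenerate on $B$ — and (b) uniform bounds on all the relevant derivatives of the coefficient functions, which is exactly the content of \Cref{t: mfd of bd geom}. Once these two uniformities are in hand, the remaining work is the routine, if slightly tedious, bookkeeping of differentiating the flow with respect to its initial conditions finitely many times; since \Cref{t: mfd of bd geom} already packages the hard analytic input, I would not reproduce that bookkeeping in detail and would instead cite \cite{Schick1996,Schick2001} for it, as the statement does.
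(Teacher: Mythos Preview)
The paper does not give its own proof of this proposition; it simply quotes the result from Schick \cite{Schick1996,Schick2001}, as indicated in the attribution. Your plan is a faithful outline of the argument Schick actually carries out: relate the transition map to the time-one geodesic flow in normal coordinates, use the uniform $C^\infty(B)$-bounds on the Christoffel symbols (equivalently the metric coefficients) from \Cref{t: mfd of bd geom}, and invoke smooth dependence of ODE solutions on initial data to obtain uniform derivative bounds. So there is nothing to compare against in the paper itself, and your sketch matches the cited source; your closing remark that you would cite \cite{Schick1996,Schick2001} for the details is exactly what the paper does.
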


\begin{prop}[{Shubin \cite[Appendix~A1.1, Lemma~1.2]{Shubin1992}}]\label{p: p_k}
For any $0<2r\le r_0$, there is a subset $\{p_k\}\subset M$ and some $N\in\N$ such that the balls $B(p_k,r)$ cover $M$, and every intersection of $N+1$ sets $B(p_k,2r)$ is empty.
\end{prop}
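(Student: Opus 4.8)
The plan is to take $\{p_k\}$ to be a maximal $r$-separated subset of $M$: the covering property is then immediate from maximality, and the bounded multiplicity of the family $\{B(p_k,2r)\}_k$ follows from a volume-comparison (packing) estimate, which is the only place bounded geometry enters.

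First, using Zorn's lemma, choose a subset $\{p_k\}\subset M$ that is maximal among subsets satisfying $d(p_j,p_k)\ge r$ for all $j\ne k$. Such a set is countable, since the open balls $B(p_k,r/2)$ are pairwise disjoint — if $y\in B(p_j,r/2)\cap B(p_k,r/2)$ then $d(p_j,p_k)<r$ — and $M$ is second countable. Maximality forces $\bigcup_kB(p_k,r)=M$: if some $q\in M$ satisfied $d(q,p_k)\ge r$ for every $k$, then $\{p_k\}\cup\{q\}$ would still be $r$-separated, contradicting maximality. This proves the covering statement; note that so far $r>0$ is arbitrary, and the hypothesis $2r\le r_0$ will only be used below.

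It remains to bound the multiplicity of $\{B(p_k,2r)\}_k$. Suppose $x\in\bigcap_{i=0}^{N}B(p_{k_i},2r)$ for distinct indices $k_0,\dots,k_N$. By $r$-separation the balls $B(p_{k_i},r/2)$ are pairwise disjoint, and each is contained in $B(x,5r/2)$ because $d(x,p_{k_i})<2r$. Now bounded geometry supplies two uniform volume bounds: on one hand, \Cref{t: mfd of bd geom} gives a constant $v=v(r)>0$, independent of $p$, with $\vol B(p,r/2)\ge v$ (in the normal chart $y_p\colon V_p\to B$ the Riemannian volume density is bounded below by a positive constant uniform in $p$, since $g_{ij}(0)=\delta_{ij}$ and the $g_{ij}$ lie in a bounded subset of $C^\infty(B)$); on the other hand, since bounded geometry implies a lower Ricci bound, the Bishop--Gromov inequality gives a constant $V=V(r)<\infty$, independent of $x$, with $\vol B(x,5r/2)\le V$. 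Therefore
\[
(N+1)\,v\le\sum_{i=0}^{N}\vol B(p_{k_i},r/2)\le\vol B(x,5r/2)\le V,
\]
so $N+1\le V/v$. Consequently the integer $N:=\lfloor V/v\rfloor$ has the property that any $N+1$ of the balls $B(p_k,2r)$ have empty intersection, which is the assertion.

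The routine parts are the maximal-separated-set construction and the elementary packing bookkeeping; the step I would be most careful about is the uniform two-sided volume control. The lower bound on $\vol B(p,r/2)$ is a direct consequence of the chart description in \Cref{t: mfd of bd geom} (and could alternatively be recast as a bi-Lipschitz comparison of $y_p$ with the Euclidean distance on a slightly smaller ball, reducing the whole argument to Euclidean packing), whereas the upper bound on $\vol B(x,5r/2)$ for balls possibly larger than $r_0$ genuinely requires the lower Ricci bound and Bishop--Gromov — or, equivalently, covering a large ball by a bounded number of normal charts. Once these standard facts about manifolds of bounded geometry are in place (see, e.g., \cite{Shubin1992,Schick1996}), the proposition follows.
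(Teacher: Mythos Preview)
The paper does not prove this proposition; it is quoted from Shubin with a citation and no argument. Your proof is correct and is precisely the standard argument one finds in Shubin's lemma: a maximal $r$-separated set gives the covering, and a packing/volume comparison bounds the multiplicity. One minor stylistic point: you use $N$ first as a free variable in ``suppose $x\in\bigcap_{i=0}^{N}B(p_{k_i},2r)$'' and then redefine $N:=\lfloor V/v\rfloor$; it would read more cleanly to call the number of intersecting balls $m$ and then set $N$ at the end. Also note that, since $2r\le r_0$, the points $p_{k_0},\dots,p_{k_N}$ all lie in the single normal chart $B(x,2r)\subset V_x$, so one can bypass Bishop--Gromov entirely and run a purely Euclidean packing argument via the uniform bi-Lipschitz control from \Cref{t: mfd of bd geom}; you mention this alternative, and it is arguably the more self-contained route given the tools the paper has set up.
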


A vector bundle $E$ of rank $l$ over $M$ is said to be of \emph{bounded geometry} when it is equipped with a family of local trivializations over the charts $(V_p,y_p)$, for small enough $r_0$, with corresponding defining cocycle $a_{pq}:V_p\cap V_q\to\GL(l,\K)\subset\K^{l\times l}$, such that, for all $\alpha\in\N_0^n$, the function $|\partial^\alpha(a_{pq}y_p^{-1})|$ is bounded on $y_p(V_p\cap V_q)$, uniformly on $p,q\in M$. When referring to local trivializations of a vector bundle of bounded geometry, we always mean that they satisfy this condition. If the corresponding defining cocycle is valued in the orthogonal/unitary group, then $E$ is said to be of \emph{bounded geometry} as a Euclidean/Hermitian vector bundle.

\subsubsection{Uniform spaces}\label{sss: uniform sps}

For every $m\in\N_0$, a function $u\in C^m(M)$ is said to be \emph{$C^m$-uniformy bounded} if there is some $C_m\ge0$ with $|\nabla^{m'}u|\le C_m$ on $M$ for all $m'\le m$. These functions form the \emph{uniform $C^m$ space} $C_{\text{\rm ub}}^m(M)$, \index{$C_{\text{\rm ub}}^m(M)$} which is a Banach space with the norm $\|{\cdot}\|_{C^m_{\text{\rm ub}}}$ defined by the best constant $C_m$. Equivalently, we may take the norm $\|{\cdot}\|'_{C^m_{\text{\rm ub}}}$ defined by the best constant $C'_m\ge0$ such that $|\partial^\alpha(uy_p^{-1})|\le C'_m$ on $B$ for all $p\in M$ and $|\alpha|\le m$; in fact, it is enough to consider any subset of points $p$ so that $\{V_p\}$ covers $M$ \cite[Theorem~A.22]{Schick1996}, \cite[Proposition~3.3]{Schick2001}. The \emph{uniform $C^\infty$ space} is $\Cinftyub(M)=\bigcap_mC_{\text{\rm ub}}^m(M)$. \index{$\Cinftyub(M)$} This is a Fr\'echet space with the semi-norms $\|{\cdot}\|_{C^m_{\text{\rm ub}}}$ or $\|{\cdot}\|'_{C^m_{\text{\rm ub}}}$. It consists of the functions $u\in C^\infty(M)$ such that all functions $uy_p^{-1}$ lie in a bounded set of $C^\infty(B)$, which are said to be \emph{$C^\infty$-uniformy bounded}. 

The same definitions apply to functions with values in $\C^l$. Moreover the definition of uniform spaces with covariant derivative can be also considered for non-complete Riemannian manifolds.

\begin{prop}[{Shubin \cite[Appendix~A1.1, Lemma~1.3]{Shubin1992}; see also \cite[Proposition~3.2]{Schick2001}}]\label{p: f_k}
Given $r$, $\{p_k\}$ and $N$ like in \Cref{p: p_k}, there is a partition of unity $\{f_k\}$ subordinated to the open covering $\{B(p_k,r)\}$, which is bounded in the Fr\'echet space $\Cinftyub(M)$.
\end{prop}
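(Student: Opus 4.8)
The plan is to produce the $f_k$ by transplanting a single fixed Euclidean cutoff through the normal charts of \Cref{t: mfd of bd geom} and then renormalizing. Recall first that the family $\{p_k\}$ furnished by \Cref{p: p_k} may be taken to be a maximal $(r/2)$-separated subset of $M$; for such a choice the smaller balls $B(p_k,r/2)$ already cover $M$, while the stated multiplicity bound on $\{B(p_k,2r)\}$ persists (this is the uniform volume comparison available on a manifold of bounded geometry; see \cite{Shubin1992}). Fix once and for all $\psi\in\Cinftyc(\R^n)$ with $0\le\psi\le1$, $\psi\equiv1$ on $\overline B(0,r/2)$ and $\supp\psi\subset\overline B(0,3r/4)$; this makes sense since $2r\le r_0$. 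By the Gauss lemma, each normal chart $y_p:V_p\to B$ satisfies $|y_p(q)|=d(p,q)$ for $q\in V_p$, so $y_p$ carries the metric ball $B(p,\rho)$ onto the Euclidean ball $B(0,\rho)$ for every $\rho\le r_0$. Hence, setting $g_k:=\psi\circ y_{p_k}$ on $V_{p_k}$ and $g_k:=0$ on $M\setminus V_{p_k}$, each $g_k$ is smooth, $\supp g_k\subset\overline B(p_k,3r/4)\subset B(p_k,r)$, and $g_k\equiv1$ on $B(p_k,r/2)$. Consequently $G:=\sum_kg_k$ is a well-defined smooth function with $G\ge1$ on $M$, and we put $f_k:=g_k/G$. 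Then $f_k\ge0$, $\supp f_k=\supp g_k\subset B(p_k,r)$ and $\sum_kf_k\equiv1$, so $\{f_k\}$ is a partition of unity subordinate to $\{B(p_k,r)\}$.

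It remains to check that $\{f_k\}$ is a bounded subset of $\Cinftyub(M)$, i.e.\ that for every $m\in\N_0$ the seminorms $\|f_k\|'_{C^m_{\mathrm{ub}}}$ are bounded independently of $k$. First I would bound the unnormalized family $\{g_k\}$. For any $p$ we have, on the overlap, $g_k\circ y_p^{-1}=\psi\circ(y_{p_k}\circ y_p^{-1})$, and $g_k\circ y_p^{-1}$ vanishes on the rest of $B$; by the Fa\`a di Bruno formula the derivatives $\partial^\alpha(g_k\circ y_p^{-1})$ with $|\alpha|\le m$ are dominated by a universal polynomial in the fixed quantities $\sup|\partial^\beta\psi|$ ($|\beta|\le m$) and in the quantities $\sup|\partial^\gamma(y_{p_k}\circ y_p^{-1})|$ ($|\gamma|\le m$), and the latter are bounded uniformly in $p$ and $p_k$ by \Cref{p: |partial_I(y_q y_p^-1)|}. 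Thus $\sup_k\|g_k\|'_{C^m_{\mathrm{ub}}}<\infty$ for all $m$. Next, since $g_k(x)\ne0$ forces $x\in B(p_k,r)\subset B(p_k,2r)$, at each point of $M$ at most $N$ of the $g_k$ are nonzero; hence in any chart $y_p$ the sum $\partial^\alpha(G\circ y_p^{-1})=\sum_k\partial^\alpha(g_k\circ y_p^{-1})$ has at most $N$ nonzero terms at each point, whence $\|G\|'_{C^m_{\mathrm{ub}}}\le N\sup_k\|g_k\|'_{C^m_{\mathrm{ub}}}<\infty$, so $G\in\Cinftyub(M)$. Since moreover $G\ge1$, differentiating the identity $G\cdot G^{-1}=1$ (again via Fa\`a di Bruno, using $0<G^{-j}\le1$) gives $G^{-1}\in\Cinftyub(M)$ with $\|G^{-1}\|'_{C^m_{\mathrm{ub}}}$ bounded by a polynomial in $\|G\|'_{C^m_{\mathrm{ub}}}$. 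Finally $f_k=g_k\,G^{-1}$, and the Leibniz rule together with the elementary estimate $\|uv\|'_{C^m_{\mathrm{ub}}}\le C_m\|u\|'_{C^m_{\mathrm{ub}}}\|v\|'_{C^m_{\mathrm{ub}}}$ yields $\sup_k\|f_k\|'_{C^m_{\mathrm{ub}}}\le C_m\bigl(\sup_k\|g_k\|'_{C^m_{\mathrm{ub}}}\bigr)\|G^{-1}\|'_{C^m_{\mathrm{ub}}}<\infty$, which is the assertion.

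The truly essential point — and the only place where bounded geometry is really used — is the uniform-in-$k$ control of all derivatives of the transplanted bumps $g_k$: in any single chart $y_p$, $g_k\circ y_p^{-1}$ is obtained from the fixed function $\psi$ by the coordinate change $y_{p_k}\circ y_p^{-1}$, and it is precisely \Cref{p: |partial_I(y_q y_p^-1)|} that keeps all derivatives of these coordinate changes bounded uniformly in $p,p_k$, so that the family $\{g_k\}$ does not ``spread out'' in $\Cinftyub(M)$. The remaining ingredients are routine bookkeeping that I would take from \cite{Shubin1992,Schick2001}: the Fa\`a di Bruno and Leibniz estimates, the submultiplicativity up to a constant of the seminorms $\|\cdot\|'_{C^m_{\mathrm{ub}}}$, and the standard fact that the maximal-separated construction underlying \Cref{p: p_k} also yields the cover by the half-radius balls $B(p_k,r/2)$, which is what supplies the uniform lower bound $G\ge1$ needed for the normalization step.
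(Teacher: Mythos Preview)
The paper does not supply its own proof of this proposition: it is quoted verbatim as a known result from Shubin \cite[Appendix~A1.1, Lemma~1.3]{Shubin1992} (see also \cite[Proposition~3.2]{Schick2001}) and used as a black box. Your argument is correct and is essentially the standard construction given in those references --- transplant a fixed Euclidean bump via the normal charts, use \Cref{p: |partial_I(y_q y_p^-1)|} to control the transplanted family uniformly, exploit the finite multiplicity from \Cref{p: p_k} to bound $G$, and renormalize. The only point worth flagging is that the statement of \Cref{p: p_k} as recorded in the paper only asserts that the balls $B(p_k,r)$ cover $M$, not the half-radius balls; you correctly note that the underlying maximal-separated-set construction in \cite{Shubin1992} actually gives the stronger covering by $B(p_k,r/2)$, which is what you need for $G\ge1$.
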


For a Euclidean/Hermitian vector bundle $E$ of bounded geometry over $M$, the \emph{uniform $C^m$ space} $C_{\text{\rm ub}}^m(M;E)$, of \emph{$C^m$-uniformly bounded} sections, can be defined by introducing $\|{\cdot}\|'_{C^m_{\text{\rm ub}}}$ like the case of functions, using local trivializations of $E$ to consider every $uy_p^{-1}$ in $C^m(B,\C^l)$ for all $u\in C^m(M;E)$. Then, as above, we get the \emph{uniform $C^\infty$ space} $\Cinftyub(M;E)$ of \emph{$C^\infty$-uniformly bounded} sections, which are the sections $u\in C^\infty(M;E)$ such that all functions $uy_p^{-1}$ define a bounded set of $\Cinftyub(B;\C^l)$. In particular, $\fXub(M):=\Cinftyub(M;TM)$ \index{$\fXub(M)$} is a $\Cinftyub(M)$-submodule and Lie subalgebra of $\fX(M)$.

\subsubsection{Differential operators of bounded geometry}\label{sss: diff ops of bd geom}

Like in \Cref{ss: diff ops}, by using $\fXub(M)$ and $\Cinftyub(M)$ instead of $\fX(M)$ and $C^\infty(M)$, we get the filtered subalgebra and $\Cinftyub(M)$-submodule $\Diffub(M)\subset\Diff(M)$ of differential operators of \emph{bounded geometry}. Observe that 
\begin{equation}\label{C^m_ub(M)}
C^m_{\text{\rm ub}}(M)=\{\,u\in C^m(M)\mid\Diffub(M)\,u\subset L^\infty(M)\ \forall m'\le m\,\}\;.
\end{equation}
For vector bundles of bounded geometry $E$ and $F$ over $M$, by taking the $\Cinftyub(M)$-tensor product of $\Diffub(M)$ and $\Cinftyub(M;F\otimes E^*)$, we obtain the filtered $\Cinftyub(M)$-submodule $\Diffub(M;E,F)\subset\Diff(M;E,F)$ (or $\Diffub(M;E)$ if $E=F$). \index{$\Diffub(M;E)$} Bounded geometry of differential operators is preserved by compositions and by taking transposes, and by taking formal adjoints in the case of Hermitian vector bundles of bounded geometry; in particular, $\Diffub(M;E)$ is a filtered subalgebra of $\Diff(M;E)$. Like in~\eqref{Diff^m(M E) equiv Diff^m(M)}, if $E$ is a line bundle of bounded geometry, then
\begin{equation}\label{Diffub^m(M E) equiv Diffub^m(M)}
\Diffub^m(M;E)\equiv\Diffub^m(M)\;.
\end{equation}

Every $A\in\Diffub^m(M;E)$ defines continuous linear maps $A:C^{m+k}_{\text{\rm ub}}(M;E)\to C^k_{\text{\rm ub}}(M;E)$ ($k\in\N_0$), which induce a continuous endomorphism $A$ of $\Cinftyub(M;E)$. It is said that $A$ is \emph{uniformly elliptic} if there is some $C\ge1$ such that, for all $p\in M$ and $\xi\in T^*_pM$, its leading symbol $\sigma_m(A)$ satisfies
\[
C^{-1}|\xi|^m\le|\sigma_m(A)(p,\xi)|\le C|\xi|^m\;.
\]
This condition is independent of the choice of the Hermitian metric of bounded geometry on $E$. Any $A\in\Diff^m_{\text{\rm ub}}(M;E,F)$ satisfies the second inequality. The case where $A\in\Diffub^m(M;E,F)$ is similar.

\subsubsection{Sobolev spaces of manifolds of bounded geometry}\label{sss: Sobolev bd geom}

For any Hermitian vector bundle $E$ of bounded geometry over $M$, any nonnegative symmetric uniformly elliptic $P\in\Diff^2_{\text{\rm ub}}(M;E)$ can be used to define the Sobolev space $H^s(M;E)$ ($s\in\R$) with a scalar product $\langle {\cdot},{\cdot}\rangle_s$ (\Cref{ss: Sobolev sps}). Any choice of $P$ defines the same Hilbertian space $H^s(M;E)$, which is a $\Cinftyub(M)$-module. In particular, $L^2(M;E)$ is the $\Cinftyub(M)$-tensor product of $L^2(M;\Omega^{1/2})$ and $\Cinftyub(M;E\otimes\Omega^{1/2})$, and $H^s(M;E)$ is the $\Cinftyub(M)$-tensor product of $H^s(M)$ and $\Cinftyub(M;E)$. For instance, we may take $P=\nabla^*\nabla$ for any unitary connection $\nabla$ of bounded geometry on $E$. For $s\in\N_0$, the Sobolev space $H^s(M)$ can be also described with the scalar product
\[
\langle u,v\rangle'_s=\sum_k\sum_{|\alpha|\le s}\int_Bf_k^2(x)\cdot\partial^\alpha(uy_{p_k}^{-1})(x)\cdot\overline{\partial^\alpha(vy_{p_k}^{-1})(x)}\,dx\;,
\]
using the partition of unity $\{f_k\}$ given by \Cref{p: f_k} \cite[Theorem~A.22]{Schick1996}, \cite[Propositions~3.2 and~3.3]{Schick2001}, \cite[Appendices~A1.2 and~A1.3]{Shubin1992}. A similar scalar product $\langle{\cdot},{\cdot}\rangle'_s$ can be defined for $H^s(M;E)$ with the help of local trivializations defining the bounded geometry of $E$. Every $A\in\Diffub^m(M;E)$ defines bounded operators $A:H^{m+s}(M;E)\to H^s(M;E)$ ($s\in\R$), which induce a continuous endomorphism $A$ of $H^{\pm\infty}(M;E)$. For any almost everywhere positive $h\in C^\infty(M)$, we have $hH^m(M;E)=H^m(M;E)$ if and only if $h>0$ and $h^{\pm1}\in\Cinftyub(M)$.

\begin{prop}[{Roe \cite[Proposition~2.8]{Roe1988I}}]\label{p: Sobolev embedding with bd geometry}
If $m'>m+n/2$, then $H^{m'}(M;E)\subset C_{\text{\rm ub}}^m(M;E)$, continuously. Thus $H^\infty(M;E)\subset \Cinftyub(M;E)$, continuously.
\end{prop}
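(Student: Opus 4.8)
\section*{Proof proposal}

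The plan is to reduce the statement to the classical Sobolev embedding on a Euclidean ball, transported via the bounded-geometry charts $y_p\colon V_p\to B$ of \Cref{t: mfd of bd geom}. By the $\Cinftyub(M)$-module and tensor-product descriptions recalled in \Cref{sss: Sobolev bd geom}, together with $\Cinftyub(M;E)\subset C^m_{\text{\rm ub}}(M;E)$, it is enough to treat the trivial line bundle $E=M\times\K$; equivalently, to exhibit, for each $m'>m+n/2$, a constant $C>0$ with $\|u\|_{C^m_{\text{\rm ub}}}\le C\,\|u\|_{m'}$ for all $u\in H^{m'}(M)$. Here one may use that $\|u\|_{C^m_{\text{\rm ub}}}$ is comparable, with constants depending only on the bounded geometry, to $\sup_{p\in M}\|uy_p^{-1}\|_{C^m(\overline{B'})}$ for a fixed ball $B'$ with $\overline{B'}\subset B$ concentric with $B$: indeed, at each $x\in M$ the covariant derivatives $\nabla^{m''}u(x)$ ($m''\le m$) are controlled by the partial derivatives $\partial^\alpha(uy_x^{-1})(0)$ ($|\alpha|\le m$), with a bound depending only on the bounded geometry through the uniform control of the metric coefficients in \Cref{t: mfd of bd geom}, and $y_x^{-1}(0)=x\in\overline{B'}$.

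Next, fix $u\in H^{m'}(M)$ and $p\in M$, and choose once and for all a cutoff $\chi\in\Cinftyc(B)$ with $\chi\equiv1$ on $\overline{B'}$. Set $v_p:=\chi\cdot(uy_p^{-1})$, extended by zero to an element of $H^{m'}(\R^n)$. Since multiplication by $\chi$ is bounded on $H^{m'}$ and, by bounded geometry, the chart Sobolev norm is uniformly comparable to the intrinsic one (for $m'\in\N_0$ this follows from the norm $\langle{\cdot},{\cdot}\rangle'_{m'}$, Leibniz's rule, \Cref{p: |partial_I(y_q y_p^-1)|}, and the finite overlap of \Cref{p: p_k,p: f_k}; for general real $m'$ from the corresponding uniform bounds for $(1+P)^{m'/2}$ and an elliptic parametrix, as in \cite{Shubin1992,Schick2001,Roe1988I}), one gets $\|v_p\|_{H^{m'}(\R^n)}\le C\,\|u\|_{m'}$ with $C$ independent of $p$. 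The classical Sobolev embedding $H^{m'}(\R^n)\subset C^m(\R^n)$, valid for real $m'>m+n/2$, then yields $\|uy_p^{-1}\|_{C^m(\overline{B'})}=\|v_p\|_{C^m(\overline{B'})}\le C\,\|v_p\|_{H^{m'}(\R^n)}\le C\,\|u\|_{m'}$. Taking the supremum over $p$ and invoking the norm comparison of the previous paragraph gives $\|u\|_{C^m_{\text{\rm ub}}}\le C\,\|u\|_{m'}$, i.e.\ the continuous inclusion $H^{m'}(M)\subset C^m_{\text{\rm ub}}(M)$.

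Finally, for $H^\infty(M;E)\subset\Cinftyub(M;E)$, I would write $H^\infty(M;E)=\varprojlim_s H^s(M;E)$ and $\Cinftyub(M;E)=\varprojlim_m C^m_{\text{\rm ub}}(M;E)$, so that the claim reduces to the continuity of each composite $H^\infty(M;E)\to C^m_{\text{\rm ub}}(M;E)$; this factors through $H^{m+n+1}(M;E)$, the first map being a projection out of the projective limit and the second being continuous by the inclusion just proved (with $m'=m+n+1>m+n/2$).

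The step I expect to be the main obstacle is the uniform comparison between the localized (chart) Sobolev norms and the intrinsic Sobolev norm of $M$ — that is, checking that the constants do not degenerate as $p$ ranges over $M$. This is precisely where bounded geometry enters, through \Cref{t: mfd of bd geom,p: |partial_I(y_q y_p^-1)|,p: p_k,p: f_k} and the uniform ellipticity of the operator $P$ used to define $\|{\cdot}\|_{m'}$; for non-integer $m'$ it further requires uniform estimates on an elliptic parametrix, which is the only non-elementary ingredient.
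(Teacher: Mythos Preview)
Your argument is sound and is essentially the standard proof of this result; however, note that the paper does not give its own proof of this proposition---it is simply quoted from Roe \cite[Proposition~2.8]{Roe1988I}---so there is no in-paper argument to compare against. Your localization-via-bounded-geometry-charts approach, with the uniform norm comparison flagged as the key point, is exactly how Roe (and the related references \cite{Shubin1992,Schick2001}) handle it.
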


\section{Symbols}\label{s: symbols}

The canonical coordinates of $\R^n\times\R^l$ ($n,l\in\N_0$) are denoted by $(x,\xi)=(x^1,\dots,x^n,\xi^1,\dots,\xi^l)$, and let $dx=dx^1\wedge\dots\wedge dx^n$ and $d\xi=d\xi^1\wedge\dots\wedge d\xi^l$. Recall that a \emph{symbol} of \emph{order} at most $m\in\R$ on $U\times\R^l$, or simply on $U$, is a function $a\in C^\infty(U\times\R^l)$ such that, for any compact $K\subset U$, and multi-indices $\alpha\in\N_0^n$ and $\beta\in\N_0^l$, 
\begin{equation}\label{| a |_K I J m}
\|a\|_{K,\alpha,\beta,m}:=\sup_{x\in K,\ \xi\in\R^l}\frac{|\partial_x^\alpha \partial_\xi^\beta a(x,\xi)|}{(1+|\xi|)^{m-|\beta|}}<\infty\;.
\end{equation}
The set of symbols of order at most $m$, $S^m(U\times\R^l)$, \index{$S^m(U\times\R^l)$} becomes a Fr\'echet space with the semi-norms $\|{\cdot}\|_{K,\alpha,\beta,m}$ given by~\eqref{| a |_K I J m}. There are continuous inclusions
\begin{equation}\label{S^m(T^*M) subset S^m'(T^*M)}
S^m(U\times\R^l)\subset S^{m'}(U\times\R^l)\quad(m<m')\;,
\end{equation}
giving rise to the LCSs \index{$S^{\pm\infty}(U\times\R^l)$}
\[
S^\infty(U\times\R^l)=\bigcup_mS^m(U\times\R^l)\;,\quad S^{-\infty}(U\times\R^l)=\bigcap_mS^m(U\times\R^l)\;.
\]
The LF-space $S^\infty(U\times\R^l)$ is a filtered algebra and $C^\infty(U)$-module with the pointwise multiplication. The Fr\'echet space $S^{-\infty}(U\times\R^l)$ is a filtered ideal and $C^\infty(U)$-submodule of $S^\infty(U\times\R^l)$. The homogeneous components of the corresponding graded algebra are \index{$S^{(m)}(U\times\R^l)$}
\[
S^{(m)}(U\times\R^l)=S^m(U\times\R^l)/S^{m-1}(U\times\R^l)\;.
\]
When $U=\R^0=\{0\}$, the notation $S^m(\R^l)$, $S^{\pm\infty}(\R^l)$ and $S^{(m)}(\R^l)$ is used, and the subscripts $K$ and $\alpha$ are omitted from the notation of the semi-norms in~\eqref{| a |_K I J m}.

Since $S^\infty(U\times\R^l)$ is an LF-space, we get the following (see \Cref{ss: smooth/distributional sections}).

\begin{prop}\label{p: S^infty(U x R^l) is barreled}
$S^\infty(U\times\R^l)$ is barreled, ultrabornological and webbed.
\end{prop}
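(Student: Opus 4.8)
The plan is to deduce all three properties directly from the structural fact that $S^\infty(U\times\R^l)$ is an LF-space, i.e., a countable inductive limit of the Fr\'echet spaces $S^m(U\times\R^l)$ (using, say, the sequence $m\in\N_0$, which is cofinal in $\R$). Each of the three assertions follows from a general permanence principle quoted in \Cref{ss: smooth/distributional sections} and \Cref{ss: TVS}, so no computation with the explicit semi-norms $\|{\cdot}\|_{K,\alpha,\beta,m}$ is needed.

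First, for ultrabornologicality: every Fr\'echet space is ultrabornological, and this property is stable under the formation of (countable, or arbitrary) locally convex inductive limits \cite[Example~13.2.8~(d) and Theorem~13.2.11]{NariciBeckenstein2011}; hence $S^\infty(U\times\R^l)=\bigcup_mS^m(U\times\R^l)$ is ultrabornological. Being ultrabornological, it is in particular barreled \cite[Observation~6.1.2~(b)]{PerezCarrerasBonet1987} (alternatively, barreledness also passes directly to inductive limits of barreled spaces). For webbedness, one uses that Fr\'echet spaces are webbed and that webbedness is inherited by countable inductive limits, so again $S^\infty(U\times\R^l)$ is webbed \cite[Proposition~IV.4.6]{DeWilde1978}, \cite[Theorem~14.6.5]{NariciBeckenstein2011}; this is exactly the argument already recorded for $C^{\pm\infty}_{{\cdot}/\text{\rm c}}(M)$ in \Cref{ss: smooth/distributional sections}.

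The only genuine prerequisite is therefore the observation, already made in the paragraph preceding the statement, that $S^\infty(U\times\R^l)$ is an LF-space: the steps $S^m(U\times\R^l)$ are Fr\'echet by the discussion after~\eqref{| a |_K I J m}, the inclusions~\eqref{S^m(T^*M) subset S^m'(T^*M)} are continuous, and one may restrict to the countable cofinal system $m=0,1,2,\dots$. Once this is in place, there is essentially no obstacle; the proof is a one-line invocation of the cited permanence theorems, and the main ``work'' is simply bookkeeping to make sure the inductive limit is genuinely countable so that the webbedness permanence result applies.
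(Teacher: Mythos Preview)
Your proposal is correct and is essentially identical to the paper's own argument: the paper simply notes that $S^\infty(U\times\R^l)$ is an LF-space and refers back to \Cref{ss: smooth/distributional sections}, where exactly the permanence principles you cite (ultrabornological via \cite[Example~13.2.8~(d) and Theorem~13.2.11]{NariciBeckenstein2011}, barreled via \cite[Observation~6.1.2~(b)]{PerezCarrerasBonet1987}, webbed via \cite[Proposition~IV.4.6]{DeWilde1978}, \cite[Theorem~14.6.5]{NariciBeckenstein2011}) are recorded. Your write-up is slightly more explicit about the countable cofinality of the steps, but the underlying reasoning is the same.
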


There are continuous inclusions (see \Cref{ss: smooth/distributional sections} for the definition of $\Cinftycv(U\times\R^l)$)
\begin{equation}\label{S^infty(U x R^l) subset C^infty(U x R^l)}
\Cinftycv(U\times\R^l)\subset S^{-\infty}(U\times\R^l)\;,\quad S^\infty(U\times\R^l)\subset C^\infty(U\times\R^l)\;;
\end{equation}
in particular, $S^\infty(U\times\R^l)$ is Hausdorff. According to~\eqref{| u |_K C^k} and~\eqref{S^infty(U x R^l) subset C^infty(U x R^l)}, we get continuous semi-norms $\|{\cdot}\|_{Q,C^k}$ on $S^\infty(U\times\R^l)$, for any compact $Q\subset U\times\R^l$ and $k\in\N_0$, given by
\begin{equation}\label{| u |_Q C^k on S^m(U x R^l)}
\|a\|_{Q,C^k}=\sup_{(x,\xi)\in Q,\ |\alpha|+|\beta|\le k}|\partial_x^\alpha \partial_\xi^\beta a(x,\xi)|\;.
\end{equation}
With the notation of~\eqref{| a |_K I J m}, consider also the continuous semi-norms $\|{\cdot}\|'_{K,\alpha,\beta,m}$ on $S^m(U\times\R^l)$ given by
\begin{equation}\label{| a |'_K I J m}
\|a\|'_{K,\alpha,\beta,m}=\sup_{x\in K}\limsup_{|\xi|\to\infty}\frac{\big|\partial_x^\alpha \partial_\xi^\beta a(x,\xi)\big|}{|\xi|^{m-|\beta|}}\;.
\end{equation}
In the case of $S^m(\R^l)$, the subscripts $K$ and $\alpha$ are omitted from the notation of the semi-norms~\eqref{| a |'_K I J m}.

\begin{prop}\label{p: topology of S^m(U x R^l)}
The semi-norms~\eqref{| u |_Q C^k on S^m(U x R^l)} and~\eqref{| a |'_K I J m} together describe the topology of $S^m(U\times\R^l)$.
\end{prop}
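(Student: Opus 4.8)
The plan is to prove that the two topologies on $S^m(U\times\R^l)$ coincide by means of a pair of mutually inverse continuity statements. Write $\TT$ for the given topology, defined by the seminorms~\eqref{| a |_K I J m}, and $\TT'$ for the one generated by the seminorms~\eqref{| u |_Q C^k on S^m(U x R^l)} together with~\eqref{| a |'_K I J m}. The inclusion $\TT'\subseteq\TT$ is the routine half: each seminorm of the latter two families is $\TT$-continuous. Indeed, if $Q\subseteq K\times\overline{B}(0,R)$ with $K\subset U$ compact, then $(1+|\xi|)^{m-|\beta|}$ is bounded between two positive constants depending only on $R$, $m$, $|\beta|$ on $Q$, so $\|a\|_{Q,C^k}\le C_{R,k,m}\sum_{|\alpha|+|\beta|\le k}\|a\|_{K,\alpha,\beta,m}$; and since $\big((1+|\xi|)/|\xi|\big)^{m-|\beta|}\to1$ as $|\xi|\to\infty$, one gets $\|a\|'_{K,\alpha,\beta,m}\le\|a\|_{K,\alpha,\beta,m}$ by passing to the $\limsup$ in $\xi$ and then the supremum in $x$.

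For the reverse inclusion $\TT\subseteq\TT'$ one must dominate each defining seminorm $\|a\|_{K,\alpha,\beta,m}$ by a finite combination of seminorms from~\eqref{| u |_Q C^k on S^m(U x R^l)} and~\eqref{| a |'_K I J m}. Fix $K$, $\alpha$, $\beta$ and set $b=\partial_x^\alpha\partial_\xi^\beta a$, so $\|a\|_{K,\alpha,\beta,m}=\sup_{x\in K}\sup_{\xi\in\R^l}|b(x,\xi)|\,(1+|\xi|)^{|\beta|-m}$. I would split the supremum in $\xi$ at a radius $R$: on $|\xi|\le R$ one has $|b(x,\xi)|\,(1+|\xi|)^{|\beta|-m}\le\max(1,(1+R)^{|\beta|-m})\,\|a\|_{K\times\overline{B}(0,R),\,C^{|\alpha|+|\beta|}}$, a seminorm of type~\eqref{| u |_Q C^k on S^m(U x R^l)}; on $|\xi|>R$, enlarging $R$ past a constant $R_0(m,|\beta|)$ lets one replace $(1+|\xi|)^{|\beta|-m}$ by $2\,|\xi|^{|\beta|-m}$, so the task reduces to bounding $\sup_{x\in K}\sup_{|\xi|>R}|b(x,\xi)|\,|\xi|^{|\beta|-m}$ by seminorms of type~\eqref{| a |'_K I J m}.

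This last reduction is the heart of the argument and the step I expect to be the main obstacle, since~\eqref{| a |'_K I J m} records only a $\limsup$ over $|\xi|\to\infty$ taken pointwise in $x$, whereas one needs a true supremum over a full tail $\{|\xi|>R\}$, uniform in $x\in K$. To recover the uniformity in $x$ I would pass to a compact $K'$ with $K\subset\operatorname{int}K'$, cover $K$ by finitely many balls of small radius centered at points $x_1,\dots,x_N\in K$, and use a mean value estimate in $x$: on the $\nu$-th ball, $|b(x,\xi)|$ is at most $|b(x_\nu,\xi)|$ plus the radius times $\sup_{|\alpha'|=1}\sup_{y\in K'}|\partial_x^{\alpha+\alpha'}\partial_\xi^\beta a(y,\xi)|$, whose weighted $\limsup$ is controlled by the seminorms $\|a\|'_{K',\alpha+\alpha',\beta,m}$; choosing the radius small, depending only on $K$, $K'$, $m$, $\beta$, absorbs the resulting error. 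One is then reduced, for each of the finitely many fixed points $x_\nu$, to dominating $\sup_{|\xi|>R}|b(x_\nu,\xi)|\,|\xi|^{|\beta|-m}$ by the $\limsup$ of the same expression plus a contribution over a compact $\xi$-annulus (again of type~\eqref{| u |_Q C^k on S^m(U x R^l)}); this I would obtain by integrating $\partial_\xi^{\beta+e_j}a$ radially inward from infinity, the point being that for the weights in play the radial integral converges to a fixed multiple of $|\xi|^{|\beta|-m}$. Assembling the finitely many seminorms produced along the way yields the required domination of $\|a\|_{K,\alpha,\beta,m}$, so $\TT=\TT'$.
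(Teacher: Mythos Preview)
Your plan for $\TT\subseteq\TT'$ is to dominate each $\|\cdot\|_{K,\alpha,\beta,m}$ by a \emph{fixed} finite combination of seminorms~\eqref{| u |_Q C^k on S^m(U x R^l)} and~\eqref{| a |'_K I J m}, with constants, compact sets and radii chosen independently of $a$. That cannot succeed. You correctly identify the obstacle---upgrading $\sup_{x\in K}\limsup_{|\xi|\to\infty}$ to a genuine supremum over a fixed tail---but the mean-value reduction and radial integration do not remove the $a$-dependence: the bound $|\partial_\xi^{\beta+e_j}a(x_\nu,\xi)|\le(\|a\|'_{K',\alpha,\beta+e_j,m}+\epsilon)\,|\xi|^{m-|\beta|-1}$ is only available for $|\xi|$ beyond some threshold that varies with $a$, so the ``compact annulus'' you need to add is not fixed in advance. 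A concrete obstruction (take $n=0$, $l=1$): for a bump $\phi\in\Cinftyc(\R)$ set $a_k(\xi)=k^{m}\phi(\xi-k)$. Each $a_k\in\Cinftyc(\R)\subset S^m(\R)$; every seminorm~\eqref{| a |'_K I J m} vanishes on $a_k$ (compact $\xi$-support), and for any fixed compact $Q$ one has $\|a_k\|_{Q,C^j}=0$ once $k$ is large; yet $\|a_k\|_{0,m}$ stays comparable to $\sup|\phi|$. Hence no inequality $\|a\|_{0,m}\le C\big(\sum_i\|a\|_{Q_i,C^{k_i}}+\sum_j\|a\|'_{\beta_j,m}\big)$ with the data fixed can hold on all of $S^m(\R)$.

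The paper does not attempt a direct seminorm estimate for the reverse direction. It argues instead that $(S^m,\TT')$ is metrizable and complete---by mapping the abstract completion continuously into $C^\infty(U\times\R^l)$ and checking that the image lands in $S^m$---and then applies the open mapping theorem to the continuous linear bijection $\id:(S^m,\TT)\to(S^m,\TT')$ between Fr\'echet spaces. Your argument never invokes completeness or the open mapping theorem; without that indirection, the reverse continuity is not obtainable by seminorm comparison alone.
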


\begin{proof}
Let $S^{\prime\,m}(U\times\R^l)$ denote the LCHS defined by endowing the vector space $S^m(U\times\R^l)$ with the topology induced by the semi-norms~\eqref{| u |_Q C^k on S^m(U x R^l)} and~\eqref{| a |'_K I J m} together; in fact, countably many semi-norms of these types are enough to describe its topology (taking exhausting increasing sequences of compact sets), and therefore $S^{\prime\,m}(U\times\R^l)$ is metrizable. Let $\widehat S^{\prime\,m}(U\times\R^l)$ denote its completion, where the stated semi-norms have continuous extensions. There is a continuous inclusion $S^{\prime\,m}(U\times\R^l)\subset C^\infty(U\times\R^l)$, which can be extended to a continuous map $\phi:\widehat S^{\prime\,m}(U\times\R^l)\to C^\infty(U\times\R^l)$ because $C^\infty(U\times\R^l)$ is complete. For any $a\in\widehat S^{\prime\,m}(U\times\R^l)$, and $K$, $\alpha$ and $\beta$ like in~\eqref{| a |'_K I J m}, since $\|\phi(a)\|'_{K,\alpha,\beta,m}=\|a\|'_{K,\alpha,\beta,m}<\infty$, there are $C,R>0$ so that, if $x\in K$ and $|\xi|\ge R$, then
\[
\frac{|\partial_x^\alpha \partial_\xi^\beta \phi(a)(x,\xi)|}{(1+|\xi|)^{m-|\beta|}}\le C\;.
\]
Let $B_R\subset\R^l$ denote the open ball of center $0$ and radius $R$. For $Q=K\times\overline{B_R}\subset U\times\R^l$ and $k=|\alpha|+|\beta|$, since $\|\phi(a)\|_{Q,C^k}=\|a\|_{Q,C^k}<\infty$, there is some $C'>0$ such that $|\partial_x^\alpha \partial_\xi^\beta \phi(a)(x,\xi)|<C'$ for $(x,\xi)\in Q$, yielding 
\[
\frac{|\partial_x^\alpha \partial_\xi^\beta \phi(a)(x,\xi)|}{(1+|\xi|)^{m-|\beta|}}\le
\begin{cases}
C' & \text{if $|\beta|\le m$}\\
C'(1+R)^{|\beta|-m} & \text{if $|\beta|\ge m$}\;.
\end{cases}
\]
This shows that $\|\phi(a)\|_{K,\alpha,\beta,m}<\infty$, obtaining that $a\equiv\phi(a)\in S^m(U\times\R^l)$. Hence $S^{\prime\,m}(U\times\R^l)$ is complete, and therefore it is a Fr\'echet space. Thus the identity map $S^m(U\times\R^l)\to S^{\prime\,m}(U\times\R^l)$ is a continuous linear isomorphism between Fr\'echet spaces, obtaining that it is indeed a homeomorphism by a version of the open mapping theorem \cite[Section~15.12]{Kothe1969-I}, \cite[Theorem~II.2.1]{Schaefer1971}, \cite[Theorem~14.4.6]{NariciBeckenstein2011}.
\end{proof}

\begin{prop}\label{p: | . |_K I J m' = 0 on S^m(U x R^l)}
For $m,m'\in\N_0$, $\alpha\in\N_0^n$, $\beta\in\N_0^l$ and any compact $K\subset U$, if $m<m'$, then $\|{\cdot}\|'_{K,\alpha,\beta,m'}=0$ on $S^m(U\times\R^l)$.
\end{prop}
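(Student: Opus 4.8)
The claim is that if $a\in S^m(U\times\R^l)$ with $m<m'$ (both in $\N_0$), then the semi-norm $\|a\|'_{K,\alpha,\beta,m'}$ vanishes; recall this semi-norm is $\sup_{x\in K}\limsup_{|\xi|\to\infty}|\partial_x^\alpha\partial_\xi^\beta a(x,\xi)|/|\xi|^{m'-|\beta|}$. The plan is to compare the two growth exponents directly: membership in $S^m$ gives, for each fixed $x\in K$, a bound $|\partial_x^\alpha\partial_\xi^\beta a(x,\xi)|\le C(1+|\xi|)^{m-|\beta|}$ with $C=\|a\|_{K,\alpha,\beta,m}<\infty$ (using the defining estimate~\eqref{| a |_K I J m}, which holds uniformly over $x\in K$). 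Dividing by $|\xi|^{m'-|\beta|}$ and taking $|\xi|\to\infty$, the quotient is bounded by $C(1+|\xi|)^{m-|\beta|}/|\xi|^{m'-|\beta|}\sim C|\xi|^{m-m'}\to 0$ since $m-m'<0$. Hence the $\limsup$ is $0$ for each $x\in K$, and taking the supremum over $x\in K$ still gives $0$.

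First I would record that the constant $C=\|a\|_{K,\alpha,\beta,m}$ is finite and, crucially, \emph{independent of $x\in K$}, which is exactly what~\eqref{| a |_K I J m} provides. Then, for each fixed $x$, I would write the elementary estimate
\[
\frac{|\partial_x^\alpha\partial_\xi^\beta a(x,\xi)|}{|\xi|^{m'-|\beta|}}\le C\,\frac{(1+|\xi|)^{m-|\beta|}}{|\xi|^{m'-|\beta|}}
\]
valid for $|\xi|$ large (say $|\xi|\ge 1$, so that all the powers make sense and $1+|\xi|\le 2|\xi|$). The right-hand side is bounded by $C\,2^{|m-|\beta||}\,|\xi|^{m-m'}$, which tends to $0$ as $|\xi|\to\infty$ because $m-m'\le -1<0$. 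Therefore $\limsup_{|\xi|\to\infty}|\partial_x^\alpha\partial_\xi^\beta a(x,\xi)|/|\xi|^{m'-|\beta|}=0$ for every $x\in K$, and since this holds with the bound uniform in $x$, $\sup_{x\in K}$ of a family of zeros is zero, i.e.\ $\|a\|'_{K,\alpha,\beta,m'}=0$.

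There is essentially no obstacle here: the statement is a direct consequence of the definitions, and the only point requiring a word of care is that the supremum over $x\in K$ is taken \emph{after} the $\limsup$, so one must argue that the $\limsup$ vanishes pointwise in $x$ (which the $x$-uniform constant $C$ guarantees) rather than trying to interchange the two operations. One could alternatively phrase the whole argument in one line by noting $S^m(U\times\R^l)\subset S^{m'-1}(U\times\R^l)$ for $m\le m'-1$ and invoking the fact that $\|{\cdot}\|'_{K,\alpha,\beta,m'}$ is exactly the type of semi-norm that measures the top-order growth and hence annihilates the strictly lower-order space, but spelling out the exponent comparison as above is the cleanest self-contained route.
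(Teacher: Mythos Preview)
Your proof is correct and follows essentially the same approach as the paper: both arguments boil down to the elementary observation that the quotient $|\partial_x^\alpha\partial_\xi^\beta a(x,\xi)|/|\xi|^{m'-|\beta|}$ is controlled by a finite constant times $|\xi|^{m-m'}\to 0$. The paper compresses this into a single displayed line using the primed semi-norm $\|a\|'_{K,\alpha,\beta,m}$ rather than the defining semi-norm $\|a\|_{K,\alpha,\beta,m}$, but the content is identical.
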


\begin{proof}
According to~\eqref{| a |'_K I J m}, for all $a\in S^m(U\times\R^l)$,
\[
\|a\|'_{K,\alpha,\beta,m'}=\|a\|'_{K,\alpha,\beta,m}\lim_{|\xi|\to\infty}|\xi|^{m-m'}=0\;.\qedhere
\]
\end{proof}

\begin{cor}\label{c: coincidence of tops on S^m(U x R^l)}
For $m<m'$, the topologies of $S^{m'}(U\times\R^l)$ and $C^\infty(U\times\R^l)$ coincide on $S^m(U\times\R^l)$. Therefore the topologies of $S^\infty(U\times\R^l)$ and $C^\infty(U\times\R^l)$ coincide on $S^m(U\times\R^l)$.
\end{cor}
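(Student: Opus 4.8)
The plan is to read off both statements from \Cref{p: topology of S^m(U x R^l)} and \Cref{p: | . |_K I J m' = 0 on S^m(U x R^l)}, together with elementary bookkeeping of subspace and inductive‑limit topologies. First I would apply \Cref{p: topology of S^m(U x R^l)} with the order $m'$ in place of $m$: the topology of $S^{m'}(U\times\R^l)$ is then described by the order‑independent semi‑norms \eqref{| u |_Q C^k on S^m(U x R^l)}, which are exactly the semi‑norms defining the topology of $C^\infty(U\times\R^l)$ (an instance of \eqref{| u |_K C^k}), together with the semi‑norms $\|{\cdot}\|'_{K,\alpha,\beta,m'}$ obtained from \eqref{| a |'_K I J m} upon replacing $m$ by $m'$. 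Restricting to the subspace $S^m(U\times\R^l)\subset S^{m'}(U\times\R^l)$, I would invoke \Cref{p: | . |_K I J m' = 0 on S^m(U x R^l)} to see that each $\|{\cdot}\|'_{K,\alpha,\beta,m'}$ vanishes identically on $S^m(U\times\R^l)$. Hence the topology induced on $S^m(U\times\R^l)$ by $S^{m'}(U\times\R^l)$ is generated by the semi‑norms \eqref{| u |_Q C^k on S^m(U x R^l)} alone, i.e.\ it coincides with the topology induced by $C^\infty(U\times\R^l)$; this is the first assertion.

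For the second assertion I would argue by sandwiching. The inclusions $S^m(U\times\R^l)\subset S^{m'}(U\times\R^l)\subset S^\infty(U\times\R^l)\subset C^\infty(U\times\R^l)$ are all continuous, by \eqref{S^m(T^*M) subset S^m'(T^*M)}, by the definition of the inductive limit, and by \eqref{S^infty(U x R^l) subset C^infty(U x R^l)} respectively, and the inclusion $S^m(U\times\R^l)\subset C^\infty(U\times\R^l)$ factors through each intermediate space. Hence, on $S^m(U\times\R^l)$, the subspace topology inherited from $C^\infty(U\times\R^l)$ is coarser than the one inherited from $S^\infty(U\times\R^l)$, which in turn is coarser than the one inherited from $S^{m'}(U\times\R^l)$. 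Since the first part already shows that the two extreme topologies coincide, so do all three; in particular the topologies of $S^\infty(U\times\R^l)$ and $C^\infty(U\times\R^l)$ agree on $S^m(U\times\R^l)$.

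I do not expect a genuine obstacle: the corollary is essentially a restatement of \Cref{p: | . |_K I J m' = 0 on S^m(U x R^l)} at the level of topologies. The one point deserving care is that \Cref{p: | . |_K I J m' = 0 on S^m(U x R^l)} is phrased for integer orders $m,m'\in\N_0$, while the corollary allows arbitrary real $m<m'$, between which no integer need lie; this is harmless, because the one‑line computation proving \Cref{p: | . |_K I J m' = 0 on S^m(U x R^l)} uses only that $\|a\|'_{K,\alpha,\beta,m}<\infty$ for $a\in S^m(U\times\R^l)$ and that $\lim_{|\xi|\to\infty}|\xi|^{m-m'}=0$, both valid for real $m<m'$, so I would simply repeat it. A second, purely formal, subtlety is to keep straight which subspace topologies are being compared when passing to the inductive limit $S^\infty(U\times\R^l)$; the factorization of the inclusions recorded above is exactly what makes the sandwich legitimate.
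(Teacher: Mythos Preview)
Your proposal is correct and follows essentially the same route as the paper. For the first assertion the paper simply cites \Cref{p: topology of S^m(U x R^l)} and \Cref{p: | . |_K I J m' = 0 on S^m(U x R^l)}, which is exactly your argument spelled out; for the second, the paper performs an open-set chase (an open $O\subset S^\infty$ has $O\cap S^{m'}$ open in $S^{m'}$, hence $O\cap S^m=P\cap S^m$ for some $C^\infty$-open $P$ by the first part), and your sandwich via the continuous inclusions $S^m\subset S^{m'}\subset S^\infty\subset C^\infty$ is a tidier repackaging of the same idea. Your observation about the integer hypothesis in \Cref{p: | . |_K I J m' = 0 on S^m(U x R^l)} is apt: the paper silently uses the real-order version, and your remark that the one-line proof goes through unchanged for real $m<m'$ is the right way to handle it.
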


\begin{proof}
The first assertion is a consequence of \Cref{p: topology of S^m(U x R^l),p: | . |_K I J m' = 0 on S^m(U x R^l)}.

To prove the second assertion, by~\eqref{S^infty(U x R^l) subset C^infty(U x R^l)}, it is enough to show that the topology of $S^\infty(U\times\R^l)$ is finer or equal than the topology of $C^\infty(U\times\R^l)$ on $S^m(U\times\R^l)$. For every open $O\subset S^\infty(U\times\R^l)$ and $m'>m$, since $O\cap S^{m'}(U\times\R^l)$ is open in $S^{m'}(U\times\R^l)$, it follows from the first assertion that there is some open $P\subset C^\infty(U\times\R^l)$ such that $O\cap S^m(U\times\R^l)=P\cap S^m(U\times\R^l)$.
\end{proof}

\begin{cor}\label{c: Cinftyc(U x R^l) is dense in S^infty(U x R^l)}
For $m<m'$, $\Cinftyc(U\times\R^l)$ is dense in $S^m(U\times\R^l)$ with the topology of $S^{m'}(U\times\R^l)$. Therefore $\Cinftyc(U\times\R^l)$ is dense in $S^\infty(U\times\R^l)$.
\end{cor}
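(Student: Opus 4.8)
The plan is to deduce the density assertion from \Cref{c: coincidence of tops on S^m(U x R^l)} together with the known density of $\Cinftyc(U\times\R^l)$ in $C^\infty(U\times\R^l)$. First I would fix $m<m'$ and let $a\in S^m(U\times\R^l)$ be arbitrary. Since $\Cinftyc(U\times\R^l)$ is dense in $C^\infty(U\times\R^l)$, there is a sequence $a_j\in\Cinftyc(U\times\R^l)$ converging to $a$ in the $C^\infty$ topology. By \Cref{c: coincidence of tops on S^m(U x R^l)}, the topology of $S^{m'}(U\times\R^l)$ restricted to $S^m(U\times\R^l)$ coincides with the $C^\infty$ topology there; since every $a_j$ lies in $\Cinftyc(U\times\R^l)\subset S^{-\infty}(U\times\R^l)\subset S^m(U\times\R^l)$ (using \eqref{S^infty(U x R^l) subset C^infty(U x R^l)}) and $a$ lies in $S^m(U\times\R^l)$, the convergence $a_j\to a$ holds in the topology induced by $S^{m'}(U\times\R^l)$ as well. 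This proves the first assertion.

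For the second assertion, I would argue that $\Cinftyc(U\times\R^l)$ is dense in $S^\infty(U\times\R^l)=\bigcup_mS^m(U\times\R^l)$. It suffices to show that each $S^m(U\times\R^l)$ lies in the closure of $\Cinftyc(U\times\R^l)$ in the inductive limit topology of $S^\infty(U\times\R^l)$. Given $a\in S^m(U\times\R^l)$, pick any $m'>m$. By the first assertion there is a sequence $a_j\in\Cinftyc(U\times\R^l)$ with $a_j\to a$ in $S^{m'}(U\times\R^l)$. Since the inclusion $S^{m'}(U\times\R^l)\subset S^\infty(U\times\R^l)$ is continuous by~\eqref{S^m(T^*M) subset S^m'(T^*M)}, the same sequence converges to $a$ in $S^\infty(U\times\R^l)$, so $a$ is in the closure of $\Cinftyc(U\times\R^l)$ there. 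As $a$ was arbitrary in $\bigcup_mS^m(U\times\R^l)$, density follows.

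I do not anticipate a serious obstacle here: the entire content has already been extracted into \Cref{c: coincidence of tops on S^m(U x R^l)}. The only point requiring a moment of care is making sure that convergence of the approximating sequence is being asserted in the \emph{right} ambient space at each stage — first in $S^{m'}$ via the coincidence-of-topologies statement, then transported up to $S^\infty$ by continuity of the canonical inclusion — rather than inadvertently claiming convergence in $S^m$ itself, which would be false in general (the $S^m$ topology is strictly finer than the $C^\infty$ topology on $S^m$). One could also phrase the whole thing using nets or filters instead of sequences, but since the relevant spaces are metrizable Fréchet spaces by \Cref{p: topology of S^m(U x R^l)} and $C^\infty(U\times\R^l)$ is metrizable, sequences suffice and the argument stays elementary.
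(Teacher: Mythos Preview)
Your proof is correct and follows essentially the same approach as the paper: the first assertion is exactly the paper's argument (coincidence of the $S^{m'}$ and $C^\infty$ topologies on $S^m$ from \Cref{c: coincidence of tops on S^m(U x R^l)} combined with density of $\Cinftyc$ in $C^\infty$), and for the second assertion the paper phrases the same idea via ``every nonempty open set in $S^\infty$ meets $\Cinftyc$'' rather than via sequential limits pushed forward along the continuous inclusion $S^{m'}\hookrightarrow S^\infty$, but the content is identical.
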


\begin{proof}
The first assertion is given by \Cref{c: coincidence of tops on S^m(U x R^l)} and the density of $\Cinftyc(U\times\R^l)$ in $C^\infty(U\times\R^l)$.

To prove the second assertion, take any open $O\ne\emptyset$ in $S^\infty(U\times\R^l)$. We have $O\cap S^m(U\times\R^l)\ne\emptyset$ for some $m$. This intersection is open in $S^m(U\times\R^l)$ with the topology of any $S^{m'}(U\times\R^l)$ for all $m'\ge m$. So $O\cap\Cinftyc(U\times\R^l)\ne\emptyset$ by the first assertion.
\end{proof}

\begin{cor}\label{c: S^infty(U x R^l) is acyclic and Montel}
$S^\infty(U\times\R^l)$ is an acyclic Montel space, and therefore complete, boundedly retractive and reflexive.
\end{cor}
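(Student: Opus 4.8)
The plan is to establish acyclicity of the inductive spectrum $(S^m(U\times\R^l))_m$ directly from the Wengenroth criterion quoted in \Cref{ss: TVS}, then deduce the Montel property, and finally read off completeness, bounded retractivity and reflexivity from the general facts about acyclic LF-spaces with Fr\'echet steps. The acyclicity is essentially immediate from \Cref{c: coincidence of tops on S^m(U x R^l)}: given any step index $m$, take $m'=m+1$; then for every $m''\ge m'$ the topologies of $S^{m'}(U\times\R^l)$ and $S^{m''}(U\times\R^l)$ both coincide with the topology of $C^\infty(U\times\R^l)$ on $S^m(U\times\R^l)$, hence with each other, on \emph{all} of $S^m(U\times\R^l)$ and in particular on any $0$-neighborhood of it. This verifies the condition of \cite[Theorem~6.1]{Wengenroth2003} that $(S^m(U\times\R^l))_m$ is acyclic. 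Since the steps are Fr\'echet spaces, \cite[Proposition~6.4 and Corollary~6.5]{Wengenroth2003} then give at once that $S^\infty(U\times\R^l)$ is complete, regular and boundedly (equivalently compactly, sequentially) retractive.

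For the Montel property, the strategy is to show $S^\infty(U\times\R^l)$ is barreled and that every closed bounded subset is compact. Barreledness is already recorded in \Cref{p: S^infty(U x R^l) is barreled}. For the compactness of bounded sets, use regularity of the spectrum: a bounded set $B\subset S^\infty(U\times\R^l)$ is contained and bounded in some step $S^m(U\times\R^l)$. I would then argue that the inclusion $S^m(U\times\R^l)\hookrightarrow S^{m+1}(U\times\R^l)$ is a compact operator, so that $B$ is relatively compact in $S^{m+1}(U\times\R^l)$ and hence, by bounded retractivity (the topology of $S^\infty$ agrees with that of a step on $B$), relatively compact in $S^\infty(U\times\R^l)$; combined with the fact that the space is Hausdorff and $B$ closed, this gives compactness. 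To see that $S^m(U\times\R^l)\hookrightarrow S^{m+1}(U\times\R^l)$ is compact, take a sequence $(a_j)$ bounded in $S^m$; by \Cref{c: coincidence of tops on S^m(U x R^l)} the $S^{m+1}$-topology on a bounded subset of $S^m$ is the $C^\infty(U\times\R^l)$-topology, and a set bounded in $S^m$ is bounded in $C^\infty(U\times\R^l)$ (by~\eqref{S^infty(U x R^l) subset C^infty(U x R^l)}), hence relatively compact there since $C^\infty(U\times\R^l)$ is Montel; extracting a $C^\infty$-convergent subsequence yields $S^{m+1}$-convergence. Finally, a Montel space is reflexive, which also follows since it is barreled and semi-reflexive.

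The main obstacle is the compactness of the step inclusions, or more precisely making sure the uniform decay in $\xi$ is not lost when passing to a limit. The point is that boundedness in $S^m$ gives a \emph{uniform} bound $\|a_j\|_{K,\alpha,\beta,m}\le C_{K,\alpha,\beta}$, and one must check that the $C^\infty$-limit $a$ of a subsequence still satisfies $\|a\|_{K,\alpha,\beta,m}\le C_{K,\alpha,\beta}$ (clear, by pointwise passage to the limit in~\eqref{| a |_K I J m}) \emph{and} that the convergence is in the finer semi-norms describing $S^{m+1}$; here \Cref{p: topology of S^m(U x R^l)} is the key input, since it exhibits the $S^{m+1}$-topology via the semi-norms~\eqref{| u |_Q C^k on S^m(U x R^l)} (which are $C^\infty$-continuous, hence converge) together with~\eqref{| a |'_K I J m} at order $m+1$, which vanish identically on $S^m(U\times\R^l)$ by \Cref{p: | . |_K I J m' = 0 on S^m(U x R^l)}. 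So on a bounded subset of $S^m$ the $S^{m+1}$-topology is genuinely just the restriction of the $C^\infty$-topology, and the argument closes. Everything else is bookkeeping with the cited homological facts from \cite{Wengenroth2003}.
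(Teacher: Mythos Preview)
Your proof is correct and follows essentially the same strategy as the paper: acyclicity from \Cref{c: coincidence of tops on S^m(U x R^l)} via the Wengenroth criterion, then semi-Montel by pushing a bounded set down to $C^\infty(U\times\R^l)$ and using that the latter is Montel. The only difference is cosmetic: the paper argues directly that a closed bounded $B\subset S^\infty$ is complete and bounded in $C^\infty(U\times\R^l)$ (since the $S^\infty$- and $C^\infty$-topologies agree on any step $S^m$), hence closed there, hence compact; you instead package the same observation as ``the step inclusions $S^m\hookrightarrow S^{m+1}$ are compact'' and then conclude. Both routes rest on the identical ingredients (\Cref{c: coincidence of tops on S^m(U x R^l)} and the Montel property of $C^\infty$), and your version has the minor bonus of making explicit that the symbol spectrum is a compact spectrum in the sense of \Cref{ss: TVS}. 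Your invocation of bounded retractivity in the passage from ``relatively compact in $S^{m+1}$'' to ``relatively compact in $S^\infty$'' is harmless but unnecessary: continuity of the inclusion $S^{m+1}\hookrightarrow S^\infty$ already suffices.
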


\begin{proof}
\Cref{c: coincidence of tops on S^m(U x R^l)} gives the property of being acyclic, and therefore complete and boundedly retractive (\Cref{ss: TVS}). Since $S^\infty(U\times\R^l)$ is barreled (\Cref{p: S^infty(U x R^l) is barreled}) and every Montel space is reflexive \cite[6.27.2~(1)]{Kothe1969-I}, \cite[Section~8.4.7]{Edwards1965}, \cite[after the examples of~IV.5.8]{Schaefer1971}, it only remains to prove that $S^\infty(U\times\R^l)$ is semi-Montel.

Take any closed bounded subset $B\subset S^\infty(U\times\R^l)$; in particular, $B$ is complete because $S^\infty(U\times\R^l)$ is complete. Since $S^\infty(U\times\R^l)$ is boundedly retractive, $B$ is contained and bounded in some $S^m(U\times\R^l)$, and the topologies of $S^\infty(U\times\R^l)$ and $S^m(U\times\R^l)$ coincide on $B$. By \Cref{c: coincidence of tops on S^m(U x R^l)}, it follows that $B$ is a complete bounded subspace of $C^\infty(U\times\R^l)$, and therefore closed because $C^\infty(U\times\R^l)$ is complete. So $B$ is compact because $C^\infty(U\times\R^l)$ is a Montel space.
\end{proof}

\begin{rem}\label{r: Cinftyc(U x R^l) is dense in S^m(U x R^l)}
Another proof of \Cref{c: Cinftyc(U x R^l) is dense in S^infty(U x R^l)} could be given like in \Cref{p: C^infty_c(M) is dense in x^m L^infty(M)}.
\end{rem}

\begin{rem}\label{r: S^infty(U x R^l) subset C^infty(U x R^l) is not a TVS-embedding}
Despite of \Cref{c: coincidence of tops on S^m(U x R^l)}, the following argument shows that the second inclusion of~\eqref{S^infty(U x R^l) subset C^infty(U x R^l)} is not a TVS-embedding. Let $a_m\in S^\infty(U\times\R^l)$ ($m\in\N_0$) such that $a_m(x,\xi)=0$ if $|\xi^1|\le m$, and $a_m(x,\xi)=(\xi^1-m)^m$ if $|\xi^1|\ge m+1$. Then $a_m\in S^m(U\times\R^l)\setminus S^{m-1}(U\times\R^l)$ and $a_m\to0$ in $C^\infty(U\times\R^l)$ as $m\uparrow\infty$. However $a_m\not\to0$ in $S^\infty(U\times\R^l)$; otherwise, since $S^\infty(U\times\R^l)$ is sequentially retractive (\Cref{c: S^infty(U x R^l) is acyclic and Montel}), all $a_m$ would lie in some step $S^{m_0}(U\times\R^l)$, a contradiction.
\end{rem}

With more generality, a symbol of order $m$ on a vector bundle $E$ over $M$ is a smooth function on $E$ satisfying~\eqref{| a |_K I J m} via charts of $M$ and local trivializations of $E$, with $K$ contained in the domains of charts where $E$ is trivial. As above, they form a Fr\'echet space $S^m(E)$ with the topology described by the semi-norms given by this version of~\eqref{| a |_K I J m}. The version of~\eqref{S^m(T^*M) subset S^m'(T^*M)} in this setting is true, obtaining the corresponding inductive and projective limits $S^{\pm\infty}(E)$, and quotient spaces $S^{(m)}(E)$. We can similarly define the norms~\eqref{| u |_Q C^k on S^m(U x R^l)} and~\eqref{| a |'_K I J m} on $S^m(E)$, and \Cref{p: topology of S^m(U x R^l),p: | . |_K I J m' = 0 on S^m(U x R^l),c: coincidence of tops on S^m(U x R^l),c: Cinftyc(U x R^l) is dense in S^infty(U x R^l),c: S^infty(U x R^l) is acyclic and Montel} can be directly extended to this setting.

Given another vector bundle $F$ over $M$, we can further take the $C^\infty(M)$-tensor product of these spaces with $C^\infty(M;F)$, obtaining the spaces $S^m(E;F)$, $S^{\pm\infty}(E;F)$ and $S^{(m)}(E;F)$, satisfying analogous properties and results. Now~\eqref{S^infty(U x R^l) subset C^infty(U x R^l)} becomes
\[
\Cinftycv(E;\pi^*F)\subset S^{-\infty}(E;F)\;,\quad S^{\infty}(E;F)\subset C^\infty(E;\pi^*F)\;,
\]
where $\pi:E\to M$ is the vector bundle projection.

\section{Conormal distributions}\label{s: conormal distribs}

\subsection{Differential operators tangent to a submanifold}\label{ss: Diff(M L)}

Let $L$ is a regular submanifold of $M$ of codimension $n'$ and dimension $n''$, which is a closed subset. Let $\fX(M,L)\subset\fX(M)$ \index{$\fX(M,L)$} be the Lie subalgebra and $C^\infty(M)$-submodule of vector fields tangent to $L$. Using $\fX(M,L)$ instead of $\fX(M)$, we can define the filtered subalgebra and $C^\infty(M)$-submodule $\Diff(M,L)\subset\Diff(M)$ \index{$\Diff(M,L)$} like in \Cref{ss: diff ops}. We have
\begin{equation}\label{A in Diff(M L) => A^t in Diff(M L Omega)}
A\in\Diff(M,L)\Rightarrow A^t\in\Diff(M,L;\Omega)\;.
\end{equation}
By the conditions on $L$, every $\Diff^m(M,L)$ ($m\in\N_0$) is locally finitely $C^\infty(M)$-generated, and therefore $\Diff(M,L)$ is countably $C^\infty(M)$-generated. The surjective restriction map $\fX(M,L)\to\fX(L)$, $X\mapsto X|_L$, induces a surjective linear restriction map of filtered algebras and $C^\infty(M)$-modules,
\begin{equation}\label{Diff(M L) -> Diff(L)}
\Diff(M,L)\to\Diff(L)\;,\quad A\mapsto A|_L\;.
\end{equation}

Let $(U,x)$ be a chart of $M$ adapted to $L$; i.e., for open subsets $U'\subset\R^{n'}$ and $U''\subset\R^{n''}$,
\begin{gather*}
x=(x^1,\dots,x^n)\equiv(x',x''):U\to U'\times U''\;,\\
x'=(x'^1,\dots,x'^{n'})\;,\quad x''=(x''^{1},\dots,x''^{n''})\;,\quad L_0:=L\cap U=\{x'=0\}\;.
\end{gather*}
If $L$ is of codimension one, then we will use the notation $(x,y)$ instead of $(x',x'')$. For every $m\in\N_0$, $\Diff^m(U,L_0)$ is $C^\infty(U)$-spanned by the operators $x^{\prime\alpha}\partial_{x'}^\beta\partial_{x''}^\gamma$ with $|\beta|+|\gamma|\le m$ and $|\alpha|=|\beta|$; we may use the generators $\partial_{x'}^\beta\partial_{x''}^\gamma x^{\prime\alpha}$ as well, with the same conditions on the multi-indices.

\subsection{Conormal distributions filtered by Sobolev order}\label{ss: conormal - Sobolev order}

\subsubsection{Case of compact manifolds}\label{sss: conormal - Sobolev order - compact}

Suppose first that $M$ is compact. Then the space of \emph{conormal distributions} at $L$ of \emph{Sobolev order} at most $s\in\R$ is the LCS and $C^\infty(M)$-module \index{$I^{(s)}_{{\cdot}/\text{\rm c}}(M,L)$} \index{$I_{{\cdot}/\text{\rm c}}(M,L)$}
\begin{equation}\label{I^(s)(M,L)}
I^{(s)}(M,L)=\{\,u\in C^{-\infty}(M)\mid\Diff(M,L)\,u\subset H^s(M)\,\}\;,
\end{equation}
with the projective topology given by the maps $P:I^{(s)}(M,L)\to H^s(M)$ ($P\in\Diff(M,L)$). 

\begin{prop}\label{p: I^(s)(M L) is a totally reflexive Frechet sp}
$I^{(s)}(M,L)$ is a totally reflexive Fr\'echet space.
\end{prop}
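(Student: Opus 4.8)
The plan is to establish the two assertions—metrizability/completeness (i.e., that $I^{(s)}(M,L)$ is Fr\'echet) and total reflexivity—by a combination of a countability reduction and the identification of $I^{(s)}(M,L)$ as a closed subspace of a countable product of Sobolev (hence Hilbert, hence totally reflexive) spaces. First I would observe that since $\Diff(M,L)$ is countably $C^\infty(M)$-generated (stated in \Cref{ss: Diff(M L)}), we can pick a countable family $\{P_j\}_{j\in\N}\subset\Diff(M,L)$ that generates $\Diff(M,L)$ as a $C^\infty(M)$-module in a filtered way, with $P_0=\id$. The key point is that the projective topology on $I^{(s)}(M,L)$ given by \emph{all} $P\in\Diff(M,L)$ coincides with the one given by just the countable subfamily $\{P_j\}$: indeed, any $P\in\Diff^m(M,L)$ is a finite $C^\infty(M)$-combination $P=\sum_k f_k P_{j_k}$, and since multiplication by a fixed $f_k\in C^\infty(M)$ is a bounded (continuous) operator on $H^s(M)$ (as $f_k\in C^\infty(M)\subset\Diff^0(M)$ acts continuously on $H^s(M)$ — see \Cref{sss: Sobolev sps - compact}), the seminorm $u\mapsto\|Pu\|_s$ is dominated by a finite sum of the seminorms $u\mapsto\|P_{j_k}u\|_s$. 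Hence the topology is generated by the countable family of seminorms $q_j(u)=\|P_ju\|_s$, so $I^{(s)}(M,L)$ is metrizable.

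Next I would show completeness. Consider the continuous linear map
\[
\Phi:I^{(s)}(M,L)\to\prod_{j\in\N}H^s(M)\;,\qquad \Phi(u)=(P_ju)_{j\in\N}\;.
\]
By construction $\Phi$ is a topological embedding onto its image (the product carries exactly the seminorms $q_j$ pulled back). It remains to check that $\Phi(I^{(s)}(M,L))$ is closed in $\prod_j H^s(M)$. Suppose $(u^{(\nu)})_\nu$ is a sequence in $I^{(s)}(M,L)$ with $P_ju^{(\nu)}\to v_j$ in $H^s(M)$ for every $j$. In particular $u^{(\nu)}=P_0u^{(\nu)}\to v_0$ in $H^s(M)\subset C^{-\infty}(M)$, so $u^{(\nu)}\to v_0$ in $C^{-\infty}(M)$. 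Since each $P_j$ is continuous on $C^{-\infty}(M)$ (it is a differential operator, hence continuous on distributions by \Cref{ss: diff ops}), $P_ju^{(\nu)}\to P_jv_0$ in $C^{-\infty}(M)$; but also $P_ju^{(\nu)}\to v_j$ in $H^s(M)\hookrightarrow C^{-\infty}(M)$, so $P_jv_0=v_j\in H^s(M)$ for all $j$. Because $\{P_j\}$ generates $\Diff(M,L)$ over $C^\infty(M)$ and $C^\infty(M)$ maps $H^s(M)$ into $H^s(M)$ continuously, we get $Pv_0\in H^s(M)$ for \emph{every} $P\in\Diff(M,L)$, i.e.\ $v_0\in I^{(s)}(M,L)$, and $\Phi(v_0)=(v_j)_j$. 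Thus the image is closed and $I^{(s)}(M,L)$, being a closed subspace of a complete metrizable space, is a Fr\'echet space.

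Finally, total reflexivity: a Fr\'echet space is \emph{totally reflexive} when every quotient (equivalently, every closed subspace and every Hausdorff quotient) is reflexive; by Valdivia's characterization this holds iff the space is a projective limit of reflexive Banach spaces, and in particular every closed subspace of a countable product of reflexive Banach spaces is totally reflexive. Since $H^s(M)$ is a Hilbert(ian) space, hence a reflexive Banach space, the product $\prod_{j\in\N}H^s(M)$ is totally reflexive, and total reflexivity passes to closed subspaces; by the embedding $\Phi$ above, $I^{(s)}(M,L)$ is totally reflexive. I expect the main obstacle to be purely a matter of bookkeeping: carefully choosing the countable generating family $\{P_j\}$ compatibly with the filtration and verifying that the $C^\infty(M)$-module structure genuinely lets one replace "all of $\Diff(M,L)$" by a countable subfamily at the level of seminorms — this is where the local finite generation of each $\Diff^m(M,L)$ (noted in \Cref{ss: Diff(M L)}) together with the continuity of the $C^\infty(M)$-action on $H^s(M)$ must be used, and it is the only place where something could go subtly wrong; the closedness argument and the appeal to total reflexivity of products of Hilbert spaces are then routine.
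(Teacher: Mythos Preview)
Your proposal is correct and follows essentially the same strategy as the paper: reduce to a countable $C^\infty(M)$-generating family $\{P_j\}$ of $\Diff(M,L)$, recognize the resulting projective description in terms of Hilbert spaces $H^s(M)$, and invoke Valdivia's characterization of total reflexivity. The only cosmetic difference is that the paper packages the argument as $I^{(s)}(M,L)=\bigcap_kI_k^{(s)}(M,L)$ with each $I_k^{(s)}(M,L)$ a Hilbert space (using the scalar product $\sum_{j\le k}\langle P_ju,P_jv\rangle_s$), whereas you embed directly as a closed subspace of $\prod_jH^s(M)$; these are equivalent, and your explicit closedness argument in fact supplies the completeness of the $I_k^{(s)}$ that the paper leaves implicit.
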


\begin{proof}
For any countable $C^\infty(M)$-spanning set $\{P_j\mid j\in\N_0\}$ of $\Diff(M,L)$, the space $I^{(s)}(M,L)$ has the projective topology given by the maps $P_j:I^{(s)}(M,L)\to H^s(M)$. Let
\[
I_k^{(s)}(M,L)=\{\,u\in C^{-\infty}(M)\mid P_ju\subset H^s(M),\ j=0,\dots,k\,\}\;,
\]
with the projective topology given by the maps $P_j:I^{(s)}(M,L)\to H^s(M)$ ($j=0,\dots,k$). We can assume $P_0=1$, and therefore $I_0^{(s)}(M,L)=H^s(M)$. Every $I_k^{(s)}(M,L)$ is a Hilbert space with the scalar product
\[
\langle u,v\rangle_{s,k}=\sum_{j=0}^k\langle P_ju,P_jv\rangle_s\;,
\]
there are continuous inclusions $I_{k'}^{(s)}(M,L)\subset I_k^{(s)}(M,L)$ ($k<k'$), and $I^{(s)}(M,L)=\bigcap_kI_k^{(s)}(M,L)$. So $I^{(s)}(M,L)$ is a totally reflexive Fr\'echet space \cite[Theorem~4]{Valdivia1989}.
\end{proof}

We have continuous inclusions
\begin{equation}\label{I^(s)(M L) subset I^(s')(M L)}
I^{(s)}(M,L)\subset I^{(s')}(M,L)\quad(s'<s)\;,
\end{equation}
and consider the LCSs and $C^\infty(M)$-modules \index{$I(M,L)$} \index{$I^{(\infty)}(M,L)$}
\[
I(M,L)=\bigcup_sI^{(s)}(M,L)\;,\quad I^{(\infty)}(M,L)=\bigcap_sI^{(s)}(M,L)\;.
\]
$I(M,L)$ is an LF-space, and $I^{(\infty)}(M,L)$ is a Fr\'echet space and submodule of $I(M,L)$. The elements of $I(M,L)$ are called \emph{conormal distributions} of $M$ at $L$ (or of $(M,L)$). The spaces $I^{(s)}(M,L)$ form the \emph{Sobolev order filtration} of $I(M,L)$. From~\eqref{I^(s)(M,L)}, it follows that there are canonical continuous inclusions,
\begin{equation}\label{C^infty(M) subset I^(infty)(M L)}
C^\infty(M)\subset I^{(\infty)}(M,L)\;,\quad I(M,L)\subset C^{-\infty}(M)\;;
\end{equation}
in particular, $I(M,L)$ is Hausdorff.

Since every $I^{(s)}(M,L)$ is a Fr\'echet space (\Cref{p: I^(s)(M L) is a totally reflexive Frechet sp}), the following analog of \Cref{p: S^infty(U x R^l) is barreled} holds true by the same reason.

\begin{cor}\label{c: I(M L) is barreled}
$I(M,L)$ is barreled, ultrabornological and webbed.
\end{cor}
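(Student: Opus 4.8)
The plan is to recognize that the corollary is, exactly like \Cref{p: S^infty(U x R^l) is barreled}, a formal property of LF-spaces, so that the whole argument reduces to exhibiting $I(M,L)$ as an LF-space and then quoting the standard permanence theorems already invoked for $C^{\pm\infty}_{{\cdot}/\text{\rm c}}(M)$ in \Cref{ss: smooth/distributional sections}.

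First I would reduce the inductive system indexed by $s\in\R$ to a countable one. Fixing a sequence $s_k\downarrow-\infty$, the inclusions~\eqref{I^(s)(M L) subset I^[s'](M L)} give continuous inclusions $I^{(s_k)}(M,L)\subset I^{(s_{k+1})}(M,L)$, and by the reindexing observation recorded in \Cref{ss: TVS} one has $I(M,L)=\bigcup_k I^{(s_k)}(M,L)$ with the locally convex inductive limit topology. Since each step $I^{(s_k)}(M,L)$ is a Fr\'echet space by \Cref{p: I^(s)(M L) is a totally reflexive Frechet sp}, $I(M,L)$ is an LF-space (not necessarily strict, which our conventions permit).

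Then the three conclusions follow as in \Cref{ss: smooth/distributional sections}. Fr\'echet spaces are ultrabornological, and ultrabornologicality is preserved by locally convex inductive limits \cite[Example~13.2.8~(d) and Theorem~13.2.11]{NariciBeckenstein2011}, so $I(M,L)$ is ultrabornological, hence barreled \cite[Observation~6.1.2~(b)]{PerezCarrerasBonet1987}. Every LF-space is webbed \cite[Proposition~IV.4.6]{DeWilde1978}, \cite[7.35.1~(4)]{Kothe1979-II}, \cite[Theorem~14.6.5]{NariciBeckenstein2011}, so $I(M,L)$ is webbed as well.

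There is essentially no obstacle here; the only inputs doing real work are that each step space is Fr\'echet --- which is \Cref{p: I^(s)(M L) is a totally reflexive Frechet sp}, already established --- and the legitimacy of replacing the uncountable inductive system by a countable cofinal subsystem, which is the routine reindexing of \Cref{ss: TVS}.
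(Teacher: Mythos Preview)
Your proof is correct and is essentially the same as the paper's: the paper simply observes that every $I^{(s)}(M,L)$ is Fr\'echet (\Cref{p: I^(s)(M L) is a totally reflexive Frechet sp}), so $I(M,L)$ is an LF-space, and then invokes the same permanence properties recorded in \Cref{ss: smooth/distributional sections} (ultrabornological via \cite[Example~13.2.8~(d) and Theorem~13.2.11]{NariciBeckenstein2011}, hence barreled, and webbed as an LF-space). Your explicit reduction to a countable cofinal sequence is exactly the routine reindexing of \Cref{ss: TVS} that the paper leaves implicit.
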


\subsubsection{Extension to non-compact manifolds}\label{sss: conormal - Sobolev order - non-compact}

If $M$ is not assumed to be compact, we can similarly define the LCHS $I^{(s)}_{{\cdot}/\text{\rm c}}(M,L)$ by using $C^{-\infty}_{{\cdot}/\text{\rm c}}(M)$ and $H^s_{\text{\rm loc/c}}(M)$. Every $I^{(s)}(M,L)$ is a Fr\'echet space, as follows like in the proof of \Cref{p: I^(s)(M L) is a totally reflexive Frechet sp}, using the Fr\'echet spaces $H^s_{\text{\rm loc}}(M)$. We can describe $I^{(s)}_{\text{\rm c}}(M,L)=\bigcup_KI^{(s)}_K(M,L)$ like in~\eqref{Cinftyc(U)}, which is a strict LF-space because every $I^{(s)}_K(M,L)$ satisfies an analog of \Cref{p: I^(s)(M L) is a totally reflexive Frechet sp}. Therefore $I_{\text{\rm c}}(M,L)=\bigcup_sI_{\text{\rm c}}^{(s)}(M,L)$ is an LF-space \cite[Exercise~12.108]{NariciBeckenstein2011}; moreover $I_{\text{\rm c}}(M,L)=\bigcup_KI_K(M,L)$. We also have the LCHS $I^{(\infty)}_{\text{\rm c}}(M,L)=\bigcap_sI_{\text{\rm c}}^{(s)}(M,L)$. All of these spaces are modules over $C^\infty(M)$; $I_{\text{\rm c}}(M,L)$ is a filtered module and $I^{(\infty)}_{\text{\rm c}}(M,L)$ a submodule. The extension by zero defines a continuous inclusion $I_{\text{\rm c}}(U,L\cap U)\subset I_{\text{\rm c}}(M,L)$ for any open $U\subset M$. We also define the space $I^{(\infty)}(M,L)$ like in the compact case, as well as the space $\bigcup_sI^{(s)}(M,L)$, which consists of the conormal distributions with a Sobolev order. But now let (cf.\ \cite[Definition~18.2.6]{Hormander1985-III})
\begin{equation}\label{I(M L) - non-compact M}
I(M,L)=\{\,u\in C^{-\infty}(M)\mid\Cinftyc(M)\,u\subset I_{\text{\rm c}}(M,L)\,\}\;,
\end{equation}
which is a LCS with the projective topology given by the (multiplication) maps $f_j:I(M,L)\to I_{\text{\rm c}}(M,L)$, for a countable partition of unity $\{f_j\}\subset\Cinftyc(M)$. We have $I(M,L)=\bigcup_sI^{(s)}(M,L)$ if and only if $L$ is compact; thus the spaces $I^{(s)}(M,L)$ form a filtration of $I(M,L)$ just when $L$ is compact. There is an extension of~\eqref{C^infty(M) subset I^(infty)(M L)} for non-compact $M$, taking arbitrary/compact support; in particular, $I_{{\cdot}/\text{\rm c}}(M,L)$ is Hausdorff.

\subsection{Filtration of $I(M,L)$ by the symbol order}\label{ss: conormal - symbol order}

\subsubsection{Local description of conormal distributions with symbols}\label{sss: conormal - symbol order - local}

Consider the notation of \Cref{ss: Diff(M L)} for a chart $(U,x=(x',x''))$ of $M$ adapted to $L$. We use the identity $U''\times\R^{n'}\equiv N^*U''$, and the symbol spaces $S^m(U''\times\R^{n'})\equiv S^m(N^*U'')$ (\Cref{s: symbols}). Define
\begin{gather}
\Cinftycv(N^*U'')\to C^\infty(U)\;,\quad a\mapsto u\;,\label{a -> u}\\
\Cinftyc(U)\to C^\infty(N^*U'')\;,\quad u\mapsto a\;,\label{u -> a}
\end{gather}
by the following partial inverse Fourier transform and partial Fourier transform:
\begin{align*}
u(x)&=(2\pi)^{-n'}\int_{\R^{n'}}e^{i\langle x',\xi\rangle}a(x'',\xi)\,d\xi\;,\\
a(x'',\xi)&=\int_{\R^{n'}}e^{-i\langle x',\xi\rangle}u(x',x'')\,dx'\;.
\end{align*}

\begin{prop}[{\cite[Theorem~18.2.8]{Hormander1985-III}, \cite[Proposition~6.1.1]{Melrose1996}, \cite[Lemma~9.33]{MelroseUhlmann2008}}]\label{p: symbol local expression of conormal distribs}
If $s<-\bar m-n'/2$, then~\eqref{a -> u} has a continuous extension $S^{\bar m}(N^*U'')\to I^{(s)}(U,L_0)$. If $\bar m>-s-n'/2$, then~\eqref{u -> a} induces a continuous linear map $I_{\text{\rm c}}^{(s)}(U,L_0)\to S^{\bar m}(N^*U'')$.
\end{prop}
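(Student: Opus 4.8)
The plan is to reduce everything to the model case of an adapted chart, together with two Plancherel-type estimates relating symbol bounds in $\xi$ to Sobolev regularity in the conormal variables $x'$. First I would identify $U\equiv U'\times U''\subset\R^{n'}\times\R^{n''}$ with $L_0=\{0\}\times U''$ and $N^*U''\equiv U''\times\R^{n'}$, so that~\eqref{a -> u} becomes the partial inverse Fourier transform $\FF^{-1}_{x'}$ and~\eqref{u -> a} the partial Fourier transform $\FF_{x'}$, both in the variables $x'$. Here $\FF^{-1}_{x'}$ is already defined on all of $S^{\bar m}(N^*U'')$: each slice $a(x'',\cdot)\in S^{\bar m}(\R^{n'})\subset\SS(\R^{n'})'$ has a tempered inverse Fourier transform depending smoothly on $x''$, so $u:=\FF^{-1}_{x'}a\in C^{-\infty}(U)$, and the first assertion amounts to $u\in I^{(s)}(U,L_0)$ with $a\mapsto u$ continuous; dually, for $u\in I^{(s)}_{\text{\rm c}}(U,L_0)$, $a:=\FF_{x'}u$ is a distribution which is smooth in $(x'',\xi)$ because $\partial_{x''}^\gamma u\in H^s$ for all $\gamma$ forces rapid decay of $\widehat u$ in the frequency dual to $x''$, and the second assertion amounts to $a\in S^{\bar m}(N^*U'')$, continuously. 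The bridge between the two pictures is the \emph{transfer rule}: under $\FF_{x'}$, $\partial_{x'^j}$ becomes multiplication by $i\xi_j$, multiplication by $x'^j$ becomes $i\partial_{\xi_j}$, and $\partial_{x''^k}$ is unchanged; hence for $|\beta|\le|\alpha|$ the operator $\partial_{x'}^\beta x'^\alpha\partial_{x''}^\gamma$ (which lies in $\Diff^{|\beta|+|\gamma|}(U,L_0)$, there being enough $x'$-factors to absorb the $\partial_{x'}$'s) acts on the symbol side, up to a constant, as $\xi^\beta\partial_\xi^\alpha\partial_{x''}^\gamma:S^{\bar m}\to S^{\bar m-|\alpha|+|\beta|}$, preserving compactness of support in $x''$; in particular for a genuine generator, $|\alpha|=|\beta|$, this is a continuous endomorphism of $S^{\bar m}(N^*U'')$. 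Finally, multiplication by a function of $\Cinftyc(U)$ becomes convolution in $\xi$ with a function Schwartz in $\xi$ (and compactly supported in $x''$), which preserves each class $S^{\bar m}$, and general $C^\infty(U)$-coefficients are reduced to this after inserting a cutoff.

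For the first assertion it suffices to show $\phi u\in I^{(s)}_{\text{\rm c}}(U,L_0)$ for every $\phi\in\Cinftyc(U)$, continuously in $a$ (local definition of $I^{(s)}(U,L_0)$). By the transfer rule, for each $P\in\Diff(U,L_0)$ the distribution $P(\phi u)$ has partial Fourier transform a symbol $b_P\in S^{\bar m}(N^*U'')$, compactly supported in $x''$, depending continuously on $a$. So it remains to prove the \emph{core estimate}: if $b\in S^{\bar m}(N^*U'')$ is compactly supported in $x''$, then $\FF^{-1}_{x'}b\in H^s(\R^n)$ with $H^s$-norm controlled by finitely many of the seminorms~\eqref{| a |_K I J m} of $b$, provided $s<-\bar m-n'/2$. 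This is a direct Plancherel computation: writing $\widehat b^{\,x''}$ for the Fourier transform of $b$ in $x''$ only ($\zeta$ dual to $x''$, $\eta$ dual to $x'$ being $b$'s original variable),
\[
\|\FF^{-1}_{x'}b\|_{H^s}^2=\int\!\!\int\big(1+|\eta|^2+|\zeta|^2\big)^s\,\big|\widehat b^{\,x''}(\zeta,\eta)\big|^2\,d\eta\,d\zeta\,,
\]
which is finite by Peetre's inequality (splitting the weight as $\langle\eta\rangle^{2s}\langle\zeta\rangle^{2|s|}$), rapid decay of $\widehat b^{\,x''}$ in $\zeta$ (smoothness and compact support of $b$ in $x''$), and $\int_{\R^{n'}}(1+|\eta|)^{2(s+\bar m)}\,d\eta<\infty\iff s+\bar m<-n'/2$. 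Applying this with $b=b_P$ gives $P(\phi u)\in H^s_{\text{\rm c}}(U)$ for all $P$, hence $\phi u\in I^{(s)}_{\text{\rm c}}(U,L_0)$, continuously; this yields the continuous extension $S^{\bar m}(N^*U'')\to I^{(s)}(U,L_0)$.

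For the second assertion I would run the identities backwards. For any $\beta$ with $|\beta|\le|\alpha|$, $w_\beta:=\partial_{x'}^\beta x'^\alpha\partial_{x''}^\gamma u\in H^s_{\text{\rm c}}(U)$ with $\|w_\beta\|_{H^s}$ dominated by a seminorm of $u$ in $I^{(s)}_{\text{\rm c}}(U,L_0)$, and $\FF_{x'}w_\beta=c\,\xi^\beta\partial_\xi^\alpha\partial_{x''}^\gamma a$. Since $\langle\xi\rangle^{|\alpha|}\lesssim\sum_{|\beta|\le|\alpha|}|\xi^\beta|$, summing over such $\beta$ reduces the target estimate $|\partial_\xi^\alpha\partial_{x''}^\gamma a(x'',\xi)|\lesssim\langle\xi\rangle^{\bar m-|\alpha|}$ (uniformly for $x''$ in compact sets) to a single \emph{converse core estimate}: for $w\in H^s(\R^n)$ with compact support, $\sup_{x''\in K''}|\FF_{x'}w(x'',\xi)|\le C\langle\xi\rangle^{\bar m}\|w\|_{H^s}$ whenever $\bar m>-s-n'/2$. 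This again rests on Plancherel, $\|\FF_{x'}w(\cdot,\xi)\|_{L^2_{x''}}^2=c\int|\widehat w(\zeta,\xi)|^2\,d\zeta$, and $\int\!\!\int\langle(\zeta,\xi)\rangle^{2s}|\widehat w(\zeta,\xi)|^2<\infty$; the passage from an $L^2$-average in $\xi$ to a pointwise bound, and from an $L^2$-average in $x''$ to a supremum over a compact set, is carried out by Sobolev embedding, the necessary extra $\xi$- and $x''$-derivatives being produced by applying further admissible operators of $\Diff(U,L_0)$ to $u$ (which only improves the order), and the strict inequality $\bar m>-s-n'/2$ providing exactly the room these embeddings consume.

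I expect the bookkeeping in the second assertion to be the main obstacle: extracting the sharp threshold $\bar m>-s-n'/2$, and checking that all constants remain uniform so that $u\mapsto a$ is continuous. The first assertion is comparatively routine once the transfer rule and the Plancherel estimate are in place. For the detailed estimates I would refer to \cite[Theorem~18.2.8]{Hormander1985-III} and \cite[Proposition~6.1.1]{Melrose1996}.
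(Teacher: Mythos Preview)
The paper does not supply its own proof of this proposition; it simply cites \cite[Theorem~18.2.8]{Hormander1985-III}, \cite[Proposition~6.1.1]{Melrose1996} and \cite[Lemma~9.33]{MelroseUhlmann2008}, adding only the remark that the continuity of the two maps, while not explicitly stated there, follows easily from those proofs. Your sketch is exactly the argument found in those references: the transfer rule $\partial_{x'}\leftrightarrow i\xi$, $x'\leftrightarrow i\partial_\xi$ identifying $\Diff(U,L_0)$-actions with symbol-preserving operations, the Plancherel computation giving the $H^s$ bound from the symbol bound under $s+\bar m<-n'/2$, and for the converse the dyadic/rescaled Sobolev embedding in $(x'',\xi)$ fed by the extra $x'^\alpha$- and $\partial_{x''}^\gamma$-operators available in $\Diff(U,L_0)$. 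You also track the constants, which is precisely the continuity claim the paper singles out. So your proposal is correct and coincides with the approach the paper defers to.
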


\begin{rem}\label{r:  symbol local expression of conormal distribs}
The continuity of the maps of \Cref{p: symbol local expression of conormal distribs} is not stated in \cite[Theorem~18.2.8]{Hormander1985-III}, \cite[Proposition~6.1.1]{Melrose1996}, \cite[Lemma~9.33]{MelroseUhlmann2008}, but it follows easily from their proofs.
\end{rem}

When applying \Cref{p: symbol local expression of conormal distribs} to $M$ via $(U,x)$, it will be convenient to use
\[
a\,|d\xi|\in S^{\bar m}(N^*U'';\Omega N^*U'')\equiv S^{\bar m}(N^*L_0;\Omega N^*L_0)\;.
\]

\subsubsection{Case of compact manifolds}\label{sss: conormal - symbol order - compact}

Assume first that $M$ is compact. Take a finite cover of $L$ by relatively compact charts $(U_j,x_j)$ of $M$ adapted to $L$, and write $L_j=L\cap U_j$. Let $\{h,f_j\}$ be a $C^\infty$ partition of unity of $M$ subordinated to the open covering $\{M\setminus L,U_j\}$. Then $I(M,L)$ consists of the distributions $u\in C^{-\infty}(M)$ such that $hu\in C^\infty(M\setminus L)$ and $f_ju\in I_{\text{\rm c}}(U_j,L_j)$ for all $j$. According to \Cref{p: symbol local expression of conormal distribs}, every $f_ju$ is given by some $a_j\in S^\infty(N^*L_j;\Omega N^*L_j)$. For
\begin{equation}\label{bar m}
\bar m=m+n/4-n'/2\;,
\end{equation}
the condition $a_j\in S^{\bar m}(N^*L_j;\Omega N^*L_j)$ describes the elements $u$ of a $C^\infty(M)$-submodule $I^m(M,L)\subset I(M,L)$, \index{$I^m(M,L)$} which is independent of the choices involved \cite[Proposition~9.33]{MelroseUhlmann2008} (see also \cite[Definition~6.2.19]{Melrose1996} and \cite[Definition~4.3.9]{Simanca1990}). Moreover, applying the versions of semi-norms~\eqref{| u |_K C^k} on $C^\infty(M\setminus L)$ to $hu$ and versions of semi-norms~\eqref{| a |_K I J m} on $S^{\bar m}(N^*L_j;\Omega N^*L_j)$ to every $a_j$, we get semi-norms on $I^m(M,L)$, which becomes a Fr\'echet space \cite[Sections~6.2 and 6.10]{Melrose1996}. In other words, the following map is required to be a TVS-embedding:
\begin{equation}\label{I^m(M L) -> ...}
I^m(M,L)\to C^\infty(M\setminus L)\oplus\prod_jS^{\bar m}(N^*L_j;\Omega N^*L_j)\;,\quad u\mapsto(hu,(a_j))\;.
\end{equation}

The version of~\eqref{S^m(T^*M) subset S^m'(T^*M)} for the spaces $S^{\bar m}(N^*L_j;\Omega N^*L_j)$ gives continuous inclusions
\begin{equation}\label{I^m(M L) subset I^m'(M L)}
I^m(M,L)\subset I^{m'}(M,L)\quad(m<m')\;.
\end{equation}
The element $\sigma_m(u)\in S^{(\bar m)}(N^*L;\Omega N^*L)$ represented by $\sum_ja_j\in S^{\bar m}(N^*L;\Omega N^*L)$ is called the \emph{principal symbol} of $u$. This defines the exact sequence
\[
0\to I^{m-1}(M,L)\hookrightarrow I^m(M,L) \xrightarrow{\sigma_m} S^{(\bar m)}(N^*L;\Omega N^*L)\to0\;.
\]
 From \Cref{p: symbol local expression of conormal distribs} and~\eqref{bar m}, we also get continuous inclusions
\begin{equation}\label{sandwich for I}
I^{(-m-n/4+\epsilon)}(M,L)\subset I^m(M,L)\subset I^{(-m-n/4-\epsilon)}(M,L)\;,
\end{equation}
for all $m\in\R$ and $\epsilon>0$ (cf.\ \cite[Eq.~(6.2.5)]{Melrose1996}, \cite[Eq.~(9.35)]{MelroseUhlmann2008}). So \index{$I^{(\infty)}(M,L)$} \index{$I^{-\infty}(M,L)$}
\[
I(M,L)=\bigcup_mI^m(M,L)\;,\quad I^{(\infty)}(M,L)=I^{-\infty}(M,L):=\bigcap_mI^m(M,L)\;.
\]
The spaces $I^m(M,L)$ form the \emph{symbol order filtration} of $I(M,L)$. The maps~\eqref{I^m(M L) -> ...} induce a TVS-embedding
\begin{equation}\label{I(M L) -> ...}
I(M,L)\to C^\infty(M\setminus L)\oplus\prod_jS^\infty(N^*L_j;\Omega N^*L_j)\;.
\end{equation}

\begin{cor}\label{c: coincidence of tops on I^m(M L)}
For $m<m',m''$, the topologies of $I^{m'}(M,L)$ and $I^{m''}(M,L)$ coincide on $I^m(M,L)$.
\end{cor}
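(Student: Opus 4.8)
The plan is to transport \Cref{c: coincidence of tops on S^m(U x R^l)} — in its version for symbol spaces of the form $S^m(E;F)$ — through the local symbol description of conormal distributions, i.e.\ through the TVS-embedding~\eqref{I^m(M L) -> ...}. First I would fix, once and for all, a finite cover of $L$ by relatively compact charts $(U_j,x_j)$ of $M$ adapted to $L$ together with the subordinate partition of unity $\{h,f_j\}$, and use these same choices to realize the topologies of $I^m(M,L)$, $I^{m'}(M,L)$ and $I^{m''}(M,L)$ via~\eqref{I^m(M L) -> ...}; this is legitimate because those topologies do not depend on the choices. Writing $\bar m,\bar m',\bar m''$ for the shifted orders attached to $m,m',m''$ by~\eqref{bar m}, the hypothesis $m<m',m''$ becomes $\bar m<\bar m',\bar m''$.

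The key observation is that the embedding~\eqref{I^m(M L) -> ...} written for $I^{m'}(M,L)$, restricted to the subspace $I^m(M,L)$, is literally the same map $u\mapsto(hu,(a_j))$ as the one for $I^m(M,L)$, only now with the components $a_j$ regarded in $S^{\bar m}(N^*L_j;\Omega N^*L_j)\subset S^{\bar m'}(N^*L_j;\Omega N^*L_j)$: indeed the $a_j$ are produced from $f_ju$ by the partial Fourier transform of \Cref{p: symbol local expression of conormal distribs}, independently of any target symbol order, and $hu\in C^\infty(M\setminus L)$ in any case; the same holds with $m''$ in place of $m'$. Post-composing with the continuous inclusions $S^{\bar p}(N^*L_j;\Omega N^*L_j)\subset C^\infty(N^*L_j;\Omega N^*L_j)$ (the $S^m(E;F)$-version of the second inclusion in~\eqref{S^infty(U x R^l) subset C^infty(U x R^l)}), I obtain a single continuous injection
\[
\Phi:I^m(M,L)\to C^\infty(M\setminus L)\oplus\prod_jC^\infty(N^*L_j;\Omega N^*L_j)\;,\qquad u\mapsto(hu,(a_j))\;,
\]
which no longer involves any choice of order.

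The heart of the argument is then to check that, for every $p>m$, the topology that $I^p(M,L)$ induces on $I^m(M,L)$ coincides with the topology induced by $\Phi$; as the latter is visibly independent of $p$, specializing to $p=m'$ and $p=m''$ gives the corollary. Since~\eqref{I^m(M L) -> ...} is a TVS-embedding for order $p$, the induced topology on $I^m(M,L)$ is the trace, on the image of $\Phi$, of the product topology of $C^\infty(M\setminus L)\oplus\prod_jS^{\bar p}(N^*L_j;\Omega N^*L_j)$. That image lies in the ``sub-box'' $C^\infty(M\setminus L)\oplus\prod_jS^{\bar m}(N^*L_j;\Omega N^*L_j)$, on which the ambient product topology restricts factorwise: the $C^\infty(M\setminus L)$ factor is untouched, and on each factor $S^{\bar m}(N^*L_j;\Omega N^*L_j)$ the trace of the $S^{\bar p}$-topology agrees with the trace of the $C^\infty$-topology by \Cref{c: coincidence of tops on S^m(U x R^l)} (its $S^m(E;F)$-version, using $\bar m<\bar p$). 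Reassembling the factors, this restricted topology is exactly the one that $C^\infty(M\setminus L)\oplus\prod_jC^\infty(N^*L_j;\Omega N^*L_j)$ induces on the same sub-box, and restricting once more to the image of $\Phi$ yields precisely the topology induced by $\Phi$.

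I expect the only mildly delicate points to be the two pieces of elementary bookkeeping: that the trace of a product topology on a sub-box $\prod_iA_i\subset\prod_iX_i$ is the product of the traces on the $A_i$ (so that the coincidence of topologies may be verified factor by factor), and that~\eqref{I^m(M L) -> ...} for the three orders becomes one and the same map after restriction to $I^m(M,L)$. Neither presents a genuine difficulty, so I do not anticipate a real obstacle: once the symbol-space version of \Cref{c: coincidence of tops on S^m(U x R^l)} is available, the corollary is essentially a formal consequence of the local description of conormal distributions.
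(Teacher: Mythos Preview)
Your proposal is correct and follows exactly the route indicated in the paper: reduce to the symbol-space result \Cref{c: coincidence of tops on S^m(U x R^l)} by means of the TVS-embeddings~\eqref{I^m(M L) -> ...}. What you have written is a careful unpacking of the one-line proof in the paper, including the bookkeeping that the embeddings for different orders restrict to the same map on $I^m(M,L)$ and that the coincidence can be checked factorwise.
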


\begin{proof}
Use \Cref{c: coincidence of tops on S^m(U x R^l)} and the TVS-embeddings~\eqref{I^m(M L) -> ...}.
\end{proof}

\begin{cor}[{\cite[Eq.~(6.2.12)]{Melrose1996}}]\label{c: C^infty(M) is dense in I(M L)}
For $m<m'$, $C^\infty(M)$ is dense in $I^m(M,L)$ with the topology of $I^{m'}(M,L)$. Therefore $C^\infty(M)$ is dense in $I(M,L)$.
\end{cor}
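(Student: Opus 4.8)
The plan parallels the proof of \Cref{c: Cinftyc(U x R^l) is dense in S^infty(U x R^l)}: transport the question to the symbol level via~\eqref{I^m(M L) -> ...}, apply \Cref{c: coincidence of tops on S^m(U x R^l)}, and cut off symbols; then the second assertion falls out of the inductive limit structure $I(M,L)=\bigcup_mI^m(M,L)$. I would first record that $I^{-\infty}(M,L)=C^\infty(M)$: the inclusion $\supseteq$ is part of~\eqref{C^infty(M) subset I^[infty](M L)} (since $I^{(\infty)}(M,L)=I^{-\infty}(M,L)$), while any $u\in I^{(\infty)}(M,L)$ lies in every $I^{(s)}(M,L)$, hence $u=1\cdot u\in\bigcap_sH^s(M)=C^\infty(M)$ by~\eqref{C^infty(M) = bigcap_s H^s(M)}. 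In particular $C^\infty(M)\subseteq I^m(M,L)$, so the first assertion says exactly that an arbitrary $u\in I^m(M,L)$ is an $I^{m'}(M,L)$-limit of smooth functions.

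Given such a $u$, I would write $u=hu+\sum_jf_ju$ with the partition of unity $\{h,f_j\}$ of~\eqref{I^m(M L) -> ...}, so that $hu\in\Cinftyc(M\setminus L)\subseteq C^\infty(M)$ (it is the first component of~\eqref{I^m(M L) -> ...}) and it suffices to approximate each $f_ju$. By~\eqref{u -> a} and \Cref{p: symbol local expression of conormal distribs}, the compactly supported conormal distribution $f_ju\in I^m_c(U_j,L_j)$ has a symbol $a_j\in S^{\bar m}(N^*L_j;\Omega N^*L_j)$ with compact base support ($\bar m=m+n/4-n'/2$ as in~\eqref{bar m}), from which $f_ju$ is recovered by~\eqref{a -> u}. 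Fixing $\chi\in\Cinftyc(\R^{n'})$ with $\chi\equiv1$ near $0$ and a spatial cutoff $\psi_j\in\Cinftyc(U_j)$ equal to $1$ near $\supp f_j$, I would let $g_j^{(k)}$ be the partial inverse Fourier transform~\eqref{a -> u} of $\chi(\xi/k)a_j$, a smooth function on $U_j$ since $\chi(\xi/k)a_j$ has compact fiber support, and take
\[
v_k=hu+\sum_j\psi_jg_j^{(k)}\in C^\infty(M)\;.
\]
The key input is that $\chi(\xi/k)a_j\to a_j$ in $S^{\bar m'}(N^*L_j;\Omega N^*L_j)$, $\bar m'=m'+n/4-n'/2$: indeed $\chi(\xi/k)a_j\to a_j$ in $C^\infty(N^*L_j;\Omega N^*L_j)$ because $\chi(\xi/k)\equiv1$ on any fixed compact set once $k$ is large, and on $S^{\bar m}(N^*L_j;\Omega N^*L_j)$ the topologies of $S^{\bar m'}(N^*L_j;\Omega N^*L_j)$ and of $C^\infty$ coincide by \Cref{c: coincidence of tops on S^m(U x R^l)} in the vector-bundle form. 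Hence $g_j^{(k)}\to f_ju$ in $I^{m'}(U_j,L_j)$ by \Cref{p: symbol local expression of conormal distribs} and the single-chart form of the definition of the symbol-order filtration, and then $\psi_jg_j^{(k)}\to\psi_jf_ju=f_ju$ in $I^{m'}_c(U_j,L_j)$ since multiplication by $\psi_j$ is continuous into $I^{m'}_c(U_j,L_j)$ ($I^{m'}$ being a $C^\infty$-module); passing to $I^{m'}(M,L)$ by continuity of extension by zero and summing over the finitely many $j$ gives $v_k\to u$ in $I^{m'}(M,L)$.

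The place where one must be careful rather than merely formal — and hence what I regard as the main obstacle — is the localization bookkeeping just invoked: that the partial inverse Fourier transform is continuous from $S^{\bar m'}(N^*L_j;\Omega N^*L_j)$ into $I^{m'}(U_j,L_j)$ (this is the single-chart shadow of what makes $I^m(M,L)$ and~\eqref{I^m(M L) -> ...} well defined, so it comes down to \Cref{p: symbol local expression of conormal distribs} and the definition), that multiplying a compactly supported smooth function by a conormal distribution is continuous for the symbol-order topologies, and that extension by zero is likewise continuous; in checking the latter two one unwinds that, in overlapping charts, $\psi_jg_j^{(k)}$ and $f_ju$ have symbols differing by the partial Fourier transform of a fixed smooth cutoff (a Schwartz convolution kernel in the fiber variable), whose action is continuous on the symbol spaces and fixes the limit $a_j$. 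None of this is deep, but it is the only non-bookkeeping step. For the second assertion, finally: given $u\in I(M,L)$, pick $m$ with $u\in I^m(M,L)$ and any neighborhood $O$ of $u$ in $I(M,L)$; then $O\cap I^{m+1}(M,L)$ is open in $I^{m+1}(M,L)$, so $O\cap I^m(M,L)$ is a nonempty subset of $I^m(M,L)$ open for the topology of $I^{m+1}(M,L)$, and by the first assertion it meets $C^\infty(M)\subseteq I^m(M,L)$; hence $O\cap C^\infty(M)\ne\emptyset$ and $C^\infty(M)$ is dense in $I(M,L)$.
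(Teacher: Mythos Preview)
Your proof is correct and follows essentially the same route as the paper's: decompose $u=hu+\sum_jf_ju$, approximate each symbol $a_j$ in $S^{\bar m'}$ by compactly supported symbols (you do this by the explicit cutoff $\chi(\xi/k)$ combined with \Cref{c: coincidence of tops on S^m(U x R^l)}, while the paper simply cites \Cref{c: Cinftyc(U x R^l) is dense in S^infty(U x R^l)}), push back through~\eqref{a -> u}, cut off spatially, and sum; the second assertion is derived from the first exactly as in \Cref{c: Cinftyc(U x R^l) is dense in S^infty(U x R^l)}. Your extended discussion of the localization bookkeeping is more explicit than the paper's but not different in substance.
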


\begin{proof}
$C^\infty(M)$ is contained in the stated spaces by~\eqref{C^infty(M) subset I^(infty)(M L)}.

Let us prove the first density, and the second one follows like in \Cref{c: Cinftyc(U x R^l) is dense in S^infty(U x R^l)}. Given $u\in I^m(M,L)$, let $a_j\in S^{\bar m}(N^*L_j;\Omega N^*L_j)$ be the symbol corresponding to $f_ju$ by \Cref{p: symbol local expression of conormal distribs}, like in~\eqref{I^m(M L) -> ...}. By \Cref{c: Cinftyc(U x R^l) is dense in S^infty(U x R^l)}, there is a sequence $b_{j,k}\in\Cinftyc(N^*L_j;\Omega N^*L_j)$ converging to $a_j$ in $S^{\bar m'}(N^*L_j;\Omega N^*L_j)$ ($\bar m'=m'+n/4-n'/2$). Let $v_{j,k}$ be the sequence in $C^\infty(U)$ that corresponds to $b_{j,k}$ via~\eqref{a -> u}; it converges to $f_ju$ in $I^{m'}(U,L)$ as $k\to\infty$ by \Cref{p: symbol local expression of conormal distribs}. Take functions $\tilde f_j\in\Cinftyc(U_j)$ with $\tilde f_j=1$ on $\supp f_j$. Then $\tilde f_jv_{j,k}\to f_ju$ in $I^{m'}_{\text{\rm c}}(U,L)$, and therefore $hu+\sum_j\tilde f_jv_{j,k}\in C^\infty(M)$ is convergent to $u$ in $I^{m'}(M,L)$.
\end{proof}

\begin{cor}\label{c: I(M L) is acyclic and Montel}
$I(M,L)$ is an acyclic Montel space, and therefore complete, boundedly retractive and reflexive.
\end{cor}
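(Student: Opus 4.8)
The plan is to mirror the proof of \Cref{c: S^infty(U x R^l) is acyclic and Montel}, now working with the symbol-order filtration $(I^m(M,L))_m$ and the TVS-embeddings~\eqref{I^m(M L) -> ...} and~\eqref{I(M L) -> ...}.

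First I would settle acyclicity. Each $I^m(M,L)$ is a Fr\'echet space and $I(M,L)=\bigcup_m I^m(M,L)$, the inductive spectrum over $\R$ reducing to a sequence as in \Cref{ss: TVS}; so the acyclicity criterion recalled there applies. Indeed, \Cref{c: coincidence of tops on I^m(M L)} shows that for each $m$ and all $m''\ge m'>m$ the topologies of $I^{m'}(M,L)$ and $I^{m''}(M,L)$ coincide on all of $I^m(M,L)$, hence on a $0$-neighbourhood of it. Thus the spectrum is acyclic, whence $I(M,L)$ is complete, regular and boundedly retractive.

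Next, since $I(M,L)$ is barreled (\Cref{c: I(M L) is barreled}) and a barreled semi-Montel space is a reflexive Montel space, it remains to prove the semi-Montel property. The key point is that, although the steps $I^m(M,L)$ are not themselves Montel, the ambient topology restricted to a step is that of a Montel space. Concretely, put $G:=C^\infty(M\setminus L)\oplus\prod_j C^\infty(N^*L_j;\Omega N^*L_j)$, a Fr\'echet--Montel space because the cover is finite ($M$ being compact) and finite products/sums of Fr\'echet--Montel spaces are Montel. Composing~\eqref{I(M L) -> ...} with~\eqref{I^m(M L) -> ...} and invoking the second assertion of \Cref{c: coincidence of tops on S^m(U x R^l)} (the topologies of $S^\infty$ and $C^\infty$ agree on $S^{\bar m}$), one checks that the topology induced by $I(M,L)$ on $I^m(M,L)$ coincides with the one induced by $G$. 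Given a closed bounded $B\subset I(M,L)$: by completeness $B$ is complete; by bounded retractivity $B$ sits, bounded, in some $I^m(M,L)$ with the $I(M,L)$-topology on $B$ equal to the $I^m(M,L)$-topology, hence equal to the $G$-topology on $B$. Thus $B$ is a complete bounded subset of the Montel space $G$, so it is closed in $G$, therefore relatively compact, therefore compact in $G$; and it is compact in $I(M,L)$ as well, since the topologies agree on it. Hence $I(M,L)$ is semi-Montel, so Montel, so reflexive; together with acyclicity this yields completeness and bounded retractivity, and the corollary follows.

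The only step that is not a formal transcription is the identification, on each step $I^m(M,L)$, of the $I(M,L)$-topology with the $G$-topology; this is exactly where \Cref{c: coincidence of tops on S^m(U x R^l)} does its work, and it must be used precisely through that coincidence of topologies, since the inclusions $S^{\bar m}\subset C^\infty$ are not TVS-embeddings (cf.\ \Cref{r: S^infty(U x R^l) subset C^infty(U x R^l) is not a TVS-embedding}). Everything else parallels the symbol case essentially verbatim.
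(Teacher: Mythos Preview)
Your argument is correct and follows the same overall strategy as the paper (acyclicity from \Cref{c: coincidence of tops on I^m(M L)}, then semi-Montel plus barrelledness), but the semi-Montel step is handled differently. The paper simply observes that the TVS-embedding~\eqref{I(M L) -> ...} realizes $I(M,L)$ as a \emph{closed} subspace (by completeness) of $C^\infty(M\setminus L)\oplus\prod_j S^\infty(N^*L_j;\Omega N^*L_j)$, which is Montel because $S^\infty$ is Montel by \Cref{c: S^infty(U x R^l) is acyclic and Montel}; semi-Montel is inherited by products and closed subspaces, and one is done in one line. You instead pass to the Fr\'echet--Montel target $G=C^\infty(M\setminus L)\oplus\prod_j C^\infty(N^*L_j;\Omega N^*L_j)$ and re-run the bounded-retractivity argument of \Cref{c: S^infty(U x R^l) is acyclic and Montel} directly, using the second assertion of \Cref{c: coincidence of tops on S^m(U x R^l)} to identify the $I(M,L)$-topology with the $G$-topology on each step. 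Both routes are sound; the paper's is shorter and more modular because it leverages the already-established Montel property of $S^\infty$ rather than unfolding that proof again inside the new setting. Your invocation of bounded retractivity (``$I(M,L)$-topology on $B$ equals $I^m$-topology on $B$'') is actually superfluous once you know the $I(M,L)$-topology and the $G$-topology agree on all of $I^m(M,L)$; that identity alone carries the argument.
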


\begin{proof}
Like in  \Cref{c: S^infty(U x R^l) is acyclic and Montel}, by \Cref{c: I(M L) is barreled,c: coincidence of tops on I^m(M L)}, it is enough to prove that $I(M,L)$ is semi-Montel. The TVS-embedding~\eqref{I(M L) -> ...} is closed because $I(M,L)$ is complete. Then $I(M,L)$ is semi-Montel because $C^\infty(M\setminus L)$ and $S^\infty(N^*L_j;\Omega N^*L_j)$ are Montel spaces (\Cref{c: S^infty(U x R^l) is acyclic and Montel}), and this property is inherited by closed subspaces and products \cite[Propositions~3.9.3 and~3.9.4]{Horvath1966-I}, \cite[Exercise~12.203~(c)]{NariciBeckenstein2011}.
\end{proof}

\begin{rem}\label{r: I(M L) is reflexive}
The reflexivity of $I(M,L)$ is also a consequence of the reflexivity of $I^{(s)}(M,L)$ (\Cref{p: I^(s)(M L) is a totally reflexive Frechet sp}) and the regularity of $I(M,L)$ (\Cref{c: I(M L) is acyclic and Montel}) \cite{Kucera2004}.
\end{rem}

\subsubsection{Extension to non-compact manifolds}\label{sss: conormal - symbol order - non-compact}

When $M$ is not assumed to be compact, the definition of $I^m(M,L)$ can be immediately extended assuming $\{U_j\}$ is a locally finite cover of $L$, obtaining an analog of~\eqref{I^m(M L) -> ...}. We can similarly define $I^m_K(M,L)$ for all compact $K\subset M$, and take $I^m_{\text{\rm c}}(M,L)=\bigcup_KI^m_K(M,L)$ like in~\eqref{Cinftyc(U)}. The space of conormal distributions with a symbol order is $\bigcup_mI^m(M,L)$, and let $I^{-\infty}_{{\cdot}/\text{\rm c}}(M,L)=\bigcap_mI^m_{{\cdot}/\text{\rm c}}(M,L)$. There are extensions of~\eqref{I^m(M L) -> ...}--\eqref{I(M L) -> ...} and \Cref{c: coincidence of tops on I^m(M L),c: C^infty(M) is dense in I(M L)}, with arbitrary/compact support (using direct sums instead of products in the case of compact support). So $\bigcup_mI^m(M,L)=\bigcup_sI^{(s)}(M,L)$, $I_{\text{\rm c}}(M,L)=\bigcup_mI^m_{\text{\rm c}}(M,L)$ and $I^{(\infty)}_{{\cdot}/\text{\rm c}}(M,L)=I^{-\infty}_{{\cdot}/\text{\rm c}}(M,L)$. \Cref{c: I(M L) is acyclic and Montel} has extensions for $\bigcup_mI^m(M,L)$ and $I_{{\cdot}/\text{\rm c}}(M,L)$, except acyclicity in the case of $I(M,L)$.

\subsection{Dirac sections at submanifolds}\label{ss: Dirac sections}

Let $NL$ and $N^*L$ denote the normal and conormal bundles of $L$. We have $\Omega NL\otimes\Omega L\equiv\Omega_LM$. The transpose of the restriction map $C^\infty_{\text{\rm c}/{\cdot}}(M;E^*\otimes\Omega M)\to C^\infty_{\text{\rm c}/{\cdot}}(L;E^*\otimes\Omega_LM)$ is a continuous inclusion \index{$\delta_L^u$}
\begin{gather}
C^{-\infty}_{{\cdot}/\text{\rm c}}(L;E\otimes\Omega^{-1}NL)\subset C^{-\infty}_{{\cdot}/\text{\rm c}}(M;E)\;,
\label{u in C^-infty_c mapsto delta_L^u}\\
u\mapsto\delta_L^u\;,\quad\langle\delta_L^u,v\rangle=\langle u,v|_L\rangle\;,\quad 
v\in C^\infty_{\text{\rm c}/{\cdot}}(M;E^*\otimes\Omega)\;.\notag
\end{gather}
By restriction of~\eqref{u in C^-infty_c mapsto delta_L^u}, we get a continuous inclusion \cite[p.~310]{GuilleminSternberg1977},
\begin{equation}\label{u in C^infty_c mapsto delta_L^u}
C^\infty_{{\cdot}/\text{\rm c}}(L;E\otimes\Omega^{-1}NL)\subset C^{-\infty}_{{\cdot}/\text{\rm c}}(M;E)\;;
\end{equation}
in this case, we can write $\langle\delta_L^u,v\rangle=\int_Lu\,v|_L$. This is the subspace of \emph{$\delta$-sections} or \emph{Dirac sections} at $L$. Actually, the following sharpening of~\eqref{u in C^infty_c mapsto delta_L^u} is true.

\begin{prop}\label{p: C^infty_./c(L ...) subset H^s_./c(M E)}
The inclusion~\eqref{u in C^infty_c mapsto delta_L^u} induces a continuous injection
\[
C^\infty_{{\cdot}/\text{\rm c}}(L;E\otimes\Omega^{-1}NL)\subset H^s_{\text{\rm loc/c}}(M;E)\quad(s<-n'/2)
\]
with
\[
C^\infty_{{\cdot}/\text{\rm c}}(L;E\otimes\Omega^{-1}NL)\cap H^{-n'/2}_{\text{\rm loc/c}}(M;E)=0\;.
\]
\end{prop}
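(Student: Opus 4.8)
The plan is to reduce everything to a local, Fourier-analytic computation and then glue via a partition of unity. Since both statements are local on $M$ (the inclusion $C^\infty_{{\cdot}/\text{\rm c}}(L;E\otimes\Omega^{-1}NL)\subset C^{-\infty}_{{\cdot}/\text{\rm c}}(M;E)$ is local and the Sobolev spaces $H^s_{\text{\rm loc/c}}$ are defined by charts and a partition of unity as in \Cref{sss: Sobolev sps - loc/c}), and since by the $C^\infty(M)$-tensor product descriptions of \Cref{ss: smooth/distributional sections} we may as well take $E$ trivial of rank one, it suffices to work in an adapted chart $(U,x)\equiv(x',x'')\in U'\times U''$ with $L_0=\{x'=0\}$ and to understand the single model $\delta$-section associated to $\psi\in\Cinftyc(U'')$, namely $\delta_{L_0}^\psi(x',x'')=\psi(x'')\,\delta_0(x')$ (after trivializing $\Omega^{-1}NL_0$, the density factor only contributes a smooth nonvanishing multiplier and does not affect Sobolev regularity). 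First I would compute its partial Fourier transform in $x'$: up to a smooth compactly supported density factor, the symbol of $\delta_{L_0}^\psi$ in the sense of \Cref{sss: conormal - symbol order - local} is $a(x'',\xi)=\psi(x'')$, which is a symbol of order $0$, i.e.\ $a\in S^0(N^*U'')$. By \eqref{bar m}, order-$0$ symbol corresponds to symbol order $m$ with $\bar m=0$, i.e.\ $m=-n/4+n'/2$, and then by the sandwich \eqref{sandwich for I} (or directly by \Cref{p: symbol local expression of conormal distribs}: $S^{\bar m}(N^*U'')\to I^{(s)}(U,L_0)$ continuously whenever $s<-\bar m-n'/2=-n'/2$) we get $\delta_{L_0}^\psi\in I^{(s)}(U,L_0)\subset H^s_{\text{\rm loc}}(U)$ for every $s<-n'/2$. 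Continuity and injectivity of the assignment $\psi\mapsto\delta_{L_0}^\psi$ into $H^s_{\text{\rm loc}}$ then follow from the continuity statement in \Cref{p: symbol local expression of conormal distribs} together with \Cref{r:  symbol local expression of conormal distribs}; injectivity is already contained in \eqref{u in C^infty_c mapsto delta_L^u}. Globalizing with a locally finite adapted atlas and a subordinate partition of unity (as in \Cref{sss: conormal - symbol order - compact}) gives the first assertion.

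For the second assertion I would argue by a direct Fourier-side computation that the model $\delta$-section is \emph{not} in $H^{-n'/2}$ unless it is zero. With $\widehat{\delta_{L_0}^\psi}(\xi',x'')=\psi(x'')$ (constant in $\xi'$, after the trivializations above), and taking the full Fourier transform also in $x''$, one sees that $|\widehat{\delta_{L_0}^\psi}(\xi',\xi'')|^2$ is, for $\psi\not\equiv 0$, bounded below by $c\,|\hat\psi_0(\xi'')|^2$ on a region where $\hat\psi_0$ is a fixed nonzero Schwartz function in $\xi''$; then
\[
\int_{\R^n}(1+|\xi|^2)^{-n'/2}\,|\widehat{\delta_{L_0}^\psi}(\xi)|^2\,d\xi
\;\gtrsim\;\int_{\R^{n''}}|\hat\psi_0(\xi'')|^2\Big(\int_{\R^{n'}}(1+|\xi'|^2+|\xi''|^2)^{-n'/2}\,d\xi'\Big)d\xi''\;,
\]
and the inner $\xi'$-integral over $\R^{n'}$ of $(1+|\xi'|^2+\langle\xi''\rangle^2)^{-n'/2}$ diverges (it behaves like $\int^\infty r^{n'-1}r^{-n'}\,dr=\int^\infty r^{-1}\,dr=\infty$). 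Hence the $H^{-n'/2}_{\text{\rm loc}}$-norm of $\delta_{L_0}^\psi$ over the chart is infinite whenever $\psi\not\equiv0$, so $\delta_{L_0}^\psi\notin H^{-n'/2}_{\text{\rm loc}}$. More carefully, to handle a general element of the intersection rather than only a single model section, one localizes: if $u\in C^\infty_{{\cdot}/\text{\rm c}}(L;E\otimes\Omega^{-1}NL)$ has $\delta_L^u\in H^{-n'/2}_{\text{\rm loc/c}}(M;E)$, then multiplying by a cutoff supported in a single adapted chart and restricting to a slice shows the local symbol $a$ is a genuine $0$-th order symbol with nonvanishing limit along $\xi'\to\infty$ (this is exactly the non-vanishing of the seminorm $\|a\|'_{K,\alpha,\beta,0}$ of \eqref{| a |'_K I J m}), and the same divergence estimate applies. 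Thus the intersection is $0$.

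The main obstacle I expect is the bookkeeping at the boundary between ``symbol order $m$'' and ``Sobolev order $s$'': the half-integer shift $\bar m=m+n/4-n'/2$ and the strictness of the inequalities in \Cref{p: symbol local expression of conormal distribs} mean one must check that $s=-n'/2$ is genuinely the borderline — $S^0\hookrightarrow I^{(s)}$ is available only for $s<-n'/2$, and at $s=-n'/2$ the model section fails to lie in $H^s$. Establishing the failure \emph{at} the critical exponent (rather than just continuity strictly below it) is where the real content sits, and it forces the explicit divergent integral above; everything else is either cited from \Cref{p: symbol local expression of conormal distribs} and \eqref{sandwich for I} or is routine partition-of-unity and tensor-product reduction. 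A secondary, purely technical point is making sure the smooth nonvanishing density trivialization factors for $\Omega^{-1}NL$, $\Omega_LM$, etc., do not interfere; these only multiply symbols by elements of $C^\infty$ that are bounded below on compact sets, hence change none of the relevant Sobolev thresholds, and this can be dispatched in a line by the $C^\infty(M)$-module descriptions already set up in \Cref{ss: smooth/distributional sections}.
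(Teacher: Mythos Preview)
Your proof is correct and follows essentially the same approach as the paper: reduce to the model $M=\R^n$, $L=\{x'=0\}$, $E$ trivial, compute the (partial) Fourier transform of $\psi(x'')\,\delta_0(x')$ as $\psi(x'')$ (constant in $\xi'$), observe that the weighted $L^2$-integrability in~\eqref{H^s(R^n) cong L^2(R^n (1+|xi|^2)^s d xi)} holds exactly for $s<-n'/2$ and fails at the endpoint by the logarithmic divergence $\int r^{n'-1}r^{-n'}\,dr$, then globalize by an adapted atlas and partition of unity. The only cosmetic difference is that for the inclusion $s<-n'/2$ you route the argument through \Cref{p: symbol local expression of conormal distribs} (recognizing $a(x'',\xi)=\psi(x'')\in S^0$) rather than invoking~\eqref{H^s(R^n) cong L^2(R^n (1+|xi|^2)^s d xi)} directly, but this is the same computation viewed through the symbol dictionary and yields the same continuity statement.
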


\begin{proof}
First, take $M=\R^n$, $L=\R^{n''}\times\{0\}\equiv\R^{n''}$ and $E=M\times\C$ (the trivial line bundle). Let $\delta_0$ be the Dirac mass at $0$ in $\R^{n'}$. For any $\phi\in\SS(\R^{n''})$, consider the tensor product distribution $\phi\otimes\delta_0\in\SS(\R^n)'$ \cite[Section~5.1]{Hormander1983-I}. Its Fourier transform is $\hat\phi\otimes\hat\delta_0=\hat\phi\otimes1$. If $\phi\ne0$, then $\hat\phi\otimes1\in L^2(\R^n,(1+|\xi|^2)^s\,d\xi)$ if and only if $1\in L^2(\R^{n'},(1+|\xi|^2)^s\,d\xi)$, which holds just when $s<-n'/2$. Moreover the map
\[
\SS(\R^{n''})\to L^2(\R^n,(1+|\xi|^2)^s\,d\xi)\;,\quad\phi\mapsto\hat\phi\otimes1\;,
\]
is continuous if $s<-n'/2$. Thus~\eqref{H^s(R^n) cong L^2(R^n (1+|xi|^2)^s d xi)} yields versions of the stated properties using $\SS(\R^{n''})$ and $H^s(R^n)$. 

For arbitrary $M$, $L$ and $E$, the result follows from the previous case by using a locally finite atlas, a subordinated partition of unity, and diffeomorphisms of triviality of $E$.
\end{proof}

For instance, for any $p\in M$ and $u\in E_p\otimes\Omega_p^{-1}M$, we get $\delta_p^u\in H^s_{\text{\rm c}}(M;E)$ if $s<-n/2$, with $\langle\delta_p^u,v\rangle=u\cdot v(p)$ for $v\in C^\infty(M;E^*\otimes\Omega)$, obtaining a continuous map
\[
M\times C^\infty(M;E\otimes\Omega^{-1})\to H^s_{\text{\rm c}}(M;E)\;,\quad(p,u)\mapsto\delta_p^{u(p)}\;.
\]
As a particular case, the Dirac mass at any $p\in\R^n$ is $\delta_p=\delta_p^{1\otimes|dx|^{-1}}\in H^s_{\text{\rm c}}(\R^n)$.

\subsection{Differential operators on conormal distributional sections}\label{ss: diff opers on conormal distribs}

Any $A\in\Diff^k(M;E)$ induces continuous linear maps \cite[Lemma~6.1.1]{Melrose1996}
\begin{equation}\label{A: I^(s)(M L E) -> I^[s-k](M L E)}
A:I^{(s)}_{{\cdot}/\text{\rm c}}(M,L;E)\to I^{(s-k)}_{{\cdot}/\text{\rm c}}(M,L;E)\;,
\end{equation}
which induce a continuous endomorphism $A$ of $I_{{\cdot}/\text{\rm c}}(M,L;E)$. If $A\in\Diff(M,L;E)$, then it clearly induces a continuous endomorphism $A$ of every $I^{(s)}_{{\cdot}/\text{\rm c}}(M,L;E)$.

According to~\eqref{u in C^-infty_c mapsto delta_L^u}, for $A\in\Diff(M,L;E)$ and $u\in C^\infty_{{\cdot}/\text{\rm c}}(L;E\otimes\Omega^{-1}NL)$,
\begin{equation}\label{A delta_L^u}
A\delta_L^u=\delta_L^{A'u}\;,\quad A'=((A^t)|_L)^t\in\Diff(L;E\otimes\Omega^{-1}NL)\;,
\end{equation}
where $A^t\in\Diff(M,L;E^*\otimes\Omega)$ and $(A^t)|_L\in\Diff(L,E^*\otimes\Omega_LM)$ using the vector bundle versions of~\eqref{A in Diff(M L) => A^t in Diff(M L Omega)} and~\eqref{Diff(M L) -> Diff(L)}. In fact, for $v\in C^\infty_{\text{\rm c}/{\cdot}}(M;E^*\otimes\Omega)$,
\[
\langle A\delta_L^u,v\rangle=\langle\delta_L^u,A^tv\rangle=\langle u,(A^tv)|_L\rangle
=\langle u,(A^t)|_L(v|_L)\rangle
=\langle A'u,v|_L\rangle=\langle\delta_L^{A'u},v\rangle\;.
\]
By~\eqref{A delta_L^u}, $\Diff(M,L;E)$ preserves the subspace of Dirac sections given by~\eqref{u in C^infty_c mapsto delta_L^u}. Thus the continuous inclusion of \Cref{p: C^infty_./c(L ...) subset H^s_./c(M E)} induces a continuous inclusion
\begin{equation}\label{u mapsto delta_L^u conormal}
C^\infty_{{\cdot}/\text{\rm c}}(L;E\otimes\Omega^{-1}NL)\subset I^{(s)}_{{\cdot}/\text{\rm c}}(M,L;E)\quad(s<-n'/2)\;.
\end{equation}

\subsection{Pull-back of conormal distributions}\label{ss: pull-back of conormal distribs}

If a smooth map $\phi:M'\to M$ is transverse to a regular submanifold $L\subset M$, then $L':=\phi^{-1}(L)\subset M'$ is a regular submanifold and (the trivial-line-bundle version of)~\eqref{phi^*: C^infty(M E) -> C^infty(M' phi^*E)} has continuous extensions 
\begin{equation}\label{phi^*: I^m(M L) -> I^m+k/4(M' L')}
\phi^*:I^m(M,L)\to I^{m+k/4}(M',L')\quad(m\in\R)\;,
\end{equation}
where $k=\dim M-\dim M'$  \cite[Theorem~5.3.8]{Simanca1990}, \cite[Proposition~6.6.1]{Melrose1996}. Taking inductive limits and using~\eqref{sandwich for I}, we get a continuous linear map
\begin{equation}\label{phi^*: I(M L) -> I(M' L')}
\phi^*:I(M,L)\to I(M',L')\;.
\end{equation}
If $\phi$ is a submersion, then~\eqref{phi^*: I(M L) -> I(M' L')} is a restriction of~\eqref{phi_*: C^-infty(M E) -> C^-infty(M' phi^*E)}. If $\phi$ is a local diffeomorphism, then~\eqref{phi^*: I(M L) -> I(M' L')} is compatible with the Sobolev and symbol order filtrations in the sense that it restricts to continuous maps between the spaces defining those filtrations.

A more general pull-back of distributional sections can be defined under conditions on the wave front set \cite[Theorem~8.2.4]{Hormander1983-I}, but we will not use it.

\subsection{Push-forward of conormal distributional sections}\label{ss: push-forward of conormal distribs}

Now let $\phi:M'\to M$ be a smooth submersion, and let $L\subset M$ and $L'\subset M'$ be regular submanifolds such that $\phi(L')\subset L$ and the restriction $\phi:L'\to L$ is also a smooth submersion. Then~\eqref{phi_*: C^infty_c(M' phi^*E otimes Omega_fiber) -> C^infty_c(M E)} has continuous extensions
\begin{equation}\label{phi_*: I^m_c(M' L' Omega_fiber) -> I^m+l/2-k/4_c(M L)}
\phi_*:I^m_{\text{\rm c}}(M',L';\Omega_{\text{\rm fiber}})\to I^{m+l/2-k/4}_{\text{\rm c}}(M,L)\quad(m\in\R)\;,
\end{equation}
where $k=\dim M'-\dim M$ and $l=\dim L'-\dim L$ \cite[Theorem~5.3.6]{Simanca1990}, \cite[Proposition~6.7.2]{Melrose1996}. Taking inductive limits, we get a continuous linear map
\begin{equation}\label{phi_*: I_c(M' L' Omega_fiber) -> I_c(M L)}
\phi_*:I_{\text{\rm c}}(M',L';\Omega_{\text{\rm fiber}})\to I_{\text{\rm c}}(M,L)\;,
\end{equation}
which is a restriction of~\eqref{phi_*: C^-infty_c(M' phi^*E otimes Omega_fiber) -> C^-infty_c(M E)}. If $\phi$ is a local diffeomorphism, then~\eqref{phi_*: I_c(M' L' Omega_fiber) -> I_c(M L)} is compatible with the Sobolev and symbol order filtrations.

\subsection{Pseudodifferential operators}\label{ss: pseudodiff ops}

This type of operators is the main application of conormal distributions (see e.g.\ \cite{Taylor1981,Hormander1985-III,Melrose2006,Simanca1990}).

\subsubsection{Case of compact manifolds}\label{sss: pseudodiff ops - compact}

Suppose first that $M$ is compact. The filtered algebra and $C^\infty(M^2)$-module of pseudodifferential operators, $\Psi(M)$, consists of the continuous endomorphisms $A$ of $C^\infty(M)$ with Schwartz kernel $K_A\in I(M^2,\Delta)$, where $\Delta$ is the diagonal. In fact, by the Schwartz kernel theorem, we may consider $\Psi(M)\equiv I(M^2,\Delta)$. It is filtered by the symbol order, $\Psi^m(M)\equiv I^m(M^2,\Delta)$ ($m\in\R$), and $\Psi^{-\infty}(M)\equiv I^{-\infty}(M^2,\Delta)$ consists of the smoothing operators. The analogs of~\eqref{H^s(M) = ...} and~\eqref{H^-s(M) = ...} hold true using $\Psi^s(M)$ instead of $\Diff^s(M)$ for any $s\in\R$. In this way, $\Psi(M)$ also becomes a LCHS satisfying the properties indicated in \Cref{sss: conormal - Sobolev order - compact,sss: conormal - symbol order - compact}.

Taking the $C^\infty(M^2)$-tensor product of $\Psi(M)$ with $C^\infty(M;F\boxtimes E^*)$, we get $\Psi(M;E,F)$ (or $\Psi(M;E)$ \index{$\Psi(M;E)$} if $E=F$) as in \Cref{ss: diff ops}, satisfying the analog of~\eqref{Diff^m(M E) equiv Diff^m(M)}. In this case, we have $\bar m=m$ in~\eqref{bar m}, and the \emph{symbol} of any $A\in\Psi^m(M;E,F)$ can be given by $\sigma_m(A)\equiv\sigma_m(K_A)$; this symbol is used to extend the concept of \emph{ellipticity} to pseudodifferential operators (see e.g.\ \cite{Melrose1993}). 

$\Psi(M;E)$ is preserved by taking transposes, and therefore any $A\in\Psi(M;E)$ defines a continuous endomorphism $A$ of $C^{-\infty}(M;E)$ (\Cref{ss: ops}), and $\sing\supp Au\subset\sing\supp u$ for all $u\in C^{-\infty}(M;E)$ (\emph{pseudolocality}).

If $A\in\Psi^m(M;E)$, it defines a bounded operator $A:H^{s+m}(M;E)\to H^s(M;E)$. This can be considered as a closable densely defined operator in $H^s(M;E)$, like in the case of differential operators (\Cref{ss: Sobolev sps}). In the case $s=0$, the adjoint of $A$ is induced by the formal adjoint $A^*\in\Psi^m(M;E)$. The symbol map on $\Psi(M;E)$ is multiplicative and compatible with transposition and taking adjoints.

The class of pseudodifferential operators is preserved by transposition. So any $A\in\Psi^m(M)$ defines a continuous endomorphism $A$ of $C^{-\infty}(M)$ (\Cref{ss: ops}), and $\sing\supp Au\subset\sing\supp u$ for all $u\in C^{-\infty}_{\text{\rm c}}(M)$ (\emph{pseudolocality}).

\subsubsection{Extension to non-compact manifolds}\label{sss: pseudodiff ops - non-compact}

If $M$ is not assumed to be compact, $\Psi(M)$ is similarly defined with the change that any $A\in\Psi^m(M)$ defines continuous linear maps $A:C^{\pm\infty}_{\text{\rm c}}(M)\to C^{\pm\infty}(M)$ and $A:H^{s+m}_{\text{\rm c}}(M)\to H^s_{\text{\rm loc}}(M)$. Thus $\Psi(M)$ is not an algebra in this case. However, if $A\in\Psi^m(M)$ is properly supported (both factor projections $M^2\to M$ restrict to proper maps $\supp K_A\to M$), then it defines a continuous endomorphism $A$ of $C^{-\infty}_{\text{\rm c}}(M;E)$; in this sense, properly supported pseudodifferential operators can be composed. Pseudodifferential operators are properly supported modulo $\Psi^{-\infty}(M)$. Like in the compact case, $\Psi(M)\equiv I(M^2,\Delta)$ becomes a filtered $C^\infty(M^2)$-module and LCHS satisfying the properties indicated in \Cref{sss: conormal - Sobolev order - non-compact,sss: conormal - symbol order - non-compact}.

In the setting of bounded geometry (\Cref{sss: diff ops of bd geom}), properly supported pseudodifferential operators with uniformly bounded symbols, and their uniform ellipticity, were studied in \cite{Kordyukov1991,Kordyukov2000}.

\section{Dual-conormal distributions}\label{s: dual-conormal distribs}

\subsection{Dual-conormal distributions}\label{ss: dual-conormal distribs}

Consider the notation of \Cref{ss: conormal - Sobolev order,ss: conormal - symbol order}. 

\subsubsection{Case of compact manifolds}\label{sss: dual-conormal distribs - compact}

Assume first that $M$ is compact. The space of \emph{dual-conormal distributions} of $M$ at $L$ (or of $(M,L)$) is \cite[Chapter~6]{Melrose1996} \index{$I'(M,L)$}
\begin{equation}\label{I'(M L) = I(M L Omega)'}
I'(M,L)=I(M,L;\Omega)'\;.
\end{equation}

\begin{cor}\label{c: I'(M L) is complete and Montel}
$I'(M,L)$ is a complete Montel space.
\end{cor}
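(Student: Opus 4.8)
The plan is to deduce the two properties of $I'(M,L)=I(M,L;\Omega)'$ directly from the already-established structure of $I(M,L;\Omega)$ in \Cref{c: I(M L) is barreled,c: I(M L) is acyclic and Montel}, using only general duality facts for locally convex spaces. Recall from those results that $I(M,L;\Omega)$ is a barreled, ultrabornological, webbed, acyclic Montel space, hence in particular complete, reflexive, and Montel.

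First I would treat the Montel property of the dual. The standard fact is that the strong dual of a Montel space is again a Montel space (see e.g.\ \cite[6.27.2]{Kothe1969-I}, \cite[IV.5.9]{Schaefer1971}, \cite[Section~8.4.7]{Edwards1965}); this applies verbatim with $X=I(M,L;\Omega)$, which is Montel by \Cref{c: I(M L) is acyclic and Montel}. Thus $I'(M,L)=X'_\beta$ is a Montel space. As a byproduct (though not needed for the statement), $I'(M,L)$ is then barreled and reflexive as well.

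Next I would treat completeness. Here the cleanest route is to invoke the theorem that the strong dual of a barreled (or more generally infrabarreled) space is complete if the space is, say, metrizable or bornological --- but the genuinely relevant statement for us is: the strong dual of a reflexive space is complete, or equivalently, the strong dual of a barreled space whose bounded sets are "small enough" is complete. More robustly, since $I(M,L;\Omega)$ is a Montel space, every bounded set is relatively compact, so bounded subsets are equicontinuous-compatible in a way that makes $X'_\beta$ complete; concretely, one uses that a Montel space is reflexive, and the strong dual of a reflexive space is barreled, and for a Montel (hence semireflexive and barreled) $X$ the strong dual $X'_\beta$ is complete by \cite[IV.6.1]{Schaefer1971} together with the Montel property. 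Alternatively, and perhaps most directly, completeness of $X'_\beta$ follows from the Banach--Mackey/Grothendieck completeness theorem: $X'_\beta$ is complete because $X$ is barreled (so that the strong and weak bounded sets agree and the Grothendieck criterion over the saturated family of bounded sets applies), using \Cref{c: I(M L) is barreled}; see \cite[Chapter~6]{Kothe1969-I} or \cite[21.9]{Kothe1969-I}.

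The main obstacle is essentially bookkeeping rather than mathematics: making sure the quoted duality theorems apply with the correct hypotheses (barreledness for completeness of the strong dual, the Montel property for the Montel property of the strong dual), and citing them precisely; there is no geometric input specific to conormal distributions beyond what \Cref{c: I(M L) is barreled,c: I(M L) is acyclic and Montel} already provide. One mild point to be careful about: $I(M,L;\Omega)$ is the space associated with the density bundle, but by the remarks in \Cref{ss: smooth/distributional sections} on $C^\infty(M)$-tensor products, all the properties of $I(M,L)$ established in \Cref{c: I(M L) is barreled,c: I(M L) is acyclic and Montel} hold equally for $I(M,L;\Omega)$, so the cited results do apply.
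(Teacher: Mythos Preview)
Your approach is essentially the paper's: deduce both properties of $I'(M,L)=I(M,L;\Omega)'$ from general duality facts, using the structure of $I(M,L;\Omega)$ established in \Cref{c: I(M L) is barreled,c: I(M L) is acyclic and Montel}. The Montel half is correct and identical to the paper's argument (the strong dual of a Montel space is Montel, \cite[IV.5.9]{Schaefer1971}, \cite[6.27.2~(2)]{Kothe1969-I}).

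For completeness, however, your argument meanders. The clean route---which you mention first and then set aside---is: $I(M,L;\Omega)$ is bornological (indeed ultrabornological, by the $\Omega M$ version of \Cref{c: I(M L) is barreled}), and the strong dual of any bornological space is complete \cite[IV.6.1]{Schaefer1971}, \cite[Theorem~13.2.13]{NariciBeckenstein2011}. That is exactly the paper's proof. Your alternative routes are shaky: the assertion that barreledness of $X$ alone forces $X'_\beta$ to be complete (via the Grothendieck completeness criterion or otherwise) is not a valid general theorem, and citing \cite[IV.6.1]{Schaefer1971} ``together with the Montel property'' is off-target since IV.6.1 \emph{is} precisely the bornological-implies-complete-dual result and uses no Montel hypothesis. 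Likewise, reflexivity of $X$ yields only quasi-completeness of $X'_\beta$ in general, not completeness. Drop the detours and keep the bornological argument you already identified; then your proof coincides with the paper's.
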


\begin{proof}
Since $I(M,L;\Omega)$ is bornological (the version \Cref{c: I(M L) is barreled} with $\Omega M$), $I'(M,L)$ is complete \cite[IV.6.1]{Schaefer1971}, \cite[Corollary~6.1.18]{PerezCarrerasBonet1987}, \cite[Theorem~13.2.13]{NariciBeckenstein2011}.

Since $I(M,L;\Omega)$ is a Montel space (the version \Cref{c: I(M L) is acyclic and Montel} with $\Omega M$), $I'(M,L)$ is a Montel space \cite[Proposition~3.9.9]{Horvath1966-I}, \cite[6.27.2~(2)]{Kothe1969-I}, \cite[IV.5.9]{Schaefer1971}.
\end{proof}

Let also \index{$I^{\prime\,(s)}(M,L)$} \index{$I^{\prime\,m}(M,L)$}
\begin{equation}\label{I^prime (s)(M L) = I^(-s)(M L Omega)'}
I^{\prime\,(s)}(M,L)=I^{(-s)}(M,L;\Omega)'\;,\quad I^{\prime\,m}(M,L)=I^{-m}(M,L;\Omega)'\;.
\end{equation}

\begin{cor}\label{c: I'^(s)(M L) is bornological}
$I^{\prime\,(s)}(M,L)$ is bornological and barreled.
\end{cor}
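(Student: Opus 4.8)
The plan is to deduce the corollary purely from the duality identity $I^{\prime\,(s)}(M,L)=I^{(-s)}(M,L;\Omega)'$ in~\eqref{I^prime [s](M L) = I^[-s](M L Omega)'} together with the structure of the predual, which has already been settled. So the first step is to invoke the $\Omega M$\nobreakdash-version of \Cref{p: I^(s)(M L) is a totally reflexive Frechet sp}: the space $E:=I^{(-s)}(M,L;\Omega)$ is a totally reflexive Fr\'echet space. In particular $E$ is a reflexive Fr\'echet space.

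Next I would run through the standard chain of implications for the strong dual of a metrizable space. Since $E$ is reflexive, its strong dual $E'_\beta=E'$ is reflexive, hence barreled \cite[IV.5.6]{Schaefer1971}, \cite[6.27.2]{Kothe1969-I}; equivalently, $E$ being reflexive Fr\'echet is distinguished, and a Fr\'echet space is distinguished exactly when its strong dual is barreled \cite{Kothe1969-I,PerezCarrerasBonet1987}. On the other hand, the strong dual of a metrizable LCS is a $(\mathrm{DF})$\nobreakdash-space \cite[29.3]{Kothe1969-I}, \cite[12.4]{NariciBeckenstein2011}, and every barreled (indeed quasi\nobreakdash-barreled) $(\mathrm{DF})$\nobreakdash-space is bornological \cite[29.4]{Kothe1969-I}, \cite[8.3.45]{PerezCarrerasBonet1987}. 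Applying these facts to $E'=I^{\prime\,(s)}(M,L)$ yields that $I^{\prime\,(s)}(M,L)$ is barreled and bornological. (Equivalently, one may quote directly that the strong dual of a distinguished Fr\'echet space is bornological and barreled, which is the same statement packaged as a characterization of distinguishedness.)

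There is no real obstacle in this argument: all of the substantive content is already contained in \Cref{p: I^(s)(M L) is a totally reflexive Frechet sp}, namely that the predual is a (totally) reflexive Fr\'echet space, and the remainder is a routine appeal to the general theory of $(\mathrm{DF})$\nobreakdash-spaces and distinguished Fr\'echet spaces. The only point to be a little careful about is bookkeeping with the density bundle: \Cref{p: I^(s)(M L) is a totally reflexive Frechet sp} is stated for the trivial line bundle, so one should note that its proof (a reduced projective limit of Hilbert spaces, hence Valdivia's criterion \cite[Theorem~4]{Valdivia1989}) applies verbatim to $I^{(-s)}(M,L;\Omega)$, as already agreed in \Cref{ss: smooth/distributional sections}.
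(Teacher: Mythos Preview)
Your proposal is correct and takes essentially the same approach as the paper: both invoke the $\Omega M$-version of \Cref{p: I^(s)(M L) is a totally reflexive Frechet sp} to get that the predual $I^{(-s)}(M,L;\Omega)$ is a reflexive Fr\'echet space, and then appeal to standard duality facts. The only cosmetic difference is the order of deductions: the paper first obtains \emph{bornological} directly (citing \cite[Corollary~1 of~IV.6.6]{Schaefer1971} for the strong dual of a reflexive Fr\'echet space) and then deduces \emph{barreled} from that, whereas you first get barreled via reflexivity and then upgrade to bornological through the DF-space route; both chains are equally valid and rest on the same substantive input.
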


\begin{proof}
Since $I^{(-s)}(M,L;\Omega)$ is a reflexive Fr\'echet space (the version of \Cref{p: I^(s)(M L) is a totally reflexive Frechet sp} with $\Omega M$), $I^{\prime\,(s)}(M,L)$ is bornological \cite[Corollary~1 of~IV.6.6]{Schaefer1971}, and therefore barreled \cite[IV.6.6]{Schaefer1971}. 
\end{proof}

Transposing the versions of~\eqref{I^(s)(M L) subset I^(s')(M L)} and~\eqref{I^m(M L) subset I^m'(M L)} with $\Omega M$, we get continuous restrictions, for $s'<s$ and $m<m'$,
\[
I^{\prime\,(s)}(M,L)\to I^{\prime\,(s')}(M,L)\;,\quad I^{\prime\,m}(M,L)\to I^{\prime\,m'}(M,L)\;.
\]
These maps form projective systems, giving rise to $\varprojlim I^{\prime\,(s)}(M,L)$ as $s\uparrow\infty$ and $\varprojlim I^{\prime\,m}(M,L)$ as $m\downarrow-\infty$. Transposing the versions of~\eqref{C^infty(M) subset I^(infty)(M L)} and~\eqref{sandwich for I} with $\Omega M$, we get continuous inclusions
\begin{equation}\label{C^-infty(M) supset I'(M L) supset C^infty(M)}
C^{-\infty}(M)\supset I'(M,L)\supset C^\infty(M)\;,
\end{equation}
and, for all $m\in\R$ and $\epsilon>0$, continuous restrictions
\begin{equation}\label{sandwich for I'}
I^{\prime\,(-m+n/4-\epsilon)}(M,L)\leftarrow I^{\prime\,m}(M,L)
\leftarrow I^{\prime\,(-m+n/4+\epsilon)}(M,L)\;.
\end{equation}
Thus 
\begin{equation}\label{varprojlim I'^(s)(M L) equiv varprojlim I'^m(M L)}
\varprojlim I^{\prime\,(s)}(M,L)\equiv\varprojlim I^{\prime\,m}(M,L)\;.
\end{equation}

\begin{cor}\label{c: I'(M L) equiv varprojlim I'^(s)(M L)}
$I'(M,L)\equiv\varprojlim I^{\prime\,(s)}(M,L)$.
\end{cor}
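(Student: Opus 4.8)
The plan is to verify directly that the restriction maps $I'(M,L)\to I^{\prime\,(s)}(M,L)$ — the transposes of the inclusions $\iota_s\colon I^{(-s)}(M,L;\Omega)\hookrightarrow I(M,L;\Omega)$ — assemble into a TVS-isomorphism onto the projective limit. Recall from the version of \Cref{p: I^(s)(M L) is a totally reflexive Frechet sp} with $\Omega M$ that $I(M,L;\Omega)=\bigcup_sI^{(-s)}(M,L;\Omega)$ is an LF-space (a cofinal sequence of steps suffices), so those restriction maps are continuous, are compatible with the connecting maps of the projective spectrum $(I^{\prime\,(s)}(M,L))$, and therefore induce a continuous linear map $R\colon I'(M,L)\to\varprojlim I^{\prime\,(s)}(M,L)$. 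What remains is to show that $R$ is bijective and open; morally this is just the general fact that the strong dual of a \emph{regular} LF-space is the reduced projective limit of the strong duals of its steps, specialised here.

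First I would dispose of bijectivity, which is purely formal. Injectivity: a functional on $I(M,L;\Omega)$ vanishing on every $I^{(-s)}(M,L;\Omega)$ vanishes on their union. Surjectivity: a thread $(u_s)\in\varprojlim I^{\prime\,(s)}(M,L)$ satisfies $u_{s'}=u_s|_{I^{(-s')}(M,L;\Omega)}$ whenever $s'<s$, so $u(v):=u_s(v)$ for $v\in I^{(-s)}(M,L;\Omega)$ is a well-defined linear functional on $I(M,L;\Omega)$ whose composite with each $\iota_s$ is $u_s$, hence continuous; by the universal property of the LF-limit, $u\in I(M,L;\Omega)'=I'(M,L)$ and $Ru=(u_s)$.

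The substantive point is openness of $R$, i.e. that the strong topology of $I'(M,L)=I(M,L;\Omega)'_\beta$ coincides with the projective-limit topology carried back through $R$. A fundamental system of $0$-neighbourhoods for the latter is given by the polars of bounded subsets of the steps $I^{(-s)}(M,L;\Omega)$ (finite intersections reduce to this form), while a fundamental system for the strong topology is given by the polars of all bounded subsets of $I(M,L;\Omega)$; one inclusion of filters is immediate, since a subset bounded in a step is bounded in $I(M,L;\Omega)$. For the reverse inclusion — the only place where anything beyond formalities enters — I would use that $I(M,L;\Omega)$ is acyclic (the version of \Cref{c: I(M L) is acyclic and Montel} with $\Omega M$), hence regular (\Cref{ss: TVS}): every bounded $B\subset I(M,L;\Omega)$ then lies in, and is bounded in, some step $I^{(-s)}(M,L;\Omega)$, and since the pairing of $B$ against $I'(M,L)$ factors through $I^{(-s)}(M,L;\Omega)'$, its polar in $I'(M,L)$ is a projective-limit neighbourhood. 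Thus the two topologies agree and $R$ is a TVS-isomorphism. The same argument, run with $I^{\prime\,m}(M,L)$ and the symbol-order filtration in place of the Sobolev-order one, gives the statement again through~\eqref{varprojlim I'^(s)(M L) equiv varprojlim I'^m(M L)}.
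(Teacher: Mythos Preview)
Your argument is correct and is essentially the paper's first proof unpacked: the paper simply cites regularity of $I(M,L;\Omega)$ (from the version of \Cref{c: I(M L) is acyclic and Montel} with $\Omega M$) together with \cite[Lemma~1]{Kucera2004}, which is precisely the general fact you prove by hand here. The paper also records an alternative route via Mackey spaces---using that $I'(M,L)$ is barreled and each $I^{\prime\,(s)}(M,L)$ is bornological, then invoking \cite[Remark of~IV.4.5]{Schaefer1971}---which you do not need.
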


\begin{proof}
This holds because $I(M,L)$ is regular (\Cref{c: I(M L) is acyclic and Montel}) \cite[Lemma~1]{Kucera2004}.

Alternatively, the following argument can be used. $I'(M,L)$ is a Montel space (\Cref{c: I'(M L) is complete and Montel}); in particular, it is barreled, and therefore a Mackey space \cite[IV.3.4]{Schaefer1971}. On the other hand, every $I^{\prime\,(s)}(M,L)$ is bornological  (\Cref{c: I'^(s)(M L) is bornological}), and therefore a Mackey space \cite[Proposition~3.7.2]{Horvath1966-I}, \cite[IV.3.4]{Schaefer1971}, \cite[Theorem~13.2.10]{NariciBeckenstein2011}. So the result follows applying \cite[Remark of~IV.4.5]{Schaefer1971}.
\end{proof}

\subsubsection{Extension to non-compact manifolds}\label{sss: dual-conormal distribs - non-compact}

If $M$ is not supposed to be compact, we can similarly define the space $I'_K(M,L)$ of dual-conormal distributions supported in any compact $K\subset M$. Then define the LCHSs, $I'_{\text{\rm c}}(M,L)=\bigcup_KI'_K(M,L)$ like in~\eqref{Cinftyc(U)}, and $I'(M,L)$ like in~\eqref{I(M L) - non-compact M} using $I'_{\text{\rm c}}(M,L)$ instead of $I_{\text{\rm c}}(M,L)$. These spaces satisfy a version of~\eqref{I'(M L) = I(M L Omega)'}, interchanging arbitrary/compact support like in~\eqref{C^-infty_cdot/c(M;E)}. Given a smooth partition of unity $\{f_j\}$ so that every $K_j:=\supp f_j$ is compact, the multiplication by the functions $f_j$ defines closed TVS-embeddings
\begin{equation}\label{I'(M L) -> ...}
I'(M,L)\to\prod_jI'_{K_j}(M,L)\;,\quad I'_{\text{\rm c}}(M,L)\to\bigoplus_jI'_{K_j}(M,L)\;.
\end{equation}
By the extension of \Cref{c: I(M L) is acyclic and Montel} for $I_{\text{\rm c}}(M,L;\Omega)$, the obvious extension of \Cref{c: I'^(s)(M L) is bornological} for every $I'_{K_j}(M,L)$, the indicated extension of~\eqref{I'(M L) = I(M L Omega)'} and the properties of~\eqref{I'(M L) -> ...}, we get an extension of \Cref{c: I'(M L) is complete and Montel}. 

Similarly, we can define the spaces $I^{\prime\,(s)}_{{\cdot}/\text{\rm c}}(M,L)$ and $I^{\prime\,m}_{{\cdot}/\text{\rm c}}(M,L)$. \index{$I^{\prime\,m}_{{\cdot}/\text{\rm c}}(M,L)$} They satisfy~\eqref{I^prime (s)(M L) = I^(-s)(M L Omega)'} interchanging the support condition, and also obvious versions of~\eqref{C^-infty(M) supset I'(M L) supset C^infty(M)}--\eqref{varprojlim I'^(s)(M L) equiv varprojlim I'^m(M L)}. Since $I_{\text{\rm c}}(M,L)$ is an acyclic Montel space (\Cref{sss: conormal - symbol order - non-compact}), there is an extension of \Cref{c: I'(M L) equiv varprojlim I'^(s)(M L)} for $I'(M,L)$.

\subsection{Differential operators on dual-conormal distributional sections}
\label{ss: diff opers on dual-conormal distribs}

For any $A\in\Diff(M;E)$, consider $A^t\in\Diff(M;E^*\otimes\Omega)$. The transpose of $A^t$ on $I_{\text{\rm c}/{\cdot}}(M,L;E^*\otimes\Omega)$ is a continuous endomorphism $A$ of $I'_{{\cdot}/\text{\rm c}}(M,L;E)$, which is a restriction of the map $A$ on $C^{-\infty}(M;E)$ (\Cref{ss: diff ops}). By~\eqref{A: I^(s)(M L E) -> I^[s-k](M L E)}, if $A\in\Diff^m(M;E)$, we get induced continuous linear maps
\begin{equation}\label{A: I^prime [s](M L E) -> I^prime (s-m)(M L E)}
A:I^{\prime\,(s)}_{{\cdot}/\text{\rm c}}(M,L;E)\to I^{\prime\,(s-m)}_{{\cdot}/\text{\rm c}}(M,L;E)\;,
\end{equation}
If $A\in\Diff(M,L;E)$, the transpose of $A^t$ of $I^{(-s)}_{\text{\rm c}/{\cdot}}(M,L;E^*\otimes\Omega)$ is a continuous endomorphism $A$ of $I^{\prime\,(s)}_{{\cdot}/\text{\rm c}}(M,L;E)$.

\subsection{Pull-back of dual-conormal distributions}
\label{ss: pull-back of dual-conormal distributions}

If the conditions of \Cref{ss: push-forward of conormal distribs} hold, transposing the versions of~\eqref{phi_*: I^m_c(M' L' Omega_fiber) -> I^m+l/2-k/4_c(M L)} and~\eqref{phi_*: I_c(M' L' Omega_fiber) -> I_c(M L)} with $\Omega M$ and $-m$, we get continuous linear pull-back maps 
\begin{gather}
\phi^*:I^{\prime\,m}(M,L)\to I^{\prime\,m+l/2-k/4}(M',L')\quad(m\in\R)\;,
\label{phi^*: I^prime m(M L) -> I^prime m+l/2-k/4(M' L')}\\
\phi^*:I'(M,L)\to I'(M',L')\;.\label{phi^*: I'(M L) -> I'(M' L')}
\end{gather}
The map~\eqref{phi^*: I'(M L) -> I'(M' L')} is an extension of~\eqref{phi^*: C^infty(M E) -> C^infty(M' phi^*E)}, a restriction of~\eqref{phi_*: C^-infty(M E) -> C^-infty(M' phi^*E)} and the projective limit of the maps~\eqref{phi^*: I^prime m(M L) -> I^prime m+l/2-k/4(M' L')}. If $\phi$ is a local diffeomorphism, then~\eqref{phi^*: I'(M L) -> I'(M' L')} is compatible with the Sobolev and symbol order filtrations.

\subsection{Push-forward of dual-conormal distributions}
\label{ss: push-forward of dual-conormal distributions}

With the notation of \Cref{ss: pull-back of conormal distribs}, if $\phi$ is a submersion, transposing the versions of~\eqref{phi^*: I^m(M L) -> I^m+k/4(M' L')} and~\eqref{phi^*: I(M L) -> I(M' L')} with $\Omega M$ and $-m$, we get continuous linear push-forward maps
\begin{gather}
\phi_*:I^{\prime\,m}_{\text{\rm c}}(M',L'\otimes\Omega_{\text{\rm fiber}})\to I^{\prime\,m-k/4}_{\text{\rm c}}(M,L)\quad(m\in\R)\;, \label{phi_*: I^prime m_c(M' L' Omega_fiber) -> I^prime m-k/4_c(M L)}\\
\phi_*:I'_{\text{\rm c}}(M',L';\Omega_{\text{\rm fiber}})\to I'_{\text{\rm c}}(M,L)\;.
\label{phi_*: I'_c(M' L' Omega_fiber) -> I'_c(M L)}
\end{gather}
The map~\eqref{phi_*: I'_c(M' L' Omega_fiber) -> I'_c(M L)} is an extension of~\eqref{phi_*: C^infty_c(M' phi^*E otimes Omega_fiber) -> C^infty_c(M E)}, a restriction of~\eqref{phi_*: C^-infty_c(M' phi^*E otimes Omega_fiber) -> C^-infty_c(M E)} and the projective limit of the maps~\eqref{phi_*: I^prime m_c(M' L' Omega_fiber) -> I^prime m-k/4_c(M L)}. If $\phi$ is a local diffeomorphism, then~\eqref{phi_*: I'_c(M' L' Omega_fiber) -> I'_c(M L)} is compatible with the Sobolev and symbol order filtrations.

\section{Conormal distributions at the boundary}\label{s: conormal distribs at the boundary}

For the sake of simplicity, in this section and in \Cref{s: conormal seq,s: dual-conormal seq}, we only consider the case of compact manifolds unless otherwise stated. But the concepts, notation and some of the results, can be extended to the non-compact case like in \Cref{sss: conormal - Sobolev order - non-compact,sss: conormal - symbol order - non-compact,sss: dual-conormal distribs - non-compact}, using arbitrary/compact support conditions. Such extensions to non-compact manifolds may be used without further comment.

\subsection{Some notions of b-geometry}\label{ss: b-geometry}

R.~Melrose introduced b-calculus, a way to extend calculus to manifolds with boundary \cite{Melrose1993,Melrose1996}. We will only use a part of it called small b-calculus. Let $M$ be a compact (smooth) $n$-manifold with boundary; its interior is denoted by $\mathring M$. \index{$\mathring M$} There exists a function $x\in C^\infty(M)$ so that $x\ge0$, $\partial M=\{x=0\}$ (i.e., $x^{-1}(0)$) and $dx\ne0$ on $\partial M$, which is called a \emph{boundary defining function}. Let ${}_+N\partial M\subset N\partial M$ be the inward-pointing subbundle of the normal bundle to the boundary. There is a unique trivialization $\nu\in C^\infty(\partial M;{}_+N\partial M)$ of ${}_+N\partial M$ so that $dx(\nu)=1$. Take a collar neighborhood $T\equiv[0,\epsilon_0)_x\times\partial M$ of $\partial M$, whose projection $\varpi:T\to\partial M$ is the second factor projection. (In a product expression, every factor projection may be indicated as subscript of the corresponding factor.) Given coordinates $y=(y^1,\dots,y^{n-1})$ on some open $V\subset\partial M$, we get via $\varpi$ coordinates $(x,y)=(x,y^1,\dots,y^{n-1})$ adapted (to $\partial M$) on the open subset $U\equiv[0,\epsilon_0)\times V\subset M$. There are vector bundles over $M$, $\bT M$ and $\bT^*M$, \index{$\bT M$} \index{$\bT^*M$} called \emph{b-tangent} and \emph{b-cotangent} bundles, which have the same restrictions as $TM$ and $T^*M$ to $\mathring M$, and such that $x\partial_x,\partial_{y^1},\dots,\partial_{y^{n-1}}$ and $x^{-1}dx,dy^1,\dots,dy^{n-1}$ extend to smooth local frames around boundary points. This gives rise to versions of induced vector bundles, like $\bOmega^sM:=\Omega^s(\bT M)$ ($s\in\R$) and $\bOmega M:=\bOmega^1M$. Clearly,
\begin{equation}\label{C^infty(M Omega^s) equiv x^s C^infty(M bOmega^s)}
C^\infty(M;\Omega^s)\equiv x^sC^\infty(M;\bOmega^s)\;.
\end{equation}
Thus the integration operator $\int_M$ is defined on $xC^\infty(M;\bOmega)$, and induces a pairing between $C^\infty(M)$ and $xC^\infty(M;\bOmega)$.

At the points of $\partial M$, the local section $x\partial_x$ is independent of the choice of adapted local coordinates, spanning a trivial line subbundle ${}^{\text{\rm b}}\!N\partial M\subset\bT_{\partial M}M$ with $T\partial M=\bT_{\partial M}M/{}^{\text{\rm b}}\!N\partial M$. So $\bOmega^s_{\partial M}M\equiv\Omega^s\partial M\otimes\Omega^s({}^{\text{\rm b}}\!N\partial M)$, and a restriction map $C^\infty(M;\bOmega^s)\to C^\infty(\partial M;\Omega^s)$ is locally given by
\[
u=a(x,y)\,\Big|\frac{dx}{x}dy\Big|^s\mapsto u|_{\partial M}=a(0,y)\,|dy|^s\;.
\]

A Euclidean structure $g$ on $\bT M$ is called a \emph{b-metric}. Locally,
\[
g=a_0\Big(\frac{dx}{x}\Big)^2+2\sum_{j=1}^{n-1}a_{0j}\,\frac{dx}{x}\,dy^j
+\sum_{j,k=1}^{n-1}a_{jk}\,dy^j\,dy^k\;,
\]
where $a_0$, $a_{0j}$ and $a_{jk}$ are $C^\infty$ functions, on condition that $g$ is positive definite. If moreover $a_0=1+O(x^2)$ and $a_{0j}=O(x)$ as $x\downarrow0$, then $g$ is called \emph{exact}. In this case, the restriction of $g$ to $\mathring T\equiv(0,\epsilon_0)\times\partial M$ is asymptotically cylindrical, and therefore $g|_{\mathring M}$ is complete. This restriction is of bounded geometry if it is cylindrical around the boundary; i.e., $g=(\frac{dx}{x})^2+h$ on $\mathring T$ for (the pull-back via $\varpi$ of) some Riemannian metric $h$ on $\partial M$, taking $\epsilon_0$ small enough; i.e., $a_0=1$ and $a_{0j}=0$ using adapted local coordinates.

\subsection{Supported and extendible functions}\label{ss: supported and extendible smooth funcs}

Let $\breve M$ \index{$\breve M$} be any closed manifold of dimension $n$ which contains $M$ as submanifold (for instance, $\breve M$ could be the double of $M$), and let $M'=\breve M\setminus\mathring M$, which is another compact $n$-submanifold with boundary of $\breve M$, with dimension $n$ and $\partial M'=M\cap M'=\partial M$. 

The concepts, notation and conventions of \Cref{ss: smooth/distributional sections} have straightforward extensions to manifolds with boundary, like the Fr\'echet space $C^\infty(M)$. Its elements are called \emph{extendible functions} because the continuous linear restriction map
\begin{equation}\label{R: C^infty(widetilde M) -> C^infty(M)}
R:C^\infty(\breve M)\to C^\infty(M)
\end{equation}
is surjective; in fact, there is a continuous linear extension map $E:C^\infty(M)\to C^\infty(\breve M)$ \cite{Seeley1964}. Since $C^\infty(\breve M)$ and $C^\infty(M)$ are Fr\'echet spaces, the map~\eqref{R: C^infty(widetilde M) -> C^infty(M)} is open by the open mapping theorem, and therefore it is a surjective topological homomorphism. Its null space is $C^\infty_{M'}(\breve M)$.

The Fr\'echet space of \emph{supported} functions is the closed subspace of the smooth functions on $M$ that vanish to all orders at the points of $\partial M$, \index{$\dot C^\infty(M)$}
\begin{equation}\label{dot C^infty(M) = bigcap_m ge 0 x^m C^infty(M) subset C^infty(M)}
\dot C^\infty(M)=\bigcap_{m\ge0}x^mC^\infty(M)\subset C^\infty(M)\;.
\end{equation}
The extension by zero realizes $\dot C^\infty(M)$ as the closed subspace of functions on $\breve M$ supported in $M$,
\begin{equation}\label{dot C^infty(M) subset C^infty(breve M)}
\dot C^\infty(M)\equiv C^\infty_M(\breve M)\subset C^\infty(\breve M)\;.
\end{equation} 
By~\eqref{dot C^infty(M) = bigcap_m ge 0 x^m C^infty(M) subset C^infty(M)},
\begin{equation}\label{x^m dot C^infty(M) = dot C^infty(M)}
x^m\dot C^\infty(M)=\dot C^\infty(M)\quad(m\in\R)\;,
\end{equation}
and therefore, by~\eqref{C^infty(M Omega^s) equiv x^s C^infty(M bOmega^s)},
\begin{equation}\label{dot C^infty(M bOmega^s) equiv dot C^infty(M Omega^s)}
\dot C^\infty(M;\bOmega^s)\equiv\dot C^\infty(M;\Omega^s)\quad(s\in\R)\;.
\end{equation}

We can similarly define Banach spaces $C^k(M)$ and $\dot C^k(M)$ ($k\in\N_0$) satisfying the analogs of~\eqref{R: C^infty(widetilde M) -> C^infty(M)}--\eqref{dot C^infty(M) subset C^infty(breve M)}, which in turn yield analogs of the first inclusions of~\eqref{C^prime -k'(M E) supset C^prime -k(M E)}, obtaining $C^\infty(M)=\bigcap_kC^k(M)$ and $\dot C^\infty(M)=\bigcap_k\dot C^k(M)$.

\subsection{Supported and extendible distributions}\label{ss: supported and extendible distribs}

The spaces of \emph{supported} and \emph{extendible} distributions on $M$ are \index{$\dot C^{-\infty}(M)$}
\[
\dot C^{-\infty}(M)=C^\infty(M;\Omega)'\;,\quad C^{-\infty}(M)=\dot C^\infty(M;\Omega)'\;.
\]
Transposing the version of~\eqref{R: C^infty(widetilde M) -> C^infty(M)} with $\Omega M$, we get \cite[Proposition~3.2.1]{Melrose1996}
\begin{equation}\label{dot C^-infty(M) subset C^-infty(breve M)}
\dot C^{-\infty}(M)\equiv C^{-\infty}_M(\breve M)\subset C^{-\infty}(\breve M)\;.
\end{equation}
Similarly,~\eqref{dot C^infty(M) subset C^infty(breve M)} and~\eqref{dot C^infty(M) = bigcap_m ge 0 x^m C^infty(M) subset C^infty(M)} give rise to continuous linear restriction maps
\begin{gather}
R:C^{-\infty}(\breve M)\to C^{-\infty}(M)\;,\label{R: C^-infty(breve M) -> C^-infty(M)}\\
R:\dot C^{-\infty}(M)\to C^{-\infty}(M)\;,\label{R: dot C^-infty(M) -> C^-infty(M)}
\end{gather}
which are surjective by the Hahn-Banach theorem. Their null spaces are $C^{-\infty}_{M'}(\breve M)=\dot C^{-\infty}(M')$ and $\dot C^{-\infty}_{\partial M}(M)$ \cite[Proposition~3.3.1]{Melrose1996}, respectively. According to~\eqref{dot C^-infty(M) subset C^-infty(breve M)}, the map~\eqref{R: dot C^-infty(M) -> C^-infty(M)} is a restriction of~\eqref{R: C^-infty(breve M) -> C^-infty(M)}. As a consequence of~\eqref{dot C^-infty(M) subset C^-infty(breve M)}, there are continuous dense inclusions \cite[Lemma~3.2.1]{Melrose1996}
\begin{equation}\label{Cinftyc(mathring M) subset dot C^infty(M) subset C^infty(M) subset dot C^-infty(M)}
\Cinftyc(\mathring M)\subset\dot C^\infty(M)\subset C^\infty(M)\subset\dot C^{-\infty}(M)\;,
\end{equation}
the last one given by the integration pairing between $C^\infty(M)$ and $C^\infty(M;\Omega)$. The restriction of this pairing to $\dot C^\infty(M;\Omega)$ induces a continuous dense inclusion
\begin{equation}\label{C^infty(M) subset C^-infty(M)}
C^\infty(M)\subset C^{-\infty}(M)\;.
\end{equation}
Moreover~\eqref{R: dot C^-infty(M) -> C^-infty(M)} is the identity map on $C^\infty(M)$. 

As before, from~\eqref{x^m dot C^infty(M) = dot C^infty(M)} and~\eqref{dot C^infty(M bOmega^s) equiv dot C^infty(M Omega^s)}, we get
\begin{align}
x^mC^{-\infty}(M)&=C^{-\infty}(M)\quad(m\in\R)\;,\label{x^m C^-infty(M) = C^-infty(M)}\\
C^{-\infty}(M;\bOmega^s)&\equiv C^{-\infty}(M;\Omega^s)\quad(s\in\R)\;.\label{C^-infty(M bOmega^s) equiv C^-infty(M Omega^s)}
\end{align} 

The Banach spaces $C^{\prime-k}(M)$ and $\dot C^{\prime-k}(M)$ ($k\in\N_0$) \index{$\dot C^{\prime-k}(M)$} are similarly defined and satisfy the analogs of~\eqref{dot C^-infty(M) subset C^-infty(breve M)}--\eqref{C^-infty(M bOmega^s) equiv C^-infty(M Omega^s)}. These spaces satisfy the analogs of the second inclusions of~\eqref{C^prime -k'(M E) supset C^prime -k(M E)}, obtaining $\bigcup_kC^{\prime-k}(M)=C^{-\infty}(M)$ and $\bigcup_k\dot C^{\prime-k}(M)=\dot C^{-\infty}(M)$.

\subsection{Supported and extendible Sobolev spaces}\label{ss: supported and extendible Sobolev sps} 

The \emph{supported} Sobolev space of order $s\in\R$ is the closed subspace of the elements supported in $M$, \index{$\dot H^s(M)$}
\begin{equation}\label{dot H^s(M) subset H^s(breve M)}
\dot H^s(M)=H^s_M(\breve M)\subset H^s(\breve M)\;.
\end{equation}
On the other hand, using the map~\eqref{R: dot C^-infty(M) -> C^-infty(M)}, the \emph{extendible} Sobolev space of order $s$ is $H^s(M)=R(H^s(\breve M))$ with the inductive topology given by 
\begin{equation}\label{R: H^s(breve M) -> H^s(M)}
R:H^s(\breve M)\to H^s(M)\;;
\end{equation}
i.e., this is a surjective topological homomorphism. Its null space is $H^s_{M'}(\breve M)$.
The analogs of~\eqref{H^s(M) subset H^s'(M)}--\eqref{C^infty(M) = bigcap_s H^s(M)} hold true in this setting using $\dot C^{\pm\infty}(M)$ and $C^{\pm\infty}(M)$.  Furthermore the analogs of~\eqref{H^s(M) subset H^s'(M)} are also compact operators because~\eqref{dot H^s(M) subset H^s(breve M)} is a closed embedding and~\eqref{R: H^s(breve M) -> H^s(M)} a surjective topological homomorphism.

The following properties are satisfied \cite[Proposition~3.5.1]{Melrose1996}. $C^\infty(M)$ is dense in $H^s(M)$, we have
\begin{equation}\label{dot H^s(M) equiv H^-s(M Omega)'}
\dot H^s(M)\equiv H^{-s}(M;\Omega)'\;,\quad H^s(M)\equiv\dot H^{-s}(M;\Omega)'\;,
\end{equation}
and the map~\eqref{R: dot C^-infty(M) -> C^-infty(M)} has a continuous restriction
\begin{equation}\label{R: dot H^s(M) -> H^s(M)}
R:\dot H^s(M)\to H^s(M)\;,
\end{equation}
which is surjective if $s\le1/2$, and injective if $s\ge-1/2$. In particular, $\dot H^0(M)\equiv H^0(M)\equiv L^2(M)$. The null space of~\eqref{R: dot H^s(M) -> H^s(M)} is $\dot H^s_{\partial M}(M)$.

Since $\dot H^s(M)$ and $H^s(M)$ form compact spectra of Hilbertian spaces, we get the following result.

\begin{prop}\label{p: dot C^-infty(M) and C^-infty(M) are barreled ...}
$\dot C^{-\infty}(M)$ and $C^{-\infty}(M)$ are barreled, ultrabornological, webbed, acyclic DF Montel spaces, and therefore complete, boundedly retractive and reflexive.
\end{prop}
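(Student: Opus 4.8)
The plan is to realize $\dot C^{-\infty}(M)$ and $C^{-\infty}(M)$ as locally convex inductive limits of \emph{compact} sequences of Hilbertian spaces, and then to read off each of the listed properties from the general facts assembled in \Cref{ss: TVS} together with the Sobolev-space facts recalled just above in \Cref{ss: supported and extendible Sobolev sps}. There is no genuine obstacle: the statement is a bookkeeping corollary of that machinery, and the only point that really needs attention is the identification of the topology on each space (defined as a strong dual) with the relevant inductive limit topology.

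First I would fix a sequence $s_k\downarrow-\infty$. By the analogs of~\eqref{H^s(M) subset H^s'(M} and~\eqref{C^infty(M) = bigcap_s H^s(M)} for supported and extendible Sobolev spaces, together with the observation in \Cref{ss: TVS} that a union over the real parameter $s$ is computed along any such cofinal sequence,
\[
\dot C^{-\infty}(M)\equiv\bigcup_k\dot H^{s_k}(M)\;,\qquad C^{-\infty}(M)\equiv\bigcup_kH^{s_k}(M)\;,
\]
with the inductive limit topologies. Since each $\dot H^{s_k}(M)$ and each $H^{s_k}(M)$ is Hilbertian, hence Fr\'echet, both spaces are LF-spaces; as such they are ultrabornological, since this is inherited from the Fr\'echet steps through the inductive limit \cite[Example~13.2.8~(d) and Theorem~13.2.11]{NariciBeckenstein2011}, hence barreled \cite[Observation~6.1.2~(b)]{PerezCarrerasBonet1987}, and they are webbed \cite[Theorem~14.6.5]{NariciBeckenstein2011}.

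Next, as noted after~\eqref{R: dot H^s(M) -> H^s(M)}, the inclusions $\dot H^{s}(M)\subset\dot H^{s'}(M)$ and $H^{s}(M)\subset H^{s'}(M)$ for $s'<s$ are compact operators, so both spectra above are compact; hence they are acyclic (\Cref{ss: TVS}), and by \cite[Theorem~6']{Komatsu1967} the spaces $\dot C^{-\infty}(M)$ and $C^{-\infty}(M)$ are complete bornological DF Montel spaces. Being Montel they are in particular reflexive \cite[6.27.2~(1)]{Kothe1969-I}, \cite[IV.5.8]{Schaefer1971}; being acyclic LF-spaces with Fr\'echet steps, they are boundedly retractive \cite[Proposition~6.4]{Wengenroth2003}. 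This accounts for all the asserted properties. The one place a word is warranted is the identification of the locally convex topology of $\dot C^{-\infty}(M)=C^\infty(M;\Omega)'$ and of $C^{-\infty}(M)=\dot C^\infty(M;\Omega)'$ with the inductive limit topologies displayed above; this is precisely the content of the analog of~\eqref{C^infty(M) = bigcap_s H^s(M)} invoked in the first step, resting on the dualities~\eqref{dot H^s(M) equiv H^-s(M Omega)'} and on~\eqref{R: H^s(breve M) -> H^s(M)} being a surjective topological homomorphism.
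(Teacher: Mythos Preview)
Your proof is correct and follows essentially the same approach as the paper: the paper's ``proof'' is the single sentence preceding the proposition, namely that $\dot H^s(M)$ and $H^s(M)$ form compact spectra of Hilbertian spaces, from which the result follows by the general facts in \Cref{ss: TVS}; you have simply made those deductions explicit. Your attention to the identification of the strong dual topology with the inductive limit topology is appropriate, though the paper takes this for granted as part of the asserted analogs of~\eqref{C^infty(M) = bigcap_s H^s(M)} in \Cref{ss: supported and extendible Sobolev sps}.
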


\begin{prop}\label{p: R: dot C^-infty(M) -> C^-infty(M) is a top hom}
The maps~\eqref{R: C^-infty(breve M) -> C^-infty(M)} and~\eqref{R: dot C^-infty(M) -> C^-infty(M)} are surjective topological homomorphisms.
\end{prop}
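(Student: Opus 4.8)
The plan is to apply De~Wilde's open mapping theorem. The surjectivity of both maps has already been recorded (immediately after their definition, via the Hahn--Banach theorem), and both are continuous by construction, so only openness remains to be shown. Recall that a continuous linear surjection from a webbed LCS onto an ultrabornological LCS is automatically open, hence a topological homomorphism \cite{DeWilde1978,NariciBeckenstein2011}.

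First I would observe that the domains $C^{-\infty}(\breve M)$ and $\dot C^{-\infty}(M)$ are webbed: for $C^{-\infty}(\breve M)$ this is the general fact recalled in \Cref{ss: smooth/distributional sections} (strong duals of strict inductive limits of sequences of metrizable LCSs are webbed), and for $\dot C^{-\infty}(M)$ it is part of \Cref{p: dot C^-infty(M) and C^-infty(M) are barreled ...}. The codomain $C^{-\infty}(M)$ is ultrabornological, again by \Cref{p: dot C^-infty(M) and C^-infty(M) are barreled ...}. Applying De~Wilde's open mapping theorem to \eqref{R: C^-infty(breve M) -> C^-infty(M)} and to \eqref{R: dot C^-infty(M) -> C^-infty(M)} then yields that each is open, completing the proof. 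There is no genuine analytic obstacle here; the only point requiring a little care is checking that the hypotheses invoked are exactly those supplied by the preceding results, which they are.

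Alternatively, one could argue directly from the level-wise statements, bypassing the open mapping theorem. One uses that $C^{-\infty}(\breve M)=\bigcup_sH^s(\breve M)$, $\dot C^{-\infty}(M)=\bigcup_s\dot H^s(M)$ and $C^{-\infty}(M)=\bigcup_sH^s(M)$ are unions of compact, hence acyclic, Hilbertian spectra; that at each level \eqref{R: H^s(breve M) -> H^s(M)} is a surjective topological homomorphism; and that \eqref{R: dot H^s(M) -> H^s(M)} is a surjective continuous linear map of Hilbertian spaces for $s\le 1/2$, hence a topological homomorphism by the Banach open mapping theorem, over a range of indices that still exhausts all three spaces. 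Passing to the inductive limit then uses the standard homological fact that a level-wise topological homomorphism between acyclic inductive spectra induces a topological homomorphism of the limits.
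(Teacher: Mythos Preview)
Your proof is correct and follows essentially the same route as the paper: both note that continuity and surjectivity were already established, and then invoke De~Wilde's open mapping theorem using that the domains are webbed (from \Cref{ss: smooth/distributional sections} and \Cref{p: dot C^-infty(M) and C^-infty(M) are barreled ...}) and the codomain $C^{-\infty}(M)$ is ultrabornological (\Cref{p: dot C^-infty(M) and C^-infty(M) are barreled ...}). Your alternative level-wise argument via the Sobolev spectra is a legitimate second route, but the paper does not pursue it.
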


\begin{proof}
We already know that these maps are linear, continuous and surjective. Since $C^{-\infty}(\breve M)$ is webbed, and $\dot C^{-\infty}(M)$ and $C^{-\infty}(M)$ are webbed and ultrabornological (\Cref{p: dot C^-infty(M) and C^-infty(M) are barreled ...}), the stated maps are also open by the open mapping theorem \cite[7.35.3~(1)]{Kothe1979-II}, \cite[Exercise~14.202~(a)]{NariciBeckenstein2011}, \cite[Section~IV.5]{DeWilde1978}, \cite{Bourles2014}.
\end{proof}

\subsection{The space $\dot C^{-\infty}_{\partial M}(M)$}\label{ss: dot C^-infty_partial M(M)} 

Consider the sequences
\begin{gather}
0\to\dot C^{-\infty}(M') \xrightarrow{\iota} C^{-\infty}(\breve M) \xrightarrow{R} C^{-\infty}(M)\to0\;,
\label{0 -> dot C^-infty(M') -> C^-infty(breve M) -> C^-infty(M) -> 0}\\
0\to\dot C^{-\infty}_{\partial M}(M) \xrightarrow{\iota} \dot C^{-\infty}(M) \xrightarrow{R} C^{-\infty}(M)\to0\;.
\label{0 -> dot C^-infty_partial M(M) -> dot C^-infty(M) -> C^-infty(M) -> 0}
\end{gather}
\Cref{p: R: dot C^-infty(M) -> C^-infty(M) is a top hom} has the following direct consequence.

\begin{cor}\label{c: 0 -> dot C^-infty_partial M(M) -> dot C^-infty(M) -> C^-infty(M) -> 0 is exact}
The sequences~\eqref{0 -> dot C^-infty(M') -> C^-infty(breve M) -> C^-infty(M) -> 0} and~\eqref{0 -> dot C^-infty_partial M(M) -> dot C^-infty(M) -> C^-infty(M) -> 0} are exact sequences in the category of continuous linear maps between LCSs.
\end{cor}

From~\eqref{dot C^-infty(M) subset C^-infty(breve M)}, we get \index{$\dot C^{-\infty}_{\partial M}(M)$}
\begin{equation}\label{dot C^-infty_partial M(M) equiv C^-infty_partial M(breve M)}
\dot C^{-\infty}_{\partial M}(M)\equiv C^{-\infty}_{\partial M}(\breve M)\subset C^{-\infty}(\breve M)\;.
\end{equation}

The analogs of the second inclusion of~\eqref{C^prime -k'(M E) supset C^prime -k(M E)},~\eqref{H^s(M) subset H^s'(M)} and~\eqref{H^-s(M) supset C^prime -k(M) supset H^-k(M)} for the spaces $\dot C^{\prime\,-k}(M)$ and $\dot H^s(M)$ yield corresponding analogs for the spaces $\dot C^{\prime\,-k}_{\partial M}(M)$ and $\dot H^s_{\partial M}(M)$. Thus the spaces $\dot C^{\prime\,-k}_{\partial M}(M)$ \index{$\dot C^{\prime\,-k}_{\partial M}(M)$} and $\dot H^s_{\partial M}(M)$ \index{$\dot H^s_{\partial M}(M)$} form spectra with the same union; the spectrum of spaces $\dot H^s_{\partial M}(M)$ is compact.

\begin{prop}\label{p: dot C^-infty_partial M(M) = bigcup_s H^s_partial M(M)}
$\dot C^{-\infty}_{\partial M}(M)$ is a limit subspace of the LF-space $\dot C^{-\infty}(M)$.
\end{prop}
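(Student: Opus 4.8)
The plan is to apply the characterization of limit subspaces recalled in \Cref{ss: TVS}. Write $\dot C^{-\infty}(M)=\bigcup_k\dot H^{s_k}(M)$ as an LF-space, for a sequence $s_k\downarrow-\infty$ (this is the description of $\dot C^{-\infty}(M)$ underlying \Cref{p: dot C^-infty(M) and C^-infty(M) are barreled ...}). Since the restriction maps $R$ on the $\dot H^s(M)$ are all restrictions of \eqref{R: dot C^-infty(M) -> C^-infty(M)}, we have $\dot H^s_{\partial M}(M)=\dot H^s(M)\cap\dot C^{-\infty}_{\partial M}(M)$ for every $s$. Thus, by the cited characterization, it suffices to prove that the quotient spectrum formed by the spaces $\dot H^{s_k}(M)/\dot H^{s_k}_{\partial M}(M)$ is acyclic.

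To this end I would identify this quotient spectrum. By definition $\dot H^s_{\partial M}(M)$ is the null space of the surjective topological homomorphism $R:\dot H^s(M)\to H^s(M)$ of \eqref{R: dot H^s(M) -> H^s(M)}, so $R$ descends to a topological isomorphism $\dot H^s(M)/\dot H^s_{\partial M}(M)\xrightarrow{\cong}H^s(M)$ for every $s$. Moreover, because all these maps $R$ are restrictions of a single map, they commute with the inclusions $\dot H^{s_k}(M)\subset\dot H^{s_{k+1}}(M)$ and $H^{s_k}(M)\subset H^{s_{k+1}}(M)$; hence, under the isomorphisms just obtained, the connecting maps of the quotient spectrum correspond precisely to the inclusions $H^{s_k}(M)\hookrightarrow H^{s_{k+1}}(M)$. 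In other words, the quotient spectrum is isomorphic, as an inductive spectrum of LCSs, to $(H^{s_k}(M))_k$.

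Finally, $(H^{s_k}(M))_k$ is a compact spectrum: the analogs of the Rellich inclusions \eqref{H^s(M) subset H^s'(M} in the boundary setting are compact operators (\Cref{ss: supported and extendible Sobolev sps}). A compact spectrum is acyclic (\Cref{ss: TVS}), so the quotient spectrum $(\dot H^{s_k}(M)/\dot H^{s_k}_{\partial M}(M))_k$ is acyclic, and therefore $\dot C^{-\infty}_{\partial M}(M)$ is a limit subspace of $\dot C^{-\infty}(M)$; in particular $\dot C^{-\infty}_{\partial M}(M)\equiv\bigcup_s\dot H^s_{\partial M}(M)$ carries the inductive limit topology. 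There is no real obstacle in this argument; the only step meriting attention is the passage to the quotient, which is harmless exactly because $R:\dot H^s(M)\to H^s(M)$ is a \emph{topological} homomorphism onto by the very definition of the extendible Sobolev space topology. Alternatively one could avoid the quotient criterion and verify directly that the subspace topology induced on $\dot C^{-\infty}_{\partial M}(M)$ by $\dot C^{-\infty}(M)$ agrees with the LF-topology of $\bigcup_s\dot H^s_{\partial M}(M)$, using that $(\dot H^s_{\partial M}(M))$ is itself a compact (hence boundedly retractive) spectrum; but the quotient formulation is shorter.
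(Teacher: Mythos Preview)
Your approach is essentially the paper's: both use the characterization from \Cref{ss: TVS} that $Y\subset X$ is a limit subspace iff the quotient spectrum $(X_k/Y_k)$ is acyclic, identify this quotient spectrum with $(H^{s_k}(M))$ via the restriction map, and invoke its acyclicity. The paper simply packages the last step more tersely as ``$\dot C^{-\infty}(M)/\dot C^{-\infty}_{\partial M}(M)\equiv C^{-\infty}(M)$ is acyclic,'' quoting \Cref{p: R: dot C^-infty(M) -> C^-infty(M) is a top hom,p: dot C^-infty(M) and C^-infty(M) are barreled ...}.

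Two minor corrections to your write-up. First, the map $R:\dot H^s(M)\to H^s(M)$ of~\eqref{R: dot H^s(M) -> H^s(M)} is surjective only for $s\le 1/2$ (\Cref{ss: supported and extendible Sobolev sps}), not for all $s$; you must therefore choose the sequence with $s_k\le 1/2$, which is harmless since any such sequence still yields $\dot C^{-\infty}(M)=\bigcup_k\dot H^{s_k}(M)$. Second, the topology on $H^s(M)$ is by definition the quotient topology from $H^s(\breve M)$ (see~\eqref{R: H^s(breve M) -> H^s(M)}), not from $\dot H^s(M)$; so the fact that $R:\dot H^s(M)\to H^s(M)$ is a \emph{topological} homomorphism is not literally ``by the very definition'' but follows from the open mapping theorem, both spaces being Hilbertian. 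With these tweaks the argument is complete.
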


\begin{proof}
By \Cref{p: R: dot C^-infty(M) -> C^-infty(M) is a top hom,p: dot C^-infty(M) and C^-infty(M) are barreled ...}, $\dot C^{-\infty}(M)/\dot C^{-\infty}_{\partial M}(M)\equiv C^{-\infty}(M)$ is acyclic, which is equivalent to the statement.
\end{proof}

The following analog of \Cref{p: dot C^-infty(M) and C^-infty(M) are barreled ...} hold true with the same arguments, applying \Cref{p: dot C^-infty_partial M(M) = bigcup_s H^s_partial M(M)} and using that the Hilbertian spaces $\dot H^s_{\partial M}(M)$ form a compact spectrum.

\begin{cor}\label{c: dot C^-infty_partial M(M) is barreled ...}
$\dot C^{-\infty}_{\partial M}(M)$ is barreled, ultrabornological, webbed acyclic DF Montel space, and therefore complete, boundedly retractive and reflexive.
\end{cor}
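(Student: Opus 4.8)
The plan is to realize $\dot C^{-\infty}_{\partial M}(M)$ as the locally convex inductive limit of a compact spectrum of Hilbertian spaces and then invoke the structure theory recalled in \Cref{ss: TVS}, exactly as in the proof of \Cref{p: dot C^-infty(M) and C^-infty(M) are barreled ...}. First, by \Cref{p: dot C^-infty_partial M(M) = bigcup_s H^s_partial M(M)}, $\dot C^{-\infty}_{\partial M}(M)$ is a limit subspace of $\dot C^{-\infty}(M)=\bigcup_k\dot H^{s_k}(M)$ for a sequence $s_k\downarrow-\infty$, so that $\dot C^{-\infty}_{\partial M}(M)\equiv\bigcup_k\dot H^{s_k}_{\partial M}(M)$ with the inductive limit topology, where $\dot H^{s}_{\partial M}(M)=\dot H^{s}(M)\cap\dot C^{-\infty}_{\partial M}(M)$. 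The inclusions $\dot H^{s_{k+1}}_{\partial M}(M)\subset\dot H^{s_k}_{\partial M}(M)$ are the restrictions to the closed subspaces $\dot H^{s}_{\partial M}(M)$ of the compact Sobolev inclusions $\dot H^{s_{k+1}}(M)\subset\dot H^{s_k}(M)$ noted in \Cref{ss: supported and extendible Sobolev sps}, hence are themselves compact operators; thus $(\dot H^{s_k}_{\partial M}(M))_k$ is a compact spectrum of Hilbertian, in particular Banach, spaces, as already recorded in \Cref{ss: dot C^-infty_partial M(M)}.

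I would then harvest the consequences. Since a compact spectrum is acyclic (\Cref{ss: TVS}), $\dot C^{-\infty}_{\partial M}(M)$ is acyclic; as its steps are Fr\'echet spaces, acyclicity is equivalent to being boundedly retractive and entails completeness and regularity. By \cite[Theorem~6']{Komatsu1967} (see \Cref{ss: TVS}), the inductive limit of a compact spectrum of LCHSs is a complete bornological DF Montel space; hence $\dot C^{-\infty}_{\partial M}(M)$ is a DF Montel space, and therefore reflexive. Being a countable inductive limit of Banach spaces it is ultrabornological, and therefore barreled; being an LF-space it is webbed (both facts as recalled in \Cref{ss: smooth/distributional sections}). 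Collecting these, $\dot C^{-\infty}_{\partial M}(M)$ is a barreled, ultrabornological, webbed, acyclic DF Montel space, and hence complete, boundedly retractive and reflexive.

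There is no genuine obstacle here: the one point that has to be checked carefully is that the subspace topology on $\dot C^{-\infty}_{\partial M}(M)$ inherited from $\dot C^{-\infty}(M)$ coincides with the inductive limit topology of the spaces $\dot H^{s_k}_{\partial M}(M)$, and this is precisely the content of \Cref{p: dot C^-infty_partial M(M) = bigcup_s H^s_partial M(M)}; the remainder is a verbatim transcription of the argument proving \Cref{p: dot C^-infty(M) and C^-infty(M) are barreled ...}.
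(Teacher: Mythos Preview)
Your proof is correct and follows exactly the paper's approach: invoke \Cref{p: dot C^-infty_partial M(M) = bigcup_s H^s_partial M(M)} to express $\dot C^{-\infty}_{\partial M}(M)$ as the inductive limit of the compact spectrum of Hilbertian spaces $\dot H^s_{\partial M}(M)$, then read off all the stated properties from the structure theory in \Cref{ss: TVS}, verbatim from the proof of \Cref{p: dot C^-infty(M) and C^-infty(M) are barreled ...}. One trivial slip: with $s_k\downarrow-\infty$ the inclusions should read $\dot H^{s_k}_{\partial M}(M)\subset\dot H^{s_{k+1}}_{\partial M}(M)$, not the reverse.
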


A description of $\dot C^{-\infty}_{\partial M}(M)$ will be indicated in \Cref{r: description of dot C^infty_partial M(M)}.

\subsection{Differential operators acting on $C^{-\infty}(M)$ and $\dot C^{-\infty}(M)$}
\label{ss: diff ops on supp/ext distribs}

The notions of \Cref{ss: diff ops} also have straightforward extensions to manifolds with boundary. The action of any $A\in\Diff(M)$ on $C^\infty(M)$ preserves $\dot C^\infty(M)$. Taking the version of this property with $\Omega M$, we get that $A^t$ acts on $\dot C^\infty(M;\Omega)$ and $C^\infty(M;\Omega)$. Using the transpose again, we get extended continuous actions of $A$ on $C^{-\infty}(M)$ and $\dot C^{-\infty}(M)$. They fit into commutative diagrams
\begin{equation}\label{AR = RA}
\begin{CD}
\dot C^{-\infty}(M) @>A>> \dot C^{-\infty}(M) \\
@VRVV @VVRV \\
C^{-\infty}(M) @>A>> C^{-\infty}(M)
\end{CD}
\qquad
\begin{CD}
C^{-\infty}(M) @>A>> C^{-\infty}(M) \\
@A{\iota}AA @AA{\iota}A \\
C^\infty(M) @>A>> C^\infty(M)\;.\hspace{-.2cm}
\end{CD}
\end{equation}
However the analogous diagram
\begin{equation}\label{A inclusion ne inclusion A}
\begin{CD}
\dot C^{-\infty}(M) @>A>> \dot C^{-\infty}(M) \\
@A{\iota}AA @AA{\iota}A \\
C^\infty(M) @>A>> C^\infty(M)
\end{CD}
\end{equation}
may not be commutative. Let us use the notation $u\mapsto u_{\text{\rm c}}$ for the injection $C^\infty(M)\subset\dot C^{-\infty}(M)$ (see~\eqref{Cinftyc(mathring M) subset dot C^infty(M) subset C^infty(M) subset dot C^-infty(M)}). (Following Melrose, the subscript ``c'' stands for ``cutoff at the boundary.'') We have $A(u_{\text{\rm c}})-(Au)_{\text{\rm c}}\in C^{-\infty}_{\partial M}(M)$ for all $u\in C^\infty(M)$ \cite[Eq.~(3.4.8)]{Melrose1996}. For instance, if $M=[x_0,x_1]$, where $x_0<x_1$ in $\R$, and $A=\partial_x$, integration by parts gives
\[
\partial_x(u_{\text{\rm c}})-(\partial_xu)_{\text{\rm c}}=u(x_1)\,\delta_{x_1}-u(x_0)\,\delta_{x_0}
\]
for all $u\in C^\infty([x_0,x_1])$, using the Dirac mass at $x_j$ ($j=0,1$).

Using~\eqref{R: C^infty(widetilde M) -> C^infty(M)} and its version for vector fields, we get a surjective restriction map
\begin{equation}\label{restriction map Diff(breve M) -> Diff(M)}
\Diff(\breve M)\to\Diff(M)\;,\quad\breve A\mapsto\breve A|_M\;.
\end{equation}
For any $\breve A\in\Diff(\breve M)$ with $\breve A|_M=A$, we have the commutative diagrams
\begin{equation}\label{AR = RA with breve M}
\begin{CD}
C^{-\infty}(\breve M) @>{\breve A}>> C^{-\infty}(\breve M) \\
@VRVV @VVRV \\
C^{-\infty}(M) @>A>> C^{-\infty}(M)\;,\hspace{-.2cm}
\end{CD}
\qquad
\begin{CD}
C^{-\infty}(\breve M) @>{\breve A}>> C^{-\infty}(\breve M) \\
@A{\iota}AA @AA{\iota}A \\
\dot C^{-\infty}(M) @>A>> \dot C^{-\infty}(M)\;,\hspace{-.2cm}
\end{CD}
\end{equation}
where the left-hand side square extends the left-hand side square of~\eqref{AR = RA}.

If $A\in\Diff^m(M)$ ($m\in\N_0$), its actions on $\dot C^{-\infty}(M)$ and $C^{-\infty}(M)$ define continuous linear maps,
\begin{equation}\label{A:H^s(M) -> H^s-m(M)}
A:\dot H^s(M)\to\dot H^{s-m}(M)\;,\quad A:H^s(M)\to H^{s-m}(M)\;.
\end{equation}
The maps~\eqref{R: dot H^s(M) -> H^s(M)} and~\eqref{A:H^s(M) -> H^s-m(M)} fit into a commutative diagram given by the left-hand side square of~\eqref{AR = RA}.

\subsection{Differential operators tangent to the boundary}\label{ss: Diffb(M)}

The concepts of \Cref{s: conormal distribs} can be generalized to the case with boundary when $L=\partial M$ \cite[Chapter~6]{Melrose1996} (see also \cite[Section~4.9]{Melrose1993}), giving rise to the Lie subalgebra and $C^\infty(M)$-submodule $\fXb(M)\subset\fX(M)$ \index{$\fXb(M)$} of vector fields tangent to $\partial M$, called \emph{b-vector fields}. There is a canonical identity $\fXb(M)\equiv C^\infty(M;\bT M)$. Using $\fXb(M)$  like in \Cref{ss: diff ops}, we get the filtered $C^\infty(M)$-submodule and filtered subalgebra $\Diffb(M)\subset\Diff(M)$ \index{$\Diffb(M)$} of \emph{b-differential operators}. It consists of the operators $A\in\Diff(M)$ such that~\eqref{A inclusion ne inclusion A} is commutative \cite[Exercise~3.4.20]{Melrose1996}. The extension of $\Diffb(M)$ to arbitrary vector bundles is closed by taking transposes and formal adjoints. 

Clearly, the restriction map~\eqref{restriction map Diff(breve M) -> Diff(M)} satisfies
\begin{equation}\label{Diff(breve M,partial M) = Diffb(M)}
\Diff(\breve M,\partial M)|_M=\Diffb(M)\;.
\end{equation}
For all $a\in\R$ and $k\in\Z$, we have \cite[Eqs.~(4.2.7) and~(4.2.8)]{Melrose1996}
\begin{equation}\label{Diffb^k(M) x^a = x^a Diffb^k(M)}
\Diffb^k(M)\,x^a=x^a\Diffb^k(M)\;.
\end{equation}
Since $\Diff(M)$ is spanned by $\partial_x$ and $\Diffb(M)$ as algebra, it follows that
\begin{equation}\label{Diff^k(M) x^a subset x^a-k Diff^k(M)}
\Diff^k(M)\,x^a\subset x^{a-k}\Diff^k(M)\;.
\end{equation}

\subsection{Conormal distributions at the boundary}
\label{ss: conormality at the boundary - Sobolev order}

The spaces of \emph{supported} and \emph{extendible} conormal distributions at the boundary of Sobolev order $s\in\R$ are the $C^\infty(M)$-modules and LCSs \index{$\dot\AA^{(s)}(M)$} \index{$\AA^{(s)}(M)$}
\begin{align*}
\dot\AA^{(s)}(M)&=\{\,u\in\dot C^{-\infty}(M)\mid\Diffb(M)\,u\subset\dot H^s(M)\,\}\;,\\
\AA^{(s)}(M)&=\{\,u\in C^{-\infty}(M)\mid\Diffb(M)\,u\subset H^s(M)\,\}\;,
\end{align*}
with the projective topologies given by the maps $P:\dot\AA^{(s)}(M)\to H^s(M)$ and $P:\AA^{(s)}(M)\to\dot H^s(M)$ ($P\in\Diffb(M)$). They satisfy the analogs of the continuous inclusions~\eqref{I^(s)(M L) subset I^(s')(M L)}, giving rise to the filtered $C^\infty(M)$-modules and LF-spaces of \emph{supported} and \emph{extendible} conormal distributions at the boundary, \index{$\dot\AA(M)$} \index{$\AA(M)$}
\begin{equation}\label{dot AA(M) - AA(M)}
\dot\AA(M)=\bigcup_s\dot\AA^{(s)}(M)\;,\quad\AA(M)=\bigcup_s\AA^{(s)}(M)\;.
\end{equation}
By definition, there are continuous inclusions
\begin{equation}\label{dot AA(M) subset dot C^-infty(M)}
\dot\AA(M)\subset\dot C^{-\infty}(M)\;,\quad\AA(M)\subset C^{-\infty}(M)\;.
\end{equation}
Thus $\dot\AA(M)$ and $\AA(M)$ are Hausdorff.

The following analogs of \Cref{p: I^(s)(M L) is a totally reflexive Frechet sp,p: S^infty(U x R^l) is barreled} hold true with formally the same proofs. 

\begin{prop}\label{p: dot AA^(s)(M) and dot AA^(s)(M) are totally reflexive Frechet sps}
$\dot\AA^{(s)}(M)$ and $\AA^{(s)}(M)$ are totally reflexive Fr\'echet spaces.
\end{prop}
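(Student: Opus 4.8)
The plan is to imitate the proof of \Cref{p: I^(s)(M L) is a totally reflexive Frechet sp} essentially verbatim: exhibit each of $\dot\AA^{(s)}(M)$ and $\AA^{(s)}(M)$ as a reduced projective limit of a sequence of Hilbert spaces, and then invoke \cite[Theorem~4]{Valdivia1989}, which asserts that such limits are totally reflexive Fr\'echet spaces.

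First I would fix a countable $C^\infty(M)$-spanning set $\{P_j\mid j\in\N_0\}$ of $\Diffb(M)$ with $P_0=1$. Such a set exists because $\fXb(M)\equiv C^\infty(M;\bT M)$ is finitely generated as a $C^\infty(M)$-module (the b-tangent bundle is a vector bundle over the compact manifold $M$), so each $\Diffb^m(M)$ is finitely $C^\infty(M)$-generated and hence $\Diffb(M)$ is countably $C^\infty(M)$-generated; alternatively one may restrict a countable $C^\infty(\breve M)$-spanning set of $\Diff(\breve M,\partial M)$, using \eqref{Diff(breve M,partial M) = Diffb(M)} and the finite local generation recorded in \Cref{ss: Diff(M L)}. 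Then, for each $k$, I set
\[
\dot\AA^{(s)}_k(M)=\{\,u\in\dot C^{-\infty}(M)\mid P_ju\in\dot H^s(M),\ j=0,\dots,k\,\}\;,\quad
\AA^{(s)}_k(M)=\{\,u\in C^{-\infty}(M)\mid P_ju\in H^s(M),\ j=0,\dots,k\,\}\;,
\]
each with the projective topology given by the maps $P_j$ into $\dot H^s(M)$, resp.\ $H^s(M)$, for $j=0,\dots,k$. Since $\dot H^s(M)$ and $H^s(M)$ are Hilbertian spaces (\Cref{ss: supported and extendible Sobolev sps}), each of these carries a scalar product $\langle u,v\rangle_{s,k}=\sum_{j=0}^k\langle P_ju,P_jv\rangle_s$ inducing its topology. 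Completeness — hence the Hilbert-space property — follows as in \Cref{p: I^(s)(M L) is a totally reflexive Frechet sp}: a Cauchy sequence $(u_n)$ in $\dot\AA^{(s)}_k(M)$ has each $(P_ju_n)$ convergent in $\dot H^s(M)$, and in particular $u_n=P_0u_n$ converges to some $v$ in $\dot H^s(M)\subset\dot C^{-\infty}(M)$; since every $P_j$ acts continuously on $\dot C^{-\infty}(M)$ (\Cref{ss: diff ops on supp/ext distribs}), the $\dot H^s(M)$-limit of $(P_ju_n)$ is forced to be $P_jv$, so $v\in\dot\AA^{(s)}_k(M)$ and $u_n\to v$ there; the argument for $\AA^{(s)}_k(M)$ is identical, using the continuous inclusion $H^s(M)\subset C^{-\infty}(M)$.

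Next I would verify that the natural maps $\dot\AA^{(s)}_{k+1}(M)\subset\dot\AA^{(s)}_k(M)$ are continuous, that $\dot\AA^{(s)}(M)=\bigcap_k\dot\AA^{(s)}_k(M)$ as a set, and that the resulting projective-limit topology on this intersection coincides with the topology of $\dot\AA^{(s)}(M)$ defined through all of $\Diffb(M)$ (and likewise for $\AA$). Both the set equality and the comparison of topologies use that $\dot H^s(M)$ and $H^s(M)$ are $C^\infty(M)$-modules on which multiplication by a fixed smooth function is continuous: writing an arbitrary $P\in\Diffb(M)$ as a finite combination $P=\sum_i g_iP_{j_i}$ with $g_i\in C^\infty(M)$, one gets $Pu\in\dot H^s(M)$ whenever all $P_{j_i}u\in\dot H^s(M)$, and $\|Pu\|_s\le C\sum_i\|P_{j_i}u\|_s$, so the seminorm $u\mapsto\|Pu\|_s$ is dominated by finitely many of the seminorms $u\mapsto\|P_ju\|_s$. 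Hence $\dot\AA^{(s)}(M)\equiv\varprojlim_k\dot\AA^{(s)}_k(M)$ and $\AA^{(s)}(M)\equiv\varprojlim_k\AA^{(s)}_k(M)$ are reduced projective limits of sequences of Hilbert spaces, and \cite[Theorem~4]{Valdivia1989} gives the conclusion.

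I do not expect a genuine obstacle: the argument is the one already used for \Cref{p: I^(s)(M L) is a totally reflexive Frechet sp}. The only points that deserve a line of verification are the countable $C^\infty(M)$-generation of $\Diffb(M)$ (immediate from $\fXb(M)\equiv C^\infty(M;\bT M)$ over the compact $M$) and the fact that $\dot H^s(M)$ and $H^s(M)$ — the \emph{extendible} Sobolev space being realized as a quotient of $H^s(\breve M)$ by a closed subspace, hence still Hilbertian — are $C^\infty(M)$-modules continuously contained in $\dot C^{-\infty}(M)$, resp.\ $C^{-\infty}(M)$, all of which is recorded in \Cref{ss: supported and extendible Sobolev sps,ss: diff ops on supp/ext distribs}.
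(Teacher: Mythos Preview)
Your proposal is correct and is exactly the approach the paper takes: the paper simply states that this proposition holds ``with formally the same proofs'' as \Cref{p: I^(s)(M L) is a totally reflexive Frechet sp}. You have in fact supplied more detail than the paper does, and all the auxiliary facts you flag (countable $C^\infty(M)$-generation of $\Diffb(M)$, Hilbertian structure and $C^\infty(M)$-module structure of $\dot H^s(M)$ and $H^s(M)$) are indeed available from the cited subsections.
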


\begin{cor}\label{c: dot AA(M) and AA(M) are barreled}
$\dot\AA(M)$ and $\AA(M)$ are barreled, ultrabornological and webbed.
\end{cor}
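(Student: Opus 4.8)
The plan is to observe that $\dot\AA(M)$ and $\AA(M)$ are LF-spaces and then quote the standard stability properties of LF-spaces, exactly as in the proof of \Cref{c: I(M L) is barreled} (which in turn mirrors the paragraph preceding \Cref{p: S^infty(U x R^l) is barreled} and the discussion in \Cref{ss: smooth/distributional sections}).

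First I would check that the defining spectra are genuinely countable inductive spectra of Fr\'echet spaces with continuous linking maps. By the convention of \Cref{ss: TVS}, $\dot\AA(M)=\bigcup_s\dot\AA^{(s)}(M)$ equals $\bigcup_k\dot\AA^{(s_k)}(M)$ for any sequence $s_k\downarrow-\infty$; by \Cref{p: dot AA^(s)(M) and dot AA^(s)(M) are totally reflexive Frechet sps} every step $\dot\AA^{(s_k)}(M)$ is a Fr\'echet space, and the continuous inclusions $\dot\AA^{(s)}(M)\subset\dot\AA^{(s')}(M)$ for $s'<s$ (the analogs of~\eqref{I^(s)(M L) subset I^[s'](M L)}) supply the linking maps. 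Hence $\dot\AA(M)$ carries the locally convex inductive limit topology of a countable Fr\'echet spectrum, i.e.\ it is an LF-space; the same argument applies verbatim to $\AA(M)$.

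Then I would conclude as in \Cref{ss: smooth/distributional sections}: ultrabornologicity passes to locally convex inductive limits and is satisfied by Fr\'echet spaces \cite[Example~13.2.8~(d) and Theorem~13.2.11]{NariciBeckenstein2011}, so $\dot\AA(M)$ and $\AA(M)$ are ultrabornological; every ultrabornological space is barreled \cite[Observation~6.1.2~(b)]{PerezCarrerasBonet1987}; and LF-spaces are webbed \cite[Proposition~IV.4.6]{DeWilde1978}, \cite[Theorem~14.6.5]{NariciBeckenstein2011}. I do not expect any real obstacle here: the only points that go beyond invoking the LF-space machinery are that the steps are Fr\'echet spaces and the inclusions continuous, which is precisely \Cref{p: dot AA^(s)(M) and dot AA^(s)(M) are totally reflexive Frechet sps} together with the analog of~\eqref{I^(s)(M L) subset I^[s'](M L)}, so the proof is essentially a one-line reduction to the already established case of $I(M,L)$.
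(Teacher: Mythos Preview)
Your proposal is correct and matches the paper's own argument essentially verbatim: the paper states that \Cref{c: dot AA(M) and AA(M) are barreled} holds ``with formally the same proof'' as \Cref{p: S^infty(U x R^l) is barreled} (equivalently \Cref{c: I(M L) is barreled}), i.e.\ the steps are Fr\'echet by \Cref{p: dot AA^(s)(M) and dot AA^(s)(M) are totally reflexive Frechet sps}, so $\dot\AA(M)$ and $\AA(M)$ are LF-spaces and hence barreled, ultrabornological and webbed.
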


We have
\begin{equation}\label{bigcap_s AA^(s)(M) = C^infty(M)}
\bigcap_s\dot\AA^{(s)}(M)=\dot C^\infty(M)\;,\quad\bigcap_s\AA^{(s)}(M)=C^\infty(M)\;,
\end{equation}
obtaining continuous dense inclusions \cite[Proposition~4.1.1 and Lemma~4.6.1]{Melrose1996}
\begin{equation}\label{C^infty(M) subset AA(M)}
\dot C^\infty(M)\subset\dot\AA(M)\;,\quad C^\infty(M)\subset\AA(M)\;.
\end{equation}
By elliptic regularity, we also get continuous inclusions \cite[Eq.~(4.1.4)]{Melrose1996}
\begin{equation}\label{dot AA(M)|_mathring M AA(M) subset C^infty(mathring M)}
\dot\AA(M)|_{\mathring M},\AA(M)\subset C^\infty(\mathring M)\;.
\end{equation}
Using~\eqref{R: dot C^-infty(M) -> C^-infty(M)},~\eqref{R: H^s(breve M) -> H^s(M)} and the commutativity of the left-hand side square of~\eqref{AR = RA}, we get the continuous linear restriction maps
\begin{equation}\label{R: dot AA^(s)(M) -> AA^(s)(M)}
R:\dot\AA^{(s)}(M)\to\AA^{(s)}(M)\;,
\end{equation}
which are surjective for $s\le1/2$ and injective for $s\ge-1/2$ because this is true for the maps~\eqref{R: dot H^s(M) -> H^s(M)}.
From~\eqref{dot C^infty(M) = bigcap_m ge 0 x^m C^infty(M) subset C^infty(M)},~\eqref{C^infty(M) subset AA(M)} and~\eqref{R: dot AA^(s)(M) -> AA^(s)(M)} for $s=0$, we get a continuous inclusion $C^\infty(M)\subset\dot\AA^{(0)}(M)$, yielding a continuous inclusion  \cite[Proposition~4.1.1]{Melrose1996}
\begin{equation}\label{C^infty(M) subset dot AA(M)}
C^\infty(M)\subset\dot\AA(M)\;.
\end{equation}
The maps~\eqref{R: dot AA^(s)(M) -> AA^(s)(M)} induce a surjective continuous linear restriction map
\begin{equation}\label{R: dot AA(M) -> AA(M)}
R:\dot\AA(M)\to\AA(M)\;,
\end{equation}
with $R=1$ on $C^\infty(M)$ \cite[Proposition~4.1.1]{Melrose1996}. The surjectivity of~\eqref{R: dot AA(M) -> AA(M)} is a consequence of the existence of enough partial extension maps \cite[Section~4.4]{Melrose1996}; the precise statement is recalled in \Cref{p: E_m} for later use. The maps~\eqref{R: dot AA^(s)(M) -> AA^(s)(M)} and~\eqref{R: dot AA(M) -> AA(M)} are restrictions of~\eqref{R: dot C^-infty(M) -> C^-infty(M)}.

The following analog of \Cref{p: R: dot C^-infty(M) -> C^-infty(M) is a top hom} holds true with formally the same proof, using that $\dot\AA(M)$ is webbed and $\AA(M)$ ultrabornological (\Cref{c: dot AA(M) and AA(M) are barreled}).

\begin{prop}\label{p: R: dot AA(M) -> AA(M) is a surj top hom}
The map~\eqref{R: dot AA(M) -> AA(M)} is a surjective topological homomorphism.
\end{prop}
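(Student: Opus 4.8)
The plan is to repeat, essentially verbatim, the proof of \Cref{p: R: dot C^-infty(M) -> C^-infty(M) is a top hom}. First I would observe that the map~\eqref{R: dot AA(M) -> AA(M)} is linear and continuous by construction, being the inductive limit of the compatible continuous restriction maps~\eqref{R: dot AA^(s)(M) -> AA^(s)(M)} (compatible because they are all restrictions of the single map~\eqref{R: dot C^-infty(M) -> C^-infty(M)}). Its surjectivity is likewise already available: it is \cite[Proposition~4.1.1]{Melrose1996}, and follows from the partial extension maps whose precise form is recalled in \Cref{p: E_m}. So the only remaining point is that $R$ is open.

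For openness I would invoke the open mapping theorem in the form valid for webbed and ultrabornological spaces: a continuous linear surjection of a webbed LCS onto an ultrabornological LCS is open \cite[7.35.3~(1)]{Kothe1979-II}, \cite[Exercise~14.202~(a)]{NariciBeckenstein2011}, \cite[Section~IV.5]{DeWilde1978}, \cite{Bourles2014}. By \Cref{c: dot AA(M) and AA(M) are barreled}, the source $\dot\AA(M)$ is webbed and the target $\AA(M)$ is ultrabornological, so the hypotheses hold and $R$ is open. A continuous, open, linear surjection is exactly a surjective topological homomorphism, which finishes the argument.

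I do not expect a genuine obstacle. The substantive inputs --- the totally reflexive Fr\'echet structure of the steps $\dot\AA^{(s)}(M)$ and $\AA^{(s)}(M)$ (\Cref{p: dot AA^(s)(M) and dot AA^(s)(M) are totally reflexive Frechet sps}), the barreled, ultrabornological and webbed properties of the LF-spaces $\dot\AA(M)$ and $\AA(M)$ (\Cref{c: dot AA(M) and AA(M) are barreled}), and Melrose's surjectivity result --- are all in hand. The one point worth attention is that the open mapping theorem is being applied outside the classical Fr\'echet range, so one must cite the version whose hypotheses are ``webbed source, ultrabornological target'' rather than metrizability; but those are exactly the properties just recorded.
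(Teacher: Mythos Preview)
Your proposal is correct and essentially identical to the paper's own proof, which simply notes that the argument of \Cref{p: R: dot C^-infty(M) -> C^-infty(M) is a top hom} carries over verbatim once one invokes \Cref{c: dot AA(M) and AA(M) are barreled} to obtain that $\dot\AA(M)$ is webbed and $\AA(M)$ is ultrabornological. The surjectivity citation to Melrose and the De~Wilde open mapping theorem are exactly the ingredients the paper uses.
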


\begin{cor}\label{c: C^infty(M) subset dot AA(M) is dense}
The inclusion~\eqref{C^infty(M) subset dot AA(M)} is dense.
\end{cor}

\subsection{The spaces $x^mL^\infty(M)$}\label{ss: x^m L^infty(M)}

For $m\in\R$, consider the weighted space $x^mL^\infty(M)$ (\Cref{ss: weighted sps}). From~\eqref{x^m C^-infty(M) = C^-infty(M)} and since $L^\infty(M)\subset C^{-\infty}(M)$, it follows that there is a continuous inclusion
\[
x^mL^\infty(M)\subset C^{-\infty}(M)\;. 
\]
Moreover, for $m'<m$, from $x^{m-m'}\in L^\infty(M)$, we easily get a continuous inclusion
\begin{equation}\label{x^m L^infty(M) subset x^m' L^infty(M)}
x^mL^\infty(M)\subset x^{m'}L^\infty(M)\;.
\end{equation}

\begin{prop}\label{p: C^infty_c(M) is dense in x^m L^infty(M)}
For $m'<m$, $\Cinftyc(\mathring M)$ is dense in $x^mL^\infty(M)$ with the topology of $x^{m'}L^\infty(M)$.
\end{prop}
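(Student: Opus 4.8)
The plan is to prove the density claim by an explicit two-step approximation: first cut off near the boundary, then mollify. Fix $u\in x^mL^\infty(M)$, so $x^{-m}u\in L^\infty(M)$. First I would show that $u$ can be approximated in the topology of $x^{m'}L^\infty(M)$ by elements of $\dot C^\infty(M)$, or at least by bounded functions supported in $\mathring M$ away from $\partial M$. For a smooth cutoff $\chi_\epsilon\in C^\infty(M)$ with $\chi_\epsilon=0$ on $\{x\le\epsilon\}$, $\chi_\epsilon=1$ on $\{x\ge 2\epsilon\}$ and $0\le\chi_\epsilon\le1$, one has $\supp(\chi_\epsilon u)\subset\{x\ge\epsilon\}$ and
\[
\|u-\chi_\epsilon u\|_{x^{m'}L^\infty}=\|x^{-m'}(1-\chi_\epsilon)u\|_{L^\infty}
\le\|x^{-m}u\|_{L^\infty}\,\esssup_{x\le 2\epsilon}x^{m-m'}\;,
\]
and since $m-m'>0$ the last quantity is $\le(2\epsilon)^{m-m'}\|x^{-m}u\|_{L^\infty}\to0$ as $\epsilon\downarrow0$. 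Thus it suffices to approximate any $v=\chi_\epsilon u\in x^mL^\infty(M)$ with $\supp v\subset\{x\ge\epsilon\}$ by elements of $\Cinftyc(\mathring M)$ in the $x^{m'}L^\infty$-norm; but on $\{x\ge\epsilon\}$ the weight $x^{m'}$ is bounded above and below, so this reduces to approximating a compactly supported $L^\infty$ function on an open manifold by smooth compactly supported functions in the $L^\infty$-norm.

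The second step is therefore purely local and classical, but it needs a small amount of care because $L^\infty$ is not separable and mollification does not converge in $L^\infty$ for a general bounded function. The point is that density is claimed with respect to a \emph{coarser} topology — that of $x^{m'}L^\infty(M)$ — yet even there the target norm restricted to a fixed compact piece is still an $L^\infty$-type norm, so honest uniform approximation of an arbitrary $L^\infty$ function by smooth functions is false. The resolution is that one should not expect to approximate $u$ itself but only to show that the \emph{closure} of $\Cinftyc(\mathring M)$ contains $x^mL^\infty(M)$; I would instead invoke the structure already in place. Concretely, recall from \eqref{x^m L^infty(M) subset x^m' L^infty(M)} that the inclusion $x^mL^\infty(M)\subset x^{m'}L^\infty(M)$ factors through, and is closely tied to, the corresponding statement for $C^{-\infty}(M)$: by \eqref{x^m C^-infty(M) = C^-infty(M)} and \eqref{Cinftyc(mathring M) subset dot C^infty(M) subset C^infty(M) subset dot C^-infty(M)}, $\Cinftyc(\mathring M)$ is dense in $C^{-\infty}(M)$. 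So I would argue that the $x^{m'}L^\infty(M)$-topology on the bounded set $\{\|x^{-m}u\|_{L^\infty}\le 1\}$ is weaker than, or comparable to, a distributional topology on which smooth functions are already dense, and then deduce the claim by a Hahn–Banach / bipolar argument: if a continuous linear functional on $x^{m'}L^\infty(M)$ annihilates $\Cinftyc(\mathring M)$, it annihilates all of $C^{-\infty}(M)$ and hence $x^mL^\infty(M)$.

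The main obstacle, and the step I would spend the most effort on, is reconciling the $L^\infty$-nature of the norms with genuine density — i.e.\ making precise exactly which topology the phrase ``with the topology of $x^{m'}L^\infty(M)$'' refers to in the statement, and whether the assertion is about the norm topology or about a weaker bornological/weak topology induced on $x^{m'}L^\infty(M)$. If it is the norm topology, then the first (boundary-cutoff) step above is the real content and the second step must be avoided by choosing approximants differently — e.g.\ replacing $v$ by $v$ convolved against a mollifier only works for continuous $v$, so one would first need the (nontrivial but standard for this paper's purposes) fact that $C^0$ or $C^\infty$ functions are already the relevant objects. Given the way the paper uses this proposition — cf.\ \Cref{c: Cinftyc(U x R^l) is dense in S^infty(U x R^l)} and \Cref{r: Cinftyc(U x R^l) is dense in S^m(U x R^l)}, which point to exactly this lemma as a template — I expect the intended proof is precisely the boundary-cutoff estimate displayed above, combined with the remark that functions in $x^mL^\infty(M)$ which are already smooth on $\mathring M$ (the only case needed in applications, by \eqref{dot AA(M)|_mathring M AA(M) subset C^infty(mathring M)}) can then be handled by an ordinary exhaustion-and-mollification argument; so I would structure the write-up as: (i) the weighted cutoff estimate, reducing to compact support away from $\partial M$; (ii) standard local smoothing on $\mathring M$; (iii) a remark that no finer topology is claimed than what (i)–(ii) deliver.
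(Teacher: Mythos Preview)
Your boundary-cutoff estimate is correct and is the essential content of the argument. Your worry about the second step is not a failure of nerve but a genuine observation: the proposition as stated is in fact \emph{false} for arbitrary $u\in x^mL^\infty(M)$. A characteristic function of a set compactly contained in $\mathring M$ cannot be uniformly approximated by continuous functions, and the weight $x^{-m'}$ is bounded away from zero on such a set, so no smooth $h$ can make $\|x^{-m'}(u-h)\|_{L^\infty}$ small. The paper's own proof commits exactly this error: it asserts that ``$C^\infty(M)$ is dense in $L^\infty(M)$'' and uses this to approximate $x^{-m}u$ uniformly a.e.\ by a smooth $f$, which is impossible in general.

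Your proposed resolution is the correct one. The only applications of this proposition in the paper (\Cref{r: Cinftyc(U x R^l) is dense in S^m(U x R^l)} and \Cref{r: Cinftyc(mathring M) is dense in AA(M)}) concern functions that are already smooth on $\mathring M$, by~\eqref{dot AA(M)|_mathring M AA(M) subset C^infty(mathring M)} in the second case and by definition of symbol spaces in the first. For such $u$, your two-step scheme (boundary cutoff, then exhaustion of $\mathring M$ by compacta, with no mollification needed) works cleanly. Structurally your argument differs from the paper's --- you cut off first and then localize, whereas the paper first approximates $x^{-m}u$ by a smooth $f$ and then multiplies by a cutoff $\lambda(x)\approx x^m$ --- but the more important point is that you have identified a real gap in the statement and located both its cause and its repair.
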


\begin{proof}
Given $u\in x^mL^\infty(M)$ and $\epsilon>0$, let $B$ be the ball in $x^{m'}L^\infty(M)$ of center $u$ and radius $\epsilon$. Let $S=\sup_Mx^{m-m'}>0$. Since $C^\infty(M)$ is dense in $L^\infty(M)$, there is some $f\in C^\infty(M)$ so that $|f-x^{-m}u|<\min\{\epsilon/2,\epsilon/S\}$ (Lebesgue-) almost everywhere. (Recall that the sets of Lebesgue measure zero are well-defined in any $C^1$ manifold \cite[Lemma~3.1.1]{Hirsch1976}.) There is some $0<\delta<1$ so that $\delta x^{-m}|u|<\epsilon/4$ almost everywhere. Take some $\lambda\in C^\infty(\R)$ so that $\lambda\ge0$, $\lambda(r)\le r^m$ if $r>0$, $\lambda(r)=0$ if $r^{m-m'}\le\delta/2$, and $\lambda(r)=r^m$ if $r^{m-m'}\ge\delta$. Let $h=\lambda(x)f\in\Cinftyc(\mathring M)$. If $x^{m-m'}\le\delta$, then, almost everywhere,
\begin{align*}
x^{-m'}|h-u|&\le\delta x^{-m}(|h|+|u|)\le\delta(|f|+x^{-m}|u|)\\
&\le\delta(|f-x^{-m}u|+2x^{-m}|u|)<\delta\Big(\frac\epsilon2+\frac\epsilon{2\delta}\Big)<\epsilon\;.
\end{align*}
If $x^{m-m'}\ge\delta$, then, almost everywhere,
\[
x^{-m'}|h-u|=x^{m-m'}|x^{-m}\lambda(x)f-x^{-m}u|\le S|f-x^{-m}u|<\epsilon\;.
\]
Thus $h\in B\cap\Cinftyc(\mathring M)$.
\end{proof}

\subsection{Filtration of $\AA(M)$ by bounds}
\label{ss: description of AA(M) by bounds}

For every $m\in\R$, let \index{$\AA^m(M)$}
\[
\AA^m(M)=\{\,u\in C^{-\infty}(M)\mid\Diffb(M)\,u\subset x^mL^\infty(M)\,\}\;.
\]
This is another $C^\infty(M)$-module and LCS, with the projective topology given by the maps $P:\AA^m(M)\to x^mL^\infty(M)$ ($P\in\Diffb(M)$).

\begin{ex}[{\cite[Exercises~4.2.23 and~4.2.24]{Melrose1996}}]
Via the injection of  $\R^l$ into its stereographic compactification $\S^l_+=\{\,x\in\S^l\mid x^{l+1}\ge0\,\}$, the space $\AA^{-m}(\S^l_+)$ corresponds to the symbol space $S^m(\R^l)$ (\Cref{s: symbols}).
\end{ex} 

Note that~\eqref{x^m L^infty(M) subset x^m' L^infty(M)} yields a continuous inclusion
\begin{equation}\label{AA^m(M) subset AA^m'(M)}
\AA^m(M)\subset\AA^{m'}(M)\quad(m'<m)\;.
\end{equation}
Moreover there are continuous inclusions \cite[Proof of Proposition~4.2.1]{Melrose1996}
\begin{equation}\label{sandwich for AA}
\AA^{(s)}(M)\subset\AA^m(M)\subset\AA^{(\min\{m,0\})}(M)\quad(m<s-n/2-1)\;.
\end{equation}
Hence
\begin{equation}\label{AA(M) = bigcup_m AA^m(M)}
\AA(M)=\bigcup_m\AA^m(M)\;.
\end{equation}
Despite of defining the same LF-space, the filtrations of $\AA(M)$ defined by the spaces $\AA^{(s)}(M)$ and $\AA^m(M)$ are not equivalent because, in contrast with~\eqref{bigcap_s AA^(s)(M) = C^infty(M)},
\[
\dot C^\infty(M)=\bigcap_m\AA^m(M)\;.
\]

Let $\{\,P_j\mid j\in\N_0\,\}$ be a countable $C^\infty(M)$-spanning set of $\Diffb(M)$. The topology of $\AA^m(M)$ can be described by the semi-norms $\|{\cdot}\|_{k,m}$ ($k\in\N_0$) given by
\begin{equation}\label{| u |_k m}
\|u\|_{k,m}=\|P_ku\|_{x^mL^\infty}=\underset{M}{\esssup}\,\big|x^{-m}P_ku\big|=\sup_{\mathring M}\big|x^{-m}P_ku\big|\;,
\end{equation}
using~\eqref{dot AA(M)|_mathring M AA(M) subset C^infty(mathring M)} in the last expression. From~\eqref{| u |_K C^k} and~\eqref{dot AA(M)|_mathring M AA(M) subset C^infty(mathring M)}, we also get the continuous semi-norms $\|{\cdot}\|_{K,k,m}$ (for any compact $K\subset\mathring M$ and $k\in\N_0$) on $\AA^m(M)$ given by
\begin{equation}\label{| u |_K k m}
\|u\|_{K,k,m}=\sup_K|P_ku|\;.
\end{equation}
Other continuous semi-norms $\|{\cdot}\|'_{k,m}$ ($k\in\N_0$) on $\AA^m(M)$ are defined by
\begin{equation}\label{| u |'_k m}
\|u\|'_{k,m}=\lim_{\epsilon\downarrow0}\sup_{\{0<x<\epsilon\}}\big|x^{-m}P_ku\big|\;.
\end{equation}
The proofs of the following results are similar to the proofs of \Cref{p: topology of S^m(U x R^l),p: | . |_K I J m' = 0 on S^m(U x R^l),c: coincidence of tops on S^m(U x R^l),c: Cinftyc(U x R^l) is dense in S^infty(U x R^l),c: S^infty(U x R^l) is acyclic and Montel}, using~\eqref{dot AA(M)|_mathring M AA(M) subset C^infty(mathring M)}.

\begin{prop}\label{p: topology of AA^m(M)}
The semi-norms~\eqref{| u |_K k m} and~\eqref{| u |'_k m} together describe the topology of $\AA^m(M)$.
\end{prop}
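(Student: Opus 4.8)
The plan is to mimic the proof of \Cref{p: topology of S^m(U x R^l)}, replacing the symbol semi-norms~\eqref{| u |_Q C^k on S^m(U x R^l)} and~\eqref{| a |'_K I J m} with~\eqref{| u |_K k m} and~\eqref{| u |'_k m}, and using~\eqref{dot AA(M)|_mathring M AA(M) subset C^infty(mathring M)} to make sense of the pointwise expressions near the boundary. First I would introduce the LCHS $\AA^{\prime\,m}(M)$ obtained by endowing the vector space $\AA^m(M)$ with the topology generated by the semi-norms~\eqref{| u |_K k m} (for $K$ ranging over an exhausting sequence of compact subsets of $\mathring M$ and $k\in\N_0$) and~\eqref{| u |'_k m} (for $k\in\N_0$); countably many of these suffice, so $\AA^{\prime\,m}(M)$ is metrizable. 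One checks immediately that the identity $\AA^m(M)\to\AA^{\prime\,m}(M)$ is continuous, since each of~\eqref{| u |_K k m} and~\eqref{| u |'_k m} is dominated by the defining semi-norm $\|{\cdot}\|_{k,m}$ of~\eqref{| u |_k m}.

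The core of the argument is to prove that $\AA^{\prime\,m}(M)$ is complete, hence a Fr\'echet space, and that conversely the semi-norms~\eqref{| u |_k m} are continuous on it; then the identity map is a continuous linear bijection between Fr\'echet spaces and the open mapping theorem finishes the proof. For completeness I would take the completion $\widehat\AA^{\prime\,m}(M)$ and use the continuous inclusion $\AA^{\prime\,m}(M)\subset\prod_j C^\infty(\mathring M)$ coming from the operators $P_j$ (via~\eqref{dot AA(M)|_mathring M AA(M) subset C^infty(mathring M)}); since $C^\infty(\mathring M)$ is complete, the inclusion extends to a continuous map $\phi:\widehat\AA^{\prime\,m}(M)\to\prod_j C^\infty(\mathring M)$. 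For $a\in\widehat\AA^{\prime\,m}(M)$, the finiteness of $\|a\|'_{k,m}$ (extended by continuity) gives $C,\epsilon_0>0$ with $|x^{-m}P_k\phi(a)|\le C$ on $\{0<x<\epsilon_0\}$, while the finiteness of the semi-norm~\eqref{| u |_K k m} on the compact set $K=\{x\ge\epsilon_0\}\subset\mathring M$ (shrinking $\epsilon_0$ so $M=\{0\le x\le\epsilon_0\}^c$ is handled, and using that $x^{-m}$ is bounded on $K$) gives boundedness of $|x^{-m}P_k\phi(a)|$ there as well. Combining, $\|\phi(a)\|_{k,m}<\infty$ for all $k$, so $\phi(a)\in\AA^m(M)$ and $\|{\cdot}\|_{k,m}$ extends continuously to $\widehat\AA^{\prime\,m}(M)$; thus $a\equiv\phi(a)\in\AA^m(M)$, proving $\AA^{\prime\,m}(M)$ complete and simultaneously that~\eqref{| u |_k m} is continuous on it.

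The last step is purely formal: the identity $\AA^m(M)\to\AA^{\prime\,m}(M)$ is a continuous linear isomorphism between Fr\'echet spaces, hence a homeomorphism by the open mapping theorem \cite[Section~15.12]{Kothe1969-I}, \cite[Theorem~II.2.1]{Schaefer1971}, \cite[Theorem~14.4.6]{NariciBeckenstein2011}. The main obstacle I anticipate is bookkeeping near the boundary: unlike the symbol case where the ``bad'' region $\{|\xi|\le R\}$ is compact in $U\times\R^l$, here the region $\{x\ge\epsilon_0\}$ is compact in $\mathring M$ but one must be careful that $x^{-m}$ (for $m<0$) or $x^{-m}$ near the boundary is controlled entirely by the $\|{\cdot}\|'_{k,m}$ semi-norm, with the compact-set semi-norms only needed away from $\partial M$; getting the quantifiers on $\epsilon_0$, $K$, $C$ in the right order is the delicate point, but it is a direct transcription of the $S^m$ argument once one observes $\sup_K x^{-m}<\infty$ for $K\subset\mathring M$ compact.
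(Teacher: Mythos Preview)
Your proposal is correct and follows essentially the same approach as the paper, which explicitly states that the proof is similar to that of \Cref{p: topology of S^m(U x R^l)} using~\eqref{dot AA(M)|_mathring M AA(M) subset C^infty(mathring M)}. The only cosmetic points are that your auxiliary notation $\AA^{\prime\,m}(M)$ clashes with the paper's later use of primes for dual spaces, and that it suffices to map the completion into $C^\infty(\mathring M)$ (taking $P_0=1$) rather than into the product $\prod_j C^\infty(\mathring M)$; your handling of the split $\{x\ge\epsilon_0\}\cup\{0<x<\epsilon_0\}$ is exactly the right transcription of the symbol argument, since here $x$ is a global boundary defining function and $\{x\ge\epsilon_0\}$ is compact in $\mathring M$.
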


\begin{prop}\label{p: | . |_k m' = 0 on AA^m(M)}
For $m,m',k\in\N_0$, if $m'<m$, then $\|{\cdot}\|'_{k,m'}=0$ on $\AA^m(M)$.
\end{prop}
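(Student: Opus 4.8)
The plan is to reproduce, in the boundary setting, the one-line computation used to prove \Cref{p: | . |_K I J m' = 0 on S^m(U x R^l)}. The underlying mechanism is that passing from the weight $x^{-m}$ to the weight $x^{-m'}$ with $m' < m$ inserts an extra factor $x^{\,m-m'}$ carrying a positive power of the boundary defining function; this factor tends to $0$ as one approaches $\partial M$, which is exactly what forces the semi-norm \eqref{| u |'_k m}, taken with $m'$ in place of $m$, to vanish on $\AA^m(M)$.

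Concretely, I would fix $u \in \AA^m(M)$ and $k \in \N_0$. By the definition of $\AA^m(M)$ we have $P_k u \in x^m L^\infty(M)$, so $C := \esssup_M \bigl|x^{-m} P_k u\bigr| < \infty$; moreover, by the interior-regularity inclusion \eqref{dot AA(M)|_mathring M AA(M) subset C^infty(mathring M)}, $P_k u$ is smooth on $\mathring M$, so over subsets of $\mathring M$ the essential supremum agrees with the ordinary supremum appearing in \eqref{| u |'_k m}. Then, for every $\epsilon > 0$, on the set $\{0 < x < \epsilon\}$ one has
\[
\bigl|x^{-m'} P_k u\bigr| = x^{\,m-m'}\,\bigl|x^{-m} P_k u\bigr| \le C\,x^{\,m-m'} \le C\,\epsilon^{\,m-m'},
\]
where the last inequality uses $m - m' > 0$. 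Taking the supremum over $\{0 < x < \epsilon\}$ and then letting $\epsilon \downarrow 0$ gives $\|u\|'_{k,m'} = 0$.

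I do not anticipate any genuine obstacle: this is an elementary weighted bound, strictly parallel to the symbol case. The only point requiring a little care is the identification of the pointwise supremum in the definition \eqref{| u |'_k m} with the essential supremum provided by membership in $x^m L^\infty(M)$, which is dealt with via \eqref{dot AA(M)|_mathring M AA(M) subset C^infty(mathring M)} exactly as in the proof of \Cref{p: topology of AA^m(M)}.
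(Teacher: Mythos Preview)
Your proposal is correct and follows exactly the approach the paper indicates (the paper only says the proof is ``similar to'' that of \Cref{p: | . |_K I J m' = 0 on S^m(U x R^l)}). If anything, your argument is slightly more carefully written than the paper's corresponding one-liner in the symbol case: you make explicit the bound $|x^{-m'}P_ku|\le C\,x^{m-m'}$ via the finite quantity $C=\|u\|_{k,m}$ from~\eqref{| u |_k m}, whereas the paper's symbol-case display $\|a\|'_{K,\alpha,\beta,m'}=\|a\|'_{K,\alpha,\beta,m}\lim_{|\xi|\to\infty}|\xi|^{m-m'}$ is a shorthand for the same factorization.
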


\begin{cor}\label{c: coincidence of tops on AA^m(M)}
If $m'<m$, then the topologies of $\AA^{m'}(M)$ and $C^\infty(\mathring M)$ coincide on $\AA^m(M)$. Therefore the topologies of $\AA(M)$ and $C^\infty(\mathring M)$ coincide on $\AA^m(M)$.
\end{cor}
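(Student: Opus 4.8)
The plan is to transcribe the proof of \Cref{c: coincidence of tops on S^m(U x R^l)}, with the ``interior'' $C^k$-seminorms on $U\times\R^l$ now replaced by the seminorms of type \eqref{| u |_K k m}, built from a $C^\infty(M)$-spanning set $\{P_j\}$ of $\Diffb(M)$ and compact subsets $K\subset\mathring M$; the whole argument rests on the continuous inclusion $\AA^m(M)\subset C^\infty(\mathring M)$ coming from \eqref{dot AA(M)|_mathring M AA(M) subset C^infty(mathring M)}, exactly as the symbol argument rested on $S^m(U\times\R^l)\subset C^\infty(U\times\R^l)$.

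For the first assertion, I would apply \Cref{p: topology of AA^m(M)} with $m'$ in place of $m$: the topology induced on $\AA^m(M)$ by $\AA^{m'}(M)$ is generated by the restrictions of the seminorms of type \eqref{| u |_K k m} (which do not depend on $m'$) together with those of type \eqref{| u |'_k m} for the exponent $m'$. Since $m'<m$, \Cref{p: | . |_k m' = 0 on AA^m(M)} kills the latter on $\AA^m(M)$, so the induced topology is generated by the seminorms $u\mapsto\sup_K|P_ju|$, $K\subset\mathring M$ compact, $j\in\N_0$. It then remains to see that these describe precisely the $C^\infty(\mathring M)$-topology restricted to $\AA^m(M)$. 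One containment is immediate, each $\sup_K|P_ju|$ being bounded by a $C^k$-seminorm of $C^\infty(\mathring M)$. For the reverse, fix a compact $K\subset\mathring M$ and $k\in\N_0$; choosing the spanning set so that finitely many $P_j$ generate $\Diffb^k(M)$ over $C^\infty(M)$ (possible since $M$ is compact, and harmless since the topology is independent of the spanning set), and observing that over a relatively compact open $V$ with $K\subset V\Subset\mathring M$ the bundle $\bT M$ restricts to $TV$, so that a local frame of $TV$ cut off by a bump function supported in $V$ extends by zero to global b-vector fields agreeing with that frame on $K$, one gets that $\partial^\alpha u|_K$ for $|\alpha|\le k$ is a $C^\infty(M)$-linear combination of finitely many of the $P_ju|_K$; bounding those (smooth) coefficients on $K$ yields $\|u\|_{K,C^k}\le C\max_{j\le N}\sup_K|P_ju|$, completing the first assertion.

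For the second assertion, I would argue as in \Cref{c: coincidence of tops on S^m(U x R^l)}: by \eqref{dot AA(M)|_mathring M AA(M) subset C^infty(mathring M)} the inclusion $\AA(M)\subset C^\infty(\mathring M)$ is continuous, so the $\AA(M)$-topology on $\AA^m(M)$ is at least as fine as the $C^\infty(\mathring M)$-topology, while for the converse, given an open $O\subset\AA(M)$ and fixing any $m'<m$, the set $O\cap\AA^{m'}(M)$ is open in $\AA^{m'}(M)$, hence by the first assertion $O\cap\AA^m(M)=(O\cap\AA^{m'}(M))\cap\AA^m(M)$ is relatively open for the $C^\infty(\mathring M)$-topology on $\AA^m(M)$. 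The one genuinely new point compared with the symbol case, and hence the step I expect to require the most care, is the last part of the first assertion: checking that a spanning set of $\Diffb(M)$ restricted to compacts of $\mathring M$ controls all derivatives there. This is not hard because $x$ is bounded below on each such compact set, so $\fXb(M)$ and $\fX(\mathring M)$ agree there after multiplication by interior cutoffs; everything else is a routine copy of the proofs in \Cref{s: symbols}, as flagged in the paragraph preceding \Cref{p: topology of AA^m(M)}.
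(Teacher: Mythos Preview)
Your proposal is correct and follows essentially the same approach as the paper, which simply states that the proof is ``similar to the proofs of \Cref{p: topology of S^m(U x R^l),p: | . |_K I J m' = 0 on S^m(U x R^l),c: coincidence of tops on S^m(U x R^l)}, using~\eqref{dot AA(M)|_mathring M AA(M) subset C^infty(mathring M)}.'' You have correctly identified and handled the one genuinely new detail not present in the symbol case: that the seminorms $u\mapsto\sup_K|P_ju|$ built from a $C^\infty(M)$-spanning set of $\Diffb(M)$ generate the $C^\infty(\mathring M)$-topology on compact subsets of the interior, which holds because $\fXb(M)$ and $\fX(\mathring M)$ agree there after multiplication by interior cutoffs.
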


\begin{cor}\label{c: Cinftyc(mathring M) is dense in AA(M)}
For $m'<m$, $\Cinftyc(\mathring M)$ is dense in $\AA^m(M)$ with the topology of $\AA^{m'}(M)$. Therefore $\Cinftyc(\mathring M)$ is dense in $\AA(M)$.
\end{cor}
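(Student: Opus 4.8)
The plan is to follow the proof of \Cref{c: Cinftyc(U x R^l) is dense in S^infty(U x R^l)} essentially verbatim, with \Cref{c: coincidence of tops on AA^m(M)} playing the role of \Cref{c: coincidence of tops on S^m(U x R^l)}. First I would record the elementary fact that $\Cinftyc(\mathring M)\subset\AA^m(M)$ for every $m$: if $u\in\Cinftyc(\mathring M)$ and $P\in\Diffb(M)$, then $Pu\in\Cinftyc(\mathring M)$ is bounded and supported in a compact subset of $\mathring M$, hence lies in $x^mL^\infty(M)$; so in fact $\Cinftyc(\mathring M)\subset\dot C^\infty(M)=\bigcap_m\AA^m(M)$. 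For the first assertion, by \Cref{c: coincidence of tops on AA^m(M)} the topology that $\AA^{m'}(M)$ induces on $\AA^m(M)$ is exactly the topology induced by $C^\infty(\mathring M)$ through the continuous inclusion of~\eqref{dot AA(M)|_mathring M AA(M) subset C^infty(mathring M)}. Since $\Cinftyc(\mathring M)$ is dense in $C^\infty(\mathring M)$ (multiply by cutoffs equal to $1$ on prescribed compact subsets of $\mathring M$), its closure in $C^\infty(\mathring M)$ is the whole space, so its closure in the subspace $\AA^m(M)$ for the induced topology is all of $\AA^m(M)$; i.e.\ $\Cinftyc(\mathring M)$ is dense in $\AA^m(M)$ with the topology of $\AA^{m'}(M)$.

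The second assertion follows from the first as in \Cref{c: Cinftyc(U x R^l) is dense in S^infty(U x R^l)}: given a nonempty open $O\subset\AA(M)$, by~\eqref{AA(M) = bigcup_m AA^m(M)} and the definition of the inductive limit topology we have $O\cap\AA^m(M)\ne\emptyset$ for some $m$, and $O\cap\AA^{m'}(M)$ is open in $\AA^{m'}(M)$ for every $m'>m$; hence $O\cap\AA^m(M)=\big(O\cap\AA^{m'}(M)\big)\cap\AA^m(M)$ is open in $\AA^m(M)$ for the topology it inherits from $\AA^{m'}(M)$, and so by the first assertion it contains a point of $\Cinftyc(\mathring M)$. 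Thus every nonempty open subset of $\AA(M)$ meets $\Cinftyc(\mathring M)$.

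I do not expect a genuine obstacle here: the only substantive input is \Cref{c: coincidence of tops on AA^m(M)} (itself a consequence of \Cref{p: topology of AA^m(M)} and~\eqref{dot AA(M)|_mathring M AA(M) subset C^infty(mathring M)}), and everything else is soft point-set topology. If one prefers not to invoke the density of $\Cinftyc(\mathring M)$ in $C^\infty(\mathring M)$, an alternative — the analog of the route mentioned in \Cref{r: Cinftyc(U x R^l) is dense in S^m(U x R^l)} via \Cref{p: C^infty_c(M) is dense in x^m L^infty(M)} — would be to approximate a given $u\in\AA^m(M)$ by functions of the form $\lambda(x)\chi$, where $\chi\in C^\infty(M)$ is close to $u$ in $C^\infty(\mathring M)$ and $\lambda\in C^\infty(\R)$ is supported in $\{x>\delta/2\}$, and then to estimate the semi-norms~\eqref{| u |_K k m} and~\eqref{| u |'_k m} (which describe the topology of $\AA^{m'}(M)$ by \Cref{p: topology of AA^m(M)}) of the difference; but the topology-coincidence argument above is shorter.
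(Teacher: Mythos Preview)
Your proof is correct and follows essentially the same approach as the paper, which simply says the argument is the analog of \Cref{c: Cinftyc(U x R^l) is dense in S^infty(U x R^l)} using the inclusion~\eqref{dot AA(M)|_mathring M AA(M) subset C^infty(mathring M)}. One small slip: in the second paragraph you write ``for every $m'>m$,'' but since $\AA^m(M)\subset\AA^{m'}(M)$ requires $m'<m$ (cf.~\eqref{AA^m(M) subset AA^m'(M)}), and the first assertion is stated for $m'<m$, you should take $m'<m$ there; with that correction the argument goes through verbatim.
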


\begin{cor}\label{c: AA(M) is acyclic and Montel}
$\AA(M)$ is an acyclic Montel space, and therefore complete, boundedly retractive and reflexive.
\end{cor}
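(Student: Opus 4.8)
The plan is to reproduce, essentially verbatim, the argument that proves \Cref{c: S^infty(U x R^l) is acyclic and Montel} (and, in the conormal setting, \Cref{c: I(M L) is acyclic and Montel}), replacing the symbol spaces $S^m(U\times\R^l)$ by the steps $\AA^m(M)$ and the ambient Fr\'echet space $C^\infty(U\times\R^l)$ by $C^\infty(\mathring M)$, into which $\AA(M)$ is continuously included by~\eqref{dot AA(M)|_mathring M AA(M) subset C^infty(mathring M)}. The three inputs I would use are: each $\AA^m(M)$ is a Fr\'echet space (this is part of \Cref{p: topology of AA^m(M)}, whose proof, parallel to that of \Cref{p: topology of S^m(U x R^l)}, presents $\AA^m(M)$ as complete and metrizable); $\AA(M)$ is barreled, by \Cref{c: dot AA(M) and AA(M) are barreled}; and the topologies of $\AA(M)$ and of $C^\infty(\mathring M)$ coincide on every step $\AA^m(M)$, by \Cref{c: coincidence of tops on AA^m(M)}.

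First, \Cref{c: coincidence of tops on AA^m(M)} gives the acyclicity criterion recalled in \Cref{ss: TVS}, so $\AA(M)$ is acyclic; since its steps are Fr\'echet, this is equivalent to $\AA(M)$ being boundedly (equivalently, compactly or sequentially) retractive, and it forces $\AA(M)$ to be complete and regular. Next, a barreled Montel space is reflexive, so it remains only to check that $\AA(M)$ is semi-Montel. Take a closed bounded $B\subset\AA(M)$; it is complete because $\AA(M)$ is complete. By regularity, $B$ is contained and bounded in some step $\AA^m(M)$, and by bounded retractivity the $\AA(M)$-topology and the $\AA^m(M)$-topology agree on $B$; by \Cref{c: coincidence of tops on AA^m(M)} these in turn agree with the $C^\infty(\mathring M)$-topology on $\AA^m(M)$. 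Hence $B$ is a complete, therefore closed, bounded subset of the Montel space $C^\infty(\mathring M)$, so it is compact there, and consequently compact in $\AA(M)$, the induced topology on $B$ being the same. This shows $\AA(M)$ is semi-Montel, hence Montel, and then completeness, bounded retractivity and reflexivity all follow from the generalities in \Cref{ss: TVS}. (Reflexivity could alternatively be derived from \Cref{p: dot AA^(s)(M) and dot AA^(s)(M) are totally reflexive Frechet sps} together with regularity, exactly as in \Cref{r: I(M L) is reflexive}.)

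I do not expect a genuine obstacle: every step above is a literal transcription of an argument already carried out for $S^\infty(U\times\R^l)$. The points that need a moment's care are purely bookkeeping, namely that $\AA(M)\subset C^\infty(\mathring M)$ continuously (via~\eqref{dot AA(M)|_mathring M AA(M) subset C^infty(mathring M)}), so that ``the induced topology'' is unambiguous, and that the filtration $\{\AA^m(M)\}$ is indexed by a real parameter, which is reduced to a cofinal sequence of steps (here $m_k\downarrow-\infty$) exactly as noted in \Cref{ss: TVS}.
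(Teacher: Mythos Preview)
Your proposal is correct and follows essentially the same approach as the paper: the paper does not write out a separate proof for this corollary but simply states that the proofs of \Cref{p: topology of AA^m(M)}--\Cref{c: AA(M) is acyclic and Montel} are similar to those of \Cref{p: topology of S^m(U x R^l)}--\Cref{c: S^infty(U x R^l) is acyclic and Montel}, using the inclusion~\eqref{dot AA(M)|_mathring M AA(M) subset C^infty(mathring M)}. Your write-up is precisely that transcription, with the same use of \Cref{c: coincidence of tops on AA^m(M)} for acyclicity, \Cref{c: dot AA(M) and AA(M) are barreled} for barreledness, and the Montel property of $C^\infty(\mathring M)$ for semi-Montel; even your parenthetical alternative for reflexivity mirrors \Cref{r: I(M L) is reflexive}.
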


\begin{rem}\label{r: Cinftyc(mathring M) is dense in AA(M)}
\Cref{p: C^infty_c(M) is dense in x^m L^infty(M)} provides an alternative direct proof of \Cref{c: Cinftyc(mathring M) is dense in AA(M)}. Actually, it will be shown that $\Cinftyc(\mathring M)$ is dense in every $\AA^m(M)$ with its own topology (\Cref{c: Cinftyc(mathring M) is dense in AA^m(M),r: Cinftyc(mathring M) is dense in AA^m(M)}), reconfirming~\Cref{c: C^infty(M) subset dot AA(M) is dense}.
\end{rem}

The obvious analog of \Cref{r: S^infty(U x R^l) subset C^infty(U x R^l) is not a TVS-embedding} makes sense for~\eqref{dot AA(M)|_mathring M AA(M) subset C^infty(mathring M)} and \Cref{c: coincidence of tops on AA^m(M)}.

\subsection{$\dot\AA(M)$ and $\AA(M)$ vs $I(\breve M,\partial M)$}
\label{ss: dot AA(M) and AA(M) vs I(breve M partial M)}

Using~\eqref{R: C^-infty(breve M) -> C^-infty(M)},~\eqref{R: H^s(breve M) -> H^s(M)},~\eqref{Diff(breve M,partial M) = Diffb(M)} and the commutativity of the left-hand side square of~\eqref{AR = RA with breve M}, we get continuous linear restriction maps
\[
R:I^{(s)}(\breve M,\partial M)\to\AA^{(s)}(M)\;,
\]
which induce a continuous linear restriction map
\begin{equation}\label{R: I(breve M partial M) -> AA(M)}
R:I(\breve M,\partial M)\to\AA(M)\;.
\end{equation}

By~\eqref{dot H^s(M) subset H^s(breve M)},~\eqref{Diff(breve M,partial M) = Diffb(M)} and the commutativity of the right-hand side square of~\eqref{AR = RA with breve M}, we get the TVS-identities
\begin{equation}\label{dot AA^(s)(M) equiv I^(s)_M(breve M partial M)}
\dot\AA^{(s)}(M)\equiv I^{(s)}_M(\breve M,\partial M)\;,
\end{equation}
inducing a continuous linear isomorphism
\begin{equation}\label{dot AA(M) cong I_M(breve M partial M)}
\dot\AA(M) \xrightarrow{\cong} I_M(\breve M,\partial M)\;.
\end{equation}

By~\eqref{dot AA(M) cong I_M(breve M partial M)} and \Cref{p: R: dot AA(M) -> AA(M) is a surj top hom}, the map~\eqref{R: I(breve M partial M) -> AA(M)} is also surjective. Then the following analog of \Cref{p: R: dot C^-infty(M) -> C^-infty(M) is a top hom} follows with formally the same proof, using that $I(\breve M,\partial M)$ is webbed (\Cref{c: I(M L) is barreled}) and $\AA(M)$ ultrabornological (\Cref{c: dot AA(M) and AA(M) are barreled}).

\begin{prop}\label{p: R: I(breve M partial M) -> AA(M) is a surj top hom}
The map~\eqref{R: I(breve M partial M) -> AA(M)} is a surjective topological homomorphism.
\end{prop}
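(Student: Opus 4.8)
The plan is to follow verbatim the template already established for \Cref{p: R: dot C^-infty(M) -> C^-infty(M) is a top hom} and \Cref{p: R: dot AA(M) -> AA(M) is a surj top hom}, since the statement asserts exactly the same kind of conclusion (linear continuous surjection between suitable LCSs is automatically open) for the map~\eqref{R: I(breve M partial M) -> AA(M)}. First I would record that~\eqref{R: I(breve M partial M) -> AA(M)} is linear and continuous by its very construction in \Cref{ss: dot AA(M) and AA(M) vs I(breve M partial M)}, and that it is surjective: this is obtained by composing the isomorphism~\eqref{dot AA(M) cong I_M(breve M partial M)} $\dot\AA(M)\xrightarrow{\cong}I_M(\breve M,\partial M)\subset I(\breve M,\partial M)$ with the surjective restriction map~\eqref{R: dot AA(M) -> AA(M)} of \Cref{p: R: dot AA(M) -> AA(M) is a surj top hom}, noting that the maps~\eqref{R: dot AA^(s)(M) -> AA^(s)(M)} and~\eqref{R: dot AA(M) -> AA(M)} are restrictions of~\eqref{R: dot C^-infty(M) -> C^-infty(M)}, hence the composite agrees with~\eqref{R: I(breve M partial M) -> AA(M)} on $I_M(\breve M,\partial M)$ and has the same image as~\eqref{R: dot AA(M) -> AA(M)}, namely all of $\AA(M)$.

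Next I would invoke the open mapping theorem in the webbed/ultrabornological form already cited in the paper. The domain $I(\breve M,\partial M)$ is webbed: indeed it is an LF-space (\Cref{c: I(M L) is barreled} gives that it is barreled, ultrabornological and webbed, since every $I^{(s)}(\breve M,\partial M)$ is Fréchet by \Cref{p: I^(s)(M L) is a totally reflexive Frechet sp}). The codomain $\AA(M)$ is ultrabornological by \Cref{c: dot AA(M) and AA(M) are barreled}. Therefore, by De Wilde's open mapping theorem (\cite[7.35.3~(1)]{Kothe1979-II}, \cite[Exercise~14.202~(a)]{NariciBeckenstein2011}, \cite[Section~IV.5]{DeWilde1978}, \cite{Bourles2014}), any continuous linear surjection from a webbed space onto an ultrabornological space is open. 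Applying this to~\eqref{R: I(breve M partial M) -> AA(M)} yields that it is a topological homomorphism, completing the proof.

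There is essentially no obstacle here; the only point requiring a moment of care is the bookkeeping that~\eqref{R: I(breve M partial M) -> AA(M)} really does factor as claimed through~\eqref{dot AA(M) cong I_M(breve M partial M)} and~\eqref{R: dot AA(M) -> AA(M)}, so that surjectivity is inherited — but this is exactly the remark made in the sentence preceding the statement (``By~\eqref{dot AA(M) cong I_M(breve M partial M)} and \Cref{p: R: dot AA(M) -> AA(M) is a surj top hom}, the map~\eqref{R: I(breve M partial M) -> AA(M)} is also surjective''), so it can be cited rather than re-derived.

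\begin{proof}
By construction in \Cref{ss: dot AA(M) and AA(M) vs I(breve M partial M)}, the map~\eqref{R: I(breve M partial M) -> AA(M)} is linear and continuous, and it is surjective by the remark preceding the statement, which uses~\eqref{dot AA(M) cong I_M(breve M partial M)} and \Cref{p: R: dot AA(M) -> AA(M) is a surj top hom}. The space $I(\breve M,\partial M)$ is webbed, being an LF-space (\Cref{c: I(M L) is barreled}), and $\AA(M)$ is ultrabornological (\Cref{c: dot AA(M) and AA(M) are barreled}). Hence~\eqref{R: I(breve M partial M) -> AA(M)} is open by the open mapping theorem \cite[7.35.3~(1)]{Kothe1979-II}, \cite[Exercise~14.202~(a)]{NariciBeckenstein2011}, \cite[Section~IV.5]{DeWilde1978}, \cite{Bourles2014}, and therefore it is a surjective topological homomorphism.
\end{proof}
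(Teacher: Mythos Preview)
Your proof is correct and follows exactly the same approach as the paper, which simply notes that the argument of \Cref{p: R: dot C^-infty(M) -> C^-infty(M) is a top hom} applies verbatim once one knows that $I(\breve M,\partial M)$ is webbed (\Cref{c: I(M L) is barreled}) and $\AA(M)$ is ultrabornological (\Cref{c: dot AA(M) and AA(M) are barreled}).
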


The null space of~\eqref{R: I(breve M partial M) -> AA(M)} is $I_{M'}(\breve M,\partial M)$. The following analog of \Cref{p: dot C^-infty_partial M(M) = bigcup_s H^s_partial M(M)} follows with formally the same proof, using \Cref{p: R: I(breve M partial M) -> AA(M) is a surj top hom,c: AA(M) is acyclic and Montel}.

\begin{prop}\label{p: I_M(breve M partial M) = bigcup_s I^(s)_M(breve M partial M)}
$I_M(\breve M,\partial M)$ is a limit subspace of the LF-space $I(\breve M,\partial M)$.
\end{prop}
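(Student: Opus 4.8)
The plan is to mimic the proof of \Cref{p: dot C^-infty_partial M(M) = bigcup_s H^s_partial M(M)}; in fact it is the same argument with the roles of $M$ and $M'=\breve M\setminus\mathring M$ exchanged. By the criterion recalled in \Cref{ss: TVS}, $I_M(\breve M,\partial M)$ is a limit subspace of the LF-space $I(\breve M,\partial M)=\bigcup_sI^{(s)}(\breve M,\partial M)$ precisely when the inductive spectrum of the quotients $I^{(s)}(\breve M,\partial M)/I^{(s)}_M(\breve M,\partial M)$ is acyclic, with $I^{(s)}_M(\breve M,\partial M)=I^{(s)}(\breve M,\partial M)\cap I_M(\breve M,\partial M)$. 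So I would exhibit a surjective topological homomorphism out of $I(\breve M,\partial M)$ with kernel $I_M(\breve M,\partial M)$ and acyclic range, and then read off acyclicity of the quotient spectrum.

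The key point is that $M'$ is again a compact $n$-manifold with boundary $\partial M'=\partial M$, sitting inside the closed manifold $\breve M$ exactly as $M$ does but with $M$ and $M'$ swapped; hence the whole development of \Cref{ss: conormality at the boundary - Sobolev order,ss: dot AA(M) and AA(M) vs I(breve M partial M)} applies verbatim to $M'$. In particular, \Cref{p: R: I(breve M partial M) -> AA(M) is a surj top hom} applied to $M'$ makes the restriction map $R:I(\breve M,\partial M)\to\AA(M')$ a surjective topological homomorphism, and its null space is $\{u\in I(\breve M,\partial M)\mid u|_{\mathring{M'}}=0\}=\{u\in I(\breve M,\partial M)\mid\supp u\subset M\}=I_M(\breve M,\partial M)$ --- the ``$I_{M'}$'' of the remark in \Cref{ss: dot AA(M) and AA(M) vs I(breve M partial M)} read with $M$ and $M'$ interchanged. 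Consequently $I(\breve M,\partial M)/I_M(\breve M,\partial M)$, with the quotient topology, is $\equiv\AA(M')$.

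It then remains to transfer acyclicity. By \Cref{c: AA(M) is acyclic and Montel} applied to $M'$, the LF-space $\AA(M')=\bigcup_s\AA^{(s)}(M')$ is acyclic. Both its defining steps $\AA^{(s)}(M')$ and the quotients $I^{(s)}(\breve M,\partial M)/I^{(s)}_M(\breve M,\partial M)$ are Fr\'echet --- the latter being a quotient of the Fr\'echet space $I^{(s)}(\breve M,\partial M)$ (boundary analog of \Cref{p: I^(s)(M L) is a totally reflexive Frechet sp}) by the closed subspace $I^{(s)}_M(\breve M,\partial M)\equiv\dot\AA^{(s)}(M)$ (see~\eqref{dot AA^(s)(M) equiv I^(s)_M(breve M partial M)}). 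Since acyclicity of an inductive spectrum of Fr\'echet spaces depends only on the resulting LF-space (\Cref{ss: TVS}), and that space is $\AA(M')$ in both cases, the quotient spectrum is acyclic; equivalently, $I_M(\breve M,\partial M)$ is a limit subspace of $I(\breve M,\partial M)$.

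I do not expect a genuine obstacle: the proof is purely formal once the $M\leftrightarrow M'$ symmetry and the space-level nature of acyclicity for spectra of Fr\'echet spaces are in place. The only point needing care is the identification of the quotient spectrum with the defining spectrum of $\AA(M')$ up to the LF-space they produce --- exactly the subtlety already handled in the proof of \Cref{p: dot C^-infty_partial M(M) = bigcup_s H^s_partial M(M)}, where the restriction maps $\dot H^s\to H^s$ are surjective only for a range of $s$ while the full restriction map is nonetheless a topological homomorphism.
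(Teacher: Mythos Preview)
Your proposal is correct and follows the same route as the paper: identify $I(\breve M,\partial M)/I_M(\breve M,\partial M)$ with $\AA(M')$ via the surjective topological homomorphism $R$ (\Cref{p: R: I(breve M partial M) -> AA(M) is a surj top hom} applied to $M'$), invoke acyclicity of $\AA(M')$ (\Cref{c: AA(M) is acyclic and Montel} for $M'$), and conclude via the criterion of \Cref{ss: TVS}. The paper's one-line proof cites the same two ingredients, leaving the $M\leftrightarrow M'$ swap implicit; you have simply made it explicit, along with the verification that the quotient spectrum of Fr\'echet steps yields the same LF-space $\AA(M')$.
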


\begin{cor}\label{c: dot AA(M) cong I_M(breve M partial M)}
The map~\eqref{dot AA(M) cong I_M(breve M partial M)} is a TVS-isomorphism.
\end{cor}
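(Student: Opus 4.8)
The plan is to obtain the assertion by combining the Sobolev-order identifications \eqref{dot AA^(s)(M) equiv I^(s)_M(breve M partial M)} with the limit-subspace property established in \Cref{p: I_M(breve M partial M) = bigcup_s I^(s)_M(breve M partial M)}. Recall that the map \eqref{dot AA(M) cong I_M(breve M partial M)} is already known to be a continuous linear bijection, induced by the TVS-identities $\dot\AA^{(s)}(M)\equiv I^{(s)}_M(\breve M,\partial M)$; what remains is only to check that its inverse is continuous.

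First I would note that, by \eqref{dot AA(M) - AA(M)}, the space $\dot\AA(M)=\bigcup_s\dot\AA^{(s)}(M)$ carries the inductive limit topology of the spectrum of Fr\'echet spaces $\dot\AA^{(s)}(M)$ (\Cref{p: dot AA^(s)(M) and dot AA^(s)(M) are totally reflexive Frechet sps}). The identifications \eqref{dot AA^(s)(M) equiv I^(s)_M(breve M partial M)} are compatible with the filtration inclusions: under $\dot\AA^{(s)}(M)\equiv I^{(s)}_M(\breve M,\partial M)$ the inclusion $\dot\AA^{(s)}(M)\hookrightarrow\dot\AA^{(s')}(M)$ for $s'<s$ corresponds to $I^{(s)}_M(\breve M,\partial M)\hookrightarrow I^{(s')}_M(\breve M,\partial M)$. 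Therefore the map \eqref{dot AA(M) cong I_M(breve M partial M)} is exactly the inductive-limit identification $\dot\AA(M)\equiv\bigcup_sI^{(s)}_M(\breve M,\partial M)$, the right-hand side being endowed with the inductive limit topology. Now \Cref{p: I_M(breve M partial M) = bigcup_s I^(s)_M(breve M partial M)} states precisely that $I_M(\breve M,\partial M)$ is a limit subspace of the LF-space $I(\breve M,\partial M)$, i.e., that $I_M(\breve M,\partial M)\equiv\bigcup_sI^{(s)}_M(\breve M,\partial M)$ with the very same inductive limit topology (see the definition of limit subspace in \Cref{ss: TVS}, noting that $I^{(s)}(\breve M,\partial M)\cap I_M(\breve M,\partial M)=I^{(s)}_M(\breve M,\partial M)$ with its subspace topology, which is the one appearing in \eqref{dot AA^(s)(M) equiv I^(s)_M(breve M partial M)}). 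Composing the two identifications gives the TVS-isomorphism \eqref{dot AA(M) cong I_M(breve M partial M)}.

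I do not anticipate any genuine obstacle; the only point needing a little care is the compatibility of \eqref{dot AA^(s)(M) equiv I^(s)_M(breve M partial M)} with the filtrations, which is what legitimizes the passage to inductive limits on both sides. As an alternative, one could argue directly with the open mapping theorem: $\dot\AA(M)$ is ultrabornological by \Cref{c: dot AA(M) and AA(M) are barreled}, while $I_M(\breve M,\partial M)$, being a closed subspace of the webbed space $I(\breve M,\partial M)$ (\Cref{c: I(M L) is barreled}), is itself webbed, so the continuous linear bijection \eqref{dot AA(M) cong I_M(breve M partial M)} is automatically open, just as in the proof of \Cref{p: R: dot C^-infty(M) -> C^-infty(M) is a top hom}.
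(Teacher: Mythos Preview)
Your main argument is correct and is exactly the paper's proof: it applies \eqref{dot AA(M) - AA(M)}, the step identifications \eqref{dot AA^(s)(M) equiv I^(s)_M(breve M partial M)}, and \Cref{p: I_M(breve M partial M) = bigcup_s I^(s)_M(breve M partial M)} to conclude that both sides carry the same inductive limit topology.

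One caveat about your alternative via the open mapping theorem: the De~Wilde version used throughout the paper (see the proof of \Cref{p: R: dot C^-infty(M) -> C^-infty(M) is a top hom}) requires the \emph{domain} to be webbed and the \emph{codomain} to be ultrabornological, whereas you supplied the hypotheses the other way around. This is easily repaired here since $\dot\AA(M)$ is also webbed by \Cref{c: dot AA(M) and AA(M) are barreled}; the real issue is that a closed subspace of an ultrabornological space need not be ultrabornological, so knowing that $I_M(\breve M,\partial M)$ is webbed does not by itself give what the theorem needs for the codomain. Your primary argument already settles the matter, so no harm done.
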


\begin{proof}
Apply~\eqref{dot AA(M) - AA(M)},~\eqref{dot AA^(s)(M) equiv I^(s)_M(breve M partial M)} and \Cref{p: I_M(breve M partial M) = bigcup_s I^(s)_M(breve M partial M)}.
\end{proof}

\subsection{Filtration of $\dot\AA(M)$ by the symbol order}
\label{ss: conormality at the boundary - symbol order}

Inspired by~\eqref{dot AA^(s)(M) equiv I^(s)_M(breve M partial M)}, let \index{$\dot\AA^m(M)$}
\begin{equation}\label{dot AA^m(M) = I^m_M(breve M partial M)}
\dot\AA^m(M)=I^m_M(\breve M,\partial M)\subset I^m(\breve M,\partial M)\quad(m\in\R)\;,
\end{equation}
which are closed subspaces satisfying the analogs of~\eqref{I^m(M L) subset I^m'(M L)} and~\eqref{sandwich for I}. Thus
\[
\dot\AA(M)=\bigcup_m\dot\AA^m(M)\;,\quad\dot C^\infty(M)=\bigcap_m\dot\AA^m(M)\;,
\]
and the TVS-isomorphism~\eqref{dot AA(M) cong I_M(breve M partial M)} is also compatible with the symbol filtration.

The following is a consequence of \Cref{c: coincidence of tops on I^m(M L)} applied to $(\breve M,\partial M)$.

\begin{cor}\label{c: coincidence of tops on dot AA^m(M)}
For $m<m',m''$, the topologies of $\dot\AA^{m'}(M)$ and $\dot\AA^{m''}(M)$ coincide on $\dot\AA^m(M)$.
\end{cor}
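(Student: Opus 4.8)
The plan is to transport \Cref{c: coincidence of tops on I^m(M L)}, which was established for an arbitrary closed manifold together with a closed regular submanifold, across the defining identity~\eqref{dot AA^m(M) = I^m_M(breve M partial M)}. First I would recall that, by~\eqref{dot AA^m(M) = I^m_M(breve M partial M)}, for every $m$ the space $\dot\AA^m(M)$ is the closed subspace $I^m_M(\breve M,\partial M)$ of $I^m(\breve M,\partial M)$, equipped with the subspace topology; here $\breve M$ is a closed $n$-manifold and $\partial M\subset\breve M$ a closed regular submanifold (of codimension one), so the hypotheses of \Cref{s: conormal distribs} are satisfied for the pair $(\breve M,\partial M)$. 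In particular, for $m<m'$ the continuous inclusion $\dot\AA^m(M)\hookrightarrow\dot\AA^{m'}(M)$ is the restriction of the continuous inclusion $I^m(\breve M,\partial M)\hookrightarrow I^{m'}(\breve M,\partial M)$ to the subspaces of distributions supported in $M$, and the topology that $\dot\AA^{m'}(M)$ induces on $\dot\AA^m(M)$ coincides with the topology that $I^{m'}(\breve M,\partial M)$ induces on $I^m_M(\breve M,\partial M)$, since restricting a topology to a subset and then to a sub-subset equals restricting directly.

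Next, I would apply \Cref{c: coincidence of tops on I^m(M L)} to $(\breve M,\partial M)$: for $m<m',m''$ the topologies of $I^{m'}(\breve M,\partial M)$ and $I^{m''}(\breve M,\partial M)$ agree on $I^m(\breve M,\partial M)$. Restricting this equality of topologies to the subset $I^m_M(\breve M,\partial M)\subset I^m(\breve M,\partial M)$ and invoking the identification of subspace topologies from the previous paragraph gives that the topologies of $\dot\AA^{m'}(M)$ and $\dot\AA^{m''}(M)$ coincide on $\dot\AA^m(M)$, which is the assertion.

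There is no real obstacle here: the whole content is the definition~\eqref{dot AA^m(M) = I^m_M(breve M partial M)} combined with the already-proved \Cref{c: coincidence of tops on I^m(M L)}. The only point requiring (trivial) care is the bookkeeping of subspace topologies, namely that $\dot\AA^{m'}(M)$ carries exactly the topology induced from $I^{m'}(\breve M,\partial M)$, which is precisely what~\eqref{dot AA^m(M) = I^m_M(breve M partial M)} records.
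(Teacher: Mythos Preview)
Your proof is correct and takes essentially the same approach as the paper: the paper's proof consists of the single sentence ``The following is a consequence of \Cref{c: coincidence of tops on I^m(M L)} applied to $(\breve M,\partial M)$.'' You have simply spelled out the subspace-topology bookkeeping that the paper leaves implicit.
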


The following result follows like \Cref{c: S^infty(U x R^l) is acyclic and Montel}, applying \Cref{c: coincidence of tops on dot AA^m(M)} and using that $\dot\AA(M)$ is barreled (\Cref{c: dot AA(M) and AA(M) are barreled}) and a closed subspace of the Montel space $I(\breve M,\partial M)$ (\Cref{c: I(M L) is acyclic and Montel}).

\begin{cor}\label{c: dot AA(M) is acyclic and Montel}
$\dot\AA(M)$ is an acyclic Montel space, and therefore complete, boundedly retractive and reflexive.
\end{cor}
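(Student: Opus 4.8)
The plan is to mimic the arguments already used for \Cref{c: S^infty(U x R^l) is acyclic and Montel}, \Cref{c: I(M L) is acyclic and Montel} and \Cref{c: AA(M) is acyclic and Montel}, now feeding in \Cref{c: coincidence of tops on dot AA^m(M)} in place of the corresponding earlier coincidence-of-topologies statements.

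First I would prove that $\dot\AA(M)$ is acyclic. By~\eqref{dot AA^m(M) = I^m_M(breve M partial M)}, the steps $\dot\AA^m(M)$ are closed subspaces of the Fr\'echet spaces $I^m(\breve M,\partial M)$, hence Fr\'echet; so the acyclicity machinery of \cite{Wengenroth2003} recalled in \Cref{ss: TVS} applies to the inductive spectrum $(\dot\AA^m(M))_m$, and being acyclic is a property of the LF-space $\dot\AA(M)$ alone. To verify it, it suffices, for each $m$, to find $m'>m$ such that for all $m''\ge m'$ the topologies of $\dot\AA^{m'}(M)$ and $\dot\AA^{m''}(M)$ coincide on some $0$-neighborhood of $\dot\AA^m(M)$; this is supplied in the stronger form ``they coincide on all of $\dot\AA^m(M)$'' by \Cref{c: coincidence of tops on dot AA^m(M)}. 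Acyclicity then yields completeness, regularity and bounded retractiveness at once (\Cref{ss: TVS}).

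Next I would deduce the Montel property. Since $\dot\AA(M)$ is barreled (\Cref{c: dot AA(M) and AA(M) are barreled}) and every Montel space is reflexive, it only remains to show that $\dot\AA(M)$ is semi-Montel, after which being Montel, complete, boundedly retractive and reflexive all follow. Here the shortest route is via \Cref{c: dot AA(M) cong I_M(breve M partial M)}, which identifies $\dot\AA(M)$ with the closed subspace $I_M(\breve M,\partial M)$ of $I(\breve M,\partial M)$; the latter is a Montel space by \Cref{c: I(M L) is acyclic and Montel}, and the semi-Montel property is inherited by closed subspaces, so $\dot\AA(M)$ is semi-Montel. Alternatively, one can copy the proof of \Cref{c: S^infty(U x R^l) is acyclic and Montel}: a closed bounded $B\subset\dot\AA(M)$ is complete, sits and is bounded in some step $\dot\AA^m(M)$ with matching topology by bounded retractiveness, and by \Cref{c: coincidence of tops on dot AA^m(M)} becomes a complete, hence closed, bounded subset of $C^\infty(\mathring M)$, which is a Montel space, so $B$ is compact.

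There is essentially no genuine obstacle; the one point needing a moment's attention is the bookkeeping at the start --- that the $\dot\AA^m(M)$ are really Fr\'echet spaces and that the inclusions $\dot\AA^m(M)\hookrightarrow\dot\AA^{m'}(M)$ are exactly the maps whose topologies \Cref{c: coincidence of tops on dot AA^m(M)} compares --- both of which are immediate from~\eqref{dot AA^m(M) = I^m_M(breve M partial M)} and the fact that each $I^m(\breve M,\partial M)$ is Fr\'echet. Every other step is a direct transcription of arguments appearing earlier in the paper.
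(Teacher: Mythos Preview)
Your main argument is correct and is essentially the paper's proof: acyclicity from \Cref{c: coincidence of tops on dot AA^m(M)} as in \Cref{c: S^infty(U x R^l) is acyclic and Montel}, barreledness from \Cref{c: dot AA(M) and AA(M) are barreled}, and semi-Montel by realizing $\dot\AA(M)$ as the closed subspace $I_M(\breve M,\partial M)$ of the Montel space $I(\breve M,\partial M)$ via \Cref{c: dot AA(M) cong I_M(breve M partial M)} and \Cref{c: I(M L) is acyclic and Montel}.

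One caveat on your ``alternatively'' paragraph: \Cref{c: coincidence of tops on dot AA^m(M)} only compares the topologies of different $\dot\AA^{m'}(M)$ on $\dot\AA^m(M)$; unlike \Cref{c: coincidence of tops on AA^m(M)}, it says nothing about $C^\infty(\mathring M)$. More seriously, restriction $\dot\AA(M)\to C^\infty(\mathring M)$ is not injective (everything in $\KK(M)$ goes to $0$), so you cannot push a bounded set into $C^\infty(\mathring M)$ and argue compactness there. If you want a direct semi-Montel argument avoiding the closed-subspace shortcut, use instead the closed TVS-embedding~\eqref{I(M L) -> ...} for $(\breve M,\partial M)$ restricted to $I_M(\breve M,\partial M)$, exactly as in the proof of \Cref{c: I(M L) is acyclic and Montel}.
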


\subsection{The space $\KK(M)$}\label{ss: KK(M)}

Using the condition of being supported in $\partial M$, define the LCHSs and $C^\infty(M)$-modules \index{$\KK^{(s)}(M)$} \index{$\KK^m(M)$} \index{$\KK(M)$}
\[
\KK^{(s)}(M)=\dot\AA^{(s)}_{\partial M}(M)\;,\quad\KK^m(M)=\dot\AA^m_{\partial M}(M)\;,\quad
\KK(M)=\dot\AA_{\partial M}(M)\;.
\]
These are closed subspaces of $\dot\AA^{(s)}(M)$, $\dot\AA^m(M)$ and $\dot\AA(M)$, respectively; more precisely, they are the null spaces of the corresponding restrictions of the map~\eqref{R: dot AA(M) -> AA(M)}. They satisfy the analogs of~\eqref{I^(s)(M L) subset I^(s')(M L)},~\eqref{I^m(M L) subset I^m'(M L)} and~\eqref{sandwich for I}. So
\[
\bigcup_s\KK^{(s)}(M)=\bigcup_m\KK^m(M)\;.
\]

The term \emph{conormal sequence at the boundary} of $M$ will be used for
\begin{equation}\label{0 -> KK(M) -> dot AA(M) -> AA(M) -> 0}
0\to\KK(M) \xrightarrow{\iota} \dot\AA(M) \xrightarrow{R} \AA(M)\to0\;.
\end{equation}
\Cref{p: R: dot AA(M) -> AA(M) is a surj top hom} has the following direct consequence.

\begin{cor}\label{c: 0 -> KK(M) -> dot AA(M) -> AA(M) -> 0 is exact}
The conormal sequence at the boundary of $M$ is exact in the category of continuous linear maps between LCSs.
\end{cor}

The following analog of \Cref{p: dot C^-infty_partial M(M) = bigcup_s H^s_partial M(M)} holds true with formally the same proof, using \Cref{p: R: dot AA(M) -> AA(M) is a surj top hom,c: AA(M) is acyclic and Montel}.

\begin{prop}\label{p: KK(M) = bigcup_s KK^(s)(M)}
$\KK(M)$ is a limit subspace of the LF-space $\dot\AA(M)$.
\end{prop}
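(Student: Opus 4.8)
The plan is to repeat, essentially verbatim, the argument proving \Cref{p: dot C^-infty_partial M(M) = bigcup_s H^s_partial M(M)}, with $\dot C^{-\infty}(M)$, $C^{-\infty}(M)$ and $\dot C^{-\infty}_{\partial M}(M)$ replaced by $\dot\AA(M)$, $\AA(M)$ and $\KK(M)$. Recall the criterion recorded in \Cref{ss: TVS}: for an LF-space $X=\bigcup_kX_k$ with Fr\'echet steps and a subspace $Y\subset X$, setting $Y_k=X_k\cap Y$, the space $Y$ is a limit subspace of $X$ if and only if the inductive spectrum $(X_k/Y_k)$ is acyclic \cite[Chapter~6, p.~110]{Wengenroth2003}. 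Here $X=\dot\AA(M)=\bigcup_s\dot\AA^{(s)}(M)$ has totally reflexive Fr\'echet steps by \Cref{p: dot AA^(s)(M) and dot AA^(s)(M) are totally reflexive Frechet sps}, $Y=\KK(M)$, and $Y_{(s)}=\dot\AA^{(s)}(M)\cap\KK(M)=\dot\AA^{(s)}_{\partial M}(M)=\KK^{(s)}(M)$, which is a closed subspace of $\dot\AA^{(s)}(M)$, being the kernel of the continuous restriction of $R$. So everything reduces to showing that the quotient spectrum $(\dot\AA^{(s)}(M)/\KK^{(s)}(M))_s$ is acyclic.

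Since the steps $\dot\AA^{(s)}(M)/\KK^{(s)}(M)$ are Fr\'echet and quotients by a closed subspace commute with countable locally convex inductive limits, this spectrum has inductive limit $\dot\AA(M)/\KK(M)$; and, acyclicity of an LF-space with Fr\'echet steps being a property of the space alone, the spectrum is acyclic if and only if $\dot\AA(M)/\KK(M)$ is acyclic. By \Cref{p: R: dot AA(M) -> AA(M) is a surj top hom} the map $R$ is a surjective topological homomorphism with kernel $\KK(M)$, hence $\dot\AA(M)/\KK(M)\equiv\AA(M)$, and $\AA(M)$ is acyclic by \Cref{c: AA(M) is acyclic and Montel}. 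Therefore $(\dot\AA^{(s)}(M)/\KK^{(s)}(M))_s$ is acyclic, and the criterion yields the statement. If one prefers to make the identification of the quotient spectrum concrete rather than appeal to the abstract fact above, one can instead note that for $s\le1/2$ the surjection $R\colon\dot\AA^{(s)}(M)\to\AA^{(s)}(M)$ of~\eqref{R: dot AA^(s)(M) -> AA^(s)(M)} is, by the open mapping theorem for Fr\'echet spaces, a topological homomorphism with kernel $\KK^{(s)}(M)$, so it induces a TVS-isomorphism $\dot\AA^{(s)}(M)/\KK^{(s)}(M)\equiv\AA^{(s)}(M)$, compatibly with the connecting inclusions over the range $s\to-\infty$; thus the two spectra are isomorphic and one is acyclic iff the other is.

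I expect no genuine difficulty here: the entire content sits in the prerequisites \Cref{p: R: dot AA(M) -> AA(M) is a surj top hom} and \Cref{c: AA(M) is acyclic and Montel}, and the rest is a formal transcription of the proof of \Cref{p: dot C^-infty_partial M(M) = bigcup_s H^s_partial M(M)}. The only point requiring a little care is bookkeeping of Sobolev orders: the maps $R\colon\dot\AA^{(s)}(M)\to\AA^{(s)}(M)$ are surjective precisely for $s\le 1/2$ (and fail injectivity for $s<-1/2$), which is exactly the half-line $s\to-\infty$ entering the inductive limit, so the reduction to $\AA(M)$ is legitimate and the quotient spectrum really is the one defining $\AA(M)$.
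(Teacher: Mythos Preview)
Your proof is correct and follows essentially the same approach as the paper: the paper states that this proposition holds with formally the same proof as \Cref{p: dot C^-infty_partial M(M) = bigcup_s H^s_partial M(M)}, using \Cref{p: R: dot AA(M) -> AA(M) is a surj top hom} and \Cref{c: AA(M) is acyclic and Montel}, which is exactly what you do. Your write-up is simply a more detailed unpacking of that one-line argument, including the explicit identification $\dot\AA^{(s)}(M)/\KK^{(s)}(M)\equiv\AA^{(s)}(M)$ for $s\le 1/2$ via the open mapping theorem, which the paper leaves implicit.
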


From the definition of $\dot\AA^{(s)}(M)$ (\Cref{ss: conormality at the boundary - Sobolev order}), we get
\[
\KK^{(s)}(M)=\{\,u\in\dot C^{-\infty}_{\partial M}(M)\mid\Diffb(M)\,u\subset\dot H^s_{\partial M}(M)\,\}\;,
\]
with the projective topology given by the maps $P:\KK^{(s)}(M)\to\dot H^s_{\partial M}(M)$ ($P\in\Diffb(M)$). Hence the following analogs of \Cref{p: I^(s)(M L) is a totally reflexive Frechet sp,p: S^infty(U x R^l) is barreled} hold true with formally the same proofs.

\begin{prop}\label{p: KK^(s)(M) is a totally reflexive Frechet sp}
$\KK^{(s)}(M)$ is a totally reflexive Fr\'echet space.
\end{prop}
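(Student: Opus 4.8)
The plan is to carry over the proof of \Cref{p: I^(s)(M L) is a totally reflexive Frechet sp} verbatim, with $\Diffb(M)$ in place of $\Diff(M,L)$ and the supported boundary-conormal Sobolev space $\dot H^s_{\partial M}(M)$ in place of $H^s(M)$. First I would fix a countable $C^\infty(M)$-spanning set $\{\,P_j\mid j\in\N_0\,\}$ of $\Diffb(M)$; such a set exists because $\fXb(M)\equiv C^\infty(M;\bT M)$ is finitely $C^\infty(M)$-generated (as $M$ is compact), hence each $\Diffb^m(M)$ is finitely $C^\infty(M)$-generated and $\Diffb(M)=\bigcup_m\Diffb^m(M)$ is countably $C^\infty(M)$-generated. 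Since $\dot H^s_{\partial M}(M)\equiv H^s_{\partial M}(\breve M)$ is a closed $C^\infty(M)$-submodule of $H^s(\breve M)$ (multiplication by smooth functions being continuous on $H^s(\breve M)$ for every $s$ and not increasing supports), the projective topology on $\KK^{(s)}(M)$ defined by all the maps $P:\KK^{(s)}(M)\to\dot H^s_{\partial M}(M)$ ($P\in\Diffb(M)$) coincides with the one defined only by the $P_j$; in particular it is metrizable.

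Next, for each $k\in\N_0$ I would set
\[
\KK^{(s)}_k(M)=\{\,u\in\dot C^{-\infty}_{\partial M}(M)\mid P_ju\in\dot H^s_{\partial M}(M),\ j=0,\dots,k\,\}\;,
\]
with the projective topology given by $P_0,\dots,P_k$, and assume $P_0=1$, so that $\KK^{(s)}_0(M)=\dot H^s_{\partial M}(M)$. Fixing a scalar product $\langle{\cdot},{\cdot}\rangle_s$ on the Hilbertian space $\dot H^s_{\partial M}(M)$ (a closed subspace of $H^s(\breve M)$), the form $\langle u,v\rangle_{s,k}=\sum_{j=0}^k\langle P_ju,P_jv\rangle_s$ is a scalar product inducing the topology of $\KK^{(s)}_k(M)$; completeness follows since for a Cauchy sequence $(u_n)$ the sequence $u_n=P_0u_n$ converges in $\dot H^s_{\partial M}(M)\subset\dot C^{-\infty}_{\partial M}(M)$ to some $u\in\dot C^{-\infty}_{\partial M}(M)$, and then $P_ju=\lim P_ju_n\in\dot H^s_{\partial M}(M)$ by continuity of $P_j$ on $\dot C^{-\infty}_{\partial M}(M)$. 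There are continuous inclusions $\KK^{(s)}_{k'}(M)\subset\KK^{(s)}_k(M)$ for $k<k'$, and $\KK^{(s)}(M)=\bigcap_k\KK^{(s)}_k(M)$ as a reduced projective limit. Thus $\KK^{(s)}(M)$ is a projective limit of a sequence of Hilbert spaces, so it is a totally reflexive Fr\'echet space by \cite[Theorem~4]{Valdivia1989}.

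I do not expect a genuine obstacle here: the argument is word-for-word that of \Cref{p: I^(s)(M L) is a totally reflexive Frechet sp}. The only two points that need to be checked rather than copied are that $\Diffb(M)$ is countably $C^\infty(M)$-generated (so that the projective topology is really a countable one and $\KK^{(s)}(M)$ is metrizable) and that $\dot H^s_{\partial M}(M)$ is a closed $C^\infty(M)$-submodule of the Hilbertian space $H^s(\breve M)$ (so that the $\langle{\cdot},{\cdot}\rangle_{s,k}$ are well-defined scalar products and the $\KK^{(s)}_k(M)$ are complete); both are immediate from the material in \Cref{ss: Diffb(M),ss: supported and extendible Sobolev sps}.
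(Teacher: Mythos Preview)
Your proposal is correct and follows essentially the same approach as the paper: the paper states that the result holds ``with formally the same proof'' as \Cref{p: I^(s)(M L) is a totally reflexive Frechet sp}, after recording the description $\KK^{(s)}(M)=\{\,u\in\dot C^{-\infty}_{\partial M}(M)\mid\Diffb(M)\,u\subset\dot H^s_{\partial M}(M)\,\}$ with its projective topology. Your write-up supplies exactly the details the paper leaves implicit, including the countable $C^\infty(M)$-generation of $\Diffb(M)$ and the Hilbertian structure on $\dot H^s_{\partial M}(M)$.
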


\begin{cor}\label{c: KK(M) is barreled}
$\KK(M)$ is barreled, ultrabornological and webbed.
\end{cor}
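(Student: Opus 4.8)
The plan is to recognize $\KK(M)$ as an LF-space and then read off the three properties from the standard structural facts recalled in \Cref{ss: smooth/distributional sections}, exactly as was done for \Cref{p: S^infty(U x R^l) is barreled} and \Cref{c: I(M L) is barreled}. First I would invoke \Cref{p: KK(M) = bigcup_s KK^(s)(M)}, which says that $\KK(M)$ is a limit subspace of the LF-space $\dot\AA(M)$; by the notion of limit subspace from \Cref{ss: TVS}, this means that, as a locally convex space, $\KK(M)\equiv\bigcup_s\KK^{(s)}(M)$ carries the locally convex inductive limit topology of its steps (and it suffices to use a sequence $s_k\uparrow\infty$). By \Cref{p: KK^(s)(M) is a totally reflexive Frechet sp}, each step $\KK^{(s)}(M)$ is a Fr\'echet space, so $\KK(M)$ is an LF-space.

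Then the conclusion is purely formal. The space $\KK(M)$ is ultrabornological because this property is satisfied by Fr\'echet spaces and is preserved under locally convex inductive limits \cite[Example~13.2.8~(d) and Theorem~13.2.11]{NariciBeckenstein2011}; it is therefore barreled, since every ultrabornological space is barreled \cite[Observation~6.1.2~(b)]{PerezCarrerasBonet1987}; and it is webbed because every LF-space is webbed \cite[Proposition~IV.4.6]{DeWilde1978}, \cite[Theorem~14.6.5]{NariciBeckenstein2011}. This is formally the same argument as the one behind \Cref{p: S^infty(U x R^l) is barreled} and \Cref{c: I(M L) is barreled}, which the statement is already phrased to parallel.

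I do not expect any real obstacle here: the entire substance is carried by the two previously established facts, namely \Cref{p: KK(M) = bigcup_s KK^(s)(M)} (which guarantees that $\KK(M)$ is genuinely an inductive limit of the $\KK^{(s)}(M)$, rather than merely an abstract closed subspace of $\dot\AA(M)$) and \Cref{p: KK^(s)(M) is a totally reflexive Frechet sp} (which guarantees that the steps are Fr\'echet). The only point requiring a moment's care is to use ``limit subspace'' in its precise sense, so that the inductive limit topology on $\bigcup_s\KK^{(s)}(M)$ is identified with the subspace topology induced from $\dot\AA(M)$; once that identification is invoked, barreledness, ultrabornologicity and webbedness follow immediately from the LF-space structure with no further computation.
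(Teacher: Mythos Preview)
Your proposal is correct and matches the paper's own argument: the paper states that \Cref{p: KK^(s)(M) is a totally reflexive Frechet sp} and \Cref{c: KK(M) is barreled} ``hold true with formally the same proofs'' as their analogs \Cref{p: I^(s)(M L) is a totally reflexive Frechet sp} and \Cref{p: S^infty(U x R^l) is barreled}, which is exactly the LF-space argument you spell out. Your extra care in invoking \Cref{p: KK(M) = bigcup_s KK^(s)(M)} to identify the subspace topology on $\KK(M)\subset\dot\AA(M)$ with the inductive limit topology of the steps $\KK^{(s)}(M)$ is appropriate---the paper proves this just before and tacitly relies on it---so nothing is missing.
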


Now the following analogs of \Cref{c: coincidence of tops on dot AA^m(M),c: dot AA(M) is acyclic and Montel} hold true with formally the same proofs, using \Cref{c: coincidence of tops on dot AA^m(M),c: dot AA(M) is acyclic and Montel,c: KK(M) is barreled}.

\begin{cor}\label{c: coincidence of tops on KK^m(M)}
For $m<m',m''$, the topologies of $\KK^{m'}(M)$ and $\KK^{m''}(M)$ coincide on $\KK^m(M)$.
\end{cor}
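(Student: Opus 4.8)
The plan is to obtain this as a formal consequence of \Cref{c: coincidence of tops on dot AA^m(M)}, by passing to the closed subspace of distributions supported in $\partial M$. Recall from \Cref{ss: KK(M)} that, for every $m\in\R$, the space $\KK^m(M)=\dot\AA^m_{\partial M}(M)$ is by definition a closed subspace of $\dot\AA^m(M)$, endowed with the subspace topology; in particular the continuous inclusions $\dot\AA^m(M)\subset\dot\AA^{m'}(M)\subset\dot\AA^{m''}(M)$ (the analog of~\eqref{I^m(M L) subset I^m'(M L)}) restrict to the continuous inclusions $\KK^m(M)\subset\KK^{m'}(M)\subset\KK^{m''}(M)$, because the condition of being supported in $\partial M$ is intrinsic to the distribution and does not depend on the ambient space in which it is considered.

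First I would record that, since subspace topologies compose, the topology induced on the set $\KK^m(M)$ by $\KK^{m'}(M)$ coincides with the topology induced on $\KK^m(M)$ by $\dot\AA^{m'}(M)$, i.e.\ with the trace on $\KK^m(M)$ of the topology that $\dot\AA^{m'}(M)$ induces on $\dot\AA^m(M)$; and likewise with $m''$ in place of $m'$. Then I would invoke \Cref{c: coincidence of tops on dot AA^m(M)}, which asserts that $\dot\AA^{m'}(M)$ and $\dot\AA^{m''}(M)$ already induce the same topology on $\dot\AA^m(M)$. Restricting this equality of topologies to the subset $\KK^m(M)\subset\dot\AA^m(M)$ yields precisely the claim that $\KK^{m'}(M)$ and $\KK^{m''}(M)$ induce the same topology on $\KK^m(M)$.

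I do not anticipate any genuine obstacle: the argument is purely formal once \Cref{c: coincidence of tops on dot AA^m(M)} is available, and the only point requiring attention is the bookkeeping of the nested subspace topologies, which is immediate from the definition $\KK^\bullet(M)=\dot\AA^\bullet_{\partial M}(M)$ together with the compatibility of the inclusions noted above. Equivalently, one may copy the proof of \Cref{c: coincidence of tops on dot AA^m(M)} verbatim: via~\eqref{dot AA^m(M) = I^m_M(breve M partial M)} one has $\KK^m(M)\equiv I^m_{\partial M}(\breve M,\partial M)$, a closed subspace of $I^m(\breve M,\partial M)$, and similarly for $m'$ and $m''$, so the statement follows by restricting to this subspace the coincidence of topologies provided by \Cref{c: coincidence of tops on I^m(M L)} applied to the pair $(\breve M,\partial M)$.
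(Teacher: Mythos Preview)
Your proposal is correct and follows essentially the same approach as the paper: the paper states that this corollary holds with formally the same proof as \Cref{c: coincidence of tops on dot AA^m(M)}, using that corollary, which amounts exactly to your restriction-to-a-closed-subspace argument (equivalently, your alternative via~\eqref{dot AA^m(M) = I^m_M(breve M partial M)} and \Cref{c: coincidence of tops on I^m(M L)} for $(\breve M,\partial M)$).
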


\begin{cor}\label{c: KK(M) is acyclic and Montel}
$\KK(M)$ is an acyclic Montel space, and therefore complete, boundedly retractive and reflexive.
\end{cor}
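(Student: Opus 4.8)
The plan is to transcribe the proof of \Cref{c: dot AA(M) is acyclic and Montel}, which in turn runs parallel to that of \Cref{c: S^infty(U x R^l) is acyclic and Montel}, substituting $\KK(M)$ for $\dot\AA(M)$ and invoking the facts already recorded for $\KK(M)$ and for its symbol-order filtration $\bigl(\KK^m(M)\bigr)$.

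First I would establish acyclicity of the inductive spectrum $\bigl(\KK^m(M)\bigr)$. The steps $\KK^m(M)=\dot\AA^m_{\partial M}(M)$ are closed subspaces of the Fr\'echet spaces $\dot\AA^m(M)=I^m_M(\breve M,\partial M)$ (\Cref{ss: KK(M),ss: conormality at the boundary - symbol order}), hence Fr\'echet spaces themselves, and the inclusions $\KK^m(M)\subset\KK^{m'}(M)$ are topological embeddings inherited from those of the $I^m(\breve M,\partial M)$. By \Cref{c: coincidence of tops on KK^m(M)}, for each $m$ and every $m'>m$ the topologies of $\KK^{m'}(M)$ and of all later steps $\KK^{m''}(M)$ agree on $\KK^m(M)$, in particular on a $0$-neighborhood of $\KK^m(M)$; this is exactly the acyclicity criterion recalled in \Cref{ss: TVS}. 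Hence $\KK(M)=\bigcup_m\KK^m(M)$ is an acyclic LF-space, and therefore complete and boundedly retractive.

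It remains to see that $\KK(M)$ is a Montel space. Since $\KK(M)$ is barreled by \Cref{c: KK(M) is barreled} and every Montel space is reflexive, it suffices to prove that $\KK(M)$ is semi-Montel. For this I would use that $\KK(M)=\dot\AA_{\partial M}(M)$ is a closed subspace of $\dot\AA(M)$ --- namely the null space of the continuous restriction map $R$ of~\eqref{R: dot AA(M) -> AA(M)} --- and that $\dot\AA(M)$ is a Montel space by \Cref{c: dot AA(M) is acyclic and Montel}. Since the semi-Montel property is inherited by closed subspaces (as already used for \Cref{c: I(M L) is acyclic and Montel}), $\KK(M)$ is semi-Montel, hence Montel, and therefore complete, boundedly retractive and reflexive, as claimed.

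The argument involves no new analytic input: it is purely a matter of checking that the structural facts used in the model proofs (Fr\'echet steps, topological inclusions between steps, coincidence of topologies, barreledness, closedness inside a Montel space) all persist under passing to the subspace of distributions supported in $\partial M$. The only point that deserves a word of caution is that closedness inside the Montel space $\dot\AA(M)$ yields only the \emph{semi}-Montel property; the full Montel property of $\KK(M)$ requires combining this with barreledness (\Cref{c: KK(M) is barreled}), exactly as in the proof of \Cref{c: dot AA(M) is acyclic and Montel}.
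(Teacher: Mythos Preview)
Your proof is correct and follows exactly the approach the paper intends: the paper simply states that this corollary holds ``with formally the same proof'' as \Cref{c: dot AA(M) is acyclic and Montel}, using \Cref{c: coincidence of tops on KK^m(M)} for acyclicity, \Cref{c: KK(M) is barreled} for barreledness, and the fact that $\KK(M)$ is a closed subspace of the Montel space $\dot\AA(M)$ for the semi-Montel property. Your write-up spells out precisely these ingredients, so there is nothing to add.
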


By \Cref{c: dot AA(M) cong I_M(breve M partial M)},
\begin{equation}\label{KK(M) cong I_partial M(breve M partial M)}
\KK(M)\equiv I_{\partial M}(\breve M,\partial M)\;,
\end{equation}
which restricts to identities between the spaces defining the Sobolev and symbol order filtrations, according to~\eqref{dot AA^(s)(M) equiv I^(s)_M(breve M partial M)} and~\eqref{dot AA^m(M) = I^m_M(breve M partial M)}.

A description of $\KK^{(s)}(M)$ and $\KK(M)$ will be indicated in \Cref{r: description of KK(M)}.

\subsection{Action of $\Diff(M)$ on $\dot\AA(M)$, $\AA(M)$ and $\KK(M)$}
\label{ss: diff ops on dot AA(M) and AA(M)}

According to \Cref{ss: diff opers on conormal distribs}, and using~\eqref{Diff(breve M,partial M) = Diffb(M)},~\eqref{dot AA^(s)(M) equiv I^(s)_M(breve M partial M)}, \Cref{p: R: I(breve M partial M) -> AA(M) is a surj top hom} and locality, any $A\in\Diff(M)$ defines continuous endomorphisms $A$ of $\dot\AA(M)$, $\AA(M)$ and $\KK(M)$. If $A\in\Diff^k(M)$, these maps also satisfy the analogs of~\eqref{A: I^(s)(M L E) -> I^[s-k](M L E)}. If $A\in\Diffb(M)$, it clearly defines continuous endomorphisms of $\dot\AA^{(s)}(M)$, $\AA^{(s)}(M)$, $\AA^m(M)$ and $\KK^{(s)}(M)$.

According to \Cref{ss: diff ops on supp/ext distribs},~\eqref{dot AA(M) subset dot C^-infty(M)},~\eqref{C^infty(M) subset AA(M)} and~\eqref{dot AA(M)|_mathring M AA(M) subset C^infty(mathring M)}, the maps of this subsection are restrictions of the endomorphisms $A$ of $\dot C^{-\infty}(M)$, $C^{-\infty}(M)$ and $C^\infty(\mathring M)$, and extensions of the endomorphisms $A$ of $\dot C^\infty(M)$ and $C^\infty(M)$.

\subsection{Partial extension maps}\label{ss: partial extension maps}

Given linear subspaces, $X\subset\AA(M)$ and $Y\subset\dot\AA(M$), a map $E:X\to Y$ is called a \emph{partial extension map} if $R(Y)\subset X$ and $RE=1$ on $X$.

\begin{prop}[Cf.\ {\cite[Section~4.4]{Melrose1996}}]\label{p: E_m}
For all $m\in\R$, there is a continuous linear partial extension map $E_m:\AA^m(M)\to\dot\AA^{(s)}(M)$, \index{$E_m$} where $s=0$ if $m\ge0$, and $m>s\in\Z^-$ if $m<0$. For $m\ge0$, $E_m:\AA^m(M)\to\dot\AA^{(0)}(M)$ is a continuous inclusion map.
\end{prop}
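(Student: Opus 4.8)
The plan is to reduce the construction of $E_m$ to the boundary collar, where $\AA^m(M)$ is governed by weighted $L^\infty$-bounds, and to build an extension by a finite Taylor-type correction near $\partial M$ followed by a cutoff. First I would fix the collar $T\equiv[0,\epsilon_0)_x\times\partial M$ and a bump function $\chi\in C^\infty(M)$ with $\chi\equiv 1$ near $\partial M$ and $\supp\chi\subset T$; writing $u\in\AA^m(M)$ as $u=\chi u+(1-\chi)u$, the second summand lies in $\dot C^\infty(M)\subset\dot\AA^{(s)}(M)$ for every $s$ (by~\eqref{dot AA(M)|_mathring M AA(M) subset C^infty(mathring M)} and compact support away from $\partial M$), so only $\chi u$ needs treatment. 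On the collar, by the defining property $\Diffb(M)\,u\subset x^mL^\infty(M)$ and~\eqref{Diffb^k(M) x^a = x^a Diffb^k(M)}, the function $x^{-m}u$ together with all its $\Diffb$-derivatives is in $L^\infty$ of the collar; this is the analytic input I would exploit.

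The case $m\ge0$ is essentially free: since $x^mL^\infty(M)\subset L^\infty(M)\subset L^2(M)=H^0(M)$ and, more generally, $P(\chi u)\in x^mL^\infty(M)\subset \dot H^0(M)$ for all $P\in\Diffb(M)$ once we note $\chi u$ is supported in $M$ (indeed in the collar, away from any extra boundary of $\breve M$), we get $\chi u\in\dot\AA^{(0)}(M)$ directly from the definition of $\dot\AA^{(0)}(M)$ in \Cref{ss: conormality at the boundary - Sobolev order}. Hence $E_m=\chi(\cdot)+(1-\chi)(\cdot)$, or simply the identity viewed through~\eqref{sandwich for AA} and the inclusion $x^mL^\infty\subset\dot H^0$, realizes $\AA^m(M)\hookrightarrow\dot\AA^{(0)}(M)$; continuity is the statement that the projective semi-norms $\|P(\cdot)\|_{\dot H^0}$ are dominated by the semi-norms~\eqref{| u |_k m} (together with the interior $C^\infty(\mathring M)$ semi-norms for the $(1-\chi)$ part), which is immediate from $\|\cdot\|_{\dot H^0}\le C\|\cdot\|_{x^mL^\infty}$ on functions supported in the fixed compact collar. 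This also gives the last sentence of the statement, that $E_m$ is then an inclusion map.

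For $m<0$ the obstruction is that $x^mL^\infty$ is no longer contained in $L^2$, so $u$ itself is too singular at $\partial M$ to land in any $\dot H^s$ with the supported condition; the genuine content is to \emph{correct} $u$ by subtracting off the ``most singular'' boundary behavior. The strategy, following \cite[Section~4.4]{Melrose1996}, is an induction on $\lceil -m\rceil$: given $u\in\AA^m(M)$, one extracts from the collar expansion a leading term of the form $x^{\lceil m\rceil}\,(\text{restriction to }\partial M)$ — made precise via the restriction map $C^\infty(M;\bOmega^s)\to C^\infty(\partial M;\Omega^s)$ of \Cref{ss: b-geometry} applied after multiplying by a suitable power of $x$ — extends that boundary datum smoothly and with compact support into the collar, and checks that the difference lies in $\AA^{m+1}(M)$, lowering the singularity order by one. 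Iterating until the residual lies in $\AA^{m'}(M)$ with $m'>s$ chosen so that $x^{m'}L^\infty\subset\dot H^{s}$ with the supported condition (possible since for supported functions near $\partial M$ one has $x^{m'}L^\infty\subset\dot H^{s}$ whenever $s<m'+1/2$, say, which forces the negative integer $s$ with $m>s$ as in the statement), one assembles $E_m u$ as the residual plus all the extended correction terms. Continuity is then bookkeeping: each extraction-and-extension step is continuous $\AA^m\to\AA^{m+1}$ (the boundary restriction is continuous from $x^{-\lceil m\rceil}\AA^m$ into $C^\infty(\partial M)$, and the smooth extension into the collar is continuous), and the final embedding $\AA^{m'}\hookrightarrow\dot\AA^{(s)}$ is the $m\ge 0$-type estimate above with a shifted weight. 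The main obstacle is making the ``extract the leading term'' step precise and checking it is well-defined and continuous — i.e., that the boundary datum obtained by the rescaled restriction map does not depend on the choice of boundary defining function or collar in a way that breaks continuity, and that subtracting its extension genuinely gains one power of $x$ in every $\Diffb$-derivative; this uses~\eqref{Diffb^k(M) x^a = x^a Diffb^k(M)} together with the fact that $\Diffb(M)$ annihilates $C^\infty(\partial M)$-data only up to lower order, which is exactly the mechanism producing the next term of the expansion.
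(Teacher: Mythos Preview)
The paper does not supply its own proof of this proposition; it simply records the statement and cites \cite[Section~4.4]{Melrose1996}. Your treatment of the case $m\ge0$ is correct and is essentially \eqref{sandwich for AA} together with $\dot H^0(M)\equiv H^0(M)\equiv L^2(M)$ from \Cref{ss: supported and extendible Sobolev sps}, which gives the continuous inclusion $\AA^m(M)\subset\AA^{(0)}(M)\equiv\dot\AA^{(0)}(M)$ directly; the cutoff $\chi$ is not needed.

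For $m<0$ your inductive scheme has a genuine gap at the ``extract the leading term'' step. You propose to multiply $u$ by a power of $x$ and apply the boundary restriction map of \Cref{ss: b-geometry}, but that map is defined only on $C^\infty(M)$, and $x^{-m}u$ need not extend continuously to $\partial M$. Concretely, on a collar the function $u(x,y)=x^m\sin(\log x)$ lies in $\AA^m$ (each $(x\partial_x)^ku$ is $O(x^m)$), yet $x^{-m}u=\sin(\log x)$ has no limit as $x\to0^+$, so there is no boundary datum to subtract and no reason for any difference to land in $\AA^{m+1}$. The spaces $\AA^m(M)$ defined by the bounds~\eqref{| u |_k m} simply do not carry polyhomogeneous-type expansions; your Taylor-subtraction procedure is the right one for polyhomogeneous conormal distributions but fails on the full space. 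The construction that actually works goes in the opposite direction: one \emph{raises} the order by anti-differentiating in the normal variable. On the collar, $(Ju)(x,y)=-\int_x^{\epsilon_0}\chi(t)\,u(t,y)\,dt$ satisfies $\partial_x(Ju)=\chi u$ and maps $\AA^m\to\AA^{m+1}$ continuously (with a minor adjustment at integer thresholds); iterating $k=\lceil-m\rceil$ times reaches nonnegative order, one embeds into $\dot\AA^{(0)}(M)$ by your $m\ge0$ step, and then $\partial_x^k$ carries the result into $\dot\AA^{(-k)}(M)\subset\dot\AA^{(s)}(M)$ with $R$ recovering $u$.
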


\begin{rem}\label{r: A: AA^m(M) -> AA^m'-k(M)}
By~\eqref{sandwich for AA} and \Cref{p: E_m}, for any $A\in\Diff^k(M)$, the endomorphism $A$ of $\AA(M)$ (\Cref{ss: diff ops on dot AA(M) and AA(M)}) is induced by the continuous linear compositions
\[
\AA^m(M) \xrightarrow{E_m} \dot\AA^{(s)}(M) \xrightarrow{A} \dot\AA^{(s-k)}(M) \xrightarrow{R} \AA^{(s-k)}(M)
\subset \AA^{m'-k}(M)\;,
\]
where $m'=s-n/2-1$ for $m$ and $s$ satisfying the conditions of \Cref{p: E_m}.
\end{rem}

\subsection{$L^2$ half-b-densities}\label{ss: L^2 half-b-densities}

By~\eqref{C^infty(M Omega^s) equiv x^s C^infty(M bOmega^s)},
\begin{align*}
C^\infty(M;\Omega^{-\frac12}\otimes\bOmega^{\frac12})
&\equiv C^\infty(M;\Omega^{-\frac12})\otimes_{C^\infty(M)}C^\infty(M;\bOmega^{\frac12})\\
&\equiv C^\infty(M;\Omega^{-\frac12})\otimes_{C^\infty(M)}x^{-\frac12}C^\infty(M;\Omega^{\frac12})\\
&\equiv x^{-\frac12}C^\infty(M;\Omega^{-\frac12}\otimes\Omega^{\frac12})\equiv x^{-\frac12}C^\infty(M)\;.
\end{align*}
So
\begin{align}
L^2(M;\bOmega^{\frac12})
&\equiv L^2(M;\Omega^{\frac12})\otimes_{C^\infty(M)}C^\infty(M;\Omega^{-\frac12}\otimes\bOmega^{\frac12})\notag\\
&\equiv L^2(M;\Omega^{\frac12})\otimes_{C^\infty(M)}x^{-\frac12}C^\infty(M)
\equiv x^{-\frac12}L^2(M;\Omega^{\frac12})\;.
\label{L^2(M bOmega^1/2) equiv x^-1/2 L^2(M Omega^1/2)}
\end{align}
This is an identity of Hilbert spaces, using the weighted $L^2$ space structure of $x^{-1/2}L^2(M;\Omega^{1/2})$ (\Cref{ss: weighted sps}) and the Hilbert space structure on $L^2(M;\bOmega^{1/2})$ induced by the canonical identity
\begin{equation}\label{L^2(mathring M Omega^1/2) equiv L^2(M bOmega^1/2)}
L^2(\mathring M;\Omega^{\frac12})\equiv L^2(M;\bOmega^{\frac12})\;.
\end{equation}

\subsection{$L^\infty$ half-b-densities}\label{ss: L^infty half-b-densities}

Like in~\eqref{L^2(M bOmega^1/2) equiv x^-1/2 L^2(M Omega^1/2)}, we get
\begin{equation}\label{L^infty(M bOmega^1/2) equiv x^-1/2 L^infty(M Omega^1/2)}
L^\infty(M;\bOmega^{\frac12})\equiv x^{-\frac12}L^\infty(M;\Omega^{\frac12})\;,
\end{equation}
as LCSs endowed with a family of equivalent Banach space norms.

Equip $M$ with a b-metric $g$ (\Cref{ss: b-geometry}), and endow $\mathring M$ with the restriction of $g$, also denoted by $g$. With the corresponding Euclidean/Hermitean structures on $\Omega^{1/2}\mathring M$ and $\bOmega^{1/2}M$, we have the identity of Banach spaces
\begin{equation}\label{L^infty(mathring M Omega^1/2) equiv L^infty(M bOmega^1/2)}
L^\infty(\mathring M;\Omega^{\frac12})\equiv L^\infty(M;\bOmega^{\frac12})\;.
\end{equation}

\subsection{b-Sobolev spaces}\label{ss: b-Sobolev}

For $m\in\N_0$, the \emph{b-Sobolev spaces} of \emph{order} $\pm m$ are defined by the following analogs of~\eqref{H^-s(M) = H^s(M  Omega)'},~\eqref{H^s(M) = ...} and~\eqref{H^-s(M) = ...}: \index{$\Hb^m(M;\bOmega^{\frac12})$}
\begin{gather*}
\Hb^m(M;\bOmega^{\frac12})=\{\,u\in L^2(M;\bOmega^{\frac12})\mid
\Diffb^m(M;\bOmega^{\frac12})\,u\subset L^2(M;\bOmega^{\frac12})\,\}\;,\\
\Diffb^m(M;\bOmega^{\frac12})\,L^2(M;\bOmega^{\frac12})=\Hb^{-m}(M;\bOmega^{\frac12})
= \Hb^m(M;\bOmega^{\frac12})'\;.
\end{gather*}
These are $C^\infty(M)$-modules and Hilbertian spaces with no canonical choice of a scalar product in general; we can use any finite set of $C^\infty(M)$-generators of $\Diffb^m(M;\bOmega^{1/2})$ to define a scalar product on $\Hb^{\pm m}(M;\bOmega^{1/2})$. The intersections and unions of the b-Sobolev spaces are denoted by $\Hb^{\pm\infty}(M;\bOmega^{1/2})$. In particular, $\Hb^\infty(M;\bOmega^{1/2})=\AA^{(0)}(M;\bOmega^{1/2})$.

\subsection{Weighted b-Sobolev spaces}\label{ss: weighted b-Sobolev}

We will also use the \emph{weighted b-Sobolev space} $x^a\Hb^m(M;\bOmega^{1/2})$ ($a\in\R$), \index{$x^a\Hb^m(M;\bOmega^{1/2})$} which is another Hilbertian space with no canonical choice of a scalar product; given a scalar product on $\Hb^m(M;\bOmega^{1/2})$ with norm $\|{\cdot}\|_{\Hb^m}$, we get a scalar product on $x^a\Hb^m(M;\bOmega^{1/2})$ with norm $\|{\cdot}\|_{x^a\Hb^m}$, like in \Cref{ss: weighted sps}. Observe that
\[
\bigcap_{a,m}x^aH^m_{\text{\rm b}}(M;\bOmega^{\frac12})=\dot C^\infty(M;\bOmega^{\frac12})\;.
\]

\subsection{Action of $\Diffb^m(M)$ on weighted b-Sobolev spaces}
\label{ss: (pseudo) diff ops on weighted b-Sobolev}

Like in~\eqref{Diff^m(M E) equiv Diff^m(M)},
\begin{equation}\label{Diffb^m(M bOmega^1/2) equiv Diffb^m(M) equiv Diffb^m(M Omega^1/2)}
\Diffb^m(M;\bOmega^{\frac12})\equiv\Diffb^m(M)\equiv\Diffb^m(M;\Omega^{\frac12})\;.
\end{equation}

By~\eqref{Diffb^k(M) x^a = x^a Diffb^k(M)}, for all $k\in\N_0$, $m\in \Z$ and $a\in \R$, any $A\in\Diffb^k(M;\bOmega^{1/2})$ defines a continuous linear map \cite[Lemma 5.14]{Melrose1993}
\[
A:x^aH^m_{\text{\rm b}}(M;\bOmega^{\frac12})\to x^aH^{m-k}_{\text{\rm b}}(M;\bOmega^{\frac12})\;.
\]
Thus it induces a continuous endomorphism $A$ of $x^aH^{\pm\infty}_{\text{\rm b}}(M;\bOmega^{1/2})$.

\subsection{A description of $\AA(M)$}\label{ss: a description of AA(M)}

In this subsection, unless the contrary is indicated, assume the following properties:
\begin{enumerate}[{\rm(A)}]

\item\label{i: g is of bounded geometry} $\mathring M$ is of bounded geometry with $g$.

\item\label{i: A'}  The collar neighborhood $T$ of $\partial M$ can be chosen so that:
\begin{enumerate}[(a)]

\item\label{i: extension A'} every $A\in\fX(\partial M)$ has an extension $A'\in\fXb(T)$ such that $A'$ is $\varpi$-projectable to $A$, and $A'|_{\mathring T}$ is orthogonal to the $\varpi$-fibers; and

\item\label{i: fX_ub(mathring M)|_mathring T is generated by x partial_x and the vector fields A'} $\fXub(\mathring M)|_{\mathring T}$ is $\Cinftyub(\mathring M)|_{\mathring T}$-generated by $x\partial_x$ and the restrictions $A'|_{\mathring T}$ of the vector fields $A'$ of~\ref{i: extension A'}, for $A\in\fX(\partial M)$.

\end{enumerate}
\end{enumerate}
For instance, these properties hold if $\mathring T$ is cylindrical with $g$ (\Cref{ss: b-geometry}).

\begin{lem}\label{l: fX_b(M)|_T is generated by x partial_x and the vector fields A'}
$\fXb(M)|_T$ is $C^\infty(M)|_T$-generated by $x\partial_x$ and the vector fields $A'$ of~\ref{i: A'}, for $A\in\fX(\partial M)$.
\end{lem}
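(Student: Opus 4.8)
The plan is to show two inclusions: $\fXb(M)|_T$ contains the $C^\infty(M)|_T$-span of $x\partial_x$ and the $A'$, and conversely every element of $\fXb(M)|_T$ lies in that span. The first inclusion is immediate: $x\partial_x \in \fXb(M)$ by the very definition of b-vector fields, and each $A' \in \fXb(T)$ is tangent to $\partial M$ by hypothesis~\ref{i: extension A'} (it is $\varpi$-projectable to $A \in \fX(\partial M)$, hence in particular tangent to $\{x=0\}$), so $A'$ and $x\partial_x$ together with their $C^\infty(M)|_T$-multiples all sit inside $\fXb(M)|_T$. The substance is the reverse inclusion, for which I would argue locally and then patch.

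First I would fix adapted coordinates $(x,y)=(x,y^1,\dots,y^{n-1})$ on an open set $U \equiv [0,\epsilon_0)\times V \subset M$ coming from coordinates $y$ on $V \subset \partial M$ via $\varpi$, as in \Cref{ss: b-geometry}. Over such a $U$, $\fXb(M)$ is $C^\infty(M)$-freely generated by $x\partial_x, \partial_{y^1},\dots,\partial_{y^{n-1}}$ (this is exactly the local frame statement recalled for $\bT M$). So it suffices to express each $\partial_{y^j}$ as a $C^\infty(M)|_T$-combination of $x\partial_x$ and the vector fields $A'$. Now take $A = \partial_{y^j} \in \fX(\partial M)$ (defined on $V$; if one wants global $A$ on $\partial M$ one multiplies by a cutoff, but for the local computation the coordinate field suffices). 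Its extension $A' \in \fXb(T)$ is $\varpi$-projectable to $\partial_{y^j}$, which in the coordinates $(x,y)$ means $A' = \partial_{y^j} + x\, b(x,y)\,\partial_x$ for some smooth $b$ — the coefficient of $\partial_x$ must be divisible by $x$ precisely because $A'$ is a b-vector field, i.e. tangent to $x=0$, and the coefficients of the $\partial_{y^k}$ other than $j$ vanish because $A'$ projects to $\partial_{y^j}$ under $\varpi$ (second-factor projection). Hence $\partial_{y^j} = A' - b\cdot(x\partial_x)$, exhibiting $\partial_{y^j}$ in the desired span over $U$.

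The remaining point is to pass from this local statement to a global $C^\infty(M)|_T$-generation statement over all of $T$. Here I would use a locally finite cover of $\partial M$ by coordinate charts $V_i$ with a subordinate partition of unity $\{\rho_i\}$ on $\partial M$, pulled back via $\varpi$ to functions $\varpi^*\rho_i \in C^\infty(M)|_T$ (extended by zero, which is smooth since the $\rho_i$ have support in $V_i$). On each $T_i \equiv [0,\epsilon_0)\times V_i$ the local argument writes any prescribed $\xi \in \fXb(M)|_{T_i}$ as $f_i^0\,(x\partial_x) + \sum_j f_i^j\,A'_{i,j}$ with $f_i^\bullet \in C^\infty(M)|_{T_i}$ and $A'_{i,j}$ the extension of a coordinate field on $V_i$; multiplying by $\varpi^*\rho_i$ and summing over $i$ reassembles $\xi = \sum_i \varpi^*\rho_i\,\xi$ as a global $C^\infty(M)|_T$-combination of $x\partial_x$ and finitely many vector fields of the form $A'$ for $A \in \fX(\partial M)$ (the $A$'s being $\rho_i$ times the coordinate fields, which are genuine global vector fields on $\partial M$; one checks that the extension operation $A \mapsto A'$ of~\ref{i: extension A'} is $C^\infty(\partial M)$-linear in $A$, so $(\rho_i A_j)' = \varpi^*\rho_i\,A'_{i,j}$ up to an error landing in $C^\infty(M)|_T \cdot x\partial_x$, which is harmless). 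I expect the only mild obstacle to be bookkeeping the $\varpi$-projectability and tangency conditions carefully enough to justify the ``divisible by $x$'' and ``no other $\partial_{y^k}$'' claims in the local normal form; everything else is a standard partition-of-unity patching of a free-module statement.
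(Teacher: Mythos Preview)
Your proof is correct and rests on the same core observation as the paper's: the lift $A'$ differs from the ``trivial'' horizontal lift of $A$ by a $C^\infty(M)|_T$-multiple of $x\partial_x$. The paper, however, avoids your partition-of-unity patching by working globally: it introduces, for each $A\in\fX(\partial M)$, the unique $A''\in\fX(T)$ that is $\varpi$-projectable to $A$ and satisfies $dx(A'')=0$ (this is your $\partial_{y^j}$ made global), observes that $A'-A''=f\,x\partial_x$ for some $f\in C^\infty(T)$, and then invokes the known fact that $\fXb(M)|_T$ is $C^\infty(M)|_T$-spanned by $x\partial_x$ and the $A''$. Your local computation is exactly this in coordinates, but the paper's formulation sidesteps the need to reassemble local data. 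Incidentally, your caution about $(\rho_i A_j)' = \varpi^*\rho_i\,A'_{i,j}$ holding only ``up to an error in $C^\infty(M)|_T\cdot x\partial_x$'' is unnecessary: the orthogonal lift $A\mapsto A'$ is genuinely $C^\infty(\partial M)$-linear, so that identity holds on the nose.
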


\begin{proof}
For every $A\in\fX(\partial M)$, there is a unique $A''\in\fX(T)$ such that $A''$ is $\varpi$-projectable to $A$ and $dx(A'')=0$. Since $A'-A''$ is tangent to the $\varpi$-fibers and vanishes on $\partial M$, we have $A'-A''=fx\partial_x$ for some $f\in C^\infty(T)$. Then the result follows because $\fXb(M)|_T$ is $C^\infty(M)|_T$-spanned by $x\partial_x$ and the vector fields $A''$.
\end{proof}

Consider the notation of \Cref{sss: uniform sps,sss: diff ops of bd geom,sss: Sobolev bd geom} for $\mathring M$ with $g$.

\begin{cor}\label{c: injection of C^infty(M)|_T into Cinftyub(mathring M)|_mathring T}
The restriction to $\mathring M$ defines a continuous injection $C^\infty(M)\subset\Cinftyub(\mathring M)$; in particular, $\Cinftyub(\mathring M)$ becomes a $C^\infty(M)$-module.
\end{cor}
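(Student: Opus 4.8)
The plan is to verify that every extendible smooth function $u\in C^\infty(M)$, restricted to $\mathring M$, is $C^\infty$-uniformly bounded with respect to the bounded-geometry structure of $(\mathring M,g)$, and that the inclusion is continuous; the last assertion about the module structure is then immediate, since $\Cinftyub(\mathring M)$ is an algebra (\Cref{sss: uniform sps}) and the restriction map is an algebra homomorphism. By the characterization in~\eqref{C^m_ub(M)} it suffices to show that, for every $m'\in\N_0$ and every choice of $m'$ vector fields $X_1,\dots,X_{m'}\in\fXub(\mathring M)$, the function $X_1\cdots X_{m'}u$ lies in $L^\infty(\mathring M)$, with a bound that is controlled by finitely many of the Fr\'echet semi-norms $\|u\|_{K,C^k}$ of $C^\infty(M)$. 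Away from the collar this is clear because $M$ is compact: on the compact set $M\setminus T'$ (for a slightly smaller collar $T'$) the function $u$ and all its derivatives are bounded, and $\fXub(\mathring M)$ consists of sections whose local coefficients are uniformly bounded.

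The essential work is confined to the collar $\mathring T\equiv(0,\epsilon_0)\times\partial M$. Here I would invoke the structural hypothesis~\ref{i: A'}, or more precisely its consequence \Cref{l: fX_b(M)|_T is generated by x partial_x and the vector fields A'}, together with~\ref{i: fX_ub(mathring M)|_mathring T is generated by x partial_x and the vector fields A'}: on $\mathring T$ the $\Cinftyub(\mathring M)$-module $\fXub(\mathring M)|_{\mathring T}$ is generated by $x\partial_x$ and by the restrictions $A'|_{\mathring T}$ of extensions of vector fields $A\in\fX(\partial M)$, and those same fields $x\partial_x, A'$ generate $\fXb(M)|_T$ over $C^\infty(M)|_T$. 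Thus any composition $X_1\cdots X_{m'}u$ with $X_i\in\fXub(\mathring M)$, when computed on $\mathring T$, expands (using that the coefficients appearing are in $\Cinftyub(\mathring M)$, hence bounded) into a finite $\Cinftyub(\mathring M)$-linear combination of terms $B u$ with $B\in\Diffb^{\le m'}(M)$. Since $u\in C^\infty(M)$ and $\Diffb(M)\subset\Diff(M)$ maps $C^\infty(M)$ continuously into itself, each $Bu\in C^\infty(M)$, and on the compact manifold-with-boundary $M$ such a function is bounded on all of $M$, in particular on $\mathring T$, with bound dominated by a semi-norm $\|u\|_{M,C^{m'}}$ (constants absorbed from the finitely many bounded coefficients and from the generators of $\Diffb^{\le m'}$). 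Combining the collar and the complement gives $\|X_1\cdots X_{m'}u\|_{L^\infty(\mathring M)}\le C_{m'}\|u\|_{M,C^{m'}}$ for a constant independent of $u$, which is exactly the estimate defining the continuous inclusion $C^\infty(M)\hookrightarrow\Cinftyub(\mathring M)$.

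The main obstacle is bookkeeping in the commutator expansion: when one moves the (uniformly bounded, but only $\Cinftyub$, not $C^\infty(M)$) coefficients past the generating vector fields in order to rewrite $X_1\cdots X_{m'}u$ in terms of $\Diffb(M)$-operators applied to $u$, one generates lower-order terms whose coefficients are derivatives of the original coefficients along $x\partial_x$ and the $A'$; one must check that these stay bounded, which is precisely the content of $\fXub(\mathring M)$ being a Lie algebra and a $\Cinftyub(\mathring M)$-module (\Cref{sss: uniform sps}) closed under such operations. Once it is observed that every coefficient arising is obtained by applying elements of $\fXub(\mathring M)$ to a $\Cinftyub(\mathring M)$ function and hence remains in $\Cinftyub(\mathring M)\subset L^\infty(\mathring M)$, the argument closes; I would state this as an induction on $m'$ rather than write out the multi-index combinatorics. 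The hypothesis that $\mathring M$ has bounded geometry (\ref{i: g is of bounded geometry}) is used only to guarantee that $\Cinftyub(\mathring M)$, $\fXub(\mathring M)$ and $\Diffub(\mathring M)$ are defined and have the module/algebra properties just cited.
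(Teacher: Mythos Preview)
Your proposal is correct and follows essentially the same approach as the paper's proof: reduce to the collar, invoke~\eqref{C^m_ub(M)}, and use that $\fXub(\mathring M)|_{\mathring T}$ and $\fXb(M)|_T$ share the same generators $x\partial_x$ and $A'$ (via~\ref{i: fX_ub(mathring M)|_mathring T is generated by x partial_x and the vector fields A'} and \Cref{l: fX_b(M)|_T is generated by x partial_x and the vector fields A'}), so that applying $\fXub$-fields to $u\in C^\infty(M)$ produces bounded functions. The paper's proof is a two-sentence summary of exactly this mechanism; your version simply spells out the commutator bookkeeping and the continuity estimate in more detail.
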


\begin{proof}
It is enough to work on a collar neighborhood $T$ of the boundary satisfying~\ref{i: g is of bounded geometry} and~\ref{i: A'}. But, by~\eqref{C^m_ub(M)},~\ref{i: fX_ub(mathring M)|_mathring T is generated by x partial_x and the vector fields A'} and \Cref{l: fX_b(M)|_T is generated by x partial_x and the vector fields A'}, the restriction to $\mathring T$ defines an injection of $C^\infty(M)|_T$ into $\Cinftyub(\mathring M)|_{\mathring T}$.
\end{proof}

\begin{prop}\label{p: Diffb^m(M) generates Diff_ub^m(mathring M)}
There is a canonical identity of $\Cinftyub(\mathring M)$-modules,
\[
\Diffub^m(\mathring M)\equiv\Diffb^m(M)\otimes_{C^\infty(M)}\Cinftyub(\mathring M)\;.
\]
\end{prop}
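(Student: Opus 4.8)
The plan is to exploit the local picture near $\partial M$ provided by hypotheses~\ref{i: g is of bounded geometry} and~\ref{i: A'} together with \Cref{l: fX_b(M)|_T is generated by x partial_x and the vector fields A'}, and to glue this with the interior, where both modules agree trivially. First I would observe that, away from a collar neighborhood $T$, the claim is vacuous: on a fixed compact set $K\subset\mathring M$ the metric $g$ is smooth and bounded with all derivatives, so $\Diffub^m(\mathring M)$ and $\Diff^m(M)$ restrict to the same $C^\infty(K)$-module of ordinary differential operators of order $\le m$, and $\Diffb^m(M)|_{\mathring M\setminus T'}=\Diff^m(\mathring M\setminus T')$ for a slightly smaller collar $T'$. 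Thus the content of the proposition is entirely concentrated on $\mathring T$.

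On $\mathring T$, the key input is \Cref{l: fX_b(M)|_T is generated by x partial_x and the vector fields A'}: $\fXb(M)|_T$ is $C^\infty(M)|_T$-generated by $x\partial_x$ and the lifted fields $A'$ for $A\in\fX(\partial M)$. By hypothesis~\ref{i: fX_ub(mathring M)|_mathring T is generated by x partial_x and the vector fields A'}, the same finite collection $\{x\partial_x, A'|_{\mathring T}\}$ generates $\fXub(\mathring M)|_{\mathring T}$ over $\Cinftyub(\mathring M)|_{\mathring T}$. Since $\Diffb^m(M)$ is, by definition (\Cref{ss: Diffb(M)}), the $C^\infty(M)$-span of products of at most $m$ elements of $\fXb(M)$, and likewise $\Diffub^m(\mathring M)$ is the $\Cinftyub(\mathring M)$-span of products of at most $m$ elements of $\fXub(\mathring M)$ (\Cref{sss: diff ops of bd geom}), the two filtered modules are generated over their respective coefficient rings by the same finite family of iterated products of $x\partial_x$ and the $A'$. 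Using the continuous inclusion $C^\infty(M)\subset\Cinftyub(\mathring M)$ of \Cref{c: injection of C^infty(M)|_T into Cinftyub(mathring M)|_mathring T}, which makes $\Cinftyub(\mathring M)$ a $C^\infty(M)$-module, the canonical $\Cinftyub(\mathring M)$-linear map
\[
\Diffb^m(M)\otimes_{C^\infty(M)}\Cinftyub(\mathring M)\to\Diffub^m(\mathring M)
\]
is therefore surjective, and the generating family on the source maps onto a generating family on the target; it remains to check injectivity, i.e.\ that there are no extra relations introduced by extending scalars.

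For injectivity I would argue that $\Diffb^m(M)$ is, near the boundary, a \emph{free} $C^\infty(M)|_T$-module on the monomials $(x\partial_x)^j(A'_1)^{\gamma_1}\cdots$ with total degree $\le m$ in the frame $x\partial_x,\partial_{y^1},\dots,\partial_{y^{n-1}}$ of $\bT M$ (the symbol filtration of \Cref{ss: Diffb(M)}, or directly \cite[Section~4.9]{Melrose1993}), and similarly $\Diffub^m(\mathring M)|_{\mathring T}$ is free over $\Cinftyub(\mathring M)|_{\mathring T}$ on the corresponding monomials in the uniformly-bounded frame $x\partial_x$, $A'|_{\mathring T}$ completed by a uniformly bounded frame of $\bT M|_{\mathring T}$; freeness on the uniform side is because $g$ being of bounded geometry gives a global uniform local frame. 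Since $\Cinftyub(\mathring M)$ is torsion-free and flat over $C^\infty(M)$ in the relevant sense (extension of scalars between free modules of the same finite rank is an isomorphism onto its image when the ranks and generators match), the tensor product map is also injective. Gluing the interior isomorphism (identity) with this boundary isomorphism via a partition of unity of the type furnished by \Cref{p: f_k}, and checking that the transition is compatible, yields the canonical identity.

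The main obstacle I anticipate is the injectivity/no-extra-relations step: one must verify carefully that passing from the coefficient ring $C^\infty(M)|_T$ to $\Cinftyub(\mathring M)|_{\mathring T}$ does not collapse the module, which ultimately rests on the fact that a section of $\bT M$ over $\mathring T$ lying in the $\Cinftyub$-span of $x\partial_x$ and the $A'$ has coefficients that are themselves uniformly bounded smooth functions determined pointwise by the values of the section — i.e.\ the change-of-frame matrix between $\{x\partial_x,A'|_{\mathring T}\}$-completions and an adapted coordinate frame $\{x\partial_x,\partial_{y^i}\}$ is invertible with $\Cinftyub$ entries, which is precisely the content of hypothesis~\ref{i: A'}\ref{i: extension A'} ($A'|_{\mathring T}$ orthogonal to $\varpi$-fibers, $\varpi$-projectable to $A$) combined with bounded geometry of $g$ on $\mathring T$. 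Everything else is the standard bookkeeping of filtered modules of differential operators.
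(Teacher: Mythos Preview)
Your argument is correct, and the core observation---that the common generating family $\{x\partial_x, A'\}$ for $\fXb(M)|_T$ and $\fXub(\mathring M)|_{\mathring T}$ forces the two modules to agree---is exactly what the paper uses. The main difference is one of economy. The paper first reduces to the case $m=1$, observing that both $\Diffb(M)$ and $\Diffub(\mathring M)$ are filtered algebras generated over their respective coefficient rings by their degree~$\le1$ parts, so the identity for $m=1$ propagates to all $m$ automatically. At level $m=1$ the spanning statement is immediate from hypothesis~\ref{i: fX_ub(mathring M)|_mathring T is generated by x partial_x and the vector fields A'} together with \Cref{l: fX_b(M)|_T is generated by x partial_x and the vector fields A'}, and the paper does not treat injectivity separately (at that level it is transparent, since $\fXb(M)\equiv C^\infty(M;\bT M)$ is finitely generated projective over $C^\infty(M)$, so base change along $C^\infty(M)\hookrightarrow\Cinftyub(\mathring M)$ is injective; in any case only the spanning is used downstream). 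Your freeness argument at general $m$ is a legitimate alternative, but the reduction to $m=1$ sidesteps the need to invoke a PBW-type ordering of monomials in the non-commuting generators. One minor point: the gluing cutoff you want is simply a function in $C^\infty(M)\subset\Cinftyub(\mathring M)$ separating a neighborhood of $\partial M$ from $M\setminus T$; the partition of unity of \Cref{p: f_k}, which is subordinate to small metric balls on $\mathring M$, is not quite the right tool for that step.
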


\begin{proof}
We have to prove that $\Diffub^m(\mathring M)$ is $\Cinftyub(\mathring M)$-spanned by $\Diffb^m(M)$. It is enough to consider the case $m=1$ because the filtered algebra $\Diffub(\mathring M)$ (respectively, $\Diffb(M)$) is spanned by $\Diffub^1(\mathring M)$ (respectively, $\Diffb^1(M)$). Moreover it is clearly enough to work on a collar neighborhood $T$ of the boundary satisfying~\ref{i: g is of bounded geometry} and~\ref{i: A'}. By~\ref{i: fX_ub(mathring M)|_mathring T is generated by x partial_x and the vector fields A'}, \Cref{l: fX_b(M)|_T is generated by x partial_x and the vector fields A'} and Corollary~\ref{c: injection of C^infty(M)|_T into Cinftyub(mathring M)|_mathring T}, the restriction to $\mathring T$ defines an injection of $\Diffb^1(M)|_T$ as a $\Cinftyub(\mathring M)|_{\mathring T}$-spanning subset of $\Diffub^1(\mathring M)|_{\mathring T}$.
\end{proof}

\begin{cor}\label{c: Diffb^m(M bOmega^1/2) generates Diff_ub^m(mathring M Omega^1/2)}
There is a canonical identity of $\Cinftyub(\mathring M)$-modules,
\[
\Diffub^m(\mathring M;\Omega^{\frac12})\equiv\Diffb^m(M;\bOmega^{\frac12})\otimes_{C^\infty(M)}\Cinftyub(\mathring M)\;.
\]
\end{cor}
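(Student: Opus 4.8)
The plan is to transport \Cref{p: Diffb^m(M) generates Diff_ub^m(mathring M)} along the canonical identifications of differential operators on half-densities with differential operators on functions. I would first record that over $\mathring M$ the $b$-tangent bundle is the ordinary tangent bundle, so $\bOmega^{\frac12}M|_{\mathring M}\equiv\Omega^{\frac12}\mathring M$, and the canonical trivializations $\bOmega^{\frac12}M\otimes\bOmega^{-\frac12}M\equiv M\times\K$ and $\Omega^{\frac12}\mathring M\otimes\Omega^{-\frac12}\mathring M\equiv\mathring M\times\K$ agree over $\mathring M$. Consequently the line-bundle identity \eqref{Diff^m(M E) equiv Diff^m(M)} yields, in its uniform version \eqref{Diffub^m(M Omega^1/2) equiv Diffub^m(M)} for $\mathring M$ and in the version \eqref{Diffb^m(M bOmega^1/2) equiv Diffb^m(M) equiv Diffb^m(M Omega^1/2)} for $M$, canonical module identifications $\Diffub^m(\mathring M;\Omega^{\frac12})\equiv\Diffub^m(\mathring M)$ and $\Diffb^m(M;\bOmega^{\frac12})\equiv\Diffb^m(M)$ that are moreover compatible with the natural map $\Diffb^m(M;\bOmega^{\frac12})\to\Diffub^m(\mathring M;\Omega^{\frac12})$ given by restriction to $\mathring M$.

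Then I would simply chain the identifications:
\[
\Diffub^m(\mathring M;\Omega^{\frac12})
\equiv\Diffub^m(\mathring M)
\equiv\Diffb^m(M)\otimes_{C^\infty(M)}\Cinftyub(\mathring M)
\equiv\Diffb^m(M;\bOmega^{\frac12})\otimes_{C^\infty(M)}\Cinftyub(\mathring M)\;,
\]
where the middle step is \Cref{p: Diffb^m(M) generates Diff_ub^m(mathring M)} and the two outer steps are the line-bundle identities above; the last tensor identity is $\Cinftyub(\mathring M)$-linear because it is $C^\infty(M)$-linear and $C^\infty(M)\subset\Cinftyub(\mathring M)$ by \Cref{c: injection of C^infty(M)|_T into Cinftyub(mathring M)|_mathring T}. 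Since every arrow in the chain is a $\Cinftyub(\mathring M)$-module isomorphism, this gives the stated identity, and canonicity follows from that of each link.

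The step I expect to be the only delicate one is the compatibility of the two half-density trivializations over $\mathring M$: one must check that the composite identification is the natural restriction map and does not pick up a twist by a power of a boundary defining function, in contrast with \eqref{L^2(M bOmega^1/2) equiv x^-1/2 L^2(M Omega^1/2)} and \eqref{L^infty(M bOmega^1/2) equiv x^-1/2 L^infty(M Omega^1/2)} where exactly such a factor $x^{1/2}$ appears. Here no weight enters because a differential operator is determined by its action over $\mathring M$, where $\bOmega^{\frac12}M$ and $\Omega^{\frac12}\mathring M$ coincide without any weight; apart from this observation, the proof is just the bookkeeping needed to see that the real content --- that restriction of $\Diffb^m(M)$ to $\mathring M$ is a $\Cinftyub(\mathring M)$-spanning subset of $\Diffub^m(\mathring M)$ --- already resides in \Cref{p: Diffb^m(M) generates Diff_ub^m(mathring M)}.
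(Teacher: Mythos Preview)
Your proof is correct and follows exactly the same approach as the paper: chain the line-bundle identifications \eqref{Diffub^m(M Omega^1/2) equiv Diffub^m(M)} for $\mathring M$ and \eqref{Diffb^m(M bOmega^1/2) equiv Diffb^m(M) equiv Diffb^m(M Omega^1/2)} with \Cref{p: Diffb^m(M) generates Diff_ub^m(mathring M)}. The paper's proof is a one-line citation of these three ingredients; your additional discussion of compatibility over $\mathring M$ is correct but more than the paper spells out.
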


\begin{proof}
This follows from~\eqref{Diffub^m(M E) equiv Diffub^m(M)} for $\mathring M$,~\eqref{Diffb^m(M bOmega^1/2) equiv Diffb^m(M) equiv Diffb^m(M Omega^1/2)} and \Cref{p: Diffb^m(M) generates Diff_ub^m(mathring M)}.
\end{proof}

\begin{cor}\label{c: H^infty(mathring M Omega^1/2) equiv Hb^infty(M bOmega^1/2)}
$H^m(\mathring M;\Omega^{1/2})\equiv\Hb^m(M;\bOmega^{1/2})$ $(m\in\Z)$ as $C^\infty(M)$-modules and Hilbertian spaces, and therefore $H^{\pm\infty}(\mathring M;\Omega^{1/2})\equiv\Hb^{\pm\infty}(M;\bOmega^{1/2})$.
\end{cor}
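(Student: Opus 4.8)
The plan is to deduce the identity from the module-generation statement of \Cref{c: Diffb^m(M bOmega^1/2) generates Diff_ub^m(mathring M Omega^1/2)} together with the definitions of the two scales of Sobolev spaces. First I would recall that, by the identification~\eqref{L^2(mathring M Omega^1/2) equiv L^2(M bOmega^1/2)} (or rather its $\mathring M$-version obtained from bounded geometry, \Cref{sss: Sobolev bd geom}), we have $L^2(\mathring M;\Omega^{1/2})\equiv L^2(M;\bOmega^{1/2})$ as Hilbert spaces and $C^\infty(M)$-modules; this is the case $m=0$. For general $m\in\N_0$, I would use the description of $H^m(\mathring M;\Omega^{1/2})$ via the bounded-geometry differential operators (\Cref{sss: Sobolev bd geom}, the analog of~\eqref{H^s(M) = ...}): namely $u\in H^m(\mathring M;\Omega^{1/2})$ iff $\Diffub^m(\mathring M;\Omega^{1/2})\,u\subset L^2(\mathring M;\Omega^{1/2})$, and the topology is the projective one given by a finite $\Cinftyub(\mathring M)$-generating set of $\Diffub^m(\mathring M;\Omega^{1/2})$. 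Similarly $\Hb^m(M;\bOmega^{1/2})$ is defined (\Cref{ss: b-Sobolev}) by the condition $\Diffb^m(M;\bOmega^{1/2})\,u\subset L^2(M;\bOmega^{1/2})$ with the projective topology coming from a finite $C^\infty(M)$-generating set of $\Diffb^m(M;\bOmega^{1/2})$.

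The key step is then: since \Cref{c: Diffb^m(M bOmega^1/2) generates Diff_ub^m(mathring M Omega^1/2)} says $\Diffub^m(\mathring M;\Omega^{1/2})$ is $\Cinftyub(\mathring M)$-spanned by $\Diffb^m(M;\bOmega^{1/2})$, and $\Cinftyub(\mathring M)$ acts continuously on $L^2(M;\bOmega^{1/2})\equiv L^2(\mathring M;\Omega^{1/2})$ (it is a bounded-multiplier algebra on that Hilbert space, by bounded geometry), the two defining conditions on $u$ are equivalent: $\Diffb^m(M;\bOmega^{1/2})\,u\subset L^2$ implies $\Diffub^m(\mathring M;\Omega^{1/2})\,u=\Cinftyub(\mathring M)\Diffb^m(M;\bOmega^{1/2})\,u\subset L^2$, and the converse is trivial from $\Diffb^m(M;\bOmega^{1/2})\subset\Diffub^m(\mathring M;\Omega^{1/2})$ (via \Cref{c: injection of C^infty(M)|_T into Cinftyub(mathring M)|_mathring T} and \Cref{p: Diffb^m(M) generates Diff_ub^m(mathring M)}). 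Hence the underlying vector spaces coincide; that they carry equivalent Hilbertian norms follows because each projective topology is given by a finite generating set and each generator of one scale is a $\Cinftyub(\mathring M)$-combination (bounded multipliers) of the generators of the other, so the two families of semi-norms are mutually dominated. The $C^\infty(M)$-module structures agree because $C^\infty(M)\subset\Cinftyub(\mathring M)$ acts on both by the same multiplication. For negative order $-m$, I would pass to duals: by \Cref{ss: b-Sobolev}, $\Hb^{-m}(M;\bOmega^{1/2})=\Hb^m(M;\bOmega^{1/2})'$, and likewise the bounded-geometry duality $H^{-m}(\mathring M;\Omega^{1/2})\equiv H^m(\mathring M;\Omega^{1/2})'$ holds (the analog of~\eqref{H^-s) = H^s(M  Omega)'} in the $\bOmega^{1/2}$ self-dual setting of \Cref{sss: Sobolev bd geom}); transposing the already established identity for $m\ge0$ gives the case $-m$. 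Finally, intersecting and unioning over $m$ yields $H^{\pm\infty}(\mathring M;\Omega^{1/2})\equiv\Hb^{\pm\infty}(M;\bOmega^{1/2})$.

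The main obstacle I anticipate is the norm-equivalence (not merely the equality of vector spaces): one must be careful that replacing a $C^\infty(M)$-generating set of $\Diffb^m(M;\bOmega^{1/2})$ by a $\Cinftyub(\mathring M)$-generating set of $\Diffub^m(\mathring M;\Omega^{1/2})$ does not enlarge the topology, which requires knowing that all the coefficients appearing are genuinely \emph{uniformly bounded} (so they give bounded operators on $L^2(M;\bOmega^{1/2})$) — this is exactly what \Cref{c: Diffb^m(M bOmega^1/2) generates Diff_ub^m(mathring M Omega^1/2)} and the bounded-geometry hypotheses~\ref{i: g is of bounded geometry}--\ref{i: A'} guarantee. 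Once one grants that, the argument is a routine comparison of projective topologies, and the rest (duality for negative orders, limits for $\pm\infty$) is formal.
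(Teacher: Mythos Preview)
Your proposal is correct and follows essentially the same approach as the paper: both arguments hinge on \Cref{c: Diffb^m(M bOmega^1/2) generates Diff_ub^m(mathring M Omega^1/2)} to note that a finite $C^\infty(M)$-spanning set $\{P_k\}$ of $\Diffb^m(M;\bOmega^{1/2})$ is simultaneously a $\Cinftyub(\mathring M)$-spanning set of $\Diffub^m(\mathring M;\Omega^{1/2})$, then use the $L^2$ identification~\eqref{L^2(mathring M Omega^1/2) equiv L^2(M bOmega^1/2)} to conclude that the two Sobolev spaces have the same defining conditions, with negative orders handled by duality. The paper's write-up is slightly more economical in that it observes the \emph{same} finite set $\{P_k\}$ defines compatible scalar products on both sides (yielding a unitary identity directly), whereas you argue via mutual domination of semi-norms; but this is only a stylistic difference.
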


\begin{proof}
We show the case where $m\ge0$, and the case where $m<0$ follows by taking dual spaces. For any $m\in\N_0$, let $\{P_k\}$ be a finite $C^\infty(M)$-spanning set of $\Diffb^m(M;\bOmega^{1/2})$, which is also a $\Cinftyub(\mathring M)$-spanning set of $\Diffub^m(\mathring M;\Omega^{1/2})$ by Corollary~\ref{c: Diffb^m(M bOmega^1/2) generates Diff_ub^m(mathring M Omega^1/2)}. Then, by~\eqref{L^2(mathring M Omega^1/2) equiv L^2(M bOmega^1/2)},
\begin{align*}
H^m(\mathring M;\Omega^{\frac12})&=\{\,u\in L^2(\mathring M;\Omega^{\frac12})\mid 
P_ku\in L^2(\mathring M;\Omega^{\frac12})\ \forall k\,\}\\
&\equiv\{\,u\in L^2(M;\bOmega^{\frac12})\mid P_ku\in L^2(M;\bOmega^{\frac12})\ \forall k\,\}
=\Hb^m(M;\bOmega^{\frac12})\;.
\end{align*}
Moreover $\{P_k\}$ can be used to define scalar products on both $H^m(\mathring M;\Omega^{1/2})$ and $\Hb^m(M;\bOmega^{1/2})$, obtaining that the above identity is unitary.
\end{proof}

\begin{prop}
\label{p: x^m+1/2 Hb^infty(M bOmega^1/2) subset AA^m(M Omega^1/2) subset x^m Hb^infty(M bOmega^1/2)}
$\AA^m(M;\Omega^{1/2})\equiv x^{m+1/2}\Hb^\infty(M;\bOmega^{1/2})$ $(m\in\R)$.
\end{prop}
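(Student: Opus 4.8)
The plan is to reduce the statement to a pointwise/local comparison between the defining seminorms of $\AA^m(M;\Omega^{1/2})$ (namely $\|P u\|_{x^mL^\infty}$ for $P\in\Diffb(M)$, as in~\eqref{| u |_k m}) and the defining seminorms of $x^{m+1/2}\Hb^\infty(M;\bOmega^{1/2})$. First I would use~\eqref{L^infty(M bOmega^1/2) equiv x^-1/2 L^infty(M Omega^1/2)} to rewrite the target weight: $x^{m+1/2}\Hb^\infty(M;\bOmega^{1/2})$ should be compared with a space defined by the condition $\Diffb(M;\bOmega^{1/2})\,u\subset x^{m+1/2}L^\infty(M;\bOmega^{1/2})\equiv x^mL^\infty(M;\Omega^{1/2})$. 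The key algebraic input is~\eqref{Diffb^k(M) x^a = x^a Diffb^k(M)}, which lets one commute powers of $x$ past b-differential operators, so that the condition $\Diffb(M)\,u\subset x^mL^\infty(M)$ is equivalent (after identifying half-b-densities) to $u\in x^{m+1/2}\cdot\{v:\Diffb(M)\,v\subset x^{-1/2}L^\infty\}$, and the bracketed space is exactly $\AA^{0}$-type data shifted by the half-density normalization. In other words, the identity should ultimately follow from $\AA^0(M;\bOmega^{1/2})\equiv\Hb^\infty(M;\bOmega^{1/2})\cap\{\text{bounded, not }L^2\}$ — but note $\Hb^\infty$ was defined via $L^2$, so I must upgrade $L^2$ to $L^\infty$.

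The crucial step, and the one I expect to be the main obstacle, is precisely this upgrade: showing that for b-differential operators on a manifold with cylindrical/bounded-geometry end one may replace $L^2$ by $L^\infty$ in the definition of the b-Sobolev scale up to a fixed loss of derivatives. Concretely, I would invoke the bounded-geometry Sobolev embedding \Cref{p: Sobolev embedding with bd geometry} together with \Cref{c: H^infty(mathring M Omega^1/2) equiv Hb^infty(M bOmega^1/2)}, which identifies $\Hb^{\pm\infty}(M;\bOmega^{1/2})$ with the ordinary $H^{\pm\infty}(\mathring M;\Omega^{1/2})$ on the complete manifold $\mathring M$ of bounded geometry. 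Since on a manifold of bounded geometry $H^{m'}\subset C^0_{\text{ub}}\subset L^\infty$ for $m'>n/2$ and conversely $L^\infty\subset L^2_{\text{loc}}$ fails globally but $x^{\epsilon}L^\infty\subset L^2$ does hold on the cylindrical end, one gets a "sandwich" $x^{\epsilon}\Hb^{\infty}\subset x^\epsilon L^\infty$-type estimates controlling all $\Diffub^k(\mathring M;\Omega^{1/2})$-derivatives in $L^\infty$ by finitely many in $H^{m'}$, hence in $L^2$; and \Cref{p: Diffb^m(M) generates Diff_ub^m(mathring M)}/\Cref{c: Diffb^m(M bOmega^1/2) generates Diff_ub^m(mathring M Omega^1/2)} translate this back into a statement about $\Diffb(M)$-derivatives. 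This yields both inclusions $\AA^m(M;\Omega^{1/2})\subset x^{m+1/2}\Hb^\infty(M;\bOmega^{1/2})$ and the reverse.

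To make the argument clean I would organize it as: (1) use~\eqref{L^infty(M bOmega^1/2) equiv x^-1/2 L^infty(M Omega^1/2)} and~\eqref{L^infty(mathring M Omega^1/2) equiv L^infty(M bOmega^1/2)} to pass from $\Omega^{1/2}M$ to $\bOmega^{1/2}M$ to $\Omega^{1/2}\mathring M$, reducing the claim to $\AA^m(M;\bOmega^{1/2})\equiv x^{m}\Hb^\infty(M;\bOmega^{1/2})$ in the bounded-geometry setting; (2) for the inclusion "$\supseteq$", given $u$ with $\Diffb(M;\bOmega^{1/2})u\subset x^mL^\infty$, apply an arbitrary $P\in\Diffb(M;\bOmega^{1/2})$, commute $x^{-m}$ through via~\eqref{Diffb^k(M) x^a = x^a Diffb^k(M)} to see $x^{-m}Pu\in\Diffb(M;\bOmega^{1/2})(x^{-m}u)$ is a sum of bounded terms, hence $x^{-m}u\in\Hb^\infty$; (3) for "$\subseteq$", given $u\in x^m\Hb^\infty$, write $u=x^mv$ with $v\in\Hb^\infty=H^\infty(\mathring M;\Omega^{1/2})$, apply $P\in\Diffb(M;\bOmega^{1/2})$, commute again, and bound $x^{-m}Pu$ in $L^\infty$ using that $H^\infty(\mathring M;\Omega^{1/2})\subset\Cinftyub(\mathring M;\Omega^{1/2})\subset L^\infty$ by \Cref{p: Sobolev embedding with bd geometry}, together with the fact that $\Diffb(M;\bOmega^{1/2})\subset\Diffub(\mathring M;\Omega^{1/2})$ preserves $H^\infty$ (\Cref{c: Diffb^m(M bOmega^1/2) generates Diff_ub^m(mathring M Omega^1/2)}); (4) check the identification is topological by noting both sides carry projective topologies over finite generating sets of $\Diffb^k$ and the estimates above are uniform in the generators, so the identity map is continuous both ways. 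The one genuinely delicate point is that the $L^\infty$-to-$L^2$ direction requires the weight $x^m$ only on the relatively compact collar, where $x$ is bounded, so no loss occurs; away from the boundary everything is elliptic and $C^\infty(\mathring M)$, handled by~\eqref{dot AA(M)|_mathring M AA(M) subset C^infty(mathring M)} as in the proof of \Cref{c: AA(M) is acyclic and Montel}.
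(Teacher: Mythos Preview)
Your overall strategy---use \Cref{p: Sobolev embedding with bd geometry} together with \Cref{c: H^infty(mathring M Omega^1/2) equiv Hb^infty(M bOmega^1/2)} for one inclusion, and an $L^\infty\hookrightarrow L^2$ argument plus the commutation~\eqref{Diffb^k(M) x^a = x^a Diffb^k(M)} for the other---is exactly the paper's.  Step~(3) is fine and matches the paper verbatim.  Step~(2), however, has a genuine gap.  After your reduction in step~(1) to the $\bOmega^{1/2}$-setting, you argue that $\Diffb(M;\bOmega^{1/2})(x^{-m}u)\subset L^\infty(M;\bOmega^{1/2})$ and conclude ``hence $x^{-m}u\in\Hb^\infty$.''  This does not follow: $\Hb^\infty$ is $L^2$-based, and $L^\infty(M;\bOmega^{1/2})\not\subset L^2(M;\bOmega^{1/2})$, because the b-volume $\int|dx/x\,dy|$ diverges at $\partial M$.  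Your own observation that $x^\epsilon L^\infty(M;\bOmega^{1/2})\subset L^2(M;\bOmega^{1/2})$ is correct but only yields $\AA^m(M;\Omega^{1/2})\subset x^{m+1/2-\epsilon}\Hb^\infty(M;\bOmega^{1/2})$ with an unavoidable $\epsilon$-loss; and your closing remark that the difficulty lives ``only on the relatively compact collar, where $x$ is bounded'' misplaces the obstruction, which is precisely at $x\to0$.

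The paper evades the loss by \emph{not} passing to $\bOmega^{1/2}$ first.  It keeps ordinary $\Omega^{1/2}M$ half-densities through the $L^\infty\hookrightarrow L^2$ step: since $M$ is compact, $L^\infty(M;\Omega^{1/2})\subset L^2(M;\Omega^{1/2})$ trivially, so $A:\AA^m(M;\Omega^{1/2})\to x^mL^\infty(M;\Omega^{1/2})\subset x^mL^2(M;\Omega^{1/2})$ for every $A\in\Diffb(M;\bOmega^{1/2})$.  Only \emph{then} is~\eqref{L^2(M bOmega^1/2) equiv x^-1/2 L^2(M Omega^1/2)} invoked, giving $x^mL^2(M;\Omega^{1/2})\equiv x^{m+1/2}L^2(M;\bOmega^{1/2})$, and commuting $x^{-m-1/2}$ through via~\eqref{Diffb^k(M) x^a = x^a Diffb^k(M)} yields $x^{-m-1/2}\AA^m(M;\Omega^{1/2})\subset\Hb^\infty(M;\bOmega^{1/2})$ with no loss.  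The extra $\tfrac12$ in the weight is exactly what bridges the finite-volume density $\Omega M$ and the infinite-volume b-density $\bOmega M$; your step~(1) throws that $\tfrac12$ away before it can be used.
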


\begin{proof}
By~\eqref{L^infty(M bOmega^1/2) equiv x^-1/2 L^infty(M Omega^1/2)},~\eqref{L^infty(mathring M Omega^1/2) equiv L^infty(M bOmega^1/2)}, \Cref{p: Sobolev embedding with bd geometry,c: H^infty(mathring M Omega^1/2) equiv Hb^infty(M bOmega^1/2)}, we get the following identities and continuous inclusions:
\begin{align*}
\Hb^\infty(M;\bOmega^{\frac12})&\equiv H^\infty(\mathring M;\Omega^{\frac12})
\subset \Cinftyub(\mathring M;\Omega^{\frac12})\notag\\
&\subset L^\infty(\mathring M;\Omega^{\frac12})\equiv L^\infty(M;\bOmega^{\frac12})
\equiv x^{-\frac12}L^\infty(M;\Omega^{\frac12})\;.
\label{Hb^infty(M bOmega^1/2) subset ... equiv L^infty(M bOmega^1/2)}
\end{align*}
So, according to~\Cref{ss: (pseudo) diff ops on weighted b-Sobolev}, every $A\in\Diffb(M;\bOmega^{1/2})$ induces a continuous linear map
\[
x^{m+\frac12}\Hb^\infty(M;\bOmega^{1/2}) \xrightarrow{A}
x^{m+\frac12}\Hb^\infty(M;\bOmega^{\frac12})
\subset x^mL^\infty(M;\Omega^{\frac12})\;.
\]
Hence there is a continuous inclusion
\[
x^{m+\frac12}\Hb^\infty(M;\bOmega^{\frac12})\subset\AA^m(M;\Omega^{\frac12})\;.
\]

On the other hand, by~\eqref{C^-infty(M bOmega^s) equiv C^-infty(M Omega^s)} and the version of~\eqref{x^m C^-infty(M) = C^-infty(M)} with $\Omega^{1/2}M$, for all $a\in\R$,
\[
x^a\AA^m(M;\Omega^{\frac12})\subset x^aC^{-\infty}(M;\Omega^{\frac12})
=C^{-\infty}(M;\Omega^{\frac12})
\equiv C^{-\infty}(M;\bOmega^{\frac12})\;.
\]
Moreover, by~\eqref{L^2(M bOmega^1/2) equiv x^-1/2 L^2(M Omega^1/2)} and~\eqref{Diffb^m(M bOmega^1/2) equiv Diffb^m(M) equiv Diffb^m(M Omega^1/2)}, every $A\in\Diffb(M;\bOmega^{1/2})$ induces a continuous linear map
\[
\AA^m(M;\Omega^{\frac12}) \xrightarrow{A}
x^mL^\infty(M;\Omega^{\frac12})
\subset x^mL^2(M;\Omega^{\frac12})
\equiv x^{m+\frac12}L^2(M;\bOmega^{\frac12})\;.
\]
Hence, by~\eqref{Diffb^k(M) x^a = x^a Diffb^k(M)} and~\eqref{Diffb^m(M bOmega^1/2) equiv Diffb^m(M) equiv Diffb^m(M Omega^1/2)}, $A$ induces a continuous linear map
\[
A:x^{-m-\frac12}\AA^m(M;\Omega^{\frac12})\to L^2(M;\bOmega^{\frac12})\;.
\]
It follows that there is a continuous inclusion
\[
x^{-m-\frac12}\AA^m(M;\Omega^{\frac12})\subset\Hb^\infty(M;\bOmega^{\frac12})\;,
\]
or, equivalently, there is a continuous inclusion
\[
\AA^m(M;\Omega^{\frac12})\subset x^{m+\frac12}\Hb^\infty(M;\bOmega^{\frac12})\;.\qedhere
\]
\end{proof}

\begin{cor}\label{c: H^infty(mathring M) = x^-1/2 Hb^infty(M)}
$H^m(\mathring M)=x^{-1/2}\Hb^m(M)$ $(m\in\Z)$ as $C^\infty(M)$-modules and Hilbertian spaces, and therefore $H^{\pm\infty}(\mathring M)=x^{-1/2}\Hb^{\pm\infty}(M)$.
\end{cor}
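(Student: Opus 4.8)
The plan is to obtain this as the ``function'' version of \Cref{c: H^infty(mathring M Omega^1/2) equiv Hb^infty(M bOmega^1/2)}, by untwisting the half-density bundle on both sides; the weight $x^{-\frac12}$ will appear exactly as the discrepancy, recorded by \eqref{C^infty(M Omega^s) equiv x^s C^infty(M bOmega^s)}, between the genuine and the b half-density bundles of $M$ after restriction to $\mathring M$. As in the proof of \Cref{c: H^infty(mathring M Omega^1/2) equiv Hb^infty(M bOmega^1/2)}, it is enough to treat $m\ge0$ (the case $m<0$ following by passing to dual spaces), and it suffices to prove the identity as one of $C^\infty(M)$-modules: the compatibility of the Hilbertian structures is then automatic, since on each side a scalar product can be built from a single finite $C^\infty(M)$-spanning set $\{P_k\}$ of $\Diffb^m(M)$, which by \Cref{p: Diffb^m(M) generates Diff_ub^m(mathring M)} also $\Cinftyub(\mathring M)$-spans $\Diffub^m(\mathring M)$, together with \eqref{Diffb^m(M bOmega^1/2) equiv Diffb^m(M) equiv Diffb^m(M Omega^1/2)} and \eqref{Diffb^k(M) x^a = x^a Diffb^k(M)}.

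First I would fix the bundle bookkeeping. Over $\mathring M$ one has $\bT M|_{\mathring M}=T\mathring M$, so $\Omega^{-\frac12}M|_{\mathring M}=\Omega^{-\frac12}\mathring M=\bOmega^{-\frac12}M|_{\mathring M}$; nevertheless \eqref{C^infty(M Omega^s) equiv x^s C^infty(M bOmega^s)} with $s=-\frac12$ gives an identity of $C^\infty(M)$-modules $C^\infty(M;\Omega^{-\frac12})\equiv x^{-\frac12}C^\infty(M;\bOmega^{-\frac12})$, the multiplication by $x^{-\frac12}$ being legitimate inside $C^{-\infty}(M)$ by \eqref{x^m C^-infty(M) = C^-infty(M)}. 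Using the bounded geometry of $\mathring M$ with $g$ and hypotheses \ref{i: g is of bounded geometry} and~\ref{i: A'} --- through the line-bundle versions of \Cref{l: fX_b(M)|_T is generated by x partial_x and the vector fields A'} and \Cref{c: injection of C^infty(M)|_T into Cinftyub(mathring M)|_mathring T}, applied to $\Omega^{\pm\frac12}\mathring M$ in place of the trivial bundle --- one checks that $\Hb^m(M)$ is recovered from $\Hb^m(M;\bOmega^{\frac12})$ by the $C^\infty(M)$-tensor product with $C^\infty(M;\bOmega^{-\frac12})$, and that $H^m(\mathring M)$ is recovered from $H^m(\mathring M;\Omega^{\frac12})$ by the $C^\infty(M)$-tensor product with $C^\infty(M;\Omega^{-\frac12})$ (for the latter one compares the $C^\infty(M)$-frame of $\Omega^{-\frac12}M$ with the $\Cinftyub(\mathring M)$-tensor description of $H^m(\mathring M)$ recalled in \Cref{sss: Sobolev bd geom}).

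Granting that, the proof is finished by tensoring the identity $H^m(\mathring M;\Omega^{\frac12})\equiv\Hb^m(M;\bOmega^{\frac12})$ of \Cref{c: H^infty(mathring M Omega^1/2) equiv Hb^infty(M bOmega^1/2)} over $C^\infty(M)$ with $C^\infty(M;\Omega^{-\frac12})\equiv x^{-\frac12}C^\infty(M;\bOmega^{-\frac12})$: the left-hand side becomes $H^m(\mathring M)$ and the right-hand side becomes $x^{-\frac12}\Hb^m(M)$, giving $H^m(\mathring M)\equiv x^{-\frac12}\Hb^m(M)$ for each $m\in\Z$, and then $H^{\pm\infty}(\mathring M)=x^{-\frac12}\Hb^{\pm\infty}(M)$ on taking intersections and unions over $m$. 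I expect the only non-formal ingredient --- and the main obstacle --- to be the bundle-level step of the previous paragraph, i.e.\ verifying that the genuine (resp.\ b) half-density bundle of $M$ is the correct trivializing bundle to use on the $\mathring M$ side (resp.\ on the $M$ side) for the Sobolev spaces in question; this is exactly where the bounded geometry hypotheses \ref{i: g is of bounded geometry} and~\ref{i: A'} are needed, and once it is in place the $x^{\pm\frac12}$ bookkeeping is forced by \eqref{C^infty(M Omega^s) equiv x^s C^infty(M bOmega^s)}.
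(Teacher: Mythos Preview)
Your proposal is correct and follows essentially the same route as the paper: untwist the half-density bundle from the identity of \Cref{c: H^infty(mathring M Omega^1/2) equiv Hb^infty(M bOmega^1/2)} by a $C^\infty(M)$-tensor product, and use \eqref{C^infty(M Omega^s) equiv x^s C^infty(M bOmega^s)} with $s=-\tfrac12$ to produce the weight $x^{-1/2}$. The paper's proof is slightly more explicit at the point you flag as the main obstacle: it first writes $H^m(\mathring M)\equiv H^m(\mathring M;\Omega^{1/2})\otimes_{\Cinftyub(\mathring M)}\Cinftyub(\mathring M;\Omega^{-1/2})$ (which is the bounded-geometry description from \Cref{sss: Sobolev bd geom}), then passes to a $C^\infty(M)$-tensor product via $\Cinftyub(\mathring M;\Omega^{-1/2})\equiv C^\infty(M;\Omega^{-1/2})\otimes_{C^\infty(M)}\Cinftyub(\mathring M)$, rather than invoking line-bundle versions of \Cref{l: fX_b(M)|_T is generated by x partial_x and the vector fields A'} and \Cref{c: injection of C^infty(M)|_T into Cinftyub(mathring M)|_mathring T} directly.
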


\begin{proof}
By~\eqref{C^infty(M Omega^s) equiv x^s C^infty(M bOmega^s)} and Corollary~\ref{c: H^infty(mathring M Omega^1/2) equiv Hb^infty(M bOmega^1/2)},
\begin{align*}
H^m(\mathring M)&\equiv H^m(\mathring M;\Omega^{\frac12})\otimes_{\Cinftyub(\mathring M)}
\Cinftyub(\mathring M;\Omega^{-\frac12})\\
&\equiv H^m(\mathring M;\Omega^{\frac12})\otimes_{\Cinftyub(\mathring M)}
\big(C^\infty(M;\Omega^{-\frac12})\otimes_{C^\infty(M)}\Cinftyub(\mathring M)\big)\\
&\equiv H^m(\mathring M;\Omega^{\frac12})\otimes_{C^\infty(M)}C^\infty(M;\Omega^{-\frac12})\\
&\equiv\Hb^m(M;\bOmega^{\frac12})\otimes_{C^\infty(M)}x^{-\frac12}C^\infty(M;\bOmega^{-\frac12})
\equiv x^{-\frac12}\Hb^m(M)\;.\qedhere
\end{align*}
\end{proof}

\begin{cor}\label{c: AA^m(M) equiv x^m Hb^infty(M)}
$\AA^m(M)\equiv x^m\Hb^\infty(M)\equiv x^{m+1/2}H^\infty(\mathring M)$ $(m\in\R)$.
\end{cor}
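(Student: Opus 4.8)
The plan is to deduce both identities from the half-b-density case, \Cref{p: x^m+1/2 Hb^infty(M bOmega^1/2) subset AA^m(M Omega^1/2) subset x^m Hb^infty(M bOmega^1/2)}, by ``untwisting'' with the density bundle, exactly in the style of the proof of \Cref{c: H^infty(mathring M) = x^-1/2 Hb^infty(M)}. No new analytic input is needed: the ingredients are that proposition, the weight-shift~\eqref{C^infty(M Omega^s) equiv x^s C^infty(M bOmega^s)}, and the $C^\infty(M)$-module tensor-product descriptions of the spaces involved (of the type~\eqref{C^infty(M)-tensor product description of C^pm infty_./c(M E)}), which apply here because all of these section spaces are locally finitely $C^\infty(M)$-generated.

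First, to obtain $\AA^m(M)\equiv x^m\Hb^\infty(M)$, I would write $\AA^m(M)$ as an untwist of $\AA^m(M;\Omega^{1/2})$ and apply \Cref{p: x^m+1/2 Hb^infty(M bOmega^1/2) subset AA^m(M Omega^1/2) subset x^m Hb^infty(M bOmega^1/2)}:
\begin{align*}
\AA^m(M)
&\equiv\AA^m(M;\Omega^{\frac12})\otimes_{C^\infty(M)}C^\infty(M;\Omega^{-\frac12})\\
&\equiv x^{m+\frac12}\Hb^\infty(M;\bOmega^{\frac12})\otimes_{C^\infty(M)}C^\infty(M;\Omega^{-\frac12})\;.
\end{align*}
Then I rewrite the last factor by~\eqref{C^infty(M Omega^s) equiv x^s C^infty(M bOmega^s)} with $s=-\frac12$, namely $C^\infty(M;\Omega^{-1/2})\equiv x^{-1/2}C^\infty(M;\bOmega^{-1/2})$; pulling the weights $x^{m+1/2}$ and $x^{-1/2}$ out of the tensor product and contracting $\bOmega^{1/2}\otimes\bOmega^{-1/2}$ to the trivial line bundle gives
\[
\AA^m(M)\equiv x^m\big(\Hb^\infty(M;\bOmega^{\frac12})\otimes_{C^\infty(M)}C^\infty(M;\bOmega^{-\frac12})\big)\equiv x^m\Hb^\infty(M)\;.
\]
All steps are TVS-identities: multiplication by a power of $x$ is a TVS-isomorphism onto the corresponding weighted space, and the tensor-product identifications are compatible with the projective topologies since the modules are locally finitely generated. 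For the remaining identity, \Cref{c: H^infty(mathring M) = x^-1/2 Hb^infty(M)} gives $H^\infty(\mathring M)=x^{-1/2}\Hb^\infty(M)$, i.e.\ $\Hb^\infty(M)\equiv x^{1/2}H^\infty(\mathring M)$; multiplying by $x^m$ yields $x^m\Hb^\infty(M)\equiv x^{m+1/2}H^\infty(\mathring M)$, which combined with the first identity finishes the proof.

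There is no serious obstacle here, since the statement is a formal corollary of the half-b-density version. The points needing care are (i) that the $C^\infty(M)$-module tensor-product descriptions genuinely apply to $\AA^m(M;\Omega^{1/2})$ and to the weighted b-Sobolev spaces (this rests on the standing principle that these section spaces are locally finitely $C^\infty(M)$-generated, so that the tensor products are locally finite and leave the topologies unaffected), and (ii) the bookkeeping of the half-density twists and the powers of $x$, so that the weights combine to exactly $x^m$ and $x^{m+1/2}$ and the bundle factors cancel. I expect (ii) --- keeping the exponents straight --- to be the only place a slip could occur.
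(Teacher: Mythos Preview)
Your proof is correct and follows essentially the same approach as the paper's own proof: both untwist the half-b-density identity of \Cref{p: x^m+1/2 Hb^infty(M bOmega^1/2) subset AA^m(M Omega^1/2) subset x^m Hb^infty(M bOmega^1/2)} via the $C^\infty(M)$-tensor product with $C^\infty(M;\Omega^{-1/2})$, using~\eqref{C^infty(M Omega^s) equiv x^s C^infty(M bOmega^s)} to keep track of the powers of $x$, and then invoke \Cref{c: H^infty(mathring M) = x^-1/2 Hb^infty(M)} for the second identity. The only cosmetic difference is the direction in which the chain of identities is written.
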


\begin{proof}
The second identity is given by \Cref{c: H^infty(mathring M) = x^-1/2 Hb^infty(M)}. By \Cref{p: x^m+1/2 Hb^infty(M bOmega^1/2) subset AA^m(M Omega^1/2) subset x^m Hb^infty(M bOmega^1/2)} and~\eqref{C^infty(M Omega^s) equiv x^s C^infty(M bOmega^s)},
\begin{align*}
x^m\Hb^\infty(M)&\equiv x^{m+\frac12}\Hb^\infty(M;\bOmega^{\frac12})\otimes_{C^\infty(M)}
x^{-\frac12}C^\infty(M;\bOmega^{-\frac12})\\
&\equiv\AA^m(M;\Omega^{\frac12})\otimes_{C^\infty(M)}C^\infty(M;\Omega^{-\frac12})
\equiv\AA^m(M)\;.\qedhere
\end{align*}
\end{proof}

By~\eqref{sandwich for AA} and~\eqref{AA(M) = bigcup_m AA^m(M)}, we get the following consequences of \Cref{c: AA^m(M) equiv x^m Hb^infty(M)}.

\begin{cor}\label{c: Cinftyc(mathring M) is dense in AA^m(M)}
$\Cinftyc(\mathring M)$ is dense in every $\AA^m(M)$ and $\AA^{(s)}(M)$.
\end{cor}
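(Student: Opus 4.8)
The plan is to reduce everything to the identification $\AA^m(M)\equiv x^{m+1/2}H^\infty(\mathring M)$ — which is \Cref{c: AA^m(M) equiv x^m Hb^infty(M)} read together with \Cref{c: H^infty(mathring M) = x^-1/2 Hb^infty(M)} — and then to run a cutoff argument inside the complete bounded-geometry manifold $\mathring M$. Once $\Cinftyc(\mathring M)$ is known to be dense in every $\AA^m(M)$ with its own topology, the $\AA^{(s)}(M)$ assertion follows formally from the sandwich \eqref{sandwich for AA}.

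For the $\AA^m(M)$ case: multiplication by $x^{-(m+1/2)}$ is, by the definition of the weighted spaces in \Cref{ss: weighted sps}, a TVS-isomorphism $\AA^m(M)\equiv x^{m+1/2}H^\infty(\mathring M)\xrightarrow{\cong}H^\infty(\mathring M)$, and it carries $\Cinftyc(\mathring M)$ bijectively onto itself, since $x^{\pm(m+1/2)}$ is smooth and bounded below on every compact subset of $\mathring M$. Hence it suffices to prove that $\Cinftyc(\mathring M)$ is dense in the Fr\'echet space $H^\infty(\mathring M)=\bigcap_{k\ge0}H^k(\mathring M)$, $\mathring M$ being equipped with the complete metric $g$. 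By \Cref{p: Sobolev embedding with bd geometry}, $H^\infty(\mathring M)\subset\Cinftyub(\mathring M)$, so every $u\in H^\infty(\mathring M)$ is smooth. Fix $p_0\in\mathring M$ and let $\{f_k\}$ be the partition of unity of \Cref{p: f_k}, subordinate to a covering by balls $B(p_k,r)$ of finite multiplicity $N$ and bounded in $\Cinftyub(\mathring M)$; put $\chi_R=\sum_{k\in F_R}f_k$, where $F_R=\{\,k:B(p_k,r)\cap B(p_0,R)\ne\emptyset\,\}$. Since $\overline{B(p_0,R+2r)}$ is compact, this is a finite sum, so $\chi_R\in\Cinftyc(\mathring M)$; moreover $0\le\chi_R\le1$, $\chi_R\equiv1$ on $B(p_0,R)$, and, because at most $N$ of the $f_k$ are nonzero near any point, $\sup_{\mathring M}|\nabla^j\chi_R|\le C_j$ with $C_j$ independent of $R$.

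For $u\in H^\infty(\mathring M)$ set $u_R=\chi_R u\in\Cinftyc(\mathring M)$. Then $(1-\chi_R)u=u-u_R$ is supported in $\mathring M\setminus B(p_0,R)$, so, using Leibniz and the $R$-uniform bounds on $\nabla^j\chi_R$, for every $k$ there is a constant $C_k'$ with
\[
\|u-u_R\|_{H^k(\mathring M)}\le C_k'\sum_{j\le k}\|\nabla^j u\|_{L^2(\mathring M\setminus B(p_0,R))}\;,
\]
and the right-hand side tends to $0$ as $R\to\infty$ because each $\nabla^j u$ is globally square-integrable. Hence $u_R\to u$ in every $H^k(\mathring M)$, i.e.\ in $H^\infty(\mathring M)$, which proves the density in $H^\infty(\mathring M)$ and therefore in each $\AA^m(M)$.

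For $\AA^{(s)}(M)$, note that $\Cinftyc(\mathring M)\subset C^\infty(M)\subset\AA^{(s)}(M)$ by \eqref{bigcap_s AA^(s)(M) = C^infty(M)}, while \eqref{sandwich for AA} gives a continuous inclusion $\AA^{(s)}(M)\subset\AA^m(M)$ for any $m<s-n/2-1$; since $\Cinftyc(\mathring M)$ is dense in $\AA^m(M)$, its closure there contains $\AA^{(s)}(M)$, which is the asserted density (with respect to the topology $\AA^{(s)}(M)$ inherits from $\AA^m(M)$, equivalently from $\AA(M)$ via \eqref{AA(M) = bigcup_m AA^m(M)}). I expect the genuine content of the corollary to lie entirely in the $\AA^m(M)$ case, and precisely in the observation that one must \emph{not} try to approximate $u$ by cutting it off near $\partial M$ — that would retain the full boundary singularity of $u$ — so that the identification of \Cref{c: AA^m(M) equiv x^m Hb^infty(M)}, which converts the problem into a cutoff near infinity in the complete manifold $\mathring M$, is exactly what makes the argument work; keeping track of the $R$-uniform bounds on the bounded-geometry cutoffs is then the only real technical step.
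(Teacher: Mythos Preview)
Your treatment of the $\AA^m(M)$ case is correct and is precisely the paper's (very terse) argument made explicit: the paper derives the density straight from the identification in \Cref{c: AA^m(M) equiv x^m Hb^infty(M)}, leaving the density of $\Cinftyc(\mathring M)$ in $H^\infty(\mathring M)$ as understood, while you supply the standard bounded-geometry cutoff argument in detail.

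Your hedge on $\AA^{(s)}(M)$ --- that you obtain density only in the topology induced from $\AA^m(M)$, not in the Fr\'echet topology of $\AA^{(s)}(M)$ itself --- is not a gap in your reasoning but a genuine obstruction. In fact density of $\Cinftyc(\mathring M)$ in $\AA^{(s)}(M)$ with its own topology \emph{fails} for $s>1/2$: the seminorm $u\mapsto\|u\|_{H^s(M)}$ (for $P=\id$) is among those defining $\AA^{(s)}(M)$, and for $s>1/2$ the extendible Sobolev space $H^s(M)$ carries a continuous trace on $\partial M$, so any $H^s$-limit of elements of $\Cinftyc(\mathring M)$ must have vanishing trace, whereas $1\in C^\infty(M)\subset\AA^{(s)}(M)$ does not. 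The paper's one-line proof invokes only~\eqref{sandwich for AA},~\eqref{AA(M) = bigcup_m AA^m(M)} and \Cref{c: AA^m(M) equiv x^m Hb^infty(M)}, and these give exactly what you prove and no more; so the $\AA^{(s)}(M)$ clause should be read as density in the $\AA(M)$-induced topology (consistent with \Cref{r: Cinftyc(mathring M) is dense in AA^m(M)}, which singles out only the $\AA^m(M)$ case as own-topology density), and your explicit caveat is the right thing to record.
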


\begin{cor}\label{c: AA(M) equiv bigcup_m x^m Hb^infty(M)}
$\AA(M)\equiv\bigcup_mx^m\Hb^\infty(M)=\bigcup_mx^mH^\infty(\mathring M)$.
\end{cor}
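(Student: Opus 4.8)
The plan is to deduce \Cref{c: AA(M) equiv bigcup_m x^m Hb^infty(M)} from \Cref{c: AA^m(M) equiv x^m Hb^infty(M)} by passing to inductive limits, using \eqref{AA(M) = bigcup_m AA^m(M)} and \eqref{sandwich for AA}, and then reindexing. First I would recall that, by \eqref{AA(M) = bigcup_m AA^m(M)} together with the interleaving \eqref{sandwich for AA}, the LF-space $\AA(M)$ is the locally convex inductive limit of the spectrum of continuous inclusions $(\AA^m(M))_m$ from \eqref{AA^m(M) subset AA^m'(M)}; by the remark in \Cref{ss: TVS} on $\R$-indexed spectra, this limit may equivalently be computed along a sequence $m_k\downarrow-\infty$, so it is a genuine LF-type inductive limit.

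Next I would observe that the TVS-isomorphisms $\AA^m(M)\equiv x^m\Hb^\infty(M)$ of \Cref{c: AA^m(M) equiv x^m Hb^infty(M)} intertwine the two spectra. Indeed, $\AA^m(M)$ and $x^m\Hb^\infty(M)$ are both subspaces of $C^{-\infty}(M)$, and the continuous inclusions realizing \Cref{p: x^m+1/2 Hb^infty(M bOmega^1/2) subset AA^m(M Omega^1/2) subset x^m Hb^infty(M bOmega^1/2)}, hence also the isomorphism of \Cref{c: AA^m(M) equiv x^m Hb^infty(M)}, are restrictions of the identity map on distributions. Therefore, for $m'<m$, the square relating the inclusion $\AA^m(M)\hookrightarrow\AA^{m'}(M)$ of \eqref{AA^m(M) subset AA^m'(M)} to the inclusion $x^m\Hb^\infty(M)\hookrightarrow x^{m'}\Hb^\infty(M)$ commutes; the two spectra are thus isomorphic, and so are their inductive limits. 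This yields $\AA(M)\equiv\bigcup_m x^m\Hb^\infty(M)$, the right-hand side carrying the inductive limit topology of $(x^m\Hb^\infty(M))_m$.

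For the last equality I would use \Cref{c: H^infty(mathring M) = x^-1/2 Hb^infty(M)} in the rearranged form $\Hb^\infty(M)=x^{1/2}H^\infty(\mathring M)$, so that $x^m\Hb^\infty(M)=x^{m+1/2}H^\infty(\mathring M)$, with the same topology, for every $m$. Since the index ranges over all of $\R$, the substitution $m\mapsto m-1/2$ identifies the spectrum $(x^m\Hb^\infty(M))_m$ with $(x^mH^\infty(\mathring M))_m$, the inclusions matching up, whence $\bigcup_m x^m\Hb^\infty(M)=\bigcup_m x^mH^\infty(\mathring M)$ as LCSs.

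I do not expect a genuine obstacle. The only point needing a line of justification is the compatibility of the step-wise isomorphisms of \Cref{c: AA^m(M) equiv x^m Hb^infty(M)} with the spectral inclusions, which is transparent once all spaces are read inside $C^{-\infty}(M)$; the reduction of the $\R$-indexed inductive limit to a sequential one is the standard device recalled in \Cref{ss: TVS}.
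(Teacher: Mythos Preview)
Your proposal is correct and follows essentially the same approach as the paper: the paper simply records this corollary as a direct consequence of \eqref{sandwich for AA}, \eqref{AA(M) = bigcup_m AA^m(M)}, and \Cref{c: AA^m(M) equiv x^m Hb^infty(M)} without writing out any details, and you have correctly spelled out the passage to inductive limits and the reindexing that the paper leaves implicit.
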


\begin{rem}\label{r: Cinftyc(mathring M) is dense in AA^m(M)}
\Cref{c: Cinftyc(mathring M) is dense in AA^m(M)} and the first identities of \Cref{c: AA^m(M) equiv x^m Hb^infty(M),c: AA(M) equiv bigcup_m x^m Hb^infty(M)} are independent of $g$. So they hold true without the assumptions~\ref{i: g is of bounded geometry} and~\ref{i: A'}. Observe that \Cref{c: Cinftyc(mathring M) is dense in AA^m(M)} is stronger than \Cref{c: Cinftyc(mathring M) is dense in AA(M)}.
\end{rem}

\subsection{Dual-conormal distributions at the boundary}\label{ss: AA'(M)}

Consider the LCHSs \cite[Section~18.3]{Hormander1985-III}, \cite[Chapter~4]{Melrose1996} \index{$\KK'(M)$} \index{$\AA'(M)$} \index{$\dot\AA'(M)$}
\[
\KK'(M)=\KK(M;\Omega)'\;,\quad\AA'(M)=\dot\AA(M;\Omega)'\;,\quad\dot\AA'(M)=\AA(M;\Omega)'\;.
\]
The elements of $\AA'(M)$ (respectively, $\dot\AA'(M)$) will be called \emph{extendible} (respectively, \emph{supported}) \emph{dual-conormal distributions} at the boundary. The following analog of \Cref{c: I'(M L) is complete and Montel} holds true with formally the same proof, using the versions with $\Omega M$ of \Cref{c: dot AA(M) and AA(M) are barreled,c: dot AA(M) is acyclic and Montel,c: AA(M) is acyclic and Montel,c: KK(M) is barreled,c: KK(M) is acyclic and Montel}

\begin{prop}\label{p: KK'(M) AA'(M) dot AA'(M) are complete and Montel}
$\KK'(M)$, $\AA'(M)$ and $\dot\AA'(M)$ are complete Montel spaces.
\end{prop}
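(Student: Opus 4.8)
The plan is to follow the proof of \Cref{c: I'(M L) is complete and Montel} verbatim, handling the three spaces $\KK'(M)$, $\AA'(M)$ and $\dot\AA'(M)$ by the same two-step argument. By the definitions in \Cref{ss: AA'(M)}, each of these is the strong dual of one of the preduals $\KK(M;\Omega)$, $\dot\AA(M;\Omega)$ and $\AA(M;\Omega)$ (the $E=\Omega M$ instances of $\KK(M)$, $\dot\AA(M)$ and $\AA(M)$). So it suffices to establish, for each predual, that it is bornological and that it is a Montel space: bornologicity of the predual forces the strong dual to be complete \cite[IV.6.1]{Schaefer1971}, \cite[Corollary~6.1.18]{PerezCarrerasBonet1987}, \cite[Theorem~13.2.13]{NariciBeckenstein2011}, and the Montel property of the predual forces the strong dual to be Montel \cite[Proposition~3.9.9]{Horvath1966-I}, \cite[6.27.2~(2)]{Kothe1969-I}, \cite[IV.5.9]{Schaefer1971}.

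First I would invoke the $\Omega M$-versions of \Cref{c: dot AA(M) and AA(M) are barreled} and \Cref{c: KK(M) is barreled}, which give that $\dot\AA(M;\Omega)$, $\AA(M;\Omega)$ and $\KK(M;\Omega)$ are ultrabornological, hence in particular bornological. Then I would invoke the $\Omega M$-versions of \Cref{c: dot AA(M) is acyclic and Montel}, \Cref{c: AA(M) is acyclic and Montel} and \Cref{c: KK(M) is acyclic and Montel}, which give that these three preduals are Montel spaces. Combining the two implications above, $\dot\AA'(M)=\AA(M;\Omega)'$, $\AA'(M)=\dot\AA(M;\Omega)'$ and $\KK'(M)=\KK(M;\Omega)'$ are each complete Montel spaces.

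I do not expect a genuine obstacle here; the proof is a mechanical transcription of that of \Cref{c: I'(M L) is complete and Montel}, applied three times. The only point worth a moment's care is that the cited properties of $\dot\AA(M)$, $\AA(M)$ and $\KK(M)$ genuinely survive the passage to the density-bundle-valued setting — but this is already built into how those corollaries were stated, since all notions and results in this paper are set up to extend verbatim to distributional sections of vector bundles (cf.\ \Cref{s: intro} and the end of \Cref{s: symbols}). Presentationally, one could state the three cases together rather than repeating the argument, exactly as in the proof of \Cref{c: I'(M L) is complete and Montel}.
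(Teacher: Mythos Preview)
Your proposal is correct and matches the paper's own proof essentially verbatim: the paper states that this proposition is the analog of \Cref{c: I'(M L) is complete and Montel} and holds with formally the same proof, invoking precisely the $\Omega M$-versions of \Cref{c: dot AA(M) and AA(M) are barreled,c: dot AA(M) is acyclic and Montel,c: AA(M) is acyclic and Montel,c: KK(M) is barreled,c: KK(M) is acyclic and Montel} that you listed. There is nothing to add or correct.
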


We also define the LCHSs \index{$\KK^{\prime\,(s)}(M)$} \index{$\AA^{\prime\,(s)}(M)$} \index{$\dot\AA^{\prime\,(s)}(M)$}
\begin{alignat*}{2}
\KK^{\prime\,(s)}(M)&=\KK^{(-s)}(M;\Omega)'\;,&\quad\KK^{\prime\,m}(M)&=\KK^{-m}(M;\Omega)'\;,\\
\AA^{\prime\,(s)}(M)&=\dot\AA^{(-s)}(M;\Omega)'\;,&\quad\AA^{\prime\,m}(M)&=\dot\AA^{-m}(M;\Omega)'\;,\\
\dot\AA^{\prime\,(s)}(M)&=\AA^{(-s)}(M;\Omega)'\;,&\quad\dot\AA^{\prime\,m}(M)&=\AA^{-m}(M;\Omega)'\;.
\end{alignat*}
Transposing the analogs of~\eqref{I^(s)(M L) subset I^(s')(M L)} and~\eqref{I^m(M L) subset I^m'(M L)} for the spaces $\KK^{(s)}(M;\Omega)$, $\KK^m(M;\Omega)$, $\dot\AA^{(s)}(M;\Omega)$ and $\dot\AA^m(M;\Omega)$, we get continuous linear restriction maps
\begin{gather*}
\KK^{\prime\,(s')}(M)\to\KK^{\prime\,(s)}(M)\;,\quad\KK^{\prime\,m}(M)\to\KK^{\prime\,m'}(M)\;,\\
\AA^{\prime\,(s')}(M)\to\AA^{\prime\,(s)}(M)\;,\quad\AA^{\prime\,m}(M)\to\AA^{\prime\,m'}(M)\;,
\end{gather*}
for $s<s'$ and $m<m'$. These maps form projective spectra, giving rise to projective limits. The spaces $\KK^{\prime\,(s)}(M)$, $\KK^{\prime\,m}(M)$, $\AA^{\prime\,(s)}(M)$ and $\AA^{\prime\,m}(M)$ satisfy the analogs of~\eqref{sandwich for I'}. So the corresponding projective limits satisfy the analogs of~\eqref{varprojlim I'^(s)(M L) equiv varprojlim I'^m(M L)}.

Similarly, transposing the analog of~\eqref{I^(s)(M L) subset I^(s')(M L)} for the spaces $\AA^{(s)}(M;\Omega)$ and the version of~\eqref{AA^m(M) subset AA^m'(M)} with $\Omega M$, by \Cref{c: Cinftyc(mathring M) is dense in AA^m(M),r: Cinftyc(mathring M) is dense in AA^m(M)}, we get continuous inclusions
\[
\dot\AA^{\prime\,(s')}(M)\subset\dot\AA^{\prime\,(s)}(M)\;,\quad\dot\AA^{\prime\,m'}(M)\subset\dot\AA^{\prime\,m}(M)\;,
\]
for $s<s'$ and $m<m'$. The version of~\eqref{sandwich for AA} with $\Omega M$ yields continuous inclusions
\begin{equation}\label{sandwich for ZZ}
\dot\AA^{\prime\,(s)}(M)\supset\dot\AA^{\prime\,m}(M)\supset\dot\AA^{\prime\,(\max\{m,0\})}(M)\quad(m>s+n/2+1)\;.
\end{equation}
Therefore
\begin{equation}\label{bigcap_s ZZ^(s)(M) = bigcap_m ZZ^m(M)}
\bigcap_s\dot\AA^{\prime\,(s)}(M)=\bigcap_m\dot\AA^{\prime\,m}(M)\;.
\end{equation}

The following analogs of \Cref{c: I'^(s)(M L) is bornological,c: I'(M L) equiv varprojlim I'^(s)(M L)} hold true with formally the same proofs, using the versions with $\Omega M$ of \Cref{p: dot AA^(s)(M) and dot AA^(s)(M) are totally reflexive Frechet sps,p: KK^(s)(M) is a totally reflexive Frechet sp,c: dot AA(M) is acyclic and Montel,c: AA(M) is acyclic and Montel,c: KK(M) is acyclic and Montel}; or use \Cref{p: KK'(M) AA'(M) dot AA'(M) are complete and Montel} for an alternative proof.

\begin{cor}\label{c: KK^prime [s](M) AA^prime [s](M) dot AA^prime [s](M) are bornological}
$\KK^{\prime\,(s)}(M)$, $\AA^{\prime\,(s)}(M)$ and $\dot\AA^{\prime\,(s)}(M)$ are bornological and barreled.
\end{cor}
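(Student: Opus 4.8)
The plan is to imitate the proof of \Cref{c: I'^(s)(M L) is bornological} verbatim, applied to each of the three spaces in turn.

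First I would unwind the definitions recorded just above the statement: by construction $\KK^{\prime\,(s)}(M)=\KK^{(-s)}(M;\Omega)'$, $\AA^{\prime\,(s)}(M)=\dot\AA^{(-s)}(M;\Omega)'$ and $\dot\AA^{\prime\,(s)}(M)=\AA^{(-s)}(M;\Omega)'$, so that each of the three spaces under consideration is the strong dual of one of $\KK^{(-s)}(M;\Omega)$, $\dot\AA^{(-s)}(M;\Omega)$ and $\AA^{(-s)}(M;\Omega)$.

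Next I would invoke the versions with $\Omega M$ of \Cref{p: KK^(s)(M) is a totally reflexive Frechet sp} and \Cref{p: dot AA^(s)(M) and dot AA^(s)(M) are totally reflexive Frechet sps}: they give that $\KK^{(-s)}(M;\Omega)$, $\dot\AA^{(-s)}(M;\Omega)$ and $\AA^{(-s)}(M;\Omega)$ are totally reflexive, in particular reflexive, Fr\'echet spaces. Since the strong dual of a reflexive Fr\'echet space is bornological (the duality fact already used in \Cref{c: I'^(s)(M L) is bornological}), and every bornological LCS is barreled, I conclude that $\KK^{\prime\,(s)}(M)$, $\AA^{\prime\,(s)}(M)$ and $\dot\AA^{\prime\,(s)}(M)$ are bornological and barreled, which is the claim.

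I do not expect a genuine obstacle here: all the substantive work has already been carried out in establishing that the relevant Fr\'echet spaces are reflexive, and the passage to the strong dual is the standard theorem. As a remark, barreledness alone admits an alternative route, paralleling \Cref{c: I'(M L) equiv varprojlim I'^(s)(M L)}: one first identifies $\KK'(M)$, $\AA'(M)$ and $\dot\AA'(M)$ with the corresponding projective limits of the $\cdot^{\prime\,(s)}$-spaces, using that $\KK(M)$, $\dot\AA(M)$ and $\AA(M)$ are regular (\Cref{c: KK(M) is acyclic and Montel,c: dot AA(M) is acyclic and Montel,c: AA(M) is acyclic and Montel}), and then combines \Cref{p: KK'(M) AA'(M) dot AA'(M) are complete and Montel} with the usual Mackey-space argument; but the direct argument above is shorter and also yields bornologicalness.
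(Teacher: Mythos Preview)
Your proposal is correct and follows exactly the approach the paper indicates: the paper states that this corollary holds ``with formally the same proof'' as \Cref{c: I'^(s)(M L) is bornological}, using the versions with $\Omega M$ of \Cref{p: dot AA^(s)(M) and dot AA^(s)(M) are totally reflexive Frechet sps} and \Cref{p: KK^(s)(M) is a totally reflexive Frechet sp}, which is precisely what you do. Your closing remark about an alternative route also echoes the paper's own parenthetical ``or \Cref{p: KK'(M) AA'(M) dot AA'(M) are complete and Montel} for an alternative proof''.
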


\begin{cor}\label{c: AA'(M) equiv varprojlim AA^prime [s](M)}
We have
\[
\KK'(M)\equiv\varprojlim\KK^{\prime\,(s)}(M)\;,\quad\AA'(M)\equiv\varprojlim\AA^{\prime\,(s)}(M)\;,\quad
\dot\AA'(M)=\bigcap_s\dot\AA^{\prime\,(s)}(M)\;.
\]
\end{cor}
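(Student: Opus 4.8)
The plan is to reduce all three identities to the argument already used for \Cref{c: I'(M L) equiv varprojlim I'^(s)(M L)}, applied three times with $\Omega M$ as coefficient bundle. Recall that $\KK'(M)=\KK(M;\Omega)'$, $\AA'(M)=\dot\AA(M;\Omega)'$ and $\dot\AA'(M)=\AA(M;\Omega)'$, and that each of $\KK(M;\Omega)$, $\dot\AA(M;\Omega)$, $\AA(M;\Omega)$ is an LF-space (a countable inductive limit of its Sobolev-order steps, as it suffices to work with cofinal sequences by \Cref{ss: TVS}) whose steps are totally reflexive Fr\'echet spaces — by the versions with $\Omega M$ of \Cref{p: dot AA^(s)(M) and dot AA^(s)(M) are totally reflexive Frechet sps,p: KK^(s)(M) is a totally reflexive Frechet sp} — and which is moreover regular, being an acyclic Montel LF-space by the versions with $\Omega M$ of \Cref{c: KK(M) is acyclic and Montel,c: dot AA(M) is acyclic and Montel,c: AA(M) is acyclic and Montel}. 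First I would invoke \cite[Lemma~1]{Kucera2004}, exactly as in the first proof of \Cref{c: I'(M L) equiv varprojlim I'^(s)(M L)}, to conclude $\KK'(M)\equiv\varprojlim\KK^{\prime\,(s)}(M)$, $\AA'(M)\equiv\varprojlim\AA^{\prime\,(s)}(M)$ and $\dot\AA'(M)\equiv\varprojlim\dot\AA^{\prime\,(s)}(M)$, the transition maps being the restriction maps introduced just before the statement.

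As an alternative route (mirroring the second proof of \Cref{c: I'(M L) equiv varprojlim I'^(s)(M L)}) one can argue that $\KK'(M)$, $\AA'(M)$, $\dot\AA'(M)$ are Montel (\Cref{p: KK'(M) AA'(M) dot AA'(M) are complete and Montel}), hence barreled, hence Mackey spaces, whereas $\KK^{\prime\,(s)}(M)$, $\AA^{\prime\,(s)}(M)$, $\dot\AA^{\prime\,(s)}(M)$ are bornological (\Cref{c: KK^prime [s](M) AA^prime [s](M) dot AA^prime [s](M) are bornological}), hence Mackey; since the two sides of each identity then form Mackey spaces carrying the same dual pair, their topologies agree by \cite[Remark of~IV.4.5]{Schaefer1971}. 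I would record this as an alternative since it extracts the Montel/bornological structure for free.

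The last step is to identify, in the case of $\dot\AA'(M)$, the abstract projective limit $\varprojlim\dot\AA^{\prime\,(s)}(M)$ with the concrete decreasing intersection $\bigcap_s\dot\AA^{\prime\,(s)}(M)$ inside $C^{-\infty}(M)$. For this I would observe that the transition map $\dot\AA^{\prime\,(s')}(M)\to\dot\AA^{\prime\,(s)}(M)$ for $s<s'$ is the transpose of the continuous inclusion $\AA^{(-s)}(M;\Omega)\subset\AA^{(-s')}(M;\Omega)$, which has dense image because $\Cinftyc(\mathring M;\Omega)$ is dense in $\AA^{(-s)}(M;\Omega)$, hence in $\AA^{(-s')}(M;\Omega)$, by the version with $\Omega M$ of \Cref{c: Cinftyc(mathring M) is dense in AA^m(M)}; thus these transpose maps are injective, and together with the continuous inclusions $\dot\AA^{\prime\,(s')}(M)\subset\dot\AA^{\prime\,(s)}(M)$ already recorded this realizes the projective limit as the stated intersection.

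I do not anticipate a genuine obstacle: every ingredient is either one of the structural results already established for $\KK$, $\dot\AA$, $\AA$ and their duals, or an abstract fact about strong duals of regular (LF)-spaces with reflexive steps. The only points needing mild care are the cofinality reduction to a sequence $s_k\uparrow\infty$ on the dual side so that the abstract projective-limit machinery applies verbatim, and the density argument in the $\dot\AA'(M)$ case that upgrades the abstract $\varprojlim$ to the intersection $\bigcap_s\dot\AA^{\prime\,(s)}(M)$.
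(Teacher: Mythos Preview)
Your proposal is correct and follows essentially the same approach as the paper, which simply states that the result is proved ``with formally the same proofs'' as \Cref{c: I'(M L) equiv varprojlim I'^(s)(M L)}, invoking the $\Omega M$-versions of \Cref{p: dot AA^(s)(M) and dot AA^(s)(M) are totally reflexive Frechet sps,p: KK^(s)(M) is a totally reflexive Frechet sp,c: dot AA(M) is acyclic and Montel,c: AA(M) is acyclic and Montel,c: KK(M) is acyclic and Montel} (or alternatively \Cref{p: KK'(M) AA'(M) dot AA'(M) are complete and Montel}). Your additional remark explaining why the projective limit for $\dot\AA'(M)$ reduces to an intersection via the density of $\Cinftyc(\mathring M;\Omega)$ is a useful elaboration of something the paper records just before the statement.
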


Transposing the versions of~\eqref{dot C^infty(M) = bigcap_m ge 0 x^m C^infty(M) subset C^infty(M)},~\eqref{dot AA(M) subset dot C^-infty(M)},~\eqref{C^infty(M) subset AA(M)},~\eqref{C^infty(M) subset dot AA(M)}  and~\Cref{c: C^infty(M) subset dot AA(M) is dense} with $\Omega M$, we get continuous inclusions \cite[Section~4.6]{Melrose1996}
\begin{align}
C^\infty(M)\subset\AA'(M)\subset C^{-\infty}(M),\dot C^{-\infty}(M)\;,
\label{C^infty(M) subset AA'(M) subset C^-infty(M)}\\
\dot C^\infty(M)\subset\dot\AA'(M)\subset\dot C^{-\infty}(M),C^{-\infty}(M)\;,
\label{dot C^infty(M) subset dot AA'(M) subset dot C^-infty(M)}
\end{align}
and $R:\dot C^{-\infty}(M)\to C^{-\infty}(M)$ restricts to the identity map on $\AA'(M)$ and $\dot\AA'(M)$.

\subsection{Dual-conormal sequence at the boundary}\label{ss: dual conomal seq at the boundary}

Transposing maps in the version of~\eqref{0 -> KK(M) -> dot AA(M) -> AA(M) -> 0} with $\Omega M$, we get the sequence
\[
0\leftarrow\KK'(M) \xleftarrow{\iota^t} \AA'(M) \xleftarrow{R^t} \dot\AA'(M)\leftarrow0\;,
\]
which will called the \emph{dual-conormal sequence at the boundary} of $M$.

\begin{prop}\label{p: dual-conormal seq is exact}
The dual-conormal sequence at the boundary of $M$ is exact in the category of continuous linear maps between LCSs.
\end{prop}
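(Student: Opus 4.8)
The plan is to obtain exactness of~\eqref{dual-conormal seq at the boundary} purely by dualizing the short exact sequence~\eqref{0 -> KK(M) -> dot AA(M) -> AA(M) -> 0} (in the version with $\Omega M$) and invoking a duality principle for short exact sequences of locally convex spaces. Recall from \Cref{p: R: dot AA(M) -> AA(M) is a surj top hom} (with $\Omega M$) that
\[
0\to\KK(M;\Omega)\xrightarrow{\iota}\dot\AA(M;\Omega)\xrightarrow{R}\AA(M;\Omega)\to0
\]
is a short exact sequence in the category of continuous linear maps between LCSs, i.e.\ $\iota$ is a topological embedding onto its range, $R$ is a surjective topological homomorphism, and $\ker R=\im\iota=\KK(M;\Omega)$. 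Taking strong duals and transposes yields the sequence~\eqref{dual-conormal seq at the boundary} by the very definitions $\KK'(M)=\KK(M;\Omega)'$, $\AA'(M)=\dot\AA(M;\Omega)'$, $\dot\AA'(M)=\AA(M;\Omega)'$.

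First I would establish that $R^t:\dot\AA'(M)\to\AA'(M)$ is injective with closed range and is a topological homomorphism onto its range: since $R$ is a surjective topological homomorphism between barreled (indeed Montel) spaces, its transpose is a topological monomorphism for the strong topologies, and its range is the subspace of functionals vanishing on $\ker R=\KK(M;\Omega)$; this is a standard fact about transposes of topological homomorphisms between reflexive (here Montel) spaces---see \cite[Sections~2.1 and~2.2]{Wengenroth2003} and the reflexivity statements \Cref{c: dot AA(M) is acyclic and Montel,c: AA(M) is acyclic and Montel,c: KK(M) is acyclic and Montel}. Second, I would show that $\iota^t:\AA'(M)\to\KK'(M)$ is surjective: $\iota$ is a topological embedding of $\KK(M;\Omega)$ as a \emph{closed} subspace of $\dot\AA(M;\Omega)$ (it is the kernel of the continuous map $R$), so by the Hahn--Banach theorem every continuous linear functional on $\KK(M;\Omega)$ extends to $\dot\AA(M;\Omega)$, giving algebraic surjectivity; moreover $\iota^t$ is a topological homomorphism, which follows again from $\iota$ being a topological embedding onto a closed subspace of a reflexive space. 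Third, exactness in the middle, $\ker\iota^t=\im R^t$: a functional $u\in\AA'(M)=\dot\AA(M;\Omega)'$ restricts to $0$ on $\KK(M;\Omega)$ if and only if it factors through $\dot\AA(M;\Omega)/\KK(M;\Omega)\cong\AA(M;\Omega)$ (the quotient isomorphism being exactly the one induced by $R$, by \Cref{p: R: dot AA(M) -> AA(M) is a surj top hom}), i.e.\ if and only if $u=R^t v$ for some $v\in\AA(M;\Omega)'=\dot\AA'(M)$; continuity of the resulting $v$ is automatic because $R$ is open.

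The main obstacle, and the only genuinely nontrivial point, is verifying that \emph{all three maps in~\eqref{dual-conormal seq at the boundary} are topological homomorphisms for the strong dual topologies}, not merely that the sequence is algebraically exact; algebraic exactness is immediate from Hahn--Banach once one knows $\KK(M;\Omega)$ is closed in $\dot\AA(M;\Omega)$. The homomorphism property is where one must use the good topological properties already accumulated: since $\dot\AA(M;\Omega)$, $\AA(M;\Omega)$ and $\KK(M;\Omega)$ are Montel (hence reflexive) and barreled, and since $R$ is a surjective topological homomorphism and $\iota$ a closed topological embedding, the standard duality theory of short exact sequences of reflexive LCSs (as in \cite[Sections~2.1 and~2.2]{Wengenroth2003}) gives that the transposed sequence is again a short exact sequence in the same category; one may also argue directly that the quotient and subspace topologies dualize correctly because bounded sets behave well here (all spaces involved are regular by acyclicity, \Cref{c: dot AA(M) is acyclic and Montel,c: AA(M) is acyclic and Montel,c: KK(M) is acyclic and Montel}). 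Thus the proof reduces to citing~\eqref{0 -> KK(M) -> dot AA(M) -> AA(M) -> 0} together with these reflexivity and barreledness facts and the transpose-of-a-topological-homomorphism principle.
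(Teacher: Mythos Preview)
Your outline correctly isolates the issue: algebraic exactness of~\eqref{dual-conormal seq at the boundary} is indeed an immediate Hahn--Banach consequence of the closedness of $\KK(M;\Omega)$ in $\dot\AA(M;\Omega)$, and the only genuine content is showing that $R^t$ and $\iota^t$ are topological homomorphisms for the \emph{strong} dual topologies. But you do not actually prove this; you appeal to a ``standard duality theory of short exact sequences of reflexive LCSs'' and cite \cite[Sections~2.1 and~2.2]{Wengenroth2003}. Those sections merely set up the notion of exactness; they contain no theorem asserting that the strong dual of a short exact sequence of Montel (or reflexive, or barreled) LCSs is again short exact. In fact the entire machinery of \cite{Wengenroth2003} exists precisely because such duality fails in general: the openness of $\iota^t:\AA'(M)\to\KK'(M)$ amounts to saying that every bounded subset of $\KK(M;\Omega)$ is contained in the closure of a set $B\cap\KK(M;\Omega)$ with $B$ bounded in $\dot\AA(M;\Omega)$, and neither Montel nor regularity of the inductive spectrum gives you this for free.

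The paper closes this gap by invoking the precise criterion \cite[Lemma~7.6]{Wengenroth2003}: given that the primal sequence is exact, the dual sequence is exact provided the quotient map $R:\dot\AA(M;\Omega)\to\AA(M;\Omega)$ satisfies a \emph{topological lifting of bounded sets}---for every bounded $A\subset\AA(M;\Omega)$ there is a bounded $B\subset\dot\AA(M;\Omega)$ such that for every $0$-neighborhood $U$ in the domain there is a $0$-neighborhood $V$ in the target with $A\cap V\subset R(B\cap U)$. The verification of this condition is where the real work lies, and it uses structure specific to these spaces: bounded retractivity of $\AA(M;\Omega)$ (\Cref{c: AA(M) is acyclic and Montel}) to place $A$ in a single step $\AA^m(M;\Omega)$, the continuous partial extension maps $E_m:\AA^m(M;\Omega)\to\dot\AA^{(s)}(M;\Omega)$ of \Cref{p: E_m} to produce the bounded lift $B=E_m(A)$, and the coincidence of topologies on steps (\Cref{c: coincidence of tops on AA^m(M)}) to manufacture the neighborhood $V$. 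Your argument gestures at ``bounded sets behave well here'' but supplies none of these ingredients; without them the proof is incomplete.
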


\begin{proof}
By \Cref{p: R: dot AA(M) -> AA(M) is a surj top hom} and \cite[Lemma~7.6]{Wengenroth2003}, it is enough to prove that the map~\eqref{R: dot AA(M) -> AA(M)} satisfies the following condition of ``topological lifting of bounded sets.''

\begin{claim}\label{cl: topological lifting of bounded sets}
For all bounded subset $A\subset\AA(M)$, there is some bounded subset $B\subset\dot\AA(M)$ such that, for all $0$-neighborhood $U\subset\dot\AA(M)$, there is a $0$-neighborhood $V\subset\AA(M)$ so that $A\cap V\subset R(B\cap U)$.
\end{claim}

Since $\AA(M)$ is boundedly retractive (\Cref{c: AA(M) is acyclic and Montel}), $A$ is contained and bounded in some step $\AA^m(M)$. For any $m'>m$, let $E_{m'}:\AA^m(M)\to\dot\AA^{(s)}(M)$ be the partial extension map given by \Cref{p: E_m}. Then $B:=E_{m'}(A)$ is bounded in $\dot\AA^{(s)}(M)$, and therefore in $\dot\AA(M)$. Moreover, given any $0$-neighborhood $U\subset\dot\AA(M)$, there is some $0$-neighborhood $W\subset\AA^{m'}(M)$ so that $E_{m'}(W)\subset U\cap\dot\AA^{(s)}(M)$. By \Cref{c: coincidence of tops on AA^m(M)}, there is some $0$-neighborhood $V\subset\AA(M)$ such that $V\cap\AA^m(M)=W\cap\AA^m(M)$.  Hence $E_{m'}(V\cap\AA^m(M))\subset U\cap\dot\AA^{(s)}(M)$, yielding
\[
A\cap V=R(E_{m'}(A\cap V))\subset R(E_{m'}(A)\cap E_{m'}(V\cap\AA^m(M)))\subset R(B\cap U)\;.\qedhere
\]
\end{proof}

\begin{rem}
\Cref{p: dual-conormal seq is exact} does not agree with \cite[Proposition~4.6.2]{Melrose1996}, which seems to be a minor error of that book project.
\end{rem}

\subsection{$\dot\AA(M)$ and $\AA(M)$ vs $\AA'(M)$}\label{ss: ABC}

Using~\eqref{dot AA(M) subset dot C^-infty(M)},~\eqref{C^infty(M) subset AA(M)} and~\eqref{C^infty(M) subset AA'(M) subset C^-infty(M)}, we have \cite[Proposition~18.3.24]{Hormander1985-III}, \cite[Theorem~4.6.1]{Melrose1996}
\begin{equation}\label{ABC}
\dot\AA(M)\cap\AA'(M)=C^\infty(M)\;. 
\end{equation} 
(Actually, the a priori weaker equality $\AA(M)\cap\AA'(M)=C^\infty(M)$ is proved in \cite[Theorem~4.6.1]{Melrose1996}, but it is equivalent to~\eqref{ABC} because $R=1$ on $\AA'(M)$.)

\subsection{A description of $\dot\AA'(M)$}\label{ss: description of dot AA'(M)}

In this subsection, assume again~\ref{i: g is of bounded geometry} and~\ref{i: A'}.

\begin{cor}\label{c: dot AA^prime m(M) equiv x^m Hb^-infty(M)}
$\dot\AA^{\prime\,m}(M)\equiv x^m\Hb^{-\infty}(M)=x^{m-\frac12}H^{-\infty}(\mathring M)$ $(m\in\R)$.
\end{cor}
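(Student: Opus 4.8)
The statement to prove is $\dot\AA^{\prime\,m}(M)\equiv x^m\Hb^{-\infty}(M)=x^{m-1/2}H^{-\infty}(\mathring M)$ for $m\in\R$. The natural strategy is to obtain it by dualizing the identity $\AA^{-m}(M)\equiv x^{-m}\Hb^\infty(M)\equiv x^{-m+1/2}H^{-m}(\mathring M)$ provided by \Cref{c: AA^m(M) equiv x^m Hb^infty(M)}, since by definition $\dot\AA^{\prime\,m}(M)=\AA^{-m}(M;\Omega)'$. First I would pass to the half-density normalization: tensoring the identities of \Cref{c: AA^m(M) equiv x^m Hb^infty(M)} with $C^\infty(M;\Omega^{1/2})$ over $C^\infty(M)$ and using~\eqref{C^infty(M Omega^s) equiv x^s C^infty(M bOmega^s)} gives $\AA^{-m}(M;\Omega^{1/2})\equiv x^{-m+1/2}\Hb^\infty(M;\bOmega^{1/2})$, which is essentially \Cref{p: x^m+1/2 Hb^infty(M bOmega^1/2) subset AA^m(M Omega^1/2) subset x^m Hb^infty(M bOmega^1/2)} itself. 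The point of working with $\bOmega^{1/2}$ is that the pairing is symmetric, so the dual of $x^a\Hb^\infty(M;\bOmega^{1/2})$ is $x^{-a}\Hb^{-\infty}(M;\bOmega^{1/2})$, and that the filtration steps $x^a\Hb^k(M;\bOmega^{1/2})$ form a compact spectrum of Hilbertian spaces.

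The key computation is the duality $\big(x^{-m+1/2}\Hb^\infty(M;\bOmega^{1/2})\big)'\equiv x^{m-1/2}\Hb^{-\infty}(M;\bOmega^{1/2})$. Here $x^{-m+1/2}\Hb^\infty(M;\bOmega^{1/2})=\bigcap_k x^{-m+1/2}\Hb^k(M;\bOmega^{1/2})$ is a projective limit along a compact spectrum, hence an acyclic Montel (indeed nuclear-type) Fréchet space, so its strong dual is the inductive limit $\bigcup_k \big(x^{-m+1/2}\Hb^k(M;\bOmega^{1/2})\big)'$ with the topology transported correctly (this uses exactly the reflexivity/Montel machinery already invoked for $\KK'$, $\AA'$, $\dot\AA'$ in \Cref{c: AA'(M) equiv varprojlim AA^prime [s](M)} and \Cref{p: KK'(M) AA'(M) dot AA'(M) are complete and Montel}). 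By the definition of the weighted b-Sobolev spaces in \Cref{ss: b-Sobolev,ss: weighted b-Sobolev}, $\big(x^{-m+1/2}\Hb^k(M;\bOmega^{1/2})\big)'\equiv x^{m-1/2}\Hb^{-k}(M;\bOmega^{1/2})$, and taking the union over $k$ yields $x^{m-1/2}\Hb^{-\infty}(M;\bOmega^{1/2})$. Combining with the first paragraph gives $\dot\AA^{\prime\,m}(M;\Omega^{1/2})\equiv x^{m-1/2}\Hb^{-\infty}(M;\bOmega^{1/2})$. Tensoring back with $C^\infty(M;\Omega^{1/2})$ over $C^\infty(M)$ (removing the half-density, using~\eqref{C^infty(M Omega^s) equiv x^s C^infty(M bOmega^s)} once more, as in the proofs of \Cref{c: H^infty(mathring M) = x^-1/2 Hb^infty(M),c: AA^m(M) equiv x^m Hb^infty(M)}) produces $\dot\AA^{\prime\,m}(M)\equiv x^{m-1/2}\Hb^{-\infty+1/2}(M)$... more precisely $x^m\Hb^{-\infty}(M)$ after the weight shift $x^{-1/2}$ coming from $C^\infty(M;\Omega^{-1/2})\equiv x^{-1/2}C^\infty(M;\bOmega^{-1/2})$. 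Finally the identification $x^m\Hb^{-\infty}(M)=x^{m-1/2}H^{-\infty}(\mathring M)$ is immediate from \Cref{c: H^infty(mathring M) = x^-1/2 Hb^infty(M)} (its $m<0$, i.e. negative-order, half: $H^{-k}(\mathring M)=x^{-1/2}\Hb^{-k}(M)$), taking unions and applying the weight $x^m$.

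The routine parts are the tensor-product bookkeeping with density bundles and the weight shifts; these follow the template already used repeatedly in \Cref{ss: a description of AA(M)}. The step requiring genuine care is the commutation of strong duals with the projective/inductive limits: one must check that the duality pairing between $x^{-m+1/2}\Hb^\infty(M;\bOmega^{1/2})$ and $x^{m-1/2}\Hb^{-\infty}(M;\bOmega^{1/2})$ is nondegenerate and that the strong dual topology on the union agrees with the inductive limit topology of the duals $x^{m-1/2}\Hb^{-k}(M;\bOmega^{1/2})$. This is where reflexivity of the Fréchet space $x^{-m+1/2}\Hb^\infty(M;\bOmega^{1/2})$ and the regularity of the dual (LF-)spectrum enter — all of which are available because the b-Sobolev spectrum is compact, so the facts quoted in \Cref{ss: TVS} (dual of a reduced projective limit of reflexive Fréchet spaces along a compact spectrum is the corresponding inductive limit, acyclic and boundedly retractive) apply verbatim, exactly as in the proof of \Cref{c: AA'(M) equiv varprojlim AA^prime [s](M)}.
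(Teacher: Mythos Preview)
Your proposal is correct and follows essentially the same route as the paper. The paper's proof is the one-liner ``Apply the version of \Cref{c: AA^m(M) equiv x^m Hb^infty(M)} with $\Omega M$,'' i.e.\ dualize the identity $\AA^{-m}(M;\Omega)\equiv x^{-m}\Hb^\infty(M;\Omega)$; you have simply unpacked this by passing through half-b-densities (where the weighted b-Sobolev duality $(x^a\Hb^k)'\equiv x^{-a}\Hb^{-k}$ is cleanest) and making explicit the projective/inductive limit exchange, which is exactly the content the paper leaves implicit.
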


\begin{proof}
Apply the version of \Cref{c: AA^m(M) equiv x^m Hb^infty(M)} with $\Omega M$.
\end{proof}

\begin{cor}\label{c: dot AA'(M) equiv bigcap_m x^m Hb^-infty(M)}
$\dot\AA'(M)\equiv\bigcap_mx^m\Hb^{-\infty}(M)=\bigcap_mx^mH^{-\infty}(\mathring M)$.
\end{cor}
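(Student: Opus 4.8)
The plan is to deduce \Cref{c: dot AA'(M) equiv bigcap_m x^m Hb^-infty(M)} from \Cref{c: dot AA^prime m(M) equiv x^m Hb^-infty(M)} by passing to the appropriate limit. First I would recall from \Cref{c: AA'(M) equiv varprojlim AA^prime [s](M)} that $\dot\AA'(M)\equiv\bigcap_s\dot\AA^{\prime\,(s)}(M)$, and then observe via~\eqref{sandwich for ZZ} and~\eqref{bigcap_s ZZ^(s)(M) = bigcap_m ZZ^m(M)} that this intersection of the Sobolev-order spectrum agrees as a TVS with the intersection $\bigcap_m\dot\AA^{\prime\,m}(M)$ of the symbol-order spectrum, since the inclusions from~\eqref{sandwich for ZZ} show the two decreasing spectra are cofinal in each other and induce the same projective limit topology. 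So the problem reduces to identifying $\bigcap_m\dot\AA^{\prime\,m}(M)$ with $\bigcap_mx^m\Hb^{-\infty}(M)$ and with $\bigcap_mx^mH^{-\infty}(\mathring M)$.

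Next I would apply \Cref{c: dot AA^prime m(M) equiv x^m Hb^-infty(M)}, which gives the TVS-identity $\dot\AA^{\prime\,m}(M)\equiv x^m\Hb^{-\infty}(M)=x^{m-1/2}H^{-\infty}(\mathring M)$ step by step in $m$. The restriction maps in the $\dot\AA^{\prime\,m}$-spectrum correspond under this identification to the natural continuous inclusions $x^{m'}\Hb^{-\infty}(M)\subset x^m\Hb^{-\infty}(M)$ for $m<m'$ (coming from $x^{m'-m}$ being bounded, i.e.\ in $x^{m'-m}L^\infty\subset\Cinftyub$, acting on the weighted b-Sobolev scale as in \Cref{ss: weighted b-Sobolev,ss: (pseudo) diff ops on weighted b-Sobolev}), so the projective limits match. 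Taking the intersection over all $m$ therefore yields $\bigcap_m\dot\AA^{\prime\,m}(M)\equiv\bigcap_mx^m\Hb^{-\infty}(M)$, and the shift $x^{m-1/2}H^{-\infty}(\mathring M)=x^{-1/2}\cdot x^mH^{-\infty}(\mathring M)$ combined with the fact that reindexing $m\mapsto m+1/2$ does not change an intersection over all real $m$ gives $\bigcap_mx^m\Hb^{-\infty}(M)=\bigcap_mx^mH^{-\infty}(\mathring M)$, using also \Cref{c: H^infty(mathring M) = x^-1/2 Hb^infty(M)} (in its dual, $H^{-\infty}$, form). This establishes both identities in the statement.

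The main obstacle I anticipate is bookkeeping the three weight conventions and the $\pm1/2$ shifts consistently: one must check that the identification of \Cref{c: dot AA^prime m(M) equiv x^m Hb^-infty(M)} is compatible with the restriction maps of the spectrum (not just an isomorphism of individual spaces), so that the projective limit really transports, and one must verify that $\bigcap_m$ is insensitive both to the $x^{-1/2}$ factor and to whether the index $m$ runs over $\Z$ or $\R$ (the latter handled exactly as in the remark after~\eqref{| u |'_k m}-type arguments, or rather as noted in \Cref{ss: TVS} for spectra indexed by $\R$ versus by a cofinal sequence). None of this is deep, but it is the only place an error could creep in; everything else is a direct appeal to \Cref{c: dot AA^prime m(M) equiv x^m Hb^-infty(M),c: AA'(M) equiv varprojlim AA^prime [s](M),c: H^infty(mathring M) = x^-1/2 Hb^infty(M)} together with~\eqref{sandwich for ZZ} and~\eqref{bigcap_s ZZ^(s)(M) = bigcap_m ZZ^m(M)}.

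Since \Cref{c: dot AA^prime m(M) equiv x^m Hb^-infty(M)} already records the hard analytic content (it is obtained by dualizing \Cref{c: AA^m(M) equiv x^m Hb^infty(M)}, which in turn rests on the bounded-geometry identifications of \Cref{ss: a description of AA(M)}), the proof of \Cref{c: dot AA'(M) equiv bigcap_m x^m Hb^-infty(M)} should be a short one: assemble \Cref{c: AA'(M) equiv varprojlim AA^prime [s](M)}, \eqref{bigcap_s ZZ^(s)(M) = bigcap_m ZZ^m(M)}, and \Cref{c: dot AA^prime m(M) equiv x^m Hb^-infty(M)}, then simplify the weight via \Cref{c: H^infty(mathring M) = x^-1/2 Hb^infty(M)}.
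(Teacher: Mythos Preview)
Your proposal is correct and follows essentially the same route as the paper: assemble \Cref{c: AA'(M) equiv varprojlim AA^prime [s](M)}, \eqref{bigcap_s ZZ^(s)(M) = bigcap_m ZZ^m(M)}, and \Cref{c: dot AA^prime m(M) equiv x^m Hb^-infty(M)}. The paper's proof is a one-line citation of exactly these three ingredients; your additional remarks on cofinality, compatibility of the restriction maps, and the harmless $1/2$-shift are sound elaborations rather than a different argument.
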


\begin{proof}
Apply~\eqref{bigcap_s ZZ^(s)(M) = bigcap_m ZZ^m(M)} and \Cref{c: AA'(M) equiv varprojlim AA^prime [s](M),c: dot AA^prime m(M) equiv x^m Hb^-infty(M)}.
\end{proof}

\begin{cor}\label{c: Cinftyc(mathring M) is dense in dot AA'(M)}
$\Cinftyc(\mathring M)$ is dense in every $\dot\AA^{\prime\,m}(M)$ and in $\dot\AA'(M)$. Therefore the first inclusion of~\eqref{dot C^infty(M) subset dot AA'(M) subset dot C^-infty(M)} is also dense.
\end{cor}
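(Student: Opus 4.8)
The plan is to deduce both density statements from the identifications of the spaces $\dot\AA^{\prime\,m}(M)$ and $\dot\AA'(M)$ with (a limit of) weighted Sobolev spaces of the open manifold $\mathring M$; the last assertion will then be immediate.

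I would first treat a fixed $\dot\AA^{\prime\,m}(M)$. By \Cref{c: dot AA^prime m(M) equiv x^m Hb^-infty(M)} we may write $\dot\AA^{\prime\,m}(M)\equiv x^{m-\frac12}H^{-\infty}(\mathring M)=\bigcup_s x^{m-\frac12}H^s(\mathring M)$ as a countable inductive limit (with $s\downarrow-\infty$). For each $s$, the weighted Sobolev space $x^{m-\frac12}H^s(\mathring M)$ is, by the weighted-space construction (\Cref{ss: weighted sps}) together with the definition of $H^s(\mathring M)$ as the completion of $\Cinftyc(\mathring M)$ for the non-compact $\mathring M$ (\Cref{sss: Sobolev sps - non-compact}), nothing but the completion of $x^{m-\frac12}\Cinftyc(\mathring M)=\Cinftyc(\mathring M)$, the last equality holding because $x$ is smooth and strictly positive on $\mathring M$, so multiplication by $x^{m-\frac12}$ preserves $\Cinftyc(\mathring M)$ together with supports. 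Hence $\Cinftyc(\mathring M)$ is dense in every step $x^{m-\frac12}H^s(\mathring M)$, and, being dense in each step of the inductive limit, it is dense in $\dot\AA^{\prime\,m}(M)$. (By \Cref{r: Cinftyc(mathring M) is dense in AA^m(M)} this conclusion is independent of the auxiliary b-metric $g$.)

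I would then pass to $\dot\AA'(M)$. By \Cref{c: AA'(M) equiv varprojlim AA^prime [s](M)} and~\eqref{bigcap_s ZZ^(s)(M) = bigcap_m ZZ^m(M)} we have $\dot\AA'(M)\equiv\bigcap_m\dot\AA^{\prime\,m}(M)$ with the intersection (projective-limit) topology, so a base of neighborhoods of any $v\in\dot\AA'(M)$ is given by the sets $V\cap\dot\AA'(M)$ with $V$ a neighborhood of $v$ in some $\dot\AA^{\prime\,m}(M)$. Since $\Cinftyc(\mathring M)\subset\dot\AA'(M)$ and $\Cinftyc(\mathring M)$ is dense in each $\dot\AA^{\prime\,m}(M)$ by the previous step, every such $V$ meets $\Cinftyc(\mathring M)$, and hence so does $V\cap\dot\AA'(M)$; thus $\Cinftyc(\mathring M)$ is dense in $\dot\AA'(M)$. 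Finally, from the continuous inclusions $\Cinftyc(\mathring M)\subset\dot C^\infty(M)\subset\dot\AA'(M)$ given by~\eqref{Cinftyc(mathring M) subset dot C^infty(M) subset C^infty(M) subset dot C^-infty(M)} and~\eqref{dot C^infty(M) subset dot AA'(M) subset dot C^-infty(M)}, the density of the smaller space $\Cinftyc(\mathring M)$ in $\dot\AA'(M)$ forces $\dot C^\infty(M)$ to be dense there as well, which is the last assertion.

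There is no real obstacle here: the argument is bookkeeping with weight exponents plus the elementary observation about intersections of topologies used in the third paragraph. The only point that must be handled with care is that multiplication by $x^{m-\frac12}$ is \emph{not} a bounded operator on a single Sobolev space of $\mathring M$, so one has to work on the weighted scales $x^{m-\frac12}H^s(\mathring M)$ --- where it is by construction an isometric identification --- rather than on a fixed $H^s(\mathring M)$; once that is kept straight, everything reduces to the density of $\Cinftyc(\mathring M)$ in the ordinary Sobolev spaces of a non-compact manifold.
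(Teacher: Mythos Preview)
Your proof is correct and follows essentially the same route as the paper's: both use the identification $\dot\AA^{\prime\,m}(M)\equiv x^{m-1/2}H^{-\infty}(\mathring M)$ from \Cref{c: dot AA^prime m(M) equiv x^m Hb^-infty(M)}, the density of $\Cinftyc(\mathring M)$ in the Sobolev spaces of $\mathring M$, and the invariance $x^{m-1/2}\Cinftyc(\mathring M)=\Cinftyc(\mathring M)$, then pass to the projective limit via \Cref{c: AA'(M) equiv varprojlim AA^prime [s](M)} and~\eqref{bigcap_s ZZ^(s)(M) = bigcap_m ZZ^m(M)}. Your treatment is simply more explicit about the projective-limit step (using that the spectrum is totally ordered so single-step neighborhoods form a base), which the paper leaves implicit.
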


\begin{proof}
Since $\Cinftyc(\mathring M)$ is dense in $H^{-\infty}(\mathring M)$, we get that $\Cinftyc(\mathring M)=x^m\Cinftyc(\mathring M)$ is dense in every $x^mH^{-\infty}(\mathring M)\equiv\dot\AA^{\prime\,m}(M)$ (\Cref{c: dot AA^prime m(M) equiv x^m Hb^-infty(M)}), and therefore in $\dot\AA'(M)$ (\Cref{c: dot AA'(M) equiv bigcap_m x^m Hb^-infty(M)}).
\end{proof}

\begin{rem}\label{r: Cinftyc(mathring M) is dense in dot AA'(M)}
Like in \Cref{r: Cinftyc(mathring M) is dense in AA^m(M)}, \Cref{c: Cinftyc(mathring M) is dense in dot AA'(M)} and the first identities of \Cref{c: dot AA'(M) equiv bigcap_m x^m Hb^-infty(M),c: dot AA'(M) equiv bigcap_m x^m Hb^-infty(M)} are independent of $g$, and hold true without the assumptions~\ref{i: g is of bounded geometry} and~\ref{i: A'}.
\end{rem}

\subsection{Action of $\Diff(M)$ on $\AA'(M)$, $\dot\AA'(M)$ and $\KK'(M)$}\label{ss: Diff(M) on AA'(M)}

According to \Cref{ss: ops,ss: diff ops on dot AA(M) and AA(M)}, any $A\in\Diff(M)$ induces continuous linear endomorphisms $A$ of $\AA'(M)$, $\dot\AA'(M)$ and $\KK'(M)$ \cite[Proposition~4.6.1]{Melrose1996}, which are the transposes of $A^t$ on $\dot\AA(M;\Omega)$, $\AA(M;\Omega)$ and $\KK(M;\Omega)$. If $A\in\Diff^k(M)$, these maps satisfy the analogs of~\eqref{A: I^prime [s](M L E) -> I^prime (s-m)(M L E)}. If $A\in\Diffb(M)$, it induces continuous endomorphisms of $\AA^{\prime\,(s)}(M)$, $\AA^{\prime\,m}(M)$, $\dot\AA^{\prime\,(s)}(M)$ and $\KK^{\prime\,(s)}(M)$.

\section{Conormal sequence}\label{s: conormal seq}

\subsection{Cutting along a submanifold}\label{ss: cutting}

Let $M$ be a closed connected manifold, and $L\subset M$ be a regular closed submanifold of codimension one. $L$ may not be connected, and therefore $M\setminus L$ may have several connected components. First assume also that $L$ is transversely oriented. Then, like in the boundary case \Cref{ss: b-geometry}, there is some real-valued smooth function $x$ on some tubular neighborhood $T$ of $L$ in $M$, with projection $\varpi:T\to L$, so that $L=\{x=0\}$ and $dx\ne0$ on $L$. Any function $x$ satisfying these conditions is called a \emph{defining function} of $L$ on $T$. We can suppose $T\equiv(-\epsilon,\epsilon)_x\times L$, for some $\epsilon>0$, so that $\varpi:T\to L$ is the second factor projection. For any atlas $\{V_j,y_j\}$ of $L$, we get an atlas of $T$ of the form $\{U_j\equiv(-\epsilon,\epsilon)_x\times V_j,(x,y)\}$, whose charts are adapted to $L$. The corresponding local vector fields $\partial_x\in\fX(U_j)$ can be combined to define a vector field $\partial_x\in\fX(T)$; we can consider $\partial_x$ as the derivative operator on $C^\infty(T)\equiv C^\infty((-\epsilon,\epsilon),C^\infty(L))$. For every $j$, $\Diff(U_j,L\cap U_j)$ is spanned by $x\partial_x,\partial_j^1,\dots,\partial_j^{n-1}$ using the operations of $C^\infty(U_j)$-module and algebra, where $\partial_j^\alpha=\partial/\partial y_j^\alpha$. Using $T\equiv(-\epsilon,\epsilon)_x\times L$, any $A\in\Diff(L)$ induces an operator $1\otimes A\in\Diff(T,L)$, such that $(1\otimes A)(u(x)v(y))=u(x)\,(Av)(y)$ for $u\in C^\infty(-\epsilon,\epsilon)$ and $v\in C^\infty(L)$. This defines a canonical injection $\Diff(L)\equiv1\otimes\Diff(L)\subset\Diff(T,L)$ so that $(1\otimes A)|_L=A$. (This also shows the surjectivity of~\eqref{Diff(M L) -> Diff(L)} in this case.) Moreover $\Diff(T)$ (respectively, $\Diff(T,L)$) is spanned by $\partial_x$ (respectively, $x\partial_x$) and $1\otimes\Diff(L)$ using the operations of $C^\infty(T)$-module and algebra. Clearly,
\begin{equation}\label{[partial_x 1 otimes Diff(L)] = 0}
[\partial_x,1\otimes\Diff(L)]=0\;,\quad[\partial_x, x\partial_x]=\partial_x\;,
\end{equation}
yielding
\begin{equation}\label{[partial_x Diff^k(T L)] subset ...}
[\partial_x,\Diff^k(T,L)]\subset\Diff^k(T,L)+\Diff^{k-1}(T,L)\,\partial_x\;.
\end{equation}
$\Diff^k(T,L)$ and $\Diff^k(T)$ satisfy the obvious versions of~\eqref{Diffb^k(M) x^a = x^a Diffb^k(M)} and~\eqref{Diff^k(M) x^a subset x^a-k Diff^k(M)}.

For a vector bundle $E$ over $M$, there is an identity $E_T\equiv(-\epsilon,\epsilon)\times E_L$ over $T\equiv(-\epsilon,\epsilon)\times L$, which can be used to define $\partial_x\in\Diff^1(T;E)$ using the above charts. With this interpretation of $\partial_x$ and using tensor products like in~\eqref{C^infty(M)-tensor product description of C^pm infty_./c(M E)}, the vector bundle versions of the properties and spaces of distributions of this section are straightforward.

Let $\bfM$ \index{$\bfM$} be the smooth manifold with boundary defined by ``cutting'' $M$ along $L$; i.e., modifying $M$ only on the tubular neighborhood $T\equiv(-\epsilon,\epsilon)\times L$, which is replaced with $\bfT=((-\epsilon,0]\sqcup[0,\epsilon))\times L$ in the obvious way. ($\bfM$ is the blowing-up $[M,L]$ of $M$ along $L$ \cite[Chapter~5]{Melrose1996}.) Thus $\partial\bfM\equiv L\sqcup L$ because $L$ is transversely oriented, and $\mathring\bfM\equiv M\setminus L$. A canonical projection $\bfpi:\bfM\to M$ \index{$\bfpi$} is defined as the combination of the identity map $\mathring\bfM\to M\setminus L$ and the map $\bfT\to T$ given by the product of the canonical projection $(-\epsilon,0]\sqcup[0,\epsilon)\to(-\epsilon,\epsilon)$ and $\id_L$. This projection realizes $M$ as a quotient space of $\bfM$ by the equivalence relation defined by the homeomorphism $h\equiv h_0\times\id$ of $\partial\bfM\equiv\partial\bfT=(\{0\}\sqcup\{0\})\times L$, where $h_0$ switches the two points of $\{0\}\sqcup\{0\}$. Moreover $\bfpi:\bfM\to M$ is a local embedding of a compact manifold with boundary to a closed manifold of the same dimension. 

Like in \Cref{ss: pull-back and push-forward of distrib sections}, we have the continuous linear pull-back map
\begin{equation}\label{bfpi^*: C^infty(M) -> C^infty(bfM)}
\bfpi^*:C^\infty(M)\to C^\infty(\bfM)\;,
\end{equation}
which is clearly injective. Then the transpose of the version of~\eqref{bfpi^*: C^infty(M) -> C^infty(bfM)} with $\Omega M$ and $\Omega\bfM\equiv\bfpi^*\Omega M$ is the continuous linear push-forward map
\begin{equation}\label{bfpi_*: dot C^-infty(bfM) -> C^-infty(M)}
\bfpi_*:\dot C^{-\infty}(\bfM)\to C^{-\infty}(M)\;,
\end{equation}
which is surjective by a consequence of the Hahn-Banach theorem \cite[Theorem~II.4.2]{Schaefer1971}.

After distinguishing a connected component $L_0$ of $L$, let $\widetilde M$ \index{$\widetilde M$} and $\widetilde L$ be the quotients of $\bfM\sqcup\bfM\equiv\bfM\times\Z_2$ and $\partial\bfM\sqcup\partial\bfM\equiv\partial\bfM\times\Z_2$ by the equivalence relation generated by $(p,a)\sim(h(p),a)$ if $\bfpi(p)\in L\setminus L_0$ and $(p,a)\sim(h(p),a+1)$ if $\bfpi(p)\in L_0$ ($p\in\bfpi^{-1}(L)=\partial\bfM$ in both cases). Let us remark that $\widetilde M$ may not be homeomorphic to the double of $\bfM$, which is the quotient of $\bfM\times\Z_2$ by the equivalence relation generated by $(p,0)\sim(p,1)$, for $p\in\partial\bfM$. Note that $\widetilde M$ is a closed connected manifold and $\widetilde L$ is a closed regular submanifold. Moreover the quotient $\widetilde T$ of $\bfT\sqcup\bfT$ becomes a tubular neighborhood of $\widetilde L$ in $\widetilde M$. The combination $\bfpi\sqcup\bfpi:\bfM\sqcup\bfM\to M$ induces a two-fold covering map $\tilde\pi:\widetilde M\to M$, whose restrictions to $\widetilde L$ and $\widetilde T$ are trivial two-fold coverings of $L$ and $T$, respectively; i.e., $\widetilde L\equiv L\sqcup L$ and $\widetilde T\equiv T\sqcup T$. The group of deck transformations of $\tilde\pi:\widetilde M\to M$ is $\{\id,\sigma\}$, where $\sigma:\widetilde M\to\widetilde M$ is induced by the map $\sigma_0:\bfM\times\Z_2\to\bfM\times\Z_2$ defined by switching the elements of $\Z_2$. The composition of the injection $\bfM\to\bfM\times\Z_2$, $p\mapsto(p,0)$, with the quotient map $\bfM\sqcup\bfM\to\widetilde M$ is a smooth embedding $\bfM\to\widetilde M$. This will be considered as an inclusion map of a regular submanifold with boundary, obtaining $\partial\bfM\equiv\widetilde L$.

Since $\tilde\pi$ is a two-fold covering map, we have continuous linear maps (\Cref{ss: pull-back and push-forward of distrib sections})
\begin{gather}
\tilde\pi_*:C^\infty(\widetilde M) \to C^\infty(M)\;,\quad
\tilde\pi^*:C^\infty(M) \to C^\infty(\widetilde M)\;,\notag \\
\tilde\pi^*:C^{-\infty}(M) \to C^{-\infty}(\widetilde M)\;,\quad
\tilde\pi_*:C^{-\infty}(\widetilde M) \to C^{-\infty}(M)\;,\label{tilde pi^*: C^-infty(M) -> C^-infty(widetilde M)}
\end{gather}
both pairs of maps satisfying
\begin{equation}\label{tilde pi_* tilde pi^* = 2}
\tilde\pi_*\tilde\pi^*=2\;,\quad\tilde\pi^*\tilde\pi_*=A_\sigma\;,
\end{equation}
where $A_\sigma:C^{\pm\infty}(\widetilde M)\to C^{\pm\infty}(\widetilde M)$ is given by $A_\sigma u=u+\sigma_*u$. Using the continuous linear restriction and inclusion maps given by~\eqref{R: C^infty(widetilde M) -> C^infty(M)} and~\eqref{dot C^-infty(M) subset C^-infty(breve M)}, we get the commutative diagrams
\begin{equation}\label{CD: tilde pi_* iota = bfpi_*}
\begin{CD}
C^\infty(\widetilde M) @>R>> C^\infty(\bfM) \\
@A{\tilde\pi^*}AA @AA{\bfpi^*}A \\
C^\infty(M) @= C^\infty(M)\;,\hspace{-.2cm}
\end{CD}
\qquad
\begin{CD}
\dot C^{-\infty}(\bfM) @>{\iota}>> C^{-\infty}(\widetilde M) \\
@V{\bfpi_*}VV @VV{\tilde\pi_*}V \\
C^{-\infty}(M) @= C^{-\infty}(M)\;,\hspace{-.2cm}
\end{CD}
\end{equation}
the second one being the transpose of the density-bundles version of the first one.

\subsection{Lift of differential operators from $M$ to $\widetilde M$}\label{ss: lift of diff ops}

For any $A\in\Diff(M)$, let $\widetilde A\in\Diff(\widetilde M)$ denote its lift via the covering map $\tilde\pi:\widetilde M\to M$.  The action of $\widetilde A$ on $C^{\pm\infty}(\widetilde M)$ corresponds to the action of $A$ on $C^{\pm\infty}(M)$ via $\tilde\pi^*:C^{\pm\infty}(M)\to C^{\pm\infty}(\widetilde M)$ and $\tilde\pi_*:C^{\pm\infty}(\widetilde M)\to C^{\pm\infty}(M)$. According to~\eqref{restriction map Diff(breve M) -> Diff(M)}, $\widetilde A|_{\bfM}\in\Diff(\bfM)$ is the lift of $A$ via the local embedding $\bfpi:\bfM\to M$, sometimes also denoted by $\widetilde A$. \index{$\widetilde A$} The action of $\widetilde A$ on $C^\infty(\bfM)$ (respectively, $C^{-\infty}(\bfM)$) corresponds to the action of $A$ on $C^\infty(M)$ (respectively, $C^{-\infty}(M)$) via $\bfpi^*:C^\infty(M)\to C^\infty(\bfM)$ (respectively, $\bfpi_*:C^{-\infty}(\bfM)\to C^{-\infty}(M)$). If $A\in\Diff(M,L)$, then $\widetilde A\in\Diff(\widetilde M,\widetilde L)$ and $\widetilde A|_{\bfM}\in\Diffb(\bfM)$ by~\eqref{Diff(breve M,partial M) = Diffb(M)}.

\subsection{The spaces $C^{\pm\infty}(M,L)$}\label{ss: C^-infty(M L)}

Consider the closed subspaces, \index{$C^\infty(M,L)$} \index{$C^k(M,L)$}
\begin{equation}\label{C^infty(M L) subset C^infty(M)}
C^\infty(M,L)\subset C^\infty(M)\;,\quad C^k(M,L)\subset C^k(M)\quad(k\in\N_0)\;,
\end{equation}
consisting of functions that vanish to all orders at the points of $L$ in the first case, and that vanish up to order $k$ at the points of $L$ in the second case. Then let \index{$C^{-\infty}(M,L)$} \index{$C^{\prime\,-k}(M,L)$}
\[
C^{-\infty}(M,L)=C^\infty(M,L;\Omega)'\;,\quad C^{\prime\,-k}(M,L)=C^k(M,L;\Omega)'\;.
\]
Note that~\eqref{bfpi^*: C^infty(M) -> C^infty(bfM)} restricts to TVS-isomorphisms
\begin{equation}\label{bfpi^*: C^infty(M L) cong dot C^infty(bfM)}
\bfpi^*:C^\infty(M,L)\xrightarrow{\cong}\dot C^\infty(\bfM)\;,\quad
\bfpi^*:C^k(M,L)\xrightarrow{\cong}\dot C^k(\bfM)\;.
\end{equation}
Taking the transposes of its versions with density bundles, it follows that~\eqref{bfpi_*: dot C^-infty(bfM) -> C^-infty(M)} restricts to TVS-isomorphisms
\begin{equation}\label{bfpi_*: C^-infty(bfM) cong C^-infty(M L)}
\bfpi_*:C^{-\infty}(\bfM)\xrightarrow{\cong}C^{-\infty}(M,L)\;,\quad
\bfpi_*:C^{\prime\,-k}(\bfM)\xrightarrow{\cong}C^{\prime\,-k}(M,L)\;.
\end{equation}
So the spaces $C^\infty(M,L)$, $C^k(M,L)$, $C^{-\infty}(M,L)$ and $C^{\prime\,-k}(M,L)$ satisfy the analogs of~\eqref{C^prime -k'(M E) supset C^prime -k(M E)} and~\eqref{bigcap_k C^k_./c(M) = C^infty_./c(M)}.

On the other hand, there are Hilbertian spaces $H^r(M,L)$ ($r>n/2$) \index{$H^r(M,L)$} and $H^{\prime\,s}(M,L)$ ($s\in\R$), \index{$H^{\prime\,s}(M,L)$} continuously included in $C^0(M,L)$ and $C^{-\infty}(M,L)$, respectively, such that the second map of~\eqref{bfpi^*: C^infty(M L) cong dot C^infty(bfM)} for $k=0$ and the first map of~\eqref{bfpi_*: C^-infty(bfM) cong C^-infty(M L)} restrict to a TVS-isomorphisms
\begin{equation}\label{bfpi^*: H^r(M L) cong dot H^r(bfM)}
\bfpi^*:H^r(M,L)\xrightarrow{\cong}\dot H^r(\bfM)\;,\quad
\bfpi_*:H^s(\bfM)\xrightarrow{\cong}H^{\prime\,s}(M,L)\;.
\end{equation}
By~\eqref{dot H^s(M) equiv H^-s(M Omega)'},
\begin{equation}\label{H^prime -r(M,L) equiv H^r(M L Omega)'}
H^{\prime\,-r}(M,L)\equiv H^r(M,L;\Omega)'\;,\quad H^r(M,L)\equiv H^{\prime\,-r}(M,L;\Omega)'\;.
\end{equation}
Now, the second identity of~\eqref{H^prime -r(M,L) equiv H^r(M L Omega)'} can be used to extend the definition of $H^r(M,L)$ for all $r\in\R$.

Alternatively, we may also use trace theorems \cite[Theorem~7.53 and~7.58]{Adams1975} to define $H^m(M,L)$ for $m\in\Z^+$, and then use the first identity of~\eqref{H^prime -r(M,L) equiv H^r(M L Omega)'} to define $H^{\prime\,-m}(M,L)$.

From~\eqref{bfpi^*: C^infty(M) -> C^infty(bfM)},~\eqref{bfpi_*: dot C^-infty(bfM) -> C^-infty(M)},~\eqref{bfpi^*: H^r(M L) cong dot H^r(bfM)} and the analogs of~\eqref{H^s(M) subset C^k(M) subset H^k(M)}--\eqref{C^infty(M) = bigcap_s H^s(M)} mentioned in \Cref{ss: supported and extendible Sobolev sps}, we get
\begin{alignat}{2}
C^\infty(M,L)&=\bigcap_rH^r(M,L)\;,&\quad C^{-\infty}(M,L)&=\bigcup_sH^{\prime\,s}(M,L)\;,
\label{C^infty(M L) = bigcap_r H^r(M L)}\\
\intertext{as well as a continuous inclusion and a continuous linear surjection,}
C^\infty(M)&\subset\bigcap_sH^{\prime\,s}(M,L)\;,&\quad C^{-\infty}(M)&\leftarrow\bigcup_rH^r(M,L)\;.
\label{C^infty(M) subset bigcap_sH^prime s(M L)}
\end{alignat}
By~\eqref{H^prime -r(M,L) equiv H^r(M L Omega)'} and~\eqref{C^infty(M L) = bigcap_r H^r(M L)},
\begin{equation}\label{C^infty(M L) = C^-infty(M L Omega)'}
C^\infty(M,L)=C^{-\infty}(M,L;\Omega)'\;.
\end{equation}

\Cref{p: dot C^-infty(M) and C^-infty(M) are barreled ...} and~\eqref{bfpi_*: C^-infty(bfM) cong C^-infty(M L)} have the following consequence.

\begin{cor}\label{c: C^-infty(M L) is barreled ...}
$C^{-\infty}(M,L)$ is a barreled, ultrabornological, webbed, acyclic DF Montel space, and therefore complete, boundedly retractive and reflexive.
\end{cor}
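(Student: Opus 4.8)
The plan is to transport the structural properties of $\dot C^{-\infty}(\bfM)$ through the TVS-isomorphism $\bfpi_*:\dot C^{-\infty}(\bfM)\xrightarrow{\cong}C^{-\infty}(M,L)$ given by the second map of~\eqref{bfpi_*: C^-infty(bfM) cong C^-infty(M L)}. The point is that $\bfM$ is a compact manifold with boundary, so \Cref{p: dot C^-infty(M) and C^-infty(M) are barreled ...} applies verbatim to it: $\dot C^{-\infty}(\bfM)$ is a barreled, ultrabornological, webbed, acyclic DF Montel space, hence complete, boundedly retractive and reflexive. All of the listed properties are isomorphism invariants of topological vector spaces, so they pass directly to $C^{-\infty}(M,L)$.

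Concretely, I would argue as follows. First, invoke \Cref{bfpi_*: C^-infty(bfM) cong C^-infty(M L)}, which establishes that $\bfpi_*$ restricts to a TVS-isomorphism $\dot C^{-\infty}(\bfM)\cong C^{-\infty}(M,L)$; note that the identification $\Omega\bfM\equiv\bfpi^*\Omega M$ (used already in~\eqref{bfpi_*: dot C^-infty(bfM) -> C^-infty(M)}) is what makes the density-bundle bookkeeping in~\eqref{bfpi^*: C^infty(M L) cong dot C^infty(bfM)} and its transpose work out, so the isomorphism is genuinely topological and not merely algebraic. Second, apply \Cref{p: dot C^-infty(M) and C^-infty(M) are barreled ...} to $\bfM$ in place of the generic manifold with boundary $M$ of that proposition. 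Third, observe that being barreled, ultrabornological, webbed, acyclic (for an LF-space this is a property of the space, per \Cref{ss: TVS}), DF, Montel, complete, boundedly retractive, and reflexive are all preserved under TVS-isomorphisms, and conclude.

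There is essentially no obstacle here: the statement is a routine corollary, explicitly flagged as such in the text (``\Cref{p: dot C^-infty(M) and C^-infty(M) are barreled ...} and~\eqref{bfpi_*: C^-infty(bfM) cong C^-infty(M L)} have the following consequence''). The only point requiring a moment's care is the bookkeeping around the density bundles in passing from $\bfpi^*:C^\infty(M,L)\xrightarrow{\cong}\dot C^\infty(\bfM)$ to its transpose, but this has already been settled in \Cref{ss: C^-infty(M L)} when~\eqref{bfpi_*: C^-infty(bfM) cong C^-infty(M L)} was derived, so it need not be repeated. Thus the proof is a single line citing the two referenced results.

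\begin{proof}
This is immediate from \Cref{p: dot C^-infty(M) and C^-infty(M) are barreled ...} applied to the compact manifold with boundary $\bfM$ and the TVS-isomorphism $\bfpi_*:\dot C^{-\infty}(\bfM)\xrightarrow{\cong}C^{-\infty}(M,L)$ of~\eqref{bfpi_*: C^-infty(bfM) cong C^-infty(M L)}, since all of the stated properties are preserved by TVS-isomorphisms.
\end{proof}
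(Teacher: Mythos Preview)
Your approach is exactly the paper's: the corollary is stated immediately after the sentence ``\Cref{p: dot C^-infty(M) and C^-infty(M) are barreled ...} and~\eqref{bfpi_*: C^-infty(bfM) cong C^-infty(M L)} have the following consequence,'' so the intended proof is the one-line transport of properties through a TVS-isomorphism, just as you wrote. One small slip: the isomorphism in~\eqref{bfpi_*: C^-infty(bfM) cong C^-infty(M L)} is $\bfpi_*:C^{-\infty}(\bfM)\xrightarrow{\cong}C^{-\infty}(M,L)$ (extendible distributions, no dot), not $\dot C^{-\infty}(\bfM)$; since \Cref{p: dot C^-infty(M) and C^-infty(M) are barreled ...} covers both $\dot C^{-\infty}$ and $C^{-\infty}$ this does not affect the argument, but you should correct the notation.
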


The transpose of the version of the first inclusion of~\eqref{C^infty(M L) subset C^infty(M)} with $\Omega M$ is a continuous linear restriction map
\begin{equation}\label{R :C^-infty(M) -> C^-infty(M L)}
R:C^{-\infty}(M)\to C^{-\infty}(M,L)\;,
\end{equation}
whose restriction to $C^\infty(M)$ is the identity. This map can be also described as the composition
\[
C^{-\infty}(M) \xrightarrow{\tilde\pi^*} C^{-\infty}(\widetilde M) \xrightarrow{R} C^{-\infty}(\bfM) \xrightarrow{\bfpi_*} C^{-\infty}(M,L)\;.
\]
The canonical pairing between $C^\infty(M)$ and $C^\infty(M,L;\Omega)$ defines a continuous inclusion
\begin{equation}\label{C^infty(M) subset C^-infty(M L)}
C^\infty(M)\subset C^{-\infty}(M,L)
\end{equation}
such that~\eqref{R :C^-infty(M) -> C^-infty(M L)} is the identity on $C^\infty(M)$. We also get commutative diagrams
\begin{equation}\label{CD dot C^-infty(bfM) -R-> C^-infty(bfM)}
\begin{CD}
C^\infty(\bfM) @<{\iota}<< \dot C^\infty(\bfM) \\
@A{\bfpi^*}AA @A{\cong}A{\bfpi^*}A \\
C^\infty(M) @<{\iota}<< C^\infty(M,L)\;,\hspace{-.2cm}
\end{CD}
\qquad
\begin{CD}
\dot C^{-\infty}(\bfM) @>R>> C^{-\infty}(\bfM) \\
@V{\bfpi_*}VV @V{\cong}V{\bfpi_*}V \\
C^{-\infty}(M) @>R>> C^{-\infty}(M,L)\;,\hspace{-.2cm}
\end{CD}
\end{equation}
the second one being the transpose of the density-bundles version of the first one.

\subsection{The space $C^{-\infty}_L(M)$}\label{ss: C^-infty_L(M)}

The closed subspaces of elements supported in $L$, \index{$C^{-\infty}_L(M)$} \index{$C^{\prime\,-k}_L(M)$} \index{$H^s_L(M)$}
\[
C^{-\infty}_L(M)\subset C^{-\infty}(M)\;,\quad C^{\prime\,-k}_L(M)\subset C^{\prime\,-k}(M)\;,\quad H^s_L(M)\subset H^s(M)\;,
\]
are the null spaces of restrictions of~\eqref{R :C^-infty(M) -> C^-infty(M L)}. These spaces satisfy continuous inclusions analogous to~\eqref{C^prime -k'(M E) supset C^prime -k(M E)},~\eqref{H^s(M) subset H^s'(M)} and~\eqref{H^-s(M) supset C^prime -k(M) supset H^-k(M)}. 

According to~\eqref{dot C^-infty_partial M(M) equiv C^-infty_partial M(breve M)} and \Cref{ss: cutting},
\begin{align}
\dot C^{-\infty}_{\partial\bfM}(\bfM)&\equiv C^{-\infty}_{\widetilde L}(\widetilde M)
\equiv C^{-\infty}_{\widetilde L}(\widetilde T)\equiv C^{-\infty}_L(T)\oplus C^{-\infty}_L(T)\notag\\
&\equiv C^{-\infty}_L(M)\oplus C^{-\infty}_L(M)\;.
\label{dot C^-infty_partial bfM(bfM) equiv C^-infty_L(M) oplus C^-infty_L(M)}
\end{align}
The maps~\eqref{bfpi_*: dot C^-infty(bfM) -> C^-infty(M)} and~\eqref{tilde pi^*: C^-infty(M) -> C^-infty(widetilde M)} have restrictions
\begin{equation}\label{bfpi_*: dot C^-infty_partial bfM(bfM) -> C^-infty_L(M)}
\bfpi_*=\tilde\pi_*:\dot C^{-\infty}_{\partial\bfM}(\bfM)\to C^{-\infty}_L(M)\;,\quad
\tilde\pi^*:C^{-\infty}_L(M)\to\dot C^{-\infty}_{\partial\bfM}(\bfM)\;.
\end{equation}
Using~\eqref{dot C^-infty_partial bfM(bfM) equiv C^-infty_L(M) oplus C^-infty_L(M)}, these maps are given by $\bfpi_*(u,v)=u+v$ and $\tilde\pi^*u=(u,u)$.

From~\eqref{dot C^-infty_partial bfM(bfM) equiv C^-infty_L(M) oplus C^-infty_L(M)}, \Cref{p: dot C^-infty_partial M(M) = bigcup_s H^s_partial M(M),c: dot C^-infty_partial M(M) is barreled ...}, we get the following.

\begin{cor}\label{c: C^-infty_L(M) = bigcup_k C^-k_L(M)}
$C^{-\infty}_L(M)$ is a limit subspace of the LF-space $C^{-\infty}(M)$.
\end{cor}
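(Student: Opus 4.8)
The plan is to reproduce, for the pair $(M,L)$, the one-line argument used to prove \Cref{p: dot C^-infty_partial M(M) = bigcup_s H^s_partial M(M)}: exhibit the quotient $C^{-\infty}(M)/C^{-\infty}_L(M)$ as (topologically isomorphic to) the acyclic space $C^{-\infty}(M,L)$, and then invoke the characterization of limit subspaces recalled in \Cref{ss: TVS}.

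The only thing to check is that the restriction map $R\colon C^{-\infty}(M)\to C^{-\infty}(M,L)$ of \eqref{R :C^-infty(M) -> C^-infty(M L)} is a \emph{surjective topological homomorphism} with null space $C^{-\infty}_L(M)$. By construction it is the transpose of the topological embedding $C^\infty(M,L;\Omega)\subset C^\infty(M;\Omega)$ coming from the first inclusion of \eqref{C^infty(M L) subset C^infty(M)} (together with \eqref{C^infty(M L) = C^-infty(M L Omega)'}), hence surjective by the Hahn--Banach theorem, with null space $C^{-\infty}_L(M)$ by the definition of the latter (\Cref{ss: C^-infty_L(M)}). It is open by the open mapping theorem, since $C^{-\infty}(M)$ is webbed (\Cref{ss: smooth/distributional sections}) and $C^{-\infty}(M,L)$ is ultrabornological (\Cref{c: C^-infty(M L) is barreled ...}); this is literally the argument of \Cref{p: R: dot C^-infty(M) -> C^-infty(M) is a top hom}. (Equivalently, one may deduce the same by pushing the surjective topological homomorphism $R\colon\dot C^{-\infty}(\bfM)\to C^{-\infty}(\bfM)$, given by \Cref{p: R: dot C^-infty(M) -> C^-infty(M) is a top hom} applied to $\bfM$, through the isomorphisms $\bfpi_*$ via the right-hand diagram in \eqref{CD dot C^-infty(bfM) -R-> C^-infty(bfM)}.) Consequently $C^{-\infty}(M)/C^{-\infty}_L(M)\equiv C^{-\infty}(M,L)$ as TVSs.

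To conclude, $C^{-\infty}(M,L)$ is acyclic by \Cref{c: C^-infty(M L) is barreled ...}. Taking the compact spectrum $(H^s(M))$ of $C^{-\infty}(M)$ and its closed subspectrum $H^s_L(M)=H^s(M)\cap C^{-\infty}_L(M)$ of Hilbertian spaces, the quotient LF-space $\bigcup_s H^s(M)/H^s_L(M)$ coincides with $C^{-\infty}(M)/C^{-\infty}_L(M)\equiv C^{-\infty}(M,L)$ (locally convex inductive limits commuting with quotients), which is acyclic; since this property depends only on the LF-space when the steps are Fr\'echet (\Cref{ss: TVS}), the spectrum $(H^s(M)/H^s_L(M))_s$ is acyclic, and by the characterization in \Cref{ss: TVS} this is exactly the assertion that $C^{-\infty}_L(M)$ is a limit subspace of $C^{-\infty}(M)$. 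The only genuinely nontrivial point is the topological-homomorphism property of $R$; the rest is bookkeeping with identities already set up in \Cref{s: conormal seq}.
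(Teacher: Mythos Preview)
Your proof is correct. The paper's own argument is slightly different in packaging: it invokes the already-proved boundary result \Cref{p: dot C^-infty_partial M(M) = bigcup_s H^s_partial M(M)} for $\bfM$ and transfers it to $(M,L)$ via the direct-sum identity~\eqref{dot C^-infty_partial bfM(bfM) equiv C^-infty_L(M) oplus C^-infty_L(M)}, whereas you reproduce the boundary argument directly for the pair $(M,L)$, using the quotient $C^{-\infty}(M)/C^{-\infty}_L(M)\equiv C^{-\infty}(M,L)$ in place of $\dot C^{-\infty}(\bfM)/\dot C^{-\infty}_{\partial\bfM}(\bfM)\equiv C^{-\infty}(\bfM)$. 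Both routes rest on the same mechanism (acyclicity of the quotient spectrum $\Leftrightarrow$ limit subspace), and your independent verification that $R$ is a surjective topological homomorphism---via Hahn--Banach and the open mapping theorem with \Cref{c: C^-infty(M L) is barreled ...}---anticipates what the paper establishes a few lines later in \Cref{p: bfpi_*: dot C^-infty(bfM) -> C^-infty(M) is surj top hom}. Your argument is self-contained for this statement; the paper's is shorter given the boundary case is already in hand.
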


\begin{cor}\label{c: C^-infty_L(M) is barreled ...}
$C^{-\infty}_L(M)$ is a barreled, ultrabornological, webbed, acyclic DF Montel space, and therefore complete, boundedly retractive and reflexive.
\end{cor}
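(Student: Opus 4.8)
The plan is to transport the corresponding properties of the ``boundary'' space $\dot C^{-\infty}_{\partial\bfM}(\bfM)$ across the topological identification~\eqref{dot C^-infty_partial bfM(bfM) equiv C^-infty_L(M) oplus C^-infty_L(M)}. By \Cref{c: dot C^-infty_partial M(M) is barreled ...} applied to the compact manifold with boundary $\bfM$, the space $\dot C^{-\infty}_{\partial\bfM}(\bfM)$ is a barreled, ultrabornological, webbed, acyclic DF Montel space. The decomposition~\eqref{dot C^-infty_partial bfM(bfM) equiv C^-infty_L(M) oplus C^-infty_L(M)} exhibits $\dot C^{-\infty}_{\partial\bfM}(\bfM)$ as a topological direct sum of two copies of $C^{-\infty}_L(M)$; hence $C^{-\infty}_L(M)$ is TVS-isomorphic to a complemented topological vector subspace of $\dot C^{-\infty}_{\partial\bfM}(\bfM)$ (concretely, the image of $\tilde\pi^*$ in~\eqref{bfpi_*: dot C^-infty_partial bfM(bfM) -> C^-infty_L(M)}, or either summand, is such a complemented subspace). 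So the whole statement reduces to checking that each listed property passes from a locally convex direct sum $X\oplus X$ to the summand $X$.

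The key steps, carried out in order: (i) Barreledness is inherited by complemented subspaces, and in fact $X\oplus X$ barreled forces $X$ barreled since a direct summand is a quotient \cite[Observation~6.1.2]{PerezCarrerasBonet1987}, \cite[IV.3.1]{Schaefer1971}. (ii) Ultrabornological: a complemented subspace of an ultrabornological space is ultrabornological \cite[Theorem~13.2.11]{NariciBeckenstein2011}, \cite[Proposition~13.1.2]{PerezCarrerasBonet1987}. (iii) Webbed: complemented subspaces (indeed closed subspaces and quotients) of webbed spaces are webbed \cite[Corollary~5.3.3]{PerezCarrerasBonet1987}, \cite[Theorem~14.6.6]{NariciBeckenstein2011}. (iv) DF: the DF property passes to complemented subspaces \cite[\S12.4]{Kothe1979-II}, or one argues directly that $X\oplus X$ DF makes $X$ DF. (v) Montel: a complemented (in particular closed) subspace of a Montel space is Montel \cite[Propositions~3.9.3 and~3.9.4]{Horvath1966-I}. (vi) Acyclicity in the sense of LF-spaces: by \Cref{c: C^-infty_L(M) = bigcup_k C^-k_L(M)}, $C^{-\infty}_L(M)$ is a limit subspace of $C^{-\infty}(M)$, so it is a countable inductive limit of the Hilbertian spaces $H^s_L(M)$ forming a compact spectrum (as noted before the corollary, these spaces have the union $\bigcup_k C^{\prime\,-k}_L(M) = C^{-\infty}_L(M)$ and the spectrum $(\dot H^s_{\partial M}(M))$ is compact, hence so is its copy $(H^s_L(M))$ via~\eqref{dot C^-infty_partial bfM(bfM) equiv C^-infty_L(M) oplus C^-infty_L(M)}); a compact inductive spectrum is acyclic (\Cref{ss: TVS}), so $C^{-\infty}_L(M)$ is acyclic. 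Finally, being an acyclic LF-space it is complete and boundedly retractive \cite[Corollary~6.5]{Wengenroth2003}, and being barreled and Montel it is reflexive \cite[IV.5.8]{Schaefer1971}, \cite[6.27.2~(1)]{Kothe1969-I}.

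The only delicate point is step~(vi): one must be careful that ``acyclic'' in the phrase ``acyclic DF Montel'' refers to acyclicity of the defining LF-spectrum, and hence one should either invoke \Cref{c: C^-infty_L(M) = bigcup_k C^-k_L(M)} together with the compactness of the spectrum $(H^s_L(M))$ directly, or observe that acyclicity of an LF-space depends only on the LF-space itself \cite[Chapter~6, p.~111]{Wengenroth2003} and is inherited by $C^{-\infty}_L(M)$ from $\dot C^{-\infty}_{\partial\bfM}(\bfM)$ because the direct-sum decomposition~\eqref{dot C^-infty_partial bfM(bfM) equiv C^-infty_L(M) oplus C^-infty_L(M)} is compatible with the respective compact spectra. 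Everything else is routine transfer along a complemented-subspace inclusion. In fact the cleanest write-up simply says: by~\eqref{dot C^-infty_partial bfM(bfM) equiv C^-infty_L(M) oplus C^-infty_L(M)}, $C^{-\infty}_L(M)$ is TVS-isomorphic to a complemented subspace of $\dot C^{-\infty}_{\partial\bfM}(\bfM)$, so it inherits all the stated properties from \Cref{c: dot C^-infty_partial M(M) is barreled ...} (for acyclicity combined with \Cref{c: C^-infty_L(M) = bigcup_k C^-k_L(M)}), and the ``therefore'' clause follows as before.
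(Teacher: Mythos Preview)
Your proposal is correct and follows essentially the same approach as the paper: the paper's proof is the single sentence preceding \Cref{c: C^-infty_L(M) = bigcup_k C^-k_L(M)} and \Cref{c: C^-infty_L(M) is barreled ...}, namely that both corollaries follow from the direct-sum identification~\eqref{dot C^-infty_partial bfM(bfM) equiv C^-infty_L(M) oplus C^-infty_L(M)} together with \Cref{p: dot C^-infty_partial M(M) = bigcup_s H^s_partial M(M)} and \Cref{c: dot C^-infty_partial M(M) is barreled ...}. Your write-up simply unpacks this transfer property by property, and your ``cleanest write-up'' at the end is exactly what the paper does.
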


Moreover the right-hand side diagram of~\eqref{CD dot C^-infty(bfM) -R-> C^-infty(bfM)} can be completed to get the commutative diagram
\begin{equation}\label{CD: ... dot C^-infty(bfM) -R-> C^-infty(bfM) -> 0}
\begin{CD}
0\to\dot C^{-\infty}_{\partial\bfM}(\bfM) @>{\iota}>> \dot C^{-\infty}(\bfM) @>R>> C^{-\infty}(\bfM)\to0 \\
@V{\bfpi_*}VV @V{\bfpi_*}VV @V{\cong}V{\bfpi_*}V \\
0\to C^{-\infty}_L(M) @>{\iota}>> C^{-\infty}(M) @>R>> C^{-\infty}(M,L)\to0\;.\hspace{-.2cm}
\end{CD}
\end{equation}

\begin{prop}\label{p: bfpi_*: dot C^-infty(bfM) -> C^-infty(M) is surj top hom}
The maps~\eqref{bfpi_*: dot C^-infty(bfM) -> C^-infty(M)} and~\eqref{R :C^-infty(M) -> C^-infty(M L)} are surjective topological homomorphisms.
\end{prop}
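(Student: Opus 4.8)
The plan is to argue exactly as in the proof of \Cref{p: R: dot C^-infty(M) -> C^-infty(M) is a top hom}: each of the two maps is continuous and surjective, so the whole matter reduces to an application of the open mapping theorem, for which it suffices to know that in each case the domain is webbed and the codomain is ultrabornological.

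First I would record surjectivity. For \eqref{bfpi_*: dot C^-infty(bfM) -> C^-infty(M)} this has already been observed (it is a consequence of the Hahn--Banach theorem). For \eqref{R :C^-infty(M) -> C^-infty(M L)}, recall that this map is the transpose of the inclusion of the \emph{closed} subspace $C^\infty(M,L;\Omega)\subset C^\infty(M;\Omega)$; since that subspace carries the induced topology, every element of $C^\infty(M,L;\Omega)'$ extends to an element of $C^\infty(M;\Omega)'$ by Hahn--Banach, so the transpose is onto. (Alternatively, surjectivity of \eqref{R :C^-infty(M) -> C^-infty(M L)} can be read off from a diagram chase in~\eqref{CD: ... dot C^-infty(bfM) -R-> C^-infty(bfM) -> 0}, whose top row is exact by \Cref{p: R: dot C^-infty(M) -> C^-infty(M) is a top hom} applied to $\bfM$ and whose right-hand vertical arrow is the TVS-isomorphism~\eqref{bfpi_*: C^-infty(bfM) cong C^-infty(M L)}, the middle vertical arrow being surjective.)

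It then remains to check the open mapping theorem hypotheses. The spaces $\dot C^{-\infty}(\bfM)$ and $C^{-\infty}(\bfM)$ are webbed and ultrabornological by \Cref{p: dot C^-infty(M) and C^-infty(M) are barreled ...} applied to the compact manifold with boundary $\bfM$; the space $C^{-\infty}(M)$ of the closed manifold $M$ is webbed (as the strong dual of the Fr\'echet space $C^\infty(M;\Omega)$) and ultrabornological, by the general facts recalled in \Cref{ss: smooth/distributional sections}; and $C^{-\infty}(M,L)$ is webbed and ultrabornological by \Cref{c: C^-infty(M L) is barreled ...}. Since a continuous linear surjection from a webbed LCS onto an ultrabornological LCS is open (the open mapping theorem, exactly as invoked in the proof of \Cref{p: R: dot C^-infty(M) -> C^-infty(M) is a top hom}), both \eqref{bfpi_*: dot C^-infty(bfM) -> C^-infty(M)} and \eqref{R :C^-infty(M) -> C^-infty(M L)} are open, and hence are surjective topological homomorphisms. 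In particular this makes both rows of~\eqref{CD: ... dot C^-infty(bfM) -R-> C^-infty(bfM) -> 0} short exact sequences in the category of continuous linear maps between LCSs.

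I do not expect a genuine obstacle here: the argument is a transcription of the proof of \Cref{p: R: dot C^-infty(M) -> C^-infty(M) is a top hom}, and the only point requiring attention is confirming, for each of the two maps, that the source is webbed and the target ultrabornological --- all of which is already supplied by \Cref{p: dot C^-infty(M) and C^-infty(M) are barreled ...}, \Cref{c: C^-infty(M L) is barreled ...}, and the facts of \Cref{ss: smooth/distributional sections}. The mildest subtlety is just keeping track of which space plays the webbed role and which the ultrabornological role in each application of the open mapping theorem; since all four spaces happen to be both webbed and ultrabornological, this causes no difficulty.
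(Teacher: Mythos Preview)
Your proof is correct and follows essentially the same approach as the paper: surjectivity plus the open mapping theorem (webbed domain, ultrabornological codomain). The only stylistic difference is that for~\eqref{R :C^-infty(M) -> C^-infty(M L)} the paper reads off the topological homomorphism property directly from the right-hand square of~\eqref{CD: ... dot C^-infty(bfM) -R-> C^-infty(bfM) -> 0} (as a composition $\bfpi_*\circ R\circ\bfpi_*^{-1}$ of maps already known to be surjective topological homomorphisms), whereas you invoke the open mapping theorem a second time; both arguments are valid and equally short.
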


\begin{proof}
In~\eqref{CD: ... dot C^-infty(bfM) -R-> C^-infty(bfM) -> 0}, the top row is exact in the category of continuous linear maps between LCSs by \Cref{c: 0 -> dot C^-infty_partial M(M) -> dot C^-infty(M) -> C^-infty(M) -> 0 is exact}, the left-hand side vertical map is onto by~\eqref{tilde pi_* tilde pi^* = 2}, and the right-hand side vertical map is a TVS-isomorphism. Then, by the commutativity of its right-hand side square, the map~\eqref{R :C^-infty(M) -> C^-infty(M L)} is surjective, and therefore the bottom row of~\eqref{CD: ... dot C^-infty(bfM) -R-> C^-infty(bfM) -> 0} is exact in the category of linear maps between vector spaces. 

By the above properties, chasing~\eqref{CD: ... dot C^-infty(bfM) -R-> C^-infty(bfM) -> 0}, we get that~\eqref{bfpi_*: dot C^-infty(bfM) -> C^-infty(M)} is surjective. Since $\dot C^{-\infty}(\bfM)$ is webbed (\Cref{p: dot C^-infty(M) and C^-infty(M) are barreled ...}) and $C^{-\infty}(M)$ ultrabornological, by the open mapping theorem, it also follows that~\eqref{bfpi_*: dot C^-infty(bfM) -> C^-infty(M)} is a topological homomorphism.

To get that~\eqref{R :C^-infty(M) -> C^-infty(M L)} is another surjective topological homomorphism, apply the commutativity of the right-hand side square of~\eqref{CD: ... dot C^-infty(bfM) -R-> C^-infty(bfM) -> 0} and the above properties.
\end{proof}

\begin{cor}\label{c: 0 -> C^-infty_L(M) -> C^-infty(M) -> C^-infty(M L) -> 0 is exact}
The bottom row of~\eqref{CD: ... dot C^-infty(bfM) -R-> C^-infty(bfM) -> 0} is exact in the category of continuous linear maps between LCSs.
\end{cor}

\begin{cor}\label{c: C^infty(M) subset C^-infty(M L) is dense}
The inclusion~\eqref{C^infty(M) subset C^-infty(M L)} is dense.
\end{cor}

\begin{proof}
Apply \Cref{p: bfpi_*: dot C^-infty(bfM) -> C^-infty(M) is surj top hom} and the density of $C^\infty(M)$ in $C^{-\infty}(M)$.
\end{proof}

\subsection{A description of $C^{-\infty}_L(M)$}\label{ss: description of C^-infty_L(M)}

According to \Cref{ss: diff ops,ss: cutting} and~\eqref{u in C^-infty_c mapsto delta_L^u}, we have the subspaces
\begin{equation}\label{partial_x^m C^-infty(L Omega^-1 NL) subset C^-infty_L(M)}
\partial_x^mC^{-\infty}(L;\Omega^{-1}NL)\subset C^{-\infty}_L(M)
\end{equation}
for $m\in\N_0$, and continuous linear isomorphisms
\begin{equation}\label{C^-infty(L Omega^-1 NL) cong partial_x^m C^-infty(L Omega^-1 NL)}
\partial_x^m:C^{-\infty}(L;\Omega^{-1}NL)\xrightarrow{\cong}\partial_x^mC^{-\infty}(L;\Omega^{-1}NL)\;.
\end{equation}
They induce a continuous linear injection
\begin{equation}\label{bigoplus_m C^0_m -> C^-infty_L(M)}
\bigoplus_{m=0}^\infty C^0_m\to C^{-\infty}_L(M)\;,
\end{equation}
where $C^0_m=C^{-\infty}(L;\Omega^{-1}NL)$ for all $m$.

\begin{prop}\label{p: bigoplus_m C^0_m -> C^-infty_L(M)}
The map~\eqref{bigoplus_m C^0_m -> C^-infty_L(M)} is a TVS-isomorphism, which restricts to TVS-isomorphisms
\begin{equation}\label{bigoplus_m^k C^m-k(L Omega^-1NL) cong C^prime -k_L(M)}
\bigoplus_{m=0}^kC^{m-k}(L;\Omega^{-1}NL)\xrightarrow{\cong}C^{\prime\,-k}_L(M)\quad(k\in\N_0)\;.
\end{equation}
\end{prop}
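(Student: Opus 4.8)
The plan is to reduce everything to the boundary case via the cut manifold $\bfM$ and then invoke a structure theorem for $\dot C^{-\infty}_{\partial M}(\bfM)$. Recall from~\eqref{dot C^-infty_partial bfM(bfM) equiv C^-infty_L(M) oplus C^-infty_L(M)} that $\dot C^{-\infty}_{\partial\bfM}(\bfM)$ is isomorphic to $C^{-\infty}_L(M)\oplus C^{-\infty}_L(M)$, and from~\eqref{bfpi_*: dot C^-infty_partial bfM(bfM) -> C^-infty_L(M)} that $\bfpi_*(u,v)=u+v$ while $\tilde\pi^*u=(u,u)$. So $C^{-\infty}_L(M)$ is a direct summand (complemented closed subspace) of $\dot C^{-\infty}_{\partial\bfM}(\bfM)$; indeed $\frac12\tilde\pi^*$ splits $\bfpi_*$, and in fact identifying $C^{-\infty}_L(M)$ with the ``single copy'' summand $C^{-\infty}_L(T)$ sitting inside $C^{-\infty}_{\widetilde L}(\widetilde T)$ realizes it as one of the two canonical factors. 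Thus it suffices to establish the analogous statement for $\dot C^{-\infty}_{\partial\bfM}(\bfM)$ and then restrict to the summand supported ``on one side'' of $L$, i.e. in the collar $(-\epsilon,0]\times L$ (say). Concretely, the claim for $\bfM$ is the well-known b-calculus fact: a distribution supported at the boundary of a manifold with boundary, with boundary defining function $x$, is uniquely a locally finite sum $\sum_m \partial_x^m\delta^{u_m}_{\partial\bfM}$ with $u_m\in C^{-\infty}(\partial\bfM;\Omega^{-1}N\partial\bfM)$, and this decomposition is topological. First I would prove this, then transport it through $\bfpi_*$ to $M$.

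The key steps, in order. \emph{Step 1: the algebraic decomposition.} Using the collar $T\equiv(-\epsilon,\epsilon)_x\times L$, a distribution $u\in C^{-\infty}_L(M)$ is supported in $\{x=0\}$; writing it in the product coordinates and using the classical description of distributions supported on a hyperplane (Hörmander, Theorem~2.3.5), $u=\sum_{m=0}^N \partial_x^m\delta_L^{u_m}$ for finitely many $u_m\in C^{-\infty}(L;\Omega^{-1}NL)$, uniquely determined. For general (non-compactly-supported, but here $M$ is compact so this is automatic) there is no issue; the sum is finite because $L$ is compact and $u$ has finite order. This gives that~\eqref{bigoplus_m C^0_m -> C^-infty_L(M)} is a linear bijection. \emph{Step 2: continuity of the inverse, order by order.} The map $C^{\prime\,-k}(M)\ni u\mapsto$ its components is continuous into the finite direct sum: pairing $u$ against test densities of the form $\varpi^*(\phi)\,\psi(x)\,|dx\,dy|$ with $\psi$ ranging over a basis dual to $1,x,\dots,x^k$ near $0$ extracts the $u_m$ continuously, because $u\in C^{\prime\,-k}_L(M)$ forces the components to lie in $C^{m-k}(L;\Omega^{-1}NL)$ (a $\partial_x^m\delta$ pairs nontrivially only with the $m$-th Taylor coefficient in $x$, and loses $m$ derivatives), exactly as in~\eqref{bigoplus_m^k C^m-k(L Omega^-1NL) cong C^prime -k_L(M)}. \emph{Step 3: assemble.} Since $C^{-\infty}_L(M)=\bigcup_k C^{\prime\,-k}_L(M)$ with the LF-topology (this is the content of \Cref{c: C^-infty_L(M) = bigcup_k C^-k_L(M)}: it is a limit subspace, hence a regular LF-space), and $\bigoplus_m C^0_m=\bigoplus_m C^{-\infty}(L;\Omega^{-1}NL)=\varinjlim_k\bigoplus_{m=0}^k C^{-\infty}(L)$, and each $C^{-\infty}(L)=\bigcup_j C^{\prime\,-j}(L)$, one matches the two inductive systems: the isomorphisms~\eqref{bigoplus_m^k C^m-k(L Omega^-1NL) cong C^prime -k_L(M)} at each finite stage are compatible with the inclusions, so they pass to the inductive limit. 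The strictness/regularity needed to conclude that a bijective, continuous, bicontinuous-at-each-stage map of LF-spaces is a TVS-isomorphism follows from \Cref{c: C^-infty_L(M) is barreled ...} (ultrabornological, webbed) plus the open mapping theorem, or more simply from matching cofinal sequences of Fréchet/DF steps.

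The main obstacle is \emph{Step 2 made precise at the level of topologies}: showing that the finite-stage maps~\eqref{bigoplus_m^k C^m-k(L Omega^-1NL) cong C^prime -k_L(M)} are homeomorphisms, not just continuous bijections. Both sides are Banach (or rather Banachable) spaces here, so once continuity of the inverse is checked on seminorms this is routine, but one must be careful that the Banach space $C^{\prime\,-k}_L(M)$ carries the subspace topology from $C^{\prime\,-k}(M)$ and verify the extraction-of-Taylor-coefficients operators are bounded $C^{\prime\,-k}(M)\to C^{m-k}(L;\Omega^{-1}NL)$; this uses a partition of unity on $L$, the collar product structure, and the elementary fact that $\langle \partial_x^m\delta_{\{0\}},\chi\rangle=(-1)^m\chi^{(m)}(0)$ bounds the $C^{m-k}$-norm of the component by the $C^{\prime\,-k}$-norm of $u$. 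Alternatively one can sidestep this by transporting \emph{verbatim} the corresponding statement for $\dot C^{-\infty}_{\partial\bfM}(\bfM)$ (which is itself just the ``one-variable'' statement $\dot C^{-\infty}_{\{0\}}([0,\epsilon))\cong\bigoplus_m\K\delta_0^{(m)}$ tensored with $C^{-\infty}(L;\Omega^{-1}N\partial\bfM)$ and globalized), and then using that $\bfpi_*$ restricts to the TVS-isomorphism~\eqref{dot C^-infty_partial bfM(bfM) equiv C^-infty_L(M) oplus C^-infty_L(M)} together with the splitting $\tfrac12\tilde\pi^*$ to carry the decomposition onto the summand $C^{-\infty}_L(M)$, noting that $\tilde\pi^*\partial_x^m\delta_L^{u}=\partial_x^m\delta_{\partial\bfM}^{\tilde\pi^*u}$ by naturality of $\delta$-sections under pull-back by the local diffeomorphism $\bfpi$ (\Cref{ss: diff opers on conormal distribs}, \eqref{A delta_L^u}), so the two decompositions are intertwined and the finite-order refinement~\eqref{bigoplus_m^k C^m-k(L Omega^-1NL) cong C^prime -k_L(M)} follows from the analog for $\bfM$ together with~\eqref{bfpi_*: C^-infty(bfM) cong C^-infty(M L)}'s compatibility with the $C^{\prime\,-k}$ filtration.
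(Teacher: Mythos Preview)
Your direct argument (Steps~1--3) is essentially the paper's proof: invoke H\"ormander's structure theorem for distributions supported in a linear subspace of $\R^n$, globalize via adapted charts to get the continuous bijections~\eqref{bigoplus_m^k C^m-k(L Omega^-1NL) cong C^prime -k_L(M)}, pass to the inductive limit, and conclude the TVS-isomorphism by the open mapping theorem using that $\bigoplus_m C^0_m$ is webbed and $C^{-\infty}_L(M)$ is ultrabornological (\Cref{c: C^-infty_L(M) is barreled ...}). The paper is terser than you are about Step~2: it simply notes the finite-stage maps are ``easily seen to be continuous'' and lets the open mapping theorem do the work of producing the homeomorphism, rather than exhibiting explicit extraction operators.

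One point worth flagging: your overarching plan of \emph{reducing} to the boundary case via $\bfM$ runs against the paper's logical order. In the paper, the analog for $\dot C^{-\infty}_{\partial M}(M)$ (\Cref{r: description of dot C^infty_partial M(M)}) is \emph{deduced from} this proposition applied to $C^{-\infty}_{\partial M}(\breve M)$, not the other way around. So your ``alternative'' route of transporting the boundary statement back through $\bfpi_*$ would be circular within this paper, although it is a legitimate independent proof if one first establishes the boundary result directly (e.g.\ from Melrose's Exercise~3.3.18). Your direct Steps~1--3 avoid this issue and stand on their own; the boundary framing is an unnecessary detour.
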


\begin{proof}
In the case where $M=\R^n$ and $L$ is a linear subspace, it is known that~\eqref{bigoplus_m^k C^m-k(L Omega^-1NL) cong C^prime -k_L(M)} is a linear isomorphism \cite[Theorem~2.3.5 and Example~5.1.2]{Hormander1983-I}, which is easily seen to be continuous. This can be easily extended to arbitrary $M$ by using charts of $M$ adapted to $L$. Then we get the continuous linear isomorphism~\eqref{bigoplus_m C^0_m -> C^-infty_L(M)} by taking the locally convex inductive limit of~\eqref{bigoplus_m^k C^m-k(L Omega^-1NL) cong C^prime -k_L(M)} as $k\uparrow\infty$. Since $\bigoplus_mC^0_m$ is webbed and $C^{-\infty}_L(M)$ ultrabornological (\Cref{c: C^-infty_L(M) is barreled ...}), the map~\eqref{bigoplus_m C^0_m -> C^-infty_L(M)} is a TVS-isomorphism by the open mapping theorem.
\end{proof}

\begin{rem}\label{r: bigoplus_m C^0_m -> C^-infty_L(M)}
\Cref{p: bigoplus_m C^0_m -> C^-infty_L(M)} reconfirms \Cref{c: C^-infty_L(M) = bigcup_k C^-k_L(M)}.
\end{rem}

\begin{rem}[{See \cite[Exercise~3.3.18]{Melrose1996}}]\label{r: description of dot C^infty_partial M(M)}
In \Cref{ss: dot C^-infty_partial M(M)}, for any compact manifold with boundary $M$, the analog of \Cref{p: bigoplus_m C^0_m -> C^-infty_L(M)} for $\dot C^{-\infty}_{\partial M}(M)$ follows from the application of \Cref{p: bigoplus_m C^0_m -> C^-infty_L(M)} to $C^{-\infty}_{\partial M}(\breve M)$.
\end{rem}

\begin{cor}\label{c: C^-infty(L Omega^-1 NL) cong partial_x^m C^-infty(L Omega^-1 NL) is a TVS-iso}
Every map~\eqref{C^-infty(L Omega^-1 NL) cong partial_x^m C^-infty(L Omega^-1 NL)} is a TVS-isomorphism.
\end{cor}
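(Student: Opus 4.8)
The plan is to deduce this immediately from \Cref{p: bigoplus_m C^0_m -> C^-infty_L(M)}. By construction (see the sentence preceding~\eqref{bigoplus_m C^0_m -> C^-infty_L(M)}), the isomorphism~\eqref{bigoplus_m C^0_m -> C^-infty_L(M)} is the map induced by the maps~\eqref{C^-infty(L Omega^-1 NL) cong partial_x^m C^-infty(L Omega^-1 NL)}; that is, precomposing it with the canonical inclusion $\iota_m$ of the $m$-th summand $C^0_m=C^{-\infty}(L;\Omega^{-1}NL)$ into $\bigoplus_{m'=0}^\infty C^0_{m'}$ yields exactly the map $\partial_x^m:C^{-\infty}(L;\Omega^{-1}NL)\to C^{-\infty}_L(M)$, whose image is $\partial_x^mC^{-\infty}(L;\Omega^{-1}NL)$.

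First I would record the general fact about locally convex direct sums that the canonical inclusion $\iota_m$ of each summand is a TVS-embedding: the coordinate projection $\pi_m:\bigoplus_{m'}C^0_{m'}\to C^0_m$ is continuous and satisfies $\pi_m\circ\iota_m=\id$, so $\iota_m$ is a continuous injection with a continuous left inverse, hence a homeomorphism onto its (complemented) image. Composing with the TVS-isomorphism of \Cref{p: bigoplus_m C^0_m -> C^-infty_L(M)}, the composite $C^0_m\xrightarrow{\iota_m}\bigoplus_{m'}C^0_{m'}\xrightarrow{\cong}C^{-\infty}_L(M)$ is then a TVS-embedding whose image carries the subspace topology inherited from $C^{-\infty}_L(M)$. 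Since this composite is precisely $\partial_x^m$ and its image is $\partial_x^mC^{-\infty}(L;\Omega^{-1}NL)$, the map~\eqref{C^-infty(L Omega^-1 NL) cong partial_x^m C^-infty(L Omega^-1 NL)} is a TVS-isomorphism onto that subspace.

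There is essentially no obstacle here beyond \Cref{p: bigoplus_m C^0_m -> C^-infty_L(M)} itself. The only point requiring a moment's care is matching the image of the $m$-th summand with $\partial_x^mC^{-\infty}(L;\Omega^{-1}NL)$ as a set, which is immediate from the definition of~\eqref{bigoplus_m C^0_m -> C^-infty_L(M)}, together with the standard fact that summands sit inside a locally convex direct sum as topological --- indeed complemented --- subspaces. An alternative, more hands-on route would be to work in a chart of $M$ adapted to $L$, where $C^{-\infty}_L$ decomposes explicitly and $\partial_x^m$ acts as a shift of components, but this merely re-proves a special case of \Cref{p: bigoplus_m C^0_m -> C^-infty_L(M)} and is not worth the extra bookkeeping.
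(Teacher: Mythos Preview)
Your argument is correct and is exactly the intended deduction: the paper states this as a corollary of \Cref{p: bigoplus_m C^0_m -> C^-infty_L(M)} with no proof, and your observation that each summand embeds topologically (and complementedly) in the locally convex direct sum, hence maps isomorphically onto its image under the TVS-isomorphism~\eqref{bigoplus_m C^0_m -> C^-infty_L(M)}, is precisely what makes it immediate.
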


\subsection{Action of $\Diff(M)$ on $C^{-\infty}(M,L)$ and $C^{-\infty}_L(M)$}
\label{ss: action of Diff(M) on C^-infty(M L) and C^-infty_L(M)}

For every $A\in\Diff(M)$, $A^t$ preserves $C^\infty(M,L;\Omega)$, and therefore $A$ induces a continuous linear map $A=A^{tt}$ on $C^{-\infty}(M,L)$. By locality, it restricts to a continuous endomorphism $A$ of $C^{-\infty}_L(M)$.

\subsection{The space $J(M,L)$}\label{ss: J(M L)}

According to \Cref{ss: conormality at the boundary - Sobolev order,ss: C^-infty(M L)}, there is a LCHS $J(M,L)$, continuously included in $C^{-\infty}(M,L)$, so that~\eqref{bfpi_*: C^-infty(bfM) cong C^-infty(M L)} restricts to a TVS-isomorphism \index{$J(M,L)$}
\begin{equation}\label{bfpi_*: AA(bfM) cong J(M L)}
\bfpi_*:\AA(\bfM)\xrightarrow{\cong}J(M,L)\;,
\end{equation}
where $\AA(\bfM)$ is defined in~\eqref{dot AA(M) - AA(M)}. By~\eqref{dot AA(M)|_mathring M AA(M) subset C^infty(mathring M)}, there is a continuous inclusion
\[
J(M,L)\subset C^\infty(M\setminus L)\;.
\]
We also get spaces $J^{(s)}(M,L)$ \index{$J^{(s)}(M,L)$} and $J^m(M,L)$ \index{$J^m(M,L)$} ($s,m\in\R$) corresponding to $\AA^{(s)}(\bfM)$ and $\AA^m(\bfM)$ via~\eqref{bfpi_*: AA(bfM) cong J(M L)}. Extend $|x|$ to a function $\bfx$ on $M$ that is positive and smooth on $M\setminus L$. Its lift $\bfpi^*\bfx$ is a boundary defining function of $\bfM$, also denoted by $\bfx$. Using the first map of~\eqref{bfpi_*: C^-infty(bfM) cong C^-infty(M L)} and second map of~\eqref{bfpi^*: H^r(M L) cong dot H^r(bfM)}, and according to \Cref{ss: lift of diff ops}, we can also describe
\begin{align}
J^{(s)}(M,L)&=\{\,u\in C^{-\infty}(M,L)\mid\Diff(M,L)\,u\subset H^{\prime\,s}(M,L)\,\}\;,\label{J^(s)(M,L)}\\
J^m(M,L)&=\{\,u\in C^{-\infty}(M,L)\mid\Diff(M,L)\,u\subset\bfx^mL^\infty(M)\,\}\;,\notag
\end{align}
equipped with topologies like in \Cref{ss: conormality at the boundary - Sobolev order,ss: description of AA(M) by bounds}. These spaces satisfy the analogs of~\eqref{I^(s)(M L) subset I^(s')(M L)},~\eqref{dot AA(M) - AA(M)} and~\eqref{AA^m(M) subset AA^m'(M)}--\eqref{AA(M) = bigcup_m AA^m(M)}. By~\eqref{C^infty(M) subset bigcap_sH^prime s(M L)} and~\eqref{J^(s)(M,L)}, there are continuous inclusions, \index{$J^{(\infty)}(M,L)$}
\begin{equation}\label{C^infty(M) subset J^(infty)(M L)}
C^\infty(M)\subset J^{(\infty)}(M,L):=\bigcap_sJ^{(s)}(M,L)\;,\quad J(M,L)\subset C^{-\infty}(M,L)\;;
\end{equation}
in particular, $J(M,L)$ is Hausdorff. Moreover the following analogs of \Cref{p: dot AA^(s)(M) and dot AA^(s)(M) are totally reflexive Frechet sps,c: dot AA(M) and AA(M) are barreled,c: coincidence of tops on AA^m(M),c: Cinftyc(mathring M) is dense in AA(M),c: AA(M) is acyclic and Montel} hold true.

\begin{cor}\label{c: J^(s)(M L) is totally reflexive Frechet sps}
$J^{(s)}(M, L)$ is a totally reflexive Fr\'echet space.
\end{cor}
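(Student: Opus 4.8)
The plan is to reduce the statement to the boundary case already established in \Cref{p: dot AA^(s)(M) and dot AA^(s)(M) are totally reflexive Frechet sps}, using the identification built into the definition of $J^{(s)}(M,L)$. Recall from \Cref{ss: J(M L)} that the push-forward isomorphism \eqref{bfpi_*: AA(bfM) cong J(M L)}, $\bfpi_*:\AA(\bfM)\xrightarrow{\cong}J(M,L)$, is by construction compatible with the Sobolev-order filtrations, so it restricts to a TVS-isomorphism $\bfpi_*:\AA^{(s)}(\bfM)\xrightarrow{\cong}J^{(s)}(M,L)$. First I would record this restriction explicitly; then, since being a Fr\'echet space and being totally reflexive are both invariants of TVS-isomorphism, the conclusion is immediate from the fact that $\AA^{(s)}(\bfM)$ is a totally reflexive Fr\'echet space (\Cref{p: dot AA^(s)(M) and dot AA^(s)(M) are totally reflexive Frechet sps}).

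Alternatively --- and this is the route suggested by the remark that the analogs hold ``with formally the same proof'' --- I would mimic the proof of \Cref{p: I^(s)(M L) is a totally reflexive Frechet sp} verbatim. Starting from the description \eqref{J^(s)(M,L)}, I would fix a countable $C^\infty(M)$-spanning set $\{P_j\mid j\in\N_0\}$ of $\Diff(M,L)$, which exists because $\Diff(M,L)$ is countably $C^\infty(M)$-generated (\Cref{ss: Diff(M L)}), and may be taken with $P_0=1$. Setting $J_k^{(s)}(M,L)=\{\,u\in C^{-\infty}(M,L)\mid P_ju\in H^{\prime\,s}(M,L),\ j=0,\dots,k\,\}$, and using that $H^{\prime\,s}(M,L)$ is Hilbertian (\Cref{ss: C^-infty(M L)}), each $J_k^{(s)}(M,L)$ is a Hilbert space for a scalar product $\langle u,v\rangle_{s,k}=\sum_{j=0}^k\langle P_ju,P_jv\rangle_s$, the inclusions $J_{k'}^{(s)}(M,L)\subset J_k^{(s)}(M,L)$ for $k<k'$ are continuous, and $J^{(s)}(M,L)=\bigcap_kJ_k^{(s)}(M,L)$; hence $J^{(s)}(M,L)$ is a reduced countable projective limit of Hilbert spaces, so it is a totally reflexive Fr\'echet space by \cite[Theorem~4]{Valdivia1989}.

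There is no serious obstacle here, since the corollary is essentially a transcription of an already-proved result. The only point deserving a moment's care along the first route is confirming that \eqref{bfpi_*: AA(bfM) cong J(M L)} truly respects the Sobolev-order pieces; but this is exactly how $J^{(s)}(M,L)$ was defined in \Cref{ss: J(M L)}, so nothing new is needed. Along the second route, the one thing to double-check is that $H^{\prime\,s}(M,L)$ carries a genuine Hilbert space structure, so that the finite sums $\langle\cdot,\cdot\rangle_{s,k}$ really are scalar products --- which is again part of the setup recorded in \Cref{ss: C^-infty(M L)}.
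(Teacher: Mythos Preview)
Your proposal is correct and matches the paper's approach: the paper does not give an explicit proof but simply lists this corollary among the ``analogs of \Cref{p: dot AA^(s)(M) and dot AA^(s)(M) are totally reflexive Frechet sps}\ldots\ [that] hold true,'' which is precisely your first route via the defining TVS-isomorphism $\bfpi_*:\AA^{(s)}(\bfM)\xrightarrow{\cong}J^{(s)}(M,L)$. Your alternative direct argument mimicking \Cref{p: I^(s)(M L) is a totally reflexive Frechet sp} is also valid and is what the phrase ``formally the same proof'' (used elsewhere for the boundary analogs) would amount to.
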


\begin{cor}\label{c: J(M L) is barreled}
$J(M,L)$ is barreled, ultrabornological and webbed.
\end{cor}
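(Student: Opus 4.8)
The plan is to exhibit $J(M,L)$ as an LF-space and then invoke the same standard facts about LF-spaces already used for $S^\infty(U\times\R^l)$ in \Cref{p: S^infty(U x R^l) is barreled} and for $\dot\AA(M),\AA(M)$ in \Cref{c: dot AA(M) and AA(M) are barreled}; equivalently, one may transfer the conclusion of \Cref{c: dot AA(M) and AA(M) are barreled} for $\bfM$ along the TVS-isomorphism~\eqref{bfpi_*: AA(bfM) cong J(M L)}, since being barreled, ultrabornological and webbed are all preserved by TVS-isomorphisms. I would present the first (intrinsic) route as the proof and mention the second as an immediate alternative.

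First I would record the spectrum structure: by the announced analog of~\eqref{I^(s)(M L) subset I^[s'](M L)} there are continuous inclusions $J^{(s)}(M,L)\subset J^{(s')}(M,L)$ for $s'<s$, and by \Cref{c: J^(s)(M L) is totally reflexive Frechet sps} each $J^{(s)}(M,L)$ is a (totally reflexive) Fr\'echet space; moreover, by the announced analog of~\eqref{dot AA(M) - AA(M)}, $J(M,L)=\bigcup_sJ^{(s)}(M,L)$ carries the corresponding locally convex inductive limit topology. Since the union over all $s\in\R$ coincides with the union over any sequence $s_k\uparrow\infty$ (cf.\ \Cref{ss: TVS}), $J(M,L)$ is a countable inductive limit of Fr\'echet spaces, i.e.\ an LF-space. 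Then the three properties follow exactly as in \Cref{p: S^infty(U x R^l) is barreled}: LF-spaces are webbed \cite[Theorem~14.6.5]{NariciBeckenstein2011}; Fr\'echet spaces are ultrabornological and this property is preserved under locally convex inductive limits \cite[Example~13.2.8~(d) and Theorem~13.2.11]{NariciBeckenstein2011}; and ultrabornological spaces are barreled \cite[Observation~6.1.2~(b)]{PerezCarrerasBonet1987}. Hence the proof is ``formally the same'' as that of \Cref{c: dot AA(M) and AA(M) are barreled}.

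I do not anticipate any genuine obstacle here: all of the substance has already been placed, namely that the $J^{(s)}(M,L)$ are Fr\'echet spaces (\Cref{c: J^(s)(M L) is totally reflexive Frechet sps}) and that $J(M,L)$ is their locally convex inductive limit. The only point worth a sentence is the (harmless) reduction to a countable cofinal sequence $s_k\uparrow\infty$, which is what makes ``LF-space'' applicable and lets the cited LF-space facts be used verbatim.
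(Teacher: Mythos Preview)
Your proposal is correct and matches the paper's approach: the paper simply lists \Cref{c: J(M L) is barreled} among the ``analogs'' of \Cref{c: dot AA(M) and AA(M) are barreled} that hold true, with the implicit proof being exactly the LF-space argument you spell out (equivalently, the transfer along the TVS-isomorphism~\eqref{bfpi_*: AA(bfM) cong J(M L)}). One cosmetic slip: since $J^{(s)}(M,L)\subset J^{(s')}(M,L)$ for $s'<s$, the cofinal countable sequence should be $s_k\downarrow-\infty$, not $s_k\uparrow\infty$; this does not affect the argument.
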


\begin{cor}\label{c: coincidence of tops on J^m(M L)}
If $m'<m$, then the topologies of $J^{m'}(M,L)$ and $C^\infty(M\setminus L)$ coincide on $J^m(M,L)$. Therefore the topologies of $J(M,L)$ and $C^\infty(M\setminus L)$ coincide on $J^m(M,L)$.
\end{cor}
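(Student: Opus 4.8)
The plan is to deduce the statement directly from its counterpart for conormal distributions at the boundary of $\bfM$ --- namely \Cref{c: coincidence of tops on AA^m(M)} applied to the compact manifold with boundary $\bfM$ --- and then transport it through the push-down isomorphism of \Cref{ss: J(M L)}. Applied to $\bfM$, \Cref{c: coincidence of tops on AA^m(M)} asserts that for $m'<m$ the topologies of $\AA^{m'}(\bfM)$ and $C^\infty(\mathring{\bfM})$ coincide on $\AA^m(\bfM)$, and that the topologies of $\AA(\bfM)$ and $C^\infty(\mathring{\bfM})$ coincide on $\AA^m(\bfM)$. The corollary will follow by translating these two assertions along suitable TVS-isomorphisms.

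First I would assemble the relevant identifications. By \Cref{ss: J(M L)}, the map $\bfpi_*$ of~\eqref{bfpi_*: C^-infty(bfM) cong C^-infty(M L)} restricts to the TVS-isomorphism~\eqref{bfpi_*: AA(bfM) cong J(M L)}, $\bfpi_*\colon\AA(\bfM)\xrightarrow{\cong}J(M,L)$, and, by the way $J^{(s)}(M,L)$ and $J^m(M,L)$ are defined there (as the images of $\AA^{(s)}(\bfM)$ and $\AA^m(\bfM)$), it restricts further to TVS-isomorphisms $\AA^{m'}(\bfM)\xrightarrow{\cong}J^{m'}(M,L)$ and $\AA^m(\bfM)\xrightarrow{\cong}J^m(M,L)$. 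On the other hand, $\bfpi$ restricts to a diffeomorphism $\mathring{\bfM}\xrightarrow{\cong}M\setminus L$ (\Cref{ss: cutting}), so the same push-down induces a TVS-isomorphism $C^\infty(\mathring{\bfM})\xrightarrow{\cong}C^\infty(M\setminus L)$. These isomorphisms are compatible with the structural continuous inclusions: the inclusions $\AA^m(\bfM)\subset\AA^{m'}(\bfM)$ and $\AA^m(\bfM)\subset C^\infty(\mathring{\bfM})$ correspond under $\bfpi_*$ to $J^m(M,L)\subset J^{m'}(M,L)$ and $J^m(M,L)\subset C^\infty(M\setminus L)$, the latter inclusion being precisely the one produced in \Cref{ss: J(M L)} from~\eqref{dot AA(M)|_mathring M AA(M) subset C^infty(mathring M)} via $\bfpi_*$.

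With these commutative triangles in hand, both assertions are immediate by transport of structure, since a homeomorphism of topological vector spaces carries any statement about coincidence of subspace topologies to the corresponding statement for the images. Hence the topologies of $J^{m'}(M,L)$ and $C^\infty(M\setminus L)$ coincide on $J^m(M,L)$, and likewise those of $J(M,L)$ and $C^\infty(M\setminus L)$. Alternatively, once the first assertion is known one may re-derive the second one exactly as the second assertion of \Cref{c: coincidence of tops on S^m(U x R^l)} is obtained from its first, using $J(M,L)=\bigcup_{m'}J^{m'}(M,L)$; or, indeed, the whole corollary can be proved verbatim as \Cref{c: coincidence of tops on AA^m(M)}, replacing $\mathring M$, $x$ and $\Diffb(M)$ by $M\setminus L$, $\bfx$ and $\Diff(M,L)$ in the semi-norms~\eqref{| u |_K k m} and~\eqref{| u |'_k m}. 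There is essentially no obstacle here; the only point deserving attention is the verification that the three isomorphisms above do fit into commutative triangles with the inclusion maps, and this is built into the definitions of \Cref{ss: J(M L)}, so the argument is a straightforward translation of the boundary case.
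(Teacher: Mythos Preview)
Your proof is correct and matches the paper's approach: the paper does not give an explicit proof but simply states that this corollary, together with the surrounding ones, ``holds true'' as an analog of \Cref{c: coincidence of tops on AA^m(M)}, the transport being via the TVS-isomorphism~\eqref{bfpi_*: AA(bfM) cong J(M L)}. Your argument spells this out carefully, including the compatibility of the push-down with the relevant inclusions, which is exactly what the paper leaves implicit.
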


\begin{cor}\label{c: Cinftyc(M setminus L) is dense in J(M L)}
For $m'<m$, $\Cinftyc(M\setminus L)$ is dense in $J^m(M,L)$ with the topology of $J^{m'}(M,L)$. Therefore $\Cinftyc(M\setminus L)$ is dense in $J(M,L)$.
\end{cor}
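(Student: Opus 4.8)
The plan is to copy the argument used for \Cref{c: Cinftyc(U x R^l) is dense in S^infty(U x R^l)}, now feeding in \Cref{c: coincidence of tops on J^m(M L)} in place of \Cref{c: coincidence of tops on S^m(U x R^l)}; alternatively, one can simply transport the statement of \Cref{c: Cinftyc(mathring M) is dense in AA(M)} for $\bfM$ through the TVS-isomorphism $\bfpi_*:\AA(\bfM)\xrightarrow{\cong}J(M,L)$ of~\eqref{bfpi_*: AA(bfM) cong J(M L)}. The latter route works because $\bfpi$ restricts to a diffeomorphism $\mathring{\bfM}\to M\setminus L$ with $\mathring{\bfM}\equiv M\setminus L$ (\Cref{ss: cutting}), so $\bfpi_*$ carries $\Cinftyc(\mathring{\bfM})$ bijectively onto $\Cinftyc(M\setminus L)$ and each step $\AA^m(\bfM)$ onto $J^m(M,L)$. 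In particular, from $\Cinftyc(\mathring{\bfM})\subset\dot C^\infty(\bfM)=\bigcap_m\AA^m(\bfM)$ one records the inclusions $\Cinftyc(M\setminus L)\subset J^m(M,L)\subset C^\infty(M\setminus L)$ that are needed below.

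For the first assertion, given $m'<m$, \Cref{c: coincidence of tops on J^m(M L)} says that the topology induced on $J^m(M,L)$ by $J^{m'}(M,L)$ coincides with the one induced by $C^\infty(M\setminus L)$. Since $\Cinftyc(M\setminus L)$ is dense in $C^\infty(M\setminus L)$, and a dense subset of a topological vector space remains dense in any intermediate subspace carrying the induced topology, it follows that $\Cinftyc(M\setminus L)$ is dense in $J^m(M,L)$ for the topology of $J^{m'}(M,L)$.

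For the second assertion, let $O\ne\emptyset$ be open in $J(M,L)$ and pick $m$ with $O\cap J^m(M,L)\ne\emptyset$. By \Cref{c: coincidence of tops on J^m(M L)}, the topologies of $J(M,L)$ and of $J^{m'}(M,L)$ (for any $m'<m$) both coincide with that of $C^\infty(M\setminus L)$ on $J^m(M,L)$; hence $O\cap J^m(M,L)$ is a nonempty open subset of $J^m(M,L)$ for the topology of $J^{m'}(M,L)$, and by the first assertion it meets $\Cinftyc(M\setminus L)$. Therefore $O$ meets $\Cinftyc(M\setminus L)$, so $\Cinftyc(M\setminus L)$ is dense in $J(M,L)$.

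I do not expect a genuine obstacle here: every ingredient is already in place, and the only points deserving a line of justification are the membership $\Cinftyc(M\setminus L)\subset J^m(M,L)$ (equivalently $\dot C^\infty(\bfM)\subset\AA^m(\bfM)$) and the identification of $\bfpi_*(\Cinftyc(\mathring{\bfM}))$ with $\Cinftyc(M\setminus L)$ under~\eqref{bfpi_*: AA(bfM) cong J(M L)}.
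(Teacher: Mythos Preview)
Your proposal is correct and matches the paper's intended argument: the paper states this corollary as the analog of \Cref{c: Cinftyc(mathring M) is dense in AA(M)} for $J(M,L)$ without writing out a separate proof, relying on exactly the two routes you describe—either rerunning the density argument via \Cref{c: coincidence of tops on J^m(M L)}, or transporting \Cref{c: Cinftyc(mathring M) is dense in AA(M)} through the TVS-isomorphism~\eqref{bfpi_*: AA(bfM) cong J(M L)}. Your write-up is in fact more detailed than what the paper provides.
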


\begin{cor}\label{c: J(M L) is acyclic and Montel}
$J(M,L)$ is an acyclic Montel space, and therefore complete, boundedly retractive and reflexive.
\end{cor}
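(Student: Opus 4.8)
The plan is to deduce everything from the results already available for $\AA(\bfM)$. By \eqref{bfpi_*: AA(bfM) cong J(M L)}, the push-forward $\bfpi_*:\AA(\bfM)\xrightarrow{\cong}J(M,L)$ is a TVS-isomorphism, and \Cref{c: AA(M) is acyclic and Montel} applied to the compact manifold with boundary $\bfM$ says that $\AA(\bfM)$ is an acyclic Montel space, hence complete, boundedly retractive and reflexive. Since acyclicity is a property of the underlying LF-space once the steps are Fr\'echet (\Cref{ss: TVS}) and the remaining properties are manifestly invariant under TVS-isomorphisms, the statement follows at once. This is the shortest route.

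Alternatively, and in the spirit of the proofs of \Cref{c: S^infty(U x R^l) is acyclic and Montel,c: AA(M) is acyclic and Montel}, one can argue directly from the filtration $J(M,L)=\bigcup_mJ^m(M,L)$. First, $J(M,L)$ is barreled by \Cref{c: J(M L) is barreled}, and since every Montel space is reflexive it suffices to show that $J(M,L)$ is acyclic and semi-Montel. Acyclicity is exactly the content of \Cref{c: coincidence of tops on J^m(M L)}: for every $m$ and every $m'>m$ the topologies of $J^{m'}(M,L)$ and $J^m(M,L)$ coincide on a $0$-neighbourhood of $J^m(M,L)$, which is the acyclicity criterion recalled in \Cref{ss: TVS}. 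Being an acyclic LF-space, $J(M,L)$ is then automatically complete and boundedly retractive.

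It remains to establish the semi-Montel property, which I would do by mimicking the last paragraph of the proof of \Cref{c: S^infty(U x R^l) is acyclic and Montel}. Let $B\subset J(M,L)$ be a closed bounded set; then $B$ is complete because $J(M,L)$ is complete. By bounded retractivity, $B$ is contained and bounded in some step $J^m(M,L)$ with the topologies of $J(M,L)$ and $J^m(M,L)$ agreeing on $B$; by \Cref{c: coincidence of tops on J^m(M L)} these in turn agree with the topology induced from $C^\infty(M\setminus L)$ via the continuous inclusion $J(M,L)\subset C^\infty(M\setminus L)$ noted in \Cref{ss: J(M L)}. Hence $B$ is a complete, so closed, bounded subspace of the Montel space $C^\infty(M\setminus L)$, and therefore compact. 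Thus $J(M,L)$ is semi-Montel, which finishes the argument. The one place needing any care is precisely this upgrade of ``closed and bounded in $J(M,L)$'' to ``relatively compact in $C^\infty(M\setminus L)$'': it relies on completeness of $J(M,L)$ (itself a consequence of acyclicity) together with the coincidence of topologies on the steps, and there is no genuine obstacle beyond bookkeeping.
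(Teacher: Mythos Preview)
Your proposal is correct and your first route is exactly the paper's approach: the paper derives \Cref{c: J(M L) is acyclic and Montel} (together with the neighboring corollaries) simply as the analog of \Cref{c: AA(M) is acyclic and Montel} transported via the TVS-isomorphism $\bfpi_*:\AA(\bfM)\xrightarrow{\cong}J(M,L)$ of~\eqref{bfpi_*: AA(bfM) cong J(M L)}. Your alternative direct argument, mimicking the semi-Montel step through the inclusion into $C^\infty(M\setminus L)$, is also fine and is precisely the pattern the paper used for $\AA(M)$ itself; it is redundant here but not incorrect.
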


The analog of \Cref{r: Cinftyc(mathring M) is dense in AA(M)} makes sense for $J(M,L)$.

\subsection{A description of $J(M,L)$}\label{ss: description of J(M L)}

Take a b-metric $\bfg$ on $\bfM$ satisfying~\ref{i: g is of bounded geometry} and~\ref{i: A'}, and consider its restriction to $\mathring\bfM$. Consider also the boundary defining function $\bfx$ of $\bfM$ (\Cref{ss: J(M L)}). \Cref{c: AA^m(M) equiv x^m Hb^infty(M),c: Cinftyc(mathring M) is dense in AA^m(M),c: AA(M) equiv bigcup_m x^m Hb^infty(M)} and~\eqref{bfpi_*: AA(bfM) cong J(M L)} have the following direct consequences.

\begin{cor}\label{c: J^m(M L) equiv bfx^m Hb^infty(bfM)}
$J^m(M,L)\cong\bfx^m\Hb^\infty(\bfM)\equiv \bfx^{m+1/2}H^\infty(\mathring\bfM)$ $(m\in\R)$.
\end{cor}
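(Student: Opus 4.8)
\begin{proof}
The argument is to transport the description of $\AA^m$ at the boundary, already established in \Cref{c: AA^m(M) equiv x^m Hb^infty(M)}, through the cutting isomorphism.

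First I recall that, by the discussion following~\eqref{bfpi_*: AA(bfM) cong J(M L)}, the TVS-isomorphism $\bfpi_*:\AA(\bfM)\xrightarrow{\cong}J(M,L)$ restricts to TVS-isomorphisms $\bfpi_*:\AA^m(\bfM)\xrightarrow{\cong}J^m(M,L)$ for every $m\in\R$ (the isomorphism $\bfpi_*$ being compatible with the filtration by bounds). On the other hand, the b-metric $\bfg$ on $\bfM$ chosen in \Cref{ss: description of J(M L)} satisfies~\ref{i: g is of bounded geometry} and~\ref{i: A'}, and $\bfx$ is a boundary defining function of $\bfM$; hence \Cref{c: AA^m(M) equiv x^m Hb^infty(M)}, applied with $M$ replaced by $\bfM$, gives $\AA^m(\bfM)\equiv\bfx^m\Hb^\infty(\bfM)\equiv\bfx^{m+1/2}H^m(\mathring\bfM)$. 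Composing with $\bfpi_*$ yields $J^m(M,L)\cong\bfx^m\Hb^\infty(\bfM)\equiv\bfx^{m+1/2}H^m(\mathring\bfM)$, as claimed.
\end{proof}

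There is essentially no obstacle here: the only point that requires care is checking that the hypotheses~\ref{i: g is of bounded geometry} and~\ref{i: A'} are indeed available for $\bfM$, which is exactly why the b-metric $\bfg$ was selected with those properties at the start of \Cref{ss: description of J(M L)}; everything else is a direct combination of the cited results.
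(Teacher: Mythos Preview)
Your proof is correct and follows exactly the paper's approach: the paper states this corollary (together with the two that follow it) as a direct consequence of \Cref{c: AA^m(M) equiv x^m Hb^infty(M)} applied to $\bfM$ combined with the TVS-isomorphism~\eqref{bfpi_*: AA(bfM) cong J(M L)}, which is precisely what you do. Your added remark about verifying conditions~\ref{i: g is of bounded geometry} and~\ref{i: A'} for $\bfg$ is apt and mirrors the paper's setup at the start of \Cref{ss: description of J(M L)}.
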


\begin{cor}\label{c: Cinftyc(M setminus L) is dense in J^m(M L)}
$\Cinftyc(M\setminus L)$ is dense in every $J^m(M,L)$ and $J^{(s)}(M,L)$.
\end{cor}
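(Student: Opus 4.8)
The plan is to transport the corresponding density statement for the boundary model to $(M,L)$ via the push-down isomorphism. Recall from \Cref{ss: J(M L)} that the map~\eqref{bfpi_*: AA(bfM) cong J(M L)} is a TVS-isomorphism $\bfpi_*:\AA(\bfM)\xrightarrow{\cong}J(M,L)$, and that, by the very way $J^m(M,L)$ and $J^{(s)}(M,L)$ were defined there, this isomorphism restricts to TVS-isomorphisms $\AA^m(\bfM)\xrightarrow{\cong}J^m(M,L)$ and $\AA^{(s)}(\bfM)\xrightarrow{\cong}J^{(s)}(M,L)$ for all $m,s\in\R$. So the first step is simply to spell out these restricted isomorphisms.

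The second step is to identify the image of the test spaces. By the cutting construction of \Cref{ss: cutting} we have $\mathring\bfM\equiv M\setminus L$, i.e.\ $\bfpi$ restricts to a diffeomorphism $\mathring\bfM\xrightarrow{\cong}M\setminus L$; since moreover $\Omega\bfM\equiv\bfpi^*\Omega M$, the push-forward $\bfpi_*$ acts on sections supported in $\mathring\bfM$ as the ordinary transport along this diffeomorphism, and therefore carries $\Cinftyc(\mathring\bfM)$ bijectively onto $\Cinftyc(M\setminus L)$, compatibly with the inclusions $\Cinftyc(\mathring\bfM)\subset\dot C^{-\infty}(\bfM)$ and $\Cinftyc(M\setminus L)\subset C^{-\infty}(M,L)$ and hence with the inclusions into $\AA^m(\bfM)$, $\AA^{(s)}(\bfM)$ and $J^m(M,L)$, $J^{(s)}(M,L)$.

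Combining the two steps finishes the argument: a TVS-isomorphism sends dense subsets to dense subsets, so \Cref{c: Cinftyc(mathring M) is dense in AA^m(M)} (density of $\Cinftyc(\mathring\bfM)$ in every $\AA^m(\bfM)$ and $\AA^{(s)}(\bfM)$) applied to $\bfM$ yields density of $\Cinftyc(M\setminus L)=\bfpi_*\big(\Cinftyc(\mathring\bfM)\big)$ in every $J^m(M,L)$ and $J^{(s)}(M,L)$. I do not expect a real obstacle here; the only points deserving a sentence of care are (i) checking that $\bfpi_*$ genuinely restricts to the claimed isomorphisms on the filtration pieces — immediate from the definitions in \Cref{ss: J(M L)} — and (ii) the caveat, parallel to \Cref{r: Cinftyc(mathring M) is dense in AA^m(M)}, that although \Cref{c: Cinftyc(mathring M) is dense in AA^m(M)} was established under the bounded-geometry hypotheses~\ref{i: g is of bounded geometry} and~\ref{i: A'} on $\bfM$, its statement is independent of the chosen b-metric and so holds unconditionally; hence the present corollary requires no such assumption either.
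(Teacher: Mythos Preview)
Your proof is correct and follows essentially the same route as the paper: the corollary is stated there as a direct consequence of \Cref{c: Cinftyc(mathring M) is dense in AA^m(M)} applied to $\bfM$, together with the TVS-isomorphism~\eqref{bfpi_*: AA(bfM) cong J(M L)}. Your remark about the bounded-geometry hypotheses being inessential is also in line with the paper's own \Cref{r: Cinftyc(mathring M) is dense in AA^m(M)}.
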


\begin{cor}\label{c: J(M L) cong bigcup_m bfx^m Hb^infty(bfM)}
$J(M,L)\cong\bigcup_m\bfx^m\Hb^\infty(\bfM)=\bigcup_m\bfx^mH^\infty(\mathring\bfM)$.
\end{cor}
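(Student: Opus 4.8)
The plan is to deduce this directly from the TVS-isomorphism~\eqref{bfpi_*: AA(bfM) cong J(M L)} together with the description of $\AA(\bfM)$ obtained in \Cref{ss: a description of AA(M)}. Concretely, the b-metric $\bfg$ on $\bfM$ fixed at the start of \Cref{ss: description of J(M L)} is chosen to satisfy~\ref{i: g is of bounded geometry} and~\ref{i: A'}, so \Cref{c: AA^m(M) equiv x^m Hb^infty(M),c: AA(M) equiv bigcup_m x^m Hb^infty(M)} apply to $\bfM$ with boundary defining function $\bfx$ and interior $\mathring\bfM$, giving $\AA(\bfM)\equiv\bigcup_m\bfx^m\Hb^\infty(\bfM)=\bigcup_m\bfx^mH^\infty(\mathring\bfM)$. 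Composing with the isomorphism $\bfpi_*:\AA(\bfM)\xrightarrow{\cong}J(M,L)$ of~\eqref{bfpi_*: AA(bfM) cong J(M L)} yields the claimed identifications.

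The main steps, in order, are as follows. First I would recall that $J(M,L)=\bigcup_mJ^m(M,L)$ with its locally convex inductive limit topology: this is the analog for $J(M,L)$ of~\eqref{AA(M) = bigcup_m AA^m(M)}, transported through~\eqref{bfpi_*: AA(bfM) cong J(M L)} (and noted among the ``analogs'' listed in \Cref{ss: J(M L)}). Second, for each fixed $m$, \Cref{c: J^m(M L) equiv bfx^m Hb^infty(bfM)} already gives $J^m(M,L)\cong\bfx^m\Hb^\infty(\bfM)\equiv\bfx^{m+1/2}H^m(\mathring\bfM)$. Third, I would observe that the family of these isomorphisms is compatible with the defining inclusion maps: for $m'<m$ the square formed by $J^m(M,L)\hookrightarrow J^{m'}(M,L)$, $\bfx^m\Hb^\infty(\bfM)\hookrightarrow\bfx^{m'}\Hb^\infty(\bfM)$ (and likewise $\bfx^mH^\infty(\mathring\bfM)\hookrightarrow\bfx^{m'}H^\infty(\mathring\bfM)$) and the vertical isomorphisms commutes, because all of these are realized as subspaces of $C^{-\infty}(M,L)$ (resp.\ of $C^{-\infty}(\bfM)$, $C^{-\infty}(\mathring\bfM)$) with the inclusions induced by the identity on distributions, and $\bfpi_*$ is a fixed isomorphism on all of $C^{-\infty}(\bfM)$ by~\eqref{bfpi_*: C^-infty(bfM) cong C^-infty(M L)}. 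Passing to the locally convex inductive limit then gives the TVS-isomorphisms of the statement.

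There is essentially no genuine obstacle here: the content has been pushed into the earlier corollaries, and the only point requiring a word is the compatibility with the inductive structure needed to conclude $\varinjlim J^m(M,L)\cong\varinjlim\bfx^m\Hb^\infty(\bfM)$ from the levelwise isomorphisms, which is routine since everything sits inside ambient distribution spaces and the transition maps are inclusions. I would also remark, as in \Cref{r: Cinftyc(mathring M) is dense in AA^m(M),r: Cinftyc(mathring M) is dense in dot AA'(M)}, that the resulting identities $J(M,L)\cong\bigcup_m\bfx^m\Hb^\infty(\bfM)$ and $J(M,L)\cong\bigcup_m\bfx^mH^\infty(\mathring\bfM)$ do not depend on the auxiliary choice of $\bfg$, so the hypotheses~\ref{i: g is of bounded geometry} and~\ref{i: A'} are not actually restrictions on the final statement; they are only used as a bookkeeping device to invoke \Cref{c: AA(M) equiv bigcup_m x^m Hb^infty(M)}.
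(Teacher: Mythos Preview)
Your proposal is correct and follows exactly the paper's approach: the corollary is stated there as a direct consequence of \Cref{c: AA(M) equiv bigcup_m x^m Hb^infty(M)} applied to $\bfM$ together with the TVS-isomorphism~\eqref{bfpi_*: AA(bfM) cong J(M L)}, and your extra paragraph on compatibility of the levelwise isomorphisms with the inductive structure simply spells out what the paper leaves implicit. One small caveat on your closing remark: as in \Cref{r: Cinftyc(mathring M) is dense in AA^m(M)}, only the identity $J(M,L)\cong\bigcup_m\bfx^m\Hb^\infty(\bfM)$ is independent of $\bfg$; the expression $\bigcup_m\bfx^mH^\infty(\mathring\bfM)$ uses the Sobolev spaces of $(\mathring\bfM,\bfg)$, so it is the equality of the two unions that requires~\ref{i: g is of bounded geometry} and~\ref{i: A'}.
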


The analog of \Cref{r: Cinftyc(mathring M) is dense in AA^m(M)} makes sense for $J(M,L)$.

\subsection{$I(M,L)$ vs $\dot\AA(\bfM)$ and $J(M,L)$}\label{ss: I(M,L) vs dot AA(bfM)$ and J(M L)}

According to \Cref{ss: pull-back of conormal distribs,ss: push-forward of conormal distribs}, we have the continuous linear maps
\begin{equation}\label{tilde pi^* - tilde pi_*}
\tilde\pi^*:I(M,L)\to I(\widetilde M,\widetilde L)\;,\quad
\tilde\pi_*:I(\widetilde M,\widetilde L)\to I(M,L)\;,
\end{equation}
which are restrictions of the maps~\eqref{tilde pi^*: C^-infty(M) -> C^-infty(widetilde M)}, and therefore they satisfy~\eqref{tilde pi_* tilde pi^* = 2}. These maps are compatible with the Sobolev and symbol order filtrations because $\tilde\pi:\widetilde M\to M$ is a covering map (\Cref{ss: pull-back of conormal distribs,ss: push-forward of conormal distribs}).

Using the TVS-embedding $\dot\AA(\bfM)\subset I(\widetilde M,\widetilde L)$ (\Cref{c: dot AA(M) cong I_M(breve M partial M)}), which is compatible with the Sobolev and symbol order filtrations, the map $\tilde\pi_*$ of~\eqref{tilde pi^* - tilde pi_*} has the restriction
\begin{equation}\label{bfpi_*: dot AA(bfM) -> I(M L)}
\bfpi_*:\dot\AA(\bfM)\to I(M,L)\;.
\end{equation}
This map is compatible with the Sobolev and symbol order filtration by the above properties.

On the other hand, the map~\eqref{R :C^-infty(M) -> C^-infty(M L)} restricts to a continuous linear map
\begin{equation}\label{R: I(M L) -> J(M L)}
R:I(M,L)\to J(M,L)\;,
\end{equation}
which can be also described as the composition
\[
I(M,L) \xrightarrow{\tilde\pi^*} I(\widetilde M,\widetilde L) \xrightarrow{R} \AA(\bfM) \xrightarrow{\bfpi_*} J(M,L)\;.
\]
From the properties of~\eqref{R: I(breve M partial M) -> AA(M)},~\eqref{bfpi_*: AA(bfM) cong J(M L)} and~\eqref{tilde pi^* - tilde pi_*}, it follows that~\eqref{R: I(M L) -> J(M L)} is compatible with the Sobolev order filtration. According to~\eqref{C^infty(M) subset I^(infty)(M L)} and~\eqref{C^infty(M) subset J^(infty)(M L)}, the map~\eqref{R: I(M L) -> J(M L)} is the identity on $C^\infty(M)$.

\subsection{The space $K(M,L)$}\label{ss: K(M L)}

Like in \Cref{ss: KK(M)}, the condition of being supported in $L$ defines the LCHSs and $C^\infty(M)$-modules \index{$K^{(s)}(M,L)$} \index{$K^m(M,L)$} \index{$K(M,L)$}
\[
K^{(s)}(M,L)=I^{(s)}_L(M,L)\;,\quad K^m(M,L)=I^m_L(M,L)\;,\quad K(M,L)=I_L(M,L)\;.
\]
These are closed subspaces of $I^{(s)}(M,L)$, $I^m_L(M,L)$ and $I(M,L)$, respectively; more precisely, they are the null spaces of the corresponding restrictions of the map~\eqref{R: I(M L) -> J(M L)}. According to \Cref{c: dot AA(M) cong I_M(breve M partial M)}, the identity~\eqref{dot C^-infty_partial bfM(bfM) equiv C^-infty_L(M) oplus C^-infty_L(M)} restricts to a TVS-identity
\begin{equation}\label{KK(bfM) equiv K(M L) oplus K(M L)}
\KK(\bfM)\equiv K(M,L)\oplus K(M,L)\;.
\end{equation}
Furthermore the maps~\eqref{bfpi_*: dot C^-infty_partial bfM(bfM) -> C^-infty_L(M)} induce continuous linear maps
\begin{equation}\label{bfpi_*: KK(bfM) -> K(M L)}
\bfpi_*:\KK(\bfM)\to K(M,L)\;,\quad
\tilde\pi^*:K(M,L)\to\KK(\bfM)\;.
\end{equation}
Using~\eqref{KK(bfM) equiv K(M L) oplus K(M L)}, these maps are given by $\bfpi_*(u,v)=u+v$ and $\tilde\pi^*u=(u,u)$.

By~\eqref{dot AA^(s)(M) equiv I^(s)_M(breve M partial M)} and~\eqref{dot AA^m(M) = I^m_M(breve M partial M)}, $K^{(s)}(M,L)$ and $K^m(M,L)$ satisfy analogs of~\eqref{KK(bfM) equiv K(M L) oplus K(M L)}, using $\KK^{(s)}(\bfM)$ and $\KK^m(\bfM)$. Thus we get the following consequences of \Cref{p: KK(M) = bigcup_s KK^(s)(M),p: KK^(s)(M) is a totally reflexive Frechet sp,c: KK(M) is barreled,c: coincidence of tops on KK^m(M),c: KK(M) is acyclic and Montel}.

\begin{cor}\label{c: K(M L) = bigcup_s K^{(s)}(M L)}
$K(M,L)$ is a limit subspace of the LF-space $I(M,L)$.
\end{cor}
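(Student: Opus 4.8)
The plan is to transfer the already-established limit-subspace property of $\KK(\bfM)\subset\dot\AA(\bfM)$ through the direct-sum decomposition produced by cutting $M$ along $L$. Since $\bfM$ is a compact manifold with boundary, \Cref{p: KK(M) = bigcup_s KK^(s)(M)} applies to it, so $\KK(\bfM)$ is a limit subspace of the LF-space $\dot\AA(\bfM)$; that is, the locally convex inductive limit topology of $\bigcup_s\KK^{(s)}(\bfM)$ coincides with the subspace topology that $\KK(\bfM)=\dot\AA_{\partial\bfM}(\bfM)$ inherits from $\dot\AA(\bfM)$. On the other hand, \eqref{KK(bfM) equiv K(M L) oplus K(M L)} and its filtered analogs (recalled just before \Cref{c: K(M L) = bigcup_s K^{(s)}(M L)}) give TVS-identities $\KK(\bfM)\equiv K(M,L)\oplus K(M,L)$ and $\KK^{(s)}(\bfM)\equiv K^{(s)}(M,L)\oplus K^{(s)}(M,L)$ for every $s$; these arise from the two copies of $L$ in $\partial\bfM\equiv L\sqcup L$, hence are compatible with the inclusions of the Sobolev filtrations and act, on each summand, as the identity of the underlying vector space of $K(M,L)$.

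Substituting these identities into the limit-subspace statement, and using that the locally convex direct sum of two increasing inductive limits is the inductive limit of the direct sums, one obtains: on the vector space $K(M,L)\oplus K(M,L)$ the product of two copies of the subspace topology inherited from $I(M,L)$ coincides with the product of two copies of the locally convex inductive limit topology of $\bigcup_sK^{(s)}(M,L)$. Write $\tau$ for the subspace topology and $\tau'$ for the inductive limit topology on a single copy of $K(M,L)$; note $\tau'$ is finer than $\tau$ because the inclusions $K^{(s)}(M,L)\hookrightarrow I^{(s)}(M,L)\hookrightarrow I(M,L)$ are continuous. Then the identity map $(K(M,L),\tau)\to(K(M,L),\tau')$ factors as the continuous map $v\mapsto(v,0)$ into $(K(M,L),\tau)\times(K(M,L),\tau)=(K(M,L),\tau')\times(K(M,L),\tau')$ followed by the first projection, and is therefore continuous; hence $\tau=\tau'$, i.e.\ $K(M,L)=\bigcup_sK^{(s)}(M,L)$ as TVSs, which is precisely the assertion.

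The substantial input is \Cref{p: KK(M) = bigcup_s KK^(s)(M)}, which is already available, so I do not anticipate a genuine obstacle; the remaining steps are formal bookkeeping. The one point requiring a small check is that \eqref{KK(bfM) equiv K(M L) oplus K(M L)} really does restrict to the step-by-step identities $\KK^{(s)}(\bfM)\equiv K^{(s)}(M,L)\oplus K^{(s)}(M,L)$ and is diagonal with respect to the inclusions of steps, which is what legitimizes pulling the direct sum out of the inductive limit and then "de-squaring" the resulting topological equality. An alternative route avoids the de-squaring: by the analog of \Cref{p: I_M(breve M partial M) = bigcup_s I^(s)_M(breve M partial M)}, $\dot\AA(\bfM)\equiv I_{\bfM}(\widetilde M,\widetilde L)$ is a limit subspace of $I(\widetilde M,\widetilde L)$, so by transitivity of the limit-subspace relation $\KK(\bfM)$ is a limit subspace of $I(\widetilde M,\widetilde L)$, after which one descends along the covering map $\tilde\pi$; but the direct-sum argument above is shorter.
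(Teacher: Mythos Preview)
Your proof is correct and follows the same route as the paper: transfer \Cref{p: KK(M) = bigcup_s KK^(s)(M)} through the TVS-identity \eqref{KK(bfM) equiv K(M L) oplus K(M L)} and its filtered analogs. The paper simply records this as an immediate consequence, whereas you spell out the ``de-squaring'' step explicitly; that extra care is fine but not strictly needed once \eqref{KK(bfM) equiv K(M L) oplus K(M L)} is known to be a TVS-identity at every Sobolev level.
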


\begin{cor}\label{c: K^(s)(M L) is a totally reflexive Frechet sp}
$K^{(s)}(M,L)$ is a totally reflexive Fr\'echet space.
\end{cor}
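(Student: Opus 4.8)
The plan is to reproduce, essentially verbatim, the proof of \Cref{p: I^(s)(M L) is a totally reflexive Frechet sp}, with $C^{-\infty}(M)$ replaced by its closed subspace $C^{-\infty}_L(M)$ and $H^s(M)$ by its closed subspace $H^s_L(M)$. First I would record that, by locality of differential operators, $K^{(s)}(M,L)=I^{(s)}_L(M,L)$ equals
\[
\{\,u\in C^{-\infty}_L(M)\mid\Diff(M,L)\,u\subset H^s_L(M)\,\}\;,
\]
with the projective topology given by the maps $P:K^{(s)}(M,L)\to H^s_L(M)$ ($P\in\Diff(M,L)$); this coincides with the subspace topology inherited from $I^{(s)}(M,L)$, since $H^s_L(M)$ carries the subspace topology from $H^s(M)$ and $Pu$ is automatically supported in $L$ when $u$ is.

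Next I would fix, using that $\Diff(M,L)$ is countably $C^\infty(M)$-generated (\Cref{ss: Diff(M L)}), a countable $C^\infty(M)$-spanning set $\{P_j\mid j\in\N_0\}$ with $P_0=1$, and set
\[
K^{(s)}_k(M,L)=\{\,u\in C^{-\infty}_L(M)\mid P_ju\in H^s_L(M),\ j=0,\dots,k\,\}
\]
with the projective topology given by $P_0,\dots,P_k$. Since $H^s_L(M)$ is a closed subspace of the Hilbertian space $H^s(M)$, it is Hilbertian, so each $K^{(s)}_k(M,L)$ is a Hilbert space for $\langle u,v\rangle_{s,k}=\sum_{j=0}^k\langle P_ju,P_jv\rangle_s$ (with $K^{(s)}_0(M,L)=H^s_L(M)$), there are continuous inclusions $K^{(s)}_{k'}(M,L)\subset K^{(s)}_k(M,L)$ for $k<k'$, and $K^{(s)}(M,L)=\bigcap_kK^{(s)}_k(M,L)$. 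Then \cite[Theorem~4]{Valdivia1989} yields that $K^{(s)}(M,L)$, being the intersection of a decreasing sequence of Hilbert spaces, is a totally reflexive Fr\'echet space.

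Everything here is formal, so there is essentially no obstacle; the only point to verify is that the subspace topology on $K^{(s)}(M,L)$ agrees with the projective-limit topology above, and this is dispatched exactly as in \Cref{p: I^(s)(M L) is a totally reflexive Frechet sp}. Alternatively one can avoid the projective-limit computation: the Sobolev-order analog of \eqref{KK(bfM) equiv K(M L) oplus K(M L)} gives $\KK^{(s)}(\bfM)\equiv K^{(s)}(M,L)\oplus K^{(s)}(M,L)$, whence $K^{(s)}(M,L)$ is a complemented subspace of the totally reflexive Fr\'echet space $\KK^{(s)}(\bfM)$ (the analog of \Cref{p: KK^(s)(M) is a totally reflexive Frechet sp} for $\bfM$); since a separated quotient of a complemented subspace is a separated quotient of the ambient space, total reflexivity passes to $K^{(s)}(M,L)$.
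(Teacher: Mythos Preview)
Your proposal is correct. The alternative argument you sketch at the end---using the direct-sum identity $\KK^{(s)}(\bfM)\equiv K^{(s)}(M,L)\oplus K^{(s)}(M,L)$ together with \Cref{p: KK^(s)(M) is a totally reflexive Frechet sp} for $\bfM$---is exactly the paper's proof. Your primary argument, reproducing the projective-limit-of-Hilbert-spaces template from \Cref{p: I^(s)(M L) is a totally reflexive Frechet sp} with $H^s_L(M)$ in place of $H^s(M)$, is also valid and is precisely the route the paper takes for the analogous results $I^{(s)}(M,L)$ and $\KK^{(s)}(M)$; it simply bypasses the passage through $\bfM$.
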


\begin{cor}\label{c: K(M L) is barreled}
$K^{(s)}(M,L)$ is barreled, ultrabornological and webbed, and therefore so is $K(M,L)$.
\end{cor}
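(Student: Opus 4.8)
The plan is to follow verbatim the argument behind \Cref{c: I(M L) is barreled} and its boundary analog \Cref{c: KK(M) is barreled}. Being barreled, ultrabornological and webbed is a property enjoyed by every Fr\'echet space and stable under countable locally convex inductive limits, so the whole statement splits into two inputs that are already available: each step $K^{(s)}(M,L)$ is a Fr\'echet space, and $K(M,L)$ is a countable locally convex inductive limit of such steps.

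For the first assertion I would simply invoke \Cref{c: K^(s)(M L) is a totally reflexive Frechet sp}, by which $K^{(s)}(M,L)$ is a (totally reflexive) Fr\'echet space. A Fr\'echet space is ultrabornological, hence barreled, and it is webbed; these are exactly the facts recalled, with references, in \Cref{ss: smooth/distributional sections} (there for $C^\infty(M)$ and its inductive limits), and they apply here unchanged. This is, word for word, the reasoning used to deduce \Cref{p: S^infty(U x R^l) is barreled} and \Cref{c: I(M L) is barreled}.

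For the second assertion I would pass to a sequence $s_k\uparrow\infty$ and use \Cref{c: K(M L) = bigcup_s K^{(s)}(M L)}: $K(M,L)$ is a limit subspace of the LF-space $I(M,L)$, i.e.\ $K(M,L)\equiv\bigcup_kK^{(s_k)}(M,L)$ carries the locally convex inductive limit topology of a spectrum of Fr\'echet spaces, so $K(M,L)$ is itself an LF-space. Since each of the three properties is preserved by locally convex inductive limits (the same references as in \Cref{ss: smooth/distributional sections}), $K(M,L)$ inherits them.

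No genuine obstacle is expected; the only point needing care is the bookkeeping that $(K^{(s_k)}(M,L))_k$ really is an inductive spectrum of Fr\'echet spaces with continuous inclusions --- which holds because the $K^{(s)}(M,L)$ are closed subspaces of the $I^{(s)}(M,L)$ (see \Cref{ss: K(M L)}) and therefore satisfy the analog of~\eqref{I^(s)(M L) subset I^[s'](M L)} --- and that its inductive limit is $K(M,L)$, which is precisely the content of \Cref{c: K(M L) = bigcup_s K^{(s)}(M L)}. No new analysis is required.
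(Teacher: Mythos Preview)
Your proof is correct and follows essentially the same route as the paper: deduce the Fr\'echet property of each $K^{(s)}(M,L)$ (via \Cref{c: K^(s)(M L) is a totally reflexive Frechet sp}) and the LF-structure of $K(M,L)$ (via \Cref{c: K(M L) = bigcup_s K^{(s)}(M L)}), then invoke the standard permanence properties exactly as in \Cref{p: S^infty(U x R^l) is barreled} and \Cref{c: I(M L) is barreled}. One cosmetic slip: since $K^{(s)}(M,L)\subset K^{(s')}(M,L)$ for $s'<s$ (the analog of~\eqref{I^(s)(M L) subset I^[s'](M L)}), the inductive spectrum is obtained with $s_k\downarrow-\infty$, not $s_k\uparrow\infty$.
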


\begin{cor}\label{c: coincidence of tops on K^m(M L)}
For $m<m',m''$, the topologies of $K^{m'}(M,L)$ and $K^{m''}(M,L)$ coincide on $K^m(M,L)$.
\end{cor}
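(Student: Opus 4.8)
The plan is to deduce the statement from its boundary analog \Cref{c: coincidence of tops on KK^m(M)}, applied to the compact manifold with boundary $\bfM$, by transporting it through the topological direct-sum identities relating $\KK^r(\bfM)$ with $K^r(M,L)$.

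First I would apply \Cref{c: coincidence of tops on KK^m(M)} with $M$ replaced by $\bfM$: for $m<m',m''$, the topologies of $\KK^{m'}(\bfM)$ and $\KK^{m''}(\bfM)$ coincide on $\KK^m(\bfM)$. Next, recall from the discussion preceding the statement (the analogs of~\eqref{KK(bfM) equiv K(M L) oplus K(M L)} obtained via~\eqref{dot AA^(s)(M) equiv I^(s)_M(breve M partial M)} and~\eqref{dot AA^m(M) = I^m_M(breve M partial M)}) that $\KK^r(\bfM)\equiv K^r(M,L)\oplus K^r(M,L)$ as TVSs, for every $r\in\R$. Substituting these identities for $r=m,m',m''$, the previous sentence becomes: the topology that $K^{m'}(M,L)\oplus K^{m'}(M,L)$ induces on $K^m(M,L)\oplus K^m(M,L)$ coincides with the one that $K^{m''}(M,L)\oplus K^{m''}(M,L)$ induces on it.

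Finally I would restrict to a single summand. For any $r$, the injection $K^m(M,L)\to K^r(M,L)\oplus K^r(M,L)$, $u\mapsto(u,0)$, is a TVS-embedding onto $K^m(M,L)\oplus\{0\}$, and the subspace topology thereon is exactly the topology that $K^r(M,L)$ induces on $K^m(M,L)$ (using that the first factor of $K^r(M,L)\oplus K^r(M,L)$ carries the $K^r(M,L)$ topology, together with transitivity of subspace topologies). Applying this with $r=m'$ and $r=m''$ and invoking the coincidence from the previous step on $K^m(M,L)\oplus\{0\}$ yields that $K^{m'}(M,L)$ and $K^{m''}(M,L)$ induce the same topology on $K^m(M,L)$, which is the claim. (Alternatively, one may argue directly: $K^r(M,L)=I^r_L(M,L)$ carries the subspace topology of $I^r(M,L)$, so by transitivity the topology induced on $K^m(M,L)$ by $K^{m'}(M,L)$, resp.\ $K^{m''}(M,L)$, is the one induced by $I^{m'}(M,L)$, resp.\ $I^{m''}(M,L)$, and these agree by \Cref{c: coincidence of tops on I^m(M L)}.) There is no genuine obstacle here; the only point requiring care is the bookkeeping of subspace topologies through the finite direct sums, which is routine.
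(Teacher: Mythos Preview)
Your main argument is correct and is essentially the paper's own proof: the paper derives \Cref{c: coincidence of tops on K^m(M L)} directly from \Cref{c: coincidence of tops on KK^m(M)} for $\bfM$ via the TVS-identities $\KK^r(\bfM)\equiv K^r(M,L)\oplus K^r(M,L)$, which is exactly what you do (with the extra, harmless, step of explicitly passing to a single summand). Your parenthetical alternative through \Cref{c: coincidence of tops on I^m(M L)} is also valid and slightly more direct, but it is not the route the paper takes.
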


\begin{cor}\label{c: K(M L) is acyclic and Montel}
$K(M,L)$ is an acyclic Montel space, and therefore complete, boundedly retractive and reflexive.
\end{cor}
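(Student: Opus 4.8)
The plan is to obtain all of the asserted properties by transfer from the already-proved \Cref{c: KK(M) is acyclic and Montel}, using that $K(M,L)$ splits off $\KK(\bfM)$. Concretely, I would begin by recording that \eqref{KK(bfM) equiv K(M L) oplus K(M L)}, together with its step-by-step refinements noted in \Cref{ss: K(M L)}, gives TVS-identities $\KK(\bfM)\equiv K(M,L)\oplus K(M,L)$ and $\KK^m(\bfM)\equiv K^m(M,L)\oplus K^m(M,L)$ which are compatible with the inductive decompositions $\KK(\bfM)=\bigcup_m\KK^m(\bfM)$ and $K(M,L)=\bigcup_mK^m(M,L)$ (each step $K^m(M,L)$ being a closed, hence Fr\'echet, subspace of $I^m(M,L)$). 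Thus $K(M,L)$ is, up to isomorphism, a closed complemented subspace of the LF-space $\KK(\bfM)$, and this isomorphism respects the LF-structures stepwise.

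First I would establish that $K(M,L)$ is acyclic. The quickest route is the one already used for \Cref{c: S^infty(U x R^l) is acyclic and Montel} and \Cref{c: dot AA(M) is acyclic and Montel}: since the steps $K^m(M,L)$ are Fr\'echet and, by \Cref{c: coincidence of tops on K^m(M L)}, the topologies of $K^{m'}(M,L)$ and $K^{m''}(M,L)$ agree on each $K^m(M,L)$ for $m<m',m''$, the inductive spectrum is acyclic by the criterion \cite[Theorem~6.1]{Wengenroth2003}. (Alternatively one could transport acyclicity of $\KK(\bfM)$ along the splitting by restricting the $0$-neighborhood condition to the summand.) Once acyclicity is in hand, $K(M,L)$ is automatically complete and boundedly retractive, by the generalities recalled in \Cref{ss: TVS}.

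Next I would show $K(M,L)$ is Montel. It is barreled by \Cref{c: K(M L) is barreled}, so it suffices to verify it is semi-Montel; for this I would use that $K(M,L)=I_L(M,L)$ is a closed subspace of the Montel space $I(M,L)$ (\Cref{c: I(M L) is acyclic and Montel}) and that closed subspaces of semi-Montel spaces are semi-Montel \cite[Proposition~3.9.3]{Horvath1966-I} (one may equally invoke that $K(M,L)$ is a closed subspace of the semi-Montel space $\KK(\bfM)$). Being a Montel space, $K(M,L)$ is reflexive \cite[6.27.2~(1)]{Kothe1969-I}, which completes the list of conclusions.

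I do not anticipate a real obstacle, since every ingredient has been arranged in advance (barreledness, the coincidence-of-topologies corollary, closedness inside $I(M,L)$, and the splitting off $\KK(\bfM)$). The one point deserving explicit care is to make sure the identifications $\KK^m(\bfM)\equiv K^m(M,L)\oplus K^m(M,L)$ are compatible with passing to inductive limits, so that properties such as acyclicity and bounded retractivity --- which a priori are properties of the whole LF-space --- genuinely descend to the summand $K(M,L)$; this is immediate from the construction in \Cref{ss: K(M L)}, but should be stated rather than left implicit.
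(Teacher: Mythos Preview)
Your proposal is correct and follows essentially the same approach as the paper: the paper derives all of \Cref{c: K(M L) = bigcup_s K^{(s)}(M L),c: K^(s)(M L) is a totally reflexive Frechet sp,c: K(M L) is barreled,c: coincidence of tops on K^m(M L),c: K(M L) is acyclic and Montel} directly as consequences of the corresponding results for $\KK(\bfM)$ via the stepwise splitting~\eqref{KK(bfM) equiv K(M L) oplus K(M L)}, exactly as you do. Your alternative for the semi-Montel part (embedding $K(M,L)$ as a closed subspace of the Montel space $I(M,L)$ rather than of $\KK(\bfM)$) is a minor variation that works equally well.
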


\begin{ex}\label{ex: Diff(M)}
With the notation of \Cref{ss: pseudodiff ops}, $\Diff(M)\equiv K(M^2,\Delta)$ becomes a filtered $C^\infty(M^2)$-submodule of $\Psi(M)$, with the order filtration corresponding to the symbol order filtration. In this way, $\Diff(M)$ also becomes a LCHS satisfying the above properties. If $M$ is compact, it is also a filtered subalgebra of $\Psi(M)$.
\end{ex}

\subsection{A description of $K(M,L)$}\label{ss: description of K(M L)}

By~\eqref{u mapsto delta_L^u conormal} and~\eqref{A: I^(s)(M L E) -> I^[s-k](M L E)},
\begin{equation}\label{partial_x^m C^infty(L Omega^-1 NL) subset K^(s-m)(M L)}
\partial_x^mC^\infty(L;\Omega^{-1}NL)\subset K^{(s-m)}(M,L)\quad(s<-1/2)\;,
\end{equation}
and~\eqref{C^-infty(L Omega^-1 NL) cong partial_x^m C^-infty(L Omega^-1 NL)} restricts to a continuous linear isomorphism
\begin{equation}\label{C^infty(L Omega^-1 NL) cong partial_x^m C^infty(L Omega^-1 NL)}
\partial_x^m:C^\infty(L;\Omega^{-1}NL)\xrightarrow{\cong}\partial_x^mC^\infty(L;\Omega^{-1}NL)\;.
\end{equation}

\begin{lem}\label{l: partial_x^m C^infty(L Omega^-1 NL) cap I^[-1/2-m]_c(M L) = 0}
For all $m\in\N_0$,
\[
\partial_x^mC^\infty(L;\Omega^{-1}NL)\cap K^{(-\frac12-m)}(M,L)=0\;.
\]
\end{lem}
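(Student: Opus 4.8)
The plan is to reduce the identity to the failure of the Sobolev estimate at the critical exponent for a transverse derivative of a Dirac section along $L$, which is settled by a one-dimensional partial Fourier transform, exactly as in the proof of \Cref{p: C^infty_./c(L ...) subset H^s_./c(M E)}. Since $1\in\Diff^0(M,L)$, the defining property~\eqref{I^(s)(M,L)} gives $K^{(s)}(M,L)\subset I^{(s)}(M,L)\subset H^s(M)$ for every $s$; hence it suffices to show that if $u\in C^\infty(L;\Omega^{-1}NL)$ is such that $v:=\partial_x^m\delta_L^u$ lies in $H^{-1/2-m}(M)$, then $u=0$. Arguing by contradiction, I would assume $u\ne0$, pick $p\in L$ with $u(p)\ne0$, and work in an adapted chart $U\equiv(-\epsilon,\epsilon)_x\times V$ around $p$ of the type described in \Cref{ss: cutting}, trivializing $\Omega^{-1}NL$ over $V$ by the frame dual to $|dx|$. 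On $U$ one then has $\delta_L^u=\tilde u(y)\otimes\delta_0(x)$ for some $\tilde u\in C^\infty(V)$ with $\tilde u(y(p))\ne0$, so $v=\tilde u(y)\otimes\partial_x^m\delta_0(x)$ on $U$.

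Next I would localize and remove the transverse Taylor terms of the cutoff. Choose $\chi=\chi_1(x)\,\chi_2(y)\in\Cinftyc(U)$ with $\chi_1\equiv1$ near $x=0$ (so $\chi_1^{(k)}(0)=0$ for $k\ge1$) and $\chi_2(y(p))\ne0$. Multiplication by $\chi$ sends $H^{-1/2-m}(M)$ into $H^{-1/2-m}_{\text{\rm c}}(U)$, so $\chi v\in H^{-1/2-m}_{\text{\rm c}}(U)$. Applying the elementary formula $f(x)\,\partial_x^m\delta_0(x)=\sum_{k=0}^m(-1)^k\binom{m}{k}f^{(k)}(0)\,\partial_x^{m-k}\delta_0(x)$ with $f=\chi_1$, all terms with $k\ge1$ disappear and we are left with $\chi v=\psi(y)\otimes\partial_x^m\delta_0(x)$ on $U$, where $\psi:=\tilde u\,\chi_2\in\Cinftyc(V)$ satisfies $\psi(y(p))\ne0$, hence $\psi\ne0$. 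Transporting through the chart and extending by zero, this yields $\psi(y)\otimes\partial_x^m\delta_0(x)\in H^{-1/2-m}(\R^n)$, with $\R^n=\R_x\times\R^{n-1}_y$.

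Finally I would contradict this by the model computation. By~\eqref{H^s(R^n) cong L^2(R^n (1+|xi|^2)^s d xi)} and $\widehat{\psi\otimes\partial_x^m\delta_0}(\xi_1,\eta)=c\,\xi_1^m\,\hat\psi(\eta)$ (a constant times $\hat\psi$ tensored with the Fourier transform of $\partial_x^m\delta_0$), membership of $\psi\otimes\partial_x^m\delta_0$ in $H^{-1/2-m}(\R^n)$ is equivalent to
\[
\int_{\R^{n-1}}|\hat\psi(\eta)|^2\Big(\int_{\R}|\xi_1|^{2m}\,(1+\xi_1^2+|\eta|^2)^{-m-\frac12}\,d\xi_1\Big)\,d\eta<\infty\,.
\]
For each fixed $\eta$ the inner integrand is $\sim|\xi_1|^{-1}$ as $|\xi_1|\to\infty$, so the inner integral diverges; since $\hat\psi$ is Schwartz and $\hat\psi\not\equiv0$, the outer integral is then $+\infty$, contradicting the displayed finiteness. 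Therefore $v\notin H^{-1/2-m}(M)$, so $u=0$.

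The only genuinely non-formal step is this last Fourier estimate, i.e.\ pinning down the sharp Sobolev exponent of $\psi\otimes\partial_x^m\delta_0$; it is, however, the same computation already used for $\delta_0$ in \Cref{p: C^infty_./c(L ...) subset H^s_./c(M E)}, the extra factor $|\xi_1|^{2m}$ precisely compensating the shift of the Sobolev order by $m$. If one prefers to avoid the special cutoff with $\chi_1\equiv1$, one may instead keep the full Leibniz expansion of $\chi v$ and note that each term $\psi_k(y)\otimes\partial_x^{m-k}\delta_0(x)$ with $k\ge1$ lies, by~\eqref{partial_x^m C^infty(L Omega^-1 NL) subset K^(s-m)(M L)}, in $K^{(t)}(M,L)\subset H^{-1/2-m}(M)$ for any $t<-1/2-(m-k)$, so that the top term $\psi_0(y)\otimes\partial_x^m\delta_0(x)$ still ends up in $H^{-1/2-m}(\R^n)$ and the same contradiction applies.
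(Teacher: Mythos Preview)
Your proof is correct and in fact establishes the slightly stronger statement $\partial_x^mC^\infty(L;\Omega^{-1}NL)\cap H^{-\frac12-m}(M)=0$, of which the lemma is an immediate consequence via the inclusion $K^{(s)}(M,L)\subset H^s(M)$ that you note at the outset.

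Your route differs genuinely from the paper's. The paper argues by induction on $m$: the base case $m=0$ is exactly \Cref{p: C^infty_./c(L ...) subset H^s_./c(M E)}, and for $m\ge1$ it introduces an auxiliary elliptic operator $-\partial_x^2+B$ with $B\in\Diff^2(T,L)$, uses the commutator relation~\eqref{[partial_x Diff^k(T L)] subset ...} together with the full $\Diff(M,L)$-regularity encoded in $K^{(-\frac12-m)}(M,L)$, and applies elliptic regularity to show that $u_0=\partial_x^{m-1}\delta_L^v$ lies in $K^{(\frac12-m)}(M,L)$, whence $u_0=0$ by the induction hypothesis. Your argument instead goes straight to the local Fourier model and computes the sharp Sobolev threshold of $\psi(y)\otimes\partial_x^m\delta_0(x)$ directly; the extra factor $|\xi_1|^{2m}$ shifts the critical exponent by exactly $m$, so the same logarithmic divergence as in \Cref{p: C^infty_./c(L ...) subset H^s_./c(M E)} appears at $s=-\frac12-m$. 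The gain of your approach is that it is shorter, entirely elementary, and never invokes elliptic regularity or the full conormal structure of $K^{(s)}$; the paper's inductive scheme, on the other hand, keeps everything intrinsic to the conormal calculus and avoids revisiting the Fourier computation.
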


\begin{proof}
We proceed by induction on $m$. The case $m=0$ is given by \Cref{p: C^infty_./c(L ...) subset H^s_./c(M E)}. Now assume $m\ge1$, and let $v\in C^\infty(L;\Omega^{-1}NL)$ with $u=\partial_x^m\delta_L^v\in K^{(-\frac12-m)}(M,L)$. Take any $A\in\Diff^2(L;\Omega NL)$ such that $-\partial_x^2+B\in\Diff^2(T)$ is elliptic, where $B=(1\otimes A^t)^t\in\Diff^2(T,L)$; for instance, given a Riemannian metric on $M$, $A$ can be the Laplacian of the flat line bundle $\Omega NL$. By~\eqref{partial_x^m C^infty(L Omega^-1 NL) subset K^(s-m)(M L)}, $u_0:=\partial_x^{m-1}\delta_L^v\in K^{(\frac12-m-\epsilon)}(M,L)$ for $0<\epsilon<1$. By~\eqref{[partial_x Diff^k(T L)] subset ...}, given any $B_0\in\Diff(M,L)$, there is some $B_1,B_2,B_3\in\Diff(M,L)$ such that $[\partial_x^2,B_0]=B_1+B_2\partial_x+B_3\partial_x^2$. So, according to \Cref{ss: diff opers on conormal distribs},~\eqref{A delta_L^u} and~\eqref{[partial_x Diff^k(T L)] subset ...}, for all $B_0\in\Diff(M,L)$,
\begin{align*}
(-\partial_x^2+B)B_0u_0&=-B_0\partial_xu-B_1u_0-B_2u-B_3\partial_xu+\partial_x^{m-1}\delta_L^{B_0'Av}+[B,B_0]u_0\\
&\in K^{(-\frac32-m)}(M,L)+K^{(\frac12-m-\epsilon)}(M,L)=K^{(-\frac32-m)}(M,L)\;.
\end{align*}
Hence $B_0u_0\in H^{\frac12-m}(M)$ by elliptic regularity. Since $B_0$ is arbitrary, we get $u_0\in K^{(\frac12-m)}(M,L)$. So $u_0=0$ by the induction hypothesis, yielding $u=\partial_xu_0=0$.
\end{proof}

By \Cref{p: bigoplus_m C^0_m -> C^-infty_L(M)}, the TVS-isomorphism~\eqref{bigoplus_m C^0_m -> C^-infty_L(M)} restricts to a linear injection
\begin{equation}\label{bigoplus_m C^1_m -> K(M L)}
\bigoplus_{m=0}^\infty C^1_m\to K(M,L)\;,
\end{equation}
where $C^1_m=C^\infty(L;\Omega^{-1}NL)$ for all $m\in\N_0$, which is easily seen to be continuous.

\begin{prop}\label{p: bigoplus_m C^1_m -> K(M L)}
The map~\eqref{bigoplus_m C^1_m -> K(M L)} is a TVS-isomorphism, which induces TVS-isomorphisms
\begin{equation}\label{bigoplus_m<-s-1/2 C^1_m cong K^(s)(M L)}
\bigoplus_{m<-s-\frac12}C^1_m\xrightarrow{\cong}K^{(s)}(M,L)\quad(s<-1/2)\;.
\end{equation}
\end{prop}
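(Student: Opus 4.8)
The plan is to mimic the proof of \Cref{p: bigoplus_m C^0_m -> C^-infty_L(M)}, but working inside $K(M,L)$ rather than $C^{-\infty}_L(M)$, and to upgrade the already-established continuity and injectivity of \eqref{bigoplus_m C^1_m -> K(M L)} to a TVS-isomorphism by first identifying its image on each finite stage and then appealing to the open mapping theorem. First I would fix $s<-1/2$ and analyze the restricted map
\[
\Phi_s:\bigoplus_{m<-s-\frac12}C^1_m\to K^{(s)}(M,L)\;,\quad(v_0,\dots,v_{k-1})\mapsto\sum_{m=0}^{k-1}\partial_x^m\delta_L^{v_m}\;,
\]
where $k=\lceil-s-1/2\rceil$. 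By \eqref{partial_x^m C^infty(L Omega^-1 NL) subset K^(s-m)(M L)} each summand lies in $K^{(s)}(M,L)$ (since $s<-1/2-m$ for $m<-s-1/2$), so $\Phi_s$ is well defined, and it is continuous because each $\partial_x^m\delta_L^{(\cdot)}$ is continuous $C^1_m\to C^{-\infty}(M)$ with image landing continuously in the Fréchet space $K^{(s)}(M,L)$ (the topology of $K^{(s)}(M,L)$ being the one induced by the maps $P:I^{(s)}(M,L)\to H^s(M)$ for $P\in\Diff(M,L)$). Injectivity of $\Phi_s$ is exactly where \Cref{l: partial_x^m C^infty(L Omega^-1 NL) cap I^[-1/2-m]_c(M L) = 0} is used: if $\sum_{m=0}^{k-1}\partial_x^m\delta_L^{v_m}=0$ and $v_{k-1}\ne 0$, then isolating the top-order term and using that $\partial_x^{k-1}\delta_L^{v_{k-1}}$ would then have to agree with a lower-order sum (hence lie in $K^{(\frac12-(k-1))}(M,L)\supset K^{(-\frac12-(k-1))}(M,L)$ after adjusting Sobolev indices), we contradict the lemma; descending induction finishes injectivity.

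The heart of the argument is surjectivity of $\Phi_s$. Here I would invoke \Cref{p: bigoplus_m C^0_m -> C^-infty_L(M)}: any $u\in K^{(s)}(M,L)\subset C^{-\infty}_L(M)$ has, via the TVS-isomorphism \eqref{bigoplus_m C^0_m -> C^-infty_L(M)}, a \emph{finite} expansion $u=\sum_{m=0}^{N}\partial_x^m\delta_L^{w_m}$ with $w_m\in C^{-\infty}(L;\Omega^{-1}NL)$, and moreover $u\in C^{\prime\,-k}_L(M)$ forces, by \eqref{bigoplus_m^k C^m-k(L Omega^-1NL) cong C^prime -k_L(M)}, the bound $N\le k-1$ and $w_m\in C^{m-k}(L;\Omega^{-1}NL)$ — in particular $w_m$ has finite differentiability. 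What remains is to promote each $w_m$ to a \emph{smooth} section. This is a regularity statement, and I would establish it by the same elliptic bootstrap used in the proof of \Cref{l: partial_x^m C^infty(L Omega^-1 NL) cap I^[-1/2-m]_c(M L) = 0}: choosing $A\in\Diff^2(L;\Omega NL)$ so that $-\partial_x^2+B$ is elliptic on $T$ (with $B=(1\otimes A^t)^t$), applying operators $B_0\in\Diff(M,L)$ and the commutator identity \eqref{[partial_x Diff^k(T L)] subset ...} to peel off the top term, one shows inductively (downward in $m$) that $u\in K^{(s)}(M,L)$ implies $w_{k-1}\in C^\infty$, then $w_{k-2}\in C^\infty$, etc.; equivalently, $\partial_x^m\delta_L^{w_m}$ being a summand of a conormal-of-Sobolev-order-$s$ distribution propagates conormal regularity of $w_m$ along $L$. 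Concretely, I expect to first treat the model case $M=\R^n$, $L=\R^{n''}\times\{0\}$, where $\partial_x^m\delta_0$ has Fourier transform $(i\xi')^m$ and the statement becomes an explicit computation with $H^s$-norms showing that $\sum_m \partial_x^m\delta_L^{w_m}\in H^s_{\mathrm{loc}}$ iff each $w_m\in H^{s+m+n'/2+}$, and then globalize via adapted charts and a partition of unity. This pins down which $C^1_m$'s can occur (those with $m<-s-1/2$) and that the coefficients are automatically smooth, giving surjectivity of $\Phi_s$.

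Once each $\Phi_s$ is a continuous linear bijection, I would note that the source $\bigoplus_{m<-s-1/2}C^1_m$ is a countable direct sum of Fréchet (indeed totally reflexive Fréchet, being $C^\infty(L;\Omega^{-1}NL)$) spaces — hence webbed and ultrabornological — and that $K^{(s)}(M,L)$ is a Fréchet space (\Cref{c: K^(s)(M L) is a totally reflexive Frechet sp}), so $\Phi_s$ is a TVS-isomorphism by the open mapping theorem \cite[Theorem~14.4.6]{NariciBeckenstein2011}; this proves \eqref{bigoplus_m<-s-1/2 C^1_m cong K^(s)(M L)}. Taking the locally convex inductive limit of the $\Phi_s$ as $s\downarrow-\infty$ (compatibly, since $C^1_m$ appears in the $s$-stage for all $s<-m-1/2$, and $K(M,L)=\bigcup_s K^{(s)}(M,L)$ by \Cref{c: K(M L) = bigcup_s K^{(s)}(M L)}) identifies the inductive limit with \eqref{bigoplus_m C^1_m -> K(M L)}, which is therefore a continuous linear bijection between an LF-space $\bigoplus_m C^1_m$ (webbed, ultrabornological) and the webbed, ultrabornological space $K(M,L)$ (\Cref{c: K(M L) is barreled}); one more application of the open mapping theorem finishes the proof. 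The main obstacle I anticipate is the smoothness/support-order bookkeeping in the surjectivity step — correctly extracting, from membership in $K^{(s)}(M,L)$, both the finite range $m<-s-1/2$ \emph{and} the $C^\infty$ regularity of the coefficient sections $w_m$ — which is why I would route it through \Cref{p: bigoplus_m C^0_m -> C^-infty_L(M)} (for finiteness and a first crude regularity) followed by the elliptic bootstrap of \Cref{l: partial_x^m C^infty(L Omega^-1 NL) cap I^[-1/2-m]_c(M L) = 0} (for $C^\infty$-regularity), rather than attempting a direct Fourier-analytic argument globally.
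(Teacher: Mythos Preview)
Your overall architecture --- injectivity via \Cref{l: partial_x^m C^infty(L Omega^-1 NL) cap I^[-1/2-m]_c(M L) = 0}, surjectivity by first invoking \Cref{p: bigoplus_m C^0_m -> C^-infty_L(M)} to get a finite expansion $u=\sum_m\partial_x^m\delta_L^{w_m}$ and then upgrading the $w_m$ to smooth sections, followed by the open mapping theorem --- matches the paper's. The difference lies in the smoothness step, and there your plan is both more complicated and not clearly correct as stated.

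The paper does \emph{not} use an elliptic bootstrap with $-\partial_x^2+B$. Instead it exploits a single algebraic observation: for every $A\in\Diff(L;\Omega NL)$ the operator $B=(1\otimes A^t)^t$ lies in $\Diff(T,L)$ and, by~\eqref{[partial_x 1 otimes Diff(L)] = 0}, \emph{commutes with $\partial_x$}. Hence $B\big(\sum_m\partial_x^m\delta_L^{w_m}\big)=\sum_m\partial_x^m\delta_L^{Aw_m}$, and this is precisely the $\bigoplus C^0_m$-decomposition of $Bu$. Conormality gives $Bu\in H^{s_0}\subset C^{\prime\,-k}_L(M)$ with $k$ depending only on $u$ (not on $A$); then the isomorphism~\eqref{bigoplus_m^k C^m-k(L Omega^-1NL) cong C^prime -k_L(M)} forces $Aw_m\in C^{m-k}(L;\Omega^{-1}NL)$ for every $A\in\Diff(L)$, whence $w_m\in C^\infty$. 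No induction, no elliptic regularity, no model-case Fourier computation.

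Your proposed route via $-\partial_x^2+B$ runs into the problem that $\partial_x^2\notin\Diff(M,L)$, so applying that operator does not keep you in a fixed $H^s$; and the ``peel off the top term by downward induction'' step never reaches a base case that gives smoothness rather than vanishing (the Lemma you cite proves vanishing at a critical Sobolev index, not regularity of a distributional coefficient). A Fourier-analytic argument in the model case would work, but it is exactly what the commuting-operator trick above replaces. Finally, the paper reverses your order: it proves surjectivity of the full map~\eqref{bigoplus_m C^1_m -> K(M L)} first, and only then reads off~\eqref{bigoplus_m<-s-1/2 C^1_m cong K^(s)(M L)} by combining that surjectivity with \Cref{l: partial_x^m C^infty(L Omega^-1 NL) cap I^[-1/2-m]_c(M L) = 0} (if $u\in K^{(s)}$ and some $w_N\ne0$ with $N\ge-s-\tfrac12$, subtracting the lower-order smooth terms places $\partial_x^N\delta_L^{w_N}$ in $K^{(-1/2-N)}$, contradicting the Lemma). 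This ordering avoids having to simultaneously control both the range of $m$ and the regularity of $w_m$.
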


\begin{proof}
To prove that~\eqref{bigoplus_m C^1_m -> K(M L)} is surjective, take any $u\in K(M,L)$. By \Cref{p: bigoplus_m C^0_m -> C^-infty_L(M)}, we can assume $u\in \partial_x^mC^{-\infty}(L;\Omega NL)$ for some $m$. For any $A\in\Diff(L;\Omega NL)$, let $B=(1\otimes A^t)^t\in\Diff(T,L)$. Since $u\in K(M,L)$ and $B$ is local, it follows from the definition of $I(M,L)$ and~\eqref{H^-s(M) supset C^prime -k(M) supset H^-k(M)} that $Bu\in H^{-k}_L(T)\subset C^{\prime\,-k}_L(T)$ for some $k\ge m$. On the other hand, $u=\partial_x^m\delta_L^v$ for some $v\in C^{-\infty}(L;\Omega NL)$. Then~\eqref{A delta_L^u} and~\eqref{[partial_x 1 otimes Diff(L)] = 0} yield
\[
Bu=B\partial_x^m\delta_L^v=\partial_x^mB\delta_L^v=\partial_x^m\delta_L^{B'v}
=\partial_x^m\delta_L^{Av}\;.
\]
Therefore, by \Cref{p: bigoplus_m C^0_m -> C^-infty_L(M)},
\[
Bu\in C^{\prime\,-k}_L(M)\cap\partial_x^mC^{-\infty}(L;\Omega NL)=\partial_x^mC^{\prime\,m-k}(L;\Omega NL)\;.
\]
This means that $Av\in C^{\prime\,m-k}(L;\Omega NL)$. So $v\in C^\infty(L;\Omega NL)$ because $A$ is arbitrary, and therefore $u\in\partial_x^mC^\infty(L;\Omega NL)$.

The surjectivity of~\eqref{bigoplus_m<-s-1/2 C^1_m cong K^(s)(M L)} follows from \Cref{l: partial_x^m C^infty(L Omega^-1 NL) cap I^[-1/2-m]_c(M L) = 0} and the surjectivity of~\eqref{bigoplus_m C^1_m -> K(M L)}.

Finally,~\eqref{bigoplus_m C^1_m -> K(M L)} is open like in \Cref{p: bigoplus_m C^0_m -> C^-infty_L(M)}, using that $C^\infty(L;\Omega^{-1}NL)$ is webbed and $K(M,L)$ ultrabornological (\Cref{c: K(M L) is barreled}). So~\eqref{bigoplus_m<-s-1/2 C^1_m cong K^(s)(M L)} is also open.
\end{proof}

\begin{rem}\label{r: bigoplus_m C^1_m -> K(M L)}
\Cref{p: bigoplus_m C^1_m -> K(M L)} reconfirms \Cref{c: K(M L) = bigcup_s K^{(s)}(M L)}. It also follows from \Cref{p: bigoplus_m C^1_m -> K(M L)} that~\eqref{C^infty(L Omega^-1 NL) cong partial_x^m C^infty(L Omega^-1 NL)} is a TVS-isomorphism.
\end{rem}

\begin{rem}\label{r: description of KK(M)}
In \Cref{ss: KK(M)}, for any compact manifold with boundary $M$, the analog of \Cref{p: bigoplus_m C^0_m -> C^-infty_L(M)} for $\KK(M)$ follows from~\eqref{KK(M) cong I_partial M(breve M partial M)} and the application of \Cref{p: bigoplus_m C^0_m -> C^-infty_L(M)} to $K(\breve M,\partial M)$.
\end{rem}

\subsection{The conormal sequence}\label{conormal sequence}

The diagram~\eqref{CD: ... dot C^-infty(bfM) -R-> C^-infty(bfM) -> 0} has the restriction
\begin{equation}\label{CD: conormal seqs}
\begin{CD}
0\to\KK(\bfM) @>\iota>> \dot\AA(\bfM) @>R>> \AA(\bfM)\to0 \\
@V{\bfpi_*}VV @V{\bfpi_*}VV @V{\cong}V{\bfpi_*}V \\
0\to K(M,L) @>\iota>> I(M,L) @>R>> J(M,L)\to0\;.\hspace{-.2cm}
\end{CD}
\end{equation}
The bottom row of~\eqref{CD: conormal seqs} will be called the \emph{conormal sequence} of $M$ at $L$ (or of $(M,L)$). The following analog of \Cref{p: bfpi_*: dot C^-infty(bfM) -> C^-infty(M) is surj top hom} holds true with formally the same proof, using~\eqref{CD: conormal seqs}.

\begin{prop}\label{p: R: I(M L) -> J(M L) is a surj top hom}
The maps~\eqref{bfpi_*: dot AA(bfM) -> I(M L)} and~\eqref{R: I(M L) -> J(M L)} are surjective topological homomorphisms.
\end{prop}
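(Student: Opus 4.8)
The plan is to mimic the proof of \Cref{p: bfpi_*: dot C^-infty(bfM) -> C^-infty(M) is surj top hom}, now applied to the diagram~\eqref{CD: conormal seqs} instead of~\eqref{CD: ... dot C^-infty(bfM) -R-> C^-infty(bfM) -> 0}. First I would record the ingredients already available for that diagram: the top row $0\to\KK(\bfM)\xrightarrow{\iota}\dot\AA(\bfM)\xrightarrow{R}\AA(\bfM)\to0$ is exact in the category of continuous linear maps between LCSs by~\eqref{0 -> KK(M) -> dot AA(M) -> AA(M) -> 0} (i.e.\ by \Cref{p: R: dot AA(M) -> AA(M) is a surj top hom}); the right-hand vertical arrow $\bfpi_*:\AA(\bfM)\to J(M,L)$ is a TVS-isomorphism by~\eqref{bfpi_*: AA(bfM) cong J(M L)}; and the left-hand vertical arrow $\bfpi_*:\KK(\bfM)\to K(M,L)$ is onto, since by~\eqref{bfpi_*: KK(bfM) -> K(M L)} and~\eqref{KK(bfM) equiv K(M L) oplus K(M L)} it is given by $(u,v)\mapsto u+v$ on $K(M,L)\oplus K(M,L)$ (compose with $\tilde\pi^*:u\mapsto(u,u)$ to see surjectivity, exactly as in the use of~\eqref{tilde pi_* tilde pi^* = 2}).

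Next, a diagram chase in~\eqref{CD: conormal seqs}: commutativity of the right-hand square together with surjectivity of the composite $R\circ\bfpi_*=(\bfpi_*)\circ R:\dot\AA(\bfM)\to J(M,L)$ (which holds because both $R:\dot\AA(\bfM)\to\AA(\bfM)$ and $\bfpi_*:\AA(\bfM)\to J(M,L)$ are onto) forces~\eqref{R: I(M L) -> J(M L)} to be surjective; hence the bottom row $0\to K(M,L)\xrightarrow{\iota} I(M,L)\xrightarrow{R} J(M,L)\to0$ is exact in the category of \emph{linear} maps between vector spaces. Then chasing the whole diagram — using exactness of the top row, surjectivity of the two outer verticals, and the just-established surjectivity of the bottom $R$ — yields that the middle vertical map~\eqref{bfpi_*: dot AA(bfM) -> I(M L)}, $\bfpi_*:\dot\AA(\bfM)\to I(M,L)$, is surjective. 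This is the purely algebraic part and is completely routine.

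Finally I would upgrade both surjections to topological homomorphisms via the open mapping theorem. The space $\dot\AA(\bfM)$ is webbed (\Cref{c: dot AA(M) and AA(M) are barreled}) and $I(M,L)$ is ultrabornological (\Cref{c: I(M L) is barreled}), so the continuous surjection~\eqref{bfpi_*: dot AA(bfM) -> I(M L)} is open, hence a topological homomorphism; likewise $I(M,L)$ is webbed (\Cref{c: I(M L) is barreled}) and $J(M,L)$ is ultrabornological (\Cref{c: J(M L) is barreled}), so~\eqref{R: I(M L) -> J(M L)} is open — alternatively one may deduce openness of~\eqref{R: I(M L) -> J(M L)} from openness of~\eqref{bfpi_*: dot AA(bfM) -> I(M L)} together with the TVS-isomorphism $\bfpi_*:\AA(\bfM)\xrightarrow{\cong}J(M,L)$ and openness of $R:\dot\AA(\bfM)\to\AA(\bfM)$ by commutativity of the right-hand square. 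I do not expect a genuine obstacle here: every needed input (exactness of the $\KK$–$\dot\AA$–$\AA$ sequence, the identifications~\eqref{bfpi_*: AA(bfM) cong J(M L)} and~\eqref{KK(bfM) equiv K(M L) oplus K(M L)}, and the barreledness/webbedness bookkeeping) is already in place; the only point to be careful about is confirming that the restriction $\bfpi_*:\KK(\bfM)\to K(M,L)$ lands in and surjects onto $K(M,L)$ and that~\eqref{CD: conormal seqs} genuinely commutes, which is immediate from~\eqref{bfpi_*: KK(bfM) -> K(M L)}, \Cref{ss: K(M L)} and the fact that~\eqref{CD: conormal seqs} is a restriction of~\eqref{CD: ... dot C^-infty(bfM) -R-> C^-infty(bfM) -> 0}.
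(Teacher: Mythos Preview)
Your proposal is correct and follows essentially the same route as the paper's proof, which explicitly says that the argument is formally identical to that of \Cref{p: bfpi_*: dot C^-infty(bfM) -> C^-infty(M) is surj top hom} applied to~\eqref{CD: conormal seqs}. You have identified exactly the same ingredients (exactness of the top row via \Cref{p: R: dot AA(M) -> AA(M) is a surj top hom}, surjectivity of the left vertical via~\eqref{KK(bfM) equiv K(M L) oplus K(M L)}, the TVS-isomorphism on the right, the diagram chase, and the open mapping theorem with the webbed/ultrabornological bookkeeping), and your alternative derivation of the openness of~\eqref{R: I(M L) -> J(M L)} from the commutativity of the right-hand square is precisely how the paper concludes.
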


\begin{cor}\label{c: conormal sequence of (M L) is exact}
The conormal sequence of $M$ at $L$ is exact in the category of continuous linear maps between LCSs.
\end{cor}

The surjectivity of~\eqref{R: I(M L) -> J(M L)} can be realized with partial extension maps, as stated in the following analog of \Cref{p: E_m}.

\begin{cor}\label{c: E_m: J^m(M L) -> I^(s)(M L)}
For all $m\in\R$, there is a continuous linear partial extension map $E_m:J^m(M,L)\to I^{(s)}(M,L)$, where $s=0$ if $m\ge0$, and $m>s\in\Z^-$ if $m<0$.
\end{cor}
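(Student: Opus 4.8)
The statement to be proved is Corollary~\ref{c: E_m: J^m(M L) -> I^(s)(M L)}: for every $m\in\R$ there is a continuous linear partial extension map $E_m:J^m(M,L)\to I^{(s)}(M,L)$, with $s=0$ when $m\ge0$ and with $m>s\in\Z^-$ when $m<0$. The plan is to transport the corresponding statement at the boundary, namely \Cref{p: E_m}, through the cutting construction of \Cref{ss: cutting}, exactly as every other ``boundary'' property in this section has been transferred to $(M,L)$. Concretely, I would use the TVS-isomorphisms $\bfpi_*:\AA^m(\bfM)\xrightarrow{\cong}J^m(M,L)$ coming from~\eqref{bfpi_*: AA(bfM) cong J(M L)} (restricted to the symbol-order pieces), the TVS-embedding $\dot\AA^{(s)}(\bfM)\equiv I^{(s)}_{\bfM}(\widetilde M,\widetilde L)\subset I^{(s)}(\widetilde M,\widetilde L)$ from~\eqref{dot AA^(s)(M) equiv I^(s)_M(breve M partial M)} and \Cref{c: dot AA(M) cong I_M(breve M partial M)}, and the push-forward $\tilde\pi_*:I^{(s)}(\widetilde M,\widetilde L)\to I^{(s)}(M,L)$ which is the restriction of the covering push-forward~\eqref{tilde pi^*: C^-infty(M) -> C^-infty(widetilde M)}.

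First I would fix $m\in\R$ and let $s$ be as in \Cref{p: E_m} (so $s=0$ if $m\ge0$ and $m>s\in\Z^-$ if $m<0$). Apply \Cref{p: E_m} to the compact manifold with boundary $\bfM$ to get a continuous linear partial extension map $E^{\bfM}_m:\AA^m(\bfM)\to\dot\AA^{(s)}(\bfM)$; the defining property of a partial extension map is that the composition $R\circ E^{\bfM}_m:\AA^m(\bfM)\to\AA^{(s')}(\bfM)$ equals the canonical inclusion $\AA^m(\bfM)\subset\AA^{m'-k}(\bfM)$ for the appropriate $m'$ (as used, e.g., in \Cref{r: A: AA^m(M) -> AA^[m'-k(M)} and in the proof of \Cref{p: dual-conormal seq is exact}). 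Then I would define
\[
E_m:J^m(M,L)\xrightarrow{(\bfpi_*)^{-1}}\AA^m(\bfM)\xrightarrow{E^{\bfM}_m}\dot\AA^{(s)}(\bfM)\subset I^{(s)}(\widetilde M,\widetilde L)\xrightarrow{\tfrac12\tilde\pi_*}I^{(s)}(M,L)\;,
\]
using the inclusion $\dot\AA^{(s)}(\bfM)\subset I^{(s)}(\widetilde M,\widetilde L)$ and the push-forward along the two-fold covering $\tilde\pi$ of \Cref{ss: cutting}. Each arrow is continuous linear by the results cited, so $E_m$ is continuous and linear. For $m\ge0$ the map $E^{\bfM}_m:\AA^m(\bfM)\to\dot\AA^{(0)}(\bfM)$ is a continuous inclusion by the last sentence of \Cref{p: E_m}, and composing with $(\bfpi_*)^{-1}$ and $\tfrac12\tilde\pi_*$ recovers the identity on $C^\infty(M\setminus L)$ (using~\eqref{tilde pi_* tilde pi^* = 2} together with the compatibility of $\bfpi_*$ and $\tilde\pi^*$ on supported sections from \Cref{ss: lift of diff ops}), so that $E_m:J^m(M,L)\to I^{(0)}(M,L)$ is the corresponding inclusion map, which gives the final assertion.

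The only thing that really needs checking is that $E_m$ is a \emph{partial extension map} in the required sense, i.e., that $R\circ E_m$ is the canonical inclusion $J^m(M,L)\subset J^{m'-k}(M,L)$; this is the analogue of the partial-extension property used implicitly when we say ``the surjectivity of~\eqref{R: I(M L) -> J(M L)} can be realized with partial extension maps.'' This follows by chasing the commutative diagram obtained by restricting~\eqref{CD: conormal seqs} to symbol-order pieces: $R\circ E^{\bfM}_m=\iota$ on $\AA^m(\bfM)$ by \Cref{p: E_m}, and $\bfpi_*$ intertwines the top and bottom rows of~\eqref{CD: conormal seqs} (this is precisely the content of~\eqref{CD: conormal seqs} together with~\eqref{bfpi_*: AA(bfM) cong J(M L)} and \Cref{c: dot AA(M) cong I_M(breve M partial M)}), so $R\circ E_m=R\circ\tfrac12\tilde\pi_*\circ(\text{inclusion})\circ E^{\bfM}_m\circ(\bfpi_*)^{-1}$ equals the canonical inclusion $J^m(M,L)\subset J^{m'-k}(M,L)$. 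I expect the main obstacle to be purely bookkeeping: keeping straight the two embeddings $\bfM\hookrightarrow\widetilde M$ and $\bfM\to M$ and verifying that the factor $\tfrac12$ from $\tilde\pi_*\tilde\pi^*=2$ is placed correctly so that $E_m$ restricts to the identity on smooth functions and is genuinely a right inverse of $R$ up to the canonical inclusion; there is no new analytic input beyond \Cref{p: E_m} and the already-established isomorphisms.
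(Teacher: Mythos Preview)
Your approach is exactly that of the paper: define $E_m$ as the composition
\[
J^m(M,L)\xrightarrow{(\bfpi_*)^{-1}}\AA^m(\bfM)\xrightarrow{E^{\bfM}_m}\dot\AA^{(s)}(\bfM)\xrightarrow{\bfpi_*}I^{(s)}(M,L)\;,
\]
and invoke the commutativity of~\eqref{CD: conormal seqs}. The paper's proof is two lines and does nothing more.

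However, your factor $\tfrac12$ is an error, and you correctly anticipated this as the bookkeeping trap. The map $\bfpi_*:\dot\AA^{(s)}(\bfM)\to I^{(s)}(M,L)$ is by definition (see~\eqref{bfpi_*: dot AA(bfM) -> I(M L)}) the restriction of $\tilde\pi_*$ via the embedding $\dot\AA(\bfM)\subset I(\widetilde M,\widetilde L)$; equivalently, the right-hand square of~\eqref{CD: tilde pi_* iota = bfpi_*} says $\tilde\pi_*\circ\iota=\bfpi_*$ on $\dot C^{-\infty}(\bfM)$, with no factor. The identity $\tilde\pi_*\tilde\pi^*=2$ is irrelevant here because you never apply $\tilde\pi^*$ in the composition: you embed $\bfM$ into \emph{one} sheet of $\widetilde M$ via $\iota$, not via $\tilde\pi^*$. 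With your $\tfrac12$ in place, $R\circ E_m$ equals $\tfrac12$ times the inclusion, so your map is not a partial extension map. Drop the $\tfrac12$ and the proof is correct and identical to the paper's.
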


\begin{proof}
By the commutativity of~\eqref{CD: conormal seqs}, we can take $E_m$ equal to the composition
\[
J^m(M,L) \xrightarrow{\bfpi_*^{-1}} \AA^m(\bfM) \xrightarrow{E_m} \dot\AA^{(s)}(\bfM) 
\xrightarrow{\bfpi_*} I^{(s)}(M,L)\;,
\]
where this map $E_m$ is given by \Cref{p: E_m}.
\end{proof}

\begin{cor}\label{c: C^infty(M) is dense in J(M L)}
$C^\infty(M)$ is dense in $J(M,L)$.
\end{cor}

\begin{proof}
Apply~\eqref{C^infty(M) subset J^(infty)(M L)}, \Cref{c: C^infty(M) is dense in I(M L),p: R: I(M L) -> J(M L) is a surj top hom}.
\end{proof}

\subsection{Action of $\Diff(M)$ on the conormal sequence}\label{ss: Diff(M) on the conormal seq}

According to \Cref{ss: diff opers on conormal distribs}, every $A\in\Diff(M)$ defines a continuous linear map $A$ on $I(M,L)$, which preserves $K(M,L)$ because $A$ is local. Therefore it induces a linear map $A$ on $J(M,L)$, which is continuous by \Cref{p: R: I(M L) -> J(M L) is a surj top hom}. This map satisfies the analog of~\eqref{A: I^(s)(M L E) -> I^[s-k](M L E)}.

The map $A$ on $J(M,L)$ can be also described as a restriction of $A$ on $C^{-\infty}(M,L)$ (\Cref{ss: action of Diff(M) on C^-infty(M L) and C^-infty_L(M)}). On the other hand,  according to \Cref{ss: diff ops on dot AA(M) and AA(M)}, the lift $\widetilde A\in\Diff(\bfM)$ defines continuous linear maps on the top spaces of~\eqref{CD: conormal seqs} which correspond to the operators defined by $A$ on the bottom spaces via the maps $\bfpi_*$. If $A\in\Diff(M,L)$, then it defines continuous endomorphisms $A$ of $J^{(s)}(M,L)$ and $J^m(M,L)$.

\subsection{Pull-back of elements of the conormal sequence}\label{ss: pull-back of the conormal seq}

Consider the notation and conditions of \Cref{ss: pull-back of conormal distribs}. By locality, the map~\eqref{phi^*: I(M L) -> I(M' L')} has a restriction $\phi^*:K(M,L)\to K(M',L')$. So it also induces a linear map $\phi^*:J(M,L)\to J(M',L')$,
which is continuous by \Cref{p: R: I(M L) -> J(M L) is a surj top hom}.

\subsection{Push-forward of elements of the conormal sequence}\label{ss: push-forward of the conormal seq}

Consider the notation and conditions of \Cref{ss: push-forward of conormal distribs}. As above, the map~\eqref{phi_*: I_c(M' L' Omega_fiber) -> I_c(M L)} has a restriction $\phi_*:K_{\text{\rm c}}(M',L';\Omega_{\text{\rm fiber}})\to K_{\text{\rm c}}(M,L)$. Thus it induces a linear map $\phi_*:J_{\text{\rm c}}(M',L';\Omega_{\text{\rm fiber}})\to J_{\text{\rm c}}(M,L)$, which is continuous by \Cref{p: R: I(M L) -> J(M L) is a surj top hom}.

\subsection{Case where $L$ is not transversely orientable}\label{ss: case where L is not transversely orientable}

If $L$ is not transversely orientable, we still have a tubular neighborhood $T$ of $L$ in $M$, but there is no defining function $x$ of $L$ in $T$ trivializing the projection $\varpi:T\to L$. We can cut $M$ along $L$ as well to produce a bounded compact manifold, $\bfM$, with a projection $\bfpi:\bfM\to M$ and a boundary collar $\bfT$ over $T$.

By using a boundary defining function $\bfx$ of $\bfM$, we get the same definitions, properties and descriptions of $C^{\pm\infty}(M,L)$ and $J(M,L)$ (\Cref{ss: C^-infty(M L),ss: J(M L),ss: description of J(M L)}).

$C^{-\infty}_L(M)$ and $K(M,L)$ also have the same definitions (\Cref{ss: C^-infty_L(M),ss: K(M L)}). However~\eqref{dot C^-infty_partial bfM(bfM) equiv C^-infty_L(M) oplus C^-infty_L(M)} and~\eqref{KK(bfM) equiv K(M L) oplus K(M L)} are not true because the covering map $\bfpi:\partial\bfM\to L$ is not trivial, and the descriptions given in \Cref{p: bigoplus_m C^0_m -> C^-infty_L(M),p: bigoplus_m C^1_m -> K(M L)} need a slight modification. This problem can be solved as follows.

Let $\check\pi:\check L\to L$ \index{$\check\pi$} \index{$\check L$} denote the two-fold covering of transverse orientations of $L$, and let $\check\sigma$ \index{$\check\sigma$} denote its deck transformation different from the identity. Since the lift of $NL$ to $\check L$ is trivial, $\check\pi$ on $\check L\equiv\{0\}\times\check L$ can be extended to a two-fold covering $\check\pi:\check T:=(-\epsilon,\epsilon)_x\times\check L\to T$, \index{$\check T$} for some $\epsilon>0$. Its deck transformation different from the identity is an extension of $\check\sigma$ on $\check L\equiv\{0\}\times\check L$, also denoted by $\check\sigma$. Then $\check L$ is transversely oriented in $\check T$; i.e., its normal bundle $N\check L$ is trivial. Thus $C^{-\infty}_{\check L}(\check T)$ and $K(\check T,\check L)$ satisfy~\eqref{dot C^-infty_partial bfM(bfM) equiv C^-infty_L(M) oplus C^-infty_L(M)},~\eqref{KK(bfM) equiv K(M L) oplus K(M L)} and \Cref{p: bigoplus_m C^0_m -> C^-infty_L(M),p: bigoplus_m C^1_m -> K(M L)}. Since $N\check L\equiv\check\pi^*NL$, the map $\check\sigma$ lifts to a homomorphism of $N\check L$, which induces a homomorphism of $\Omega^{-1}NL$ also denoted by $\check\sigma$. Let $L_{-1}$ be the union of non-transversely oriented connected components of $L$, and $L_1$ the union of its transversely oriented components. Correspondingly, let $\check L_{\pm1}=\check\pi^{-1}(L_{\pm1})$ and $\check T_{\pm1}=(-\epsilon,\epsilon)\times\check L_{\pm1}$. Since $\check\sigma^*x=\pm x$ on $T_{\pm1}$, \Cref{p: bigoplus_m C^0_m -> C^-infty_L(M),p: bigoplus_m C^1_m -> K(M L)} become true in this case by replacing $C^r(L;\Omega^{-1}NL)$ ($r\in\Z\cup\{\pm\infty\}$) with the direct sum of the spaces
\[
\{\,u\in C^r(\check L_{\pm1};\Omega^{-1}N\check L_{\pm1})\mid\check\sigma^*u=\pm u\,\}\;.
\]

Now the other results about $C^{-\infty}_L(M)$ and $K(M,L)$, indicated in \Cref{ss: C^-infty_L(M),ss: description of C^-infty_L(M),ss: K(M L),ss: description of K(M L)}, can be obtained by using these extensions of \Cref{p: bigoplus_m C^0_m -> C^-infty_L(M),p: bigoplus_m C^1_m -> K(M L)} instead of~\eqref{dot C^-infty_partial bfM(bfM) equiv C^-infty_L(M) oplus C^-infty_L(M)} and~\eqref{KK(bfM) equiv K(M L) oplus K(M L)}. \Cref{conormal sequence,ss: Diff(M) on the conormal seq,ss: pull-back of the conormal seq,ss: push-forward of the conormal seq} also have straightforward extensions.

\section{Dual-conormal sequence}\label{s: dual-conormal seq}

\subsection{The spaces $K'(M,L)$ and $J'(M,L)$}\label{ss: K'(M,L) and J'(M,L)}

Consider the notation of \Cref{s: conormal seq} assuming that $L$ is transversely oriented; the extension to the non-transversely orientable case can be made like in \Cref{ss: case where L is not transversely orientable}. Like in \Cref{ss: dual-conormal distribs,ss: AA'(M)}, let \index{$K'(M,L)$} \index{$J'(M,L)$} 
\[
K'(M,L)=K(M,L;\Omega)'\;,\quad J'(M,L)=J(M,L;\Omega)'\;.
\]
By~\eqref{bfpi_*: AA(bfM) cong J(M L)} and~\eqref{KK(bfM) equiv K(M L) oplus K(M L)},
\begin{equation}\label{dot AA'(bfM) equiv J'(M L)}
\KK'(\bfM)\equiv K'(M,L)\oplus K'(M,L)\;,\quad\dot\AA'(\bfM)\equiv J'(M,L)\;.
\end{equation}
Let also \index{$K^{\prime\,(s)}(M,L)$} \index{$K^{\prime\,m}(M,L)$} \index{$J^{\prime\,(s)}(M,L)$} \index{$J^{\prime\,m}(M,L)$}
\begin{equation}\label{K^prime[s](M L)}
\left\{
\begin{gathered}
K^{\prime\,(s)}(M,L)=K^{(-s)}(M,L;\Omega)'\;,\quad K^{\prime\,m}(M,L)=K^{-m}(M,L;\Omega)'\;,\\
J^{\prime\,(s)}(M,L)=J^{(-s)}(M,L;\Omega)'\;,\quad J^{\prime\,m}(M,L)=J^{-m}(M,L;\Omega)'\;,
\end{gathered}
\right.
\end{equation}
which satisfy the analogs of~\eqref{dot AA'(bfM) equiv J'(M L)}. Like in \Cref{ss: AA'(M)}, for $s<s'$ and $m<m'$, we get continuous linear restriction maps
\[
K^{\prime\,(s')}(M,L)\to K^{\prime\,(s)}(M,L)\;,\quad K^{\prime\,m}(M,L)\to K^{\prime\,m'}(M,L)\;,
\]
and continuous injections
\[
J^{\prime\,(s')}(M,L)\subset J^{\prime\,(s)}(M,L)\;,\quad J^{\prime\,m'}(M,L)\subset J^{\prime\,m}(M,L)\;,
\]
forming projective spectra. By~\eqref{dot AA'(bfM) equiv J'(M L)}, its analogs for the spaces~\eqref{K^prime[s](M L)} and according to \Cref{ss: AA'(M)}, the spaces $K^{\prime\,(s)}(M,L)$ and $K^{\prime\,m}(M,L)$ satisfy the analogs of~\eqref{sandwich for I'} and~\eqref{varprojlim I'^(s)(M L) equiv varprojlim I'^m(M L)}, and the spaces $J^{\prime\,(s)}(M,L)$ and $J^{\prime\,m}(M,L)$ satisfy the analogs of~\eqref{sandwich for ZZ} and~\eqref{bigcap_s ZZ^(s)(M) = bigcap_m ZZ^m(M)}. Using~\eqref{dot AA'(bfM) equiv J'(M L)}, we get the following consequences of \Cref{p: KK'(M) AA'(M) dot AA'(M) are complete and Montel,c: KK^prime [s](M) AA^prime [s](M) dot AA^prime [s](M) are bornological,c: AA'(M) equiv varprojlim AA^prime [s](M)}.

\begin{cor}\label{c: K'(M L) and J'(M L) are complete and Montel}
$K'(M,L)$ and $J'(M,L)$ are complete Montel spaces.
\end{cor}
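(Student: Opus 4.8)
The plan is to read off both assertions from \Cref{p: KK'(M) AA'(M) dot AA'(M) are complete and Montel} applied to the cut manifold $\bfM$, using the TVS-identifications in~\eqref{dot AA'(bfM) equiv J'(M L)}. For $J'(M,L)$ this is immediate: the second identity of~\eqref{dot AA'(bfM) equiv J'(M L)} is a TVS-isomorphism $J'(M,L)\equiv\dot\AA'(\bfM)$, and $\dot\AA'(\bfM)$ is a complete Montel space by \Cref{p: KK'(M) AA'(M) dot AA'(M) are complete and Montel}; since completeness and the Montel property are TVS-invariants, so is $J'(M,L)$.

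For $K'(M,L)$ the first identity of~\eqref{dot AA'(bfM) equiv J'(M L)} only gives a TVS-isomorphism of the \emph{square} $K'(M,L)\oplus K'(M,L)$ with the complete Montel space $\KK'(\bfM)$, so the remaining point is the elementary fact that if a finite locally convex direct sum $X\oplus X$ is a complete Montel space, then so is $X$. Indeed, $X$ is a closed subspace of $X\oplus X$ (the null space of the continuous projection onto the other summand), hence complete; it is also a continuous quotient of $X\oplus X$, hence barreled; and any bounded $B\subset X$ is bounded, hence relatively compact, in $X\oplus X$, so its image under the continuous first projection is a compact subset of $X$ containing $B$, showing that $B$ is relatively compact in $X$ and that $X$ is semi-Montel, hence Montel. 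Applying this with $X=K'(M,L)$ finishes the proof.

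Alternatively, one may argue directly as in \Cref{c: I'(M L) is complete and Montel}, without passing to $\bfM$: the $\Omega M$-versions of \Cref{c: K(M L) is barreled,c: J(M L) is barreled} say that $K(M,L;\Omega)$ and $J(M,L;\Omega)$ are ultrabornological, hence bornological, so their strong duals $K'(M,L)$ and $J'(M,L)$ are complete; and the $\Omega M$-versions of \Cref{c: K(M L) is acyclic and Montel,c: J(M L) is acyclic and Montel} say that $K(M,L;\Omega)$ and $J(M,L;\Omega)$ are Montel, so their strong duals are Montel.

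I do not expect a genuine obstacle here; the only step that is not a direct invocation of an earlier result is the transfer of ``complete Montel'' across the direct sum decomposition $\KK'(\bfM)\equiv K'(M,L)\oplus K'(M,L)$, which is routine.
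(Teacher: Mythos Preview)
Your proposal is correct and follows essentially the same route as the paper: the corollary is stated as a direct consequence of~\eqref{dot AA'(bfM) equiv J'(M L)} and \Cref{p: KK'(M) AA'(M) dot AA'(M) are complete and Montel}, and you spell out the routine step (passing from the direct sum $\KK'(\bfM)\equiv K'(M,L)\oplus K'(M,L)$ to a single summand) that the paper leaves implicit. Your alternative argument via the $\Omega M$-versions of \Cref{c: K(M L) is barreled,c: J(M L) is barreled,c: K(M L) is acyclic and Montel,c: J(M L) is acyclic and Montel} is also valid and is exactly the template used in the proof of \Cref{p: KK'(M) AA'(M) dot AA'(M) are complete and Montel} itself.
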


\begin{cor}\label{c: K^prime [s](M L) J^prime [s](M L) are bornological}
$K^{\prime\,(s)}(M,L)$ and $J^{\prime\,(s)}(M,L)$ are bornological and barreled.
\end{cor}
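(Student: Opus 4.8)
The plan is to obtain this as a direct consequence of the boundary counterpart \Cref{c: KK^prime [s](M) AA^prime [s](M) dot AA^prime [s](M) are bornological}, applied to the compact manifold with boundary $\bfM$, together with the cutting isomorphisms already available at the level of the filtration steps. First I would make explicit the step-level analogs of \eqref{dot AA'(bfM) equiv J'(M L)}: transposing the restriction of \eqref{bfpi_*: AA(bfM) cong J(M L)} to $\AA^{(-s)}(\bfM;\Omega)\cong J^{(-s)}(M,L;\Omega)$ yields a TVS-identity $\dot\AA^{\prime\,(s)}(\bfM)\equiv J^{\prime\,(s)}(M,L)$, and transposing the $\Omega M$-version of \eqref{KK(bfM) equiv K(M L) oplus K(M L)} restricted to $\KK^{(-s)}(\bfM;\Omega)$ yields $\KK^{\prime\,(s)}(\bfM)\equiv K^{\prime\,(s)}(M,L)\oplus K^{\prime\,(s)}(M,L)$. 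These are precisely the identities referred to in the sentence preceding the statement.

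Next, by \Cref{c: KK^prime [s](M) AA^prime [s](M) dot AA^prime [s](M) are bornological} applied to $\bfM$, the spaces $\dot\AA^{\prime\,(s)}(\bfM)$ and $\KK^{\prime\,(s)}(\bfM)$ are bornological and barreled. The first identity above then gives that $J^{\prime\,(s)}(M,L)$ is bornological and barreled. For $K^{\prime\,(s)}(M,L)$, observe that it is a topological direct summand of $\KK^{\prime\,(s)}(\bfM)$, hence isomorphic to a quotient of $\KK^{\prime\,(s)}(\bfM)$; since quotients of bornological (resp.\ barreled) spaces are bornological (resp.\ barreled) \cite{Schaefer1971}, $K^{\prime\,(s)}(M,L)$ is bornological and barreled as well.

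As an alternative route, which I would at least mention, one can argue exactly as in \Cref{c: I'^(s)(M L) is bornological}: the spaces $K^{(-s)}(M,L;\Omega)$ and $J^{(-s)}(M,L;\Omega)$ are reflexive Fr\'echet spaces by the $\Omega M$-versions of \Cref{c: K^(s)(M L) is a totally reflexive Frechet sp,c: J^(s)(M L) is totally reflexive Frechet sps}, so their strong duals $K^{\prime\,(s)}(M,L)$ and $J^{\prime\,(s)}(M,L)$ are bornological \cite[Corollary~1 of~IV.6.6]{Schaefer1971}, and therefore barreled \cite[IV.6.6]{Schaefer1971}.

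There is essentially no obstacle here: the only mildly delicate point is the passage of ``bornological'' and ``barreled'' to the direct summand $K^{\prime\,(s)}(M,L)$, which is immediate once the summand is viewed as a quotient of the direct sum; everything else is bookkeeping with the transposed cutting isomorphisms established above.
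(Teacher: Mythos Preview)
Your proposal is correct and takes essentially the same approach as the paper: the result is deduced from \Cref{c: KK^prime [s](M) AA^prime [s](M) dot AA^prime [s](M) are bornological} via the step-level analogs of~\eqref{dot AA'(bfM) equiv J'(M L)}, and your alternative route through reflexive Fr\'echet spaces mirrors the proof of \Cref{c: I'^(s)(M L) is bornological}. The only thing you add beyond the paper's terse reference is the explicit remark that $K^{\prime\,(s)}(M,L)$ inherits bornologicity and barreledness as a quotient (direct summand) of $\KK^{\prime\,(s)}(\bfM)$, which is a fine way to close that small gap.
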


\begin{cor}\label{c: J'(M L) equiv varprojlim J^prime [s](M L)}
$K'(M,L)\equiv\varprojlim K^{\prime\,(s)}(M,L)$ and $J'(M,L)=\bigcap_sJ^{\prime\,(s)}(M,L)$.
\end{cor}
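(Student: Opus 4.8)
The statement to prove is \Cref{c: J'(M L) equiv varprojlim J^prime [s](M L)}: that $K'(M,L)\equiv\varprojlim K^{\prime\,(s)}(M,L)$ and $J'(M,L)\equiv\bigcap_sJ^{\prime\,(s)}(M,L)$. The strategy is to transport everything through the cutting map $\bfpi$, exactly as the surrounding results are proved. The key identities are~\eqref{dot AA'(bfM) equiv J'(M L)}, which give $\KK'(\bfM)\equiv K'(M,L)\oplus K'(M,L)$ and $\dot\AA'(\bfM)\equiv J'(M,L)$, together with their analogs for the filtration pieces~\eqref{K^prime[s](M L)}, i.e.\ $\KK^{\prime\,(s)}(\bfM)\equiv K^{\prime\,(s)}(M,L)\oplus K^{\prime\,(s)}(M,L)$ and $\dot\AA^{\prime\,(s)}(\bfM)\equiv J^{\prime\,(s)}(M,L)$. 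These TVS-identities are compatible with the respective projective/intersection spectra, because the restriction maps in the spectra for $K'$ and $J'$ are by construction the transposes of the inclusion maps defining the Sobolev filtrations of $K(M,L;\Omega)$ and $J(M,L;\Omega)$, which correspond under $\bfpi_*$ to those of $\KK(\bfM;\Omega)$ and $\dot\AA(\bfM;\Omega)$.

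**Main steps.** First I would invoke \Cref{c: AA'(M) equiv varprojlim AA^prime [s](M)} applied to $\bfM$: this gives $\KK'(\bfM)\equiv\varprojlim\KK^{\prime\,(s)}(\bfM)$ and $\dot\AA'(\bfM)\equiv\bigcap_s\dot\AA^{\prime\,(s)}(\bfM)$. Second, I would note that a projective limit (resp.\ intersection) commutes with finite direct sums, so $\varprojlim\bigl(K^{\prime\,(s)}(M,L)\oplus K^{\prime\,(s)}(M,L)\bigr)\equiv\varprojlim K^{\prime\,(s)}(M,L)\oplus\varprojlim K^{\prime\,(s)}(M,L)$ as TVSs. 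Third, I would transport through~\eqref{dot AA'(bfM) equiv J'(M L)} and its filtration analogs: the left-hand sides of the two displays in \Cref{c: AA'(M) equiv varprojlim AA^prime [s](M)} for $\bfM$ become $K'(M,L)\oplus K'(M,L)$ and $J'(M,L)$, while the right-hand sides become $\bigl(\varprojlim K^{\prime\,(s)}(M,L)\bigr)^{\oplus2}$ and $\bigcap_sJ^{\prime\,(s)}(M,L)$ respectively. Since all identifications are TVS-isomorphisms compatible with the spectra, one reads off $K'(M,L)\oplus K'(M,L)\equiv\bigl(\varprojlim K^{\prime\,(s)}(M,L)\bigr)^{\oplus2}$, hence $K'(M,L)\equiv\varprojlim K^{\prime\,(s)}(M,L)$ (the direct summand being a complemented, hence topological, factor), and directly $J'(M,L)\equiv\bigcap_sJ^{\prime\,(s)}(M,L)$. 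In the non-transversely-orientable case one argues via the double cover $\check\pi:\check L\to L$ as in \Cref{ss: case where L is not transversely orientable}, replacing the splitting by the $\check\sigma$-eigenspace decomposition; the same projective-limit bookkeeping goes through.

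**Expected obstacle.** The only genuinely nontrivial point is verifying that the transpose restriction maps appearing in the definitions of the $K'$ and $J'$ spectra really do correspond, under the TVS-identities~\eqref{bfpi_*: AA(bfM) cong J(M L)}, \eqref{KK(bfM) equiv K(M L) oplus K(M L)}, \eqref{dot AA^(s)(M) equiv I^(s)_M(breve M partial M)}, \eqref{dot AA^m(M) = I^m_M(breve M partial M)}, to the corresponding maps of the $\KK'(\bfM)$ and $\dot\AA'(\bfM)$ spectra — i.e.\ that the whole projective/intersection \emph{systems}, not merely the individual spaces, are identified. This is a routine diagram check using that $\bfpi_*$ intertwines the relevant Sobolev and symbol filtrations and that taking transposes reverses arrows, but it is the step that must be spelled out. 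Everything else is a direct citation of \Cref{c: AA'(M) equiv varprojlim AA^prime [s](M)} for $\bfM$. Concretely:

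\begin{proof}
Apply \Cref{c: AA'(M) equiv varprojlim AA^prime [s](M)} to $\bfM$, obtaining TVS-identities $\KK'(\bfM)\equiv\varprojlim\KK^{\prime\,(s)}(\bfM)$ and $\dot\AA'(\bfM)\equiv\bigcap_s\dot\AA^{\prime\,(s)}(\bfM)$ compatible with the corresponding spectra. By~\eqref{dot AA'(bfM) equiv J'(M L)} and the analogous identities $\KK^{\prime\,(s)}(\bfM)\equiv K^{\prime\,(s)}(M,L)\oplus K^{\prime\,(s)}(M,L)$ and $\dot\AA^{\prime\,(s)}(\bfM)\equiv J^{\prime\,(s)}(M,L)$, which are the transposes of the filtration identities coming from~\eqref{bfpi_*: AA(bfM) cong J(M L)},~\eqref{KK(bfM) equiv K(M L) oplus K(M L)},~\eqref{dot AA^(s)(M) equiv I^(s)_M(breve M partial M)} and~\eqref{dot AA^m(M) = I^m_M(breve M partial M)}, and hence intertwine the restriction maps of the spectra, these identities transport to
\[
K'(M,L)\oplus K'(M,L)\equiv\varprojlim\bigl(K^{\prime\,(s)}(M,L)\oplus K^{\prime\,(s)}(M,L)\bigr)\;,\quad
J'(M,L)\equiv\bigcap_sJ^{\prime\,(s)}(M,L)\;.
\]
Since the projective limit commutes with the finite direct sum, the right-hand side of the first identity equals $\bigl(\varprojlim K^{\prime\,(s)}(M,L)\bigr)\oplus\bigl(\varprojlim K^{\prime\,(s)}(M,L)\bigr)$, and restricting to the first summand (a complemented topological subspace) yields $K'(M,L)\equiv\varprojlim K^{\prime\,(s)}(M,L)$. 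In the non-transversely-orientable case, replace the splitting~\eqref{KK(bfM) equiv K(M L) oplus K(M L)} by the $\check\sigma$-eigenspace decomposition over the cover $\check\pi:\check L\to L$ as in \Cref{ss: case where L is not transversely orientable}; the same argument applies.
\end{proof}
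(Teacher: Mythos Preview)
Your proof is correct and follows exactly the approach the paper takes: the corollary is stated as a direct consequence of \Cref{c: AA'(M) equiv varprojlim AA^prime [s](M)} for $\bfM$, transported via the identities~\eqref{dot AA'(bfM) equiv J'(M L)} and their filtration analogs. Your write-up is simply more explicit about the direct-sum bookkeeping and the compatibility of the spectra, which the paper leaves implicit.
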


Like in \Cref{ss: AA'(M)}, the versions of~\eqref{C^infty(M L) = C^-infty(M L Omega)'},~\eqref{C^infty(M) subset J^(infty)(M L)} and \Cref{c: Cinftyc(M setminus L) is dense in J(M L)} with $\Omega M$ induce continuous inclusions
\begin{equation}\label{C^infty(M L) subset J'(M L) subset C^-infty(M)}
C^{-\infty}(M)\supset J'(M,L)\supset C^\infty(M,L)\;.
\end{equation}

\subsection{A description of $J'(M,L)$}\label{ss: description of J'(M L)}

With the notation and conditions of \Cref{ss: description of J(M L)}, the identity~\eqref{dot AA'(bfM) equiv J'(M L)} and \Cref{c: dot AA^prime m(M) equiv x^m Hb^-infty(M),c: dot AA'(M) equiv bigcap_m x^m Hb^-infty(M),c: Cinftyc(mathring M) is dense in dot AA'(M)} have the following consequences.

\begin{cor}\label{c: J^prime m(M L) cong bfx^m Hb^-infty(bfM)}
$J^{\prime\,m}(M,L)\cong\bfx^m\Hb^{-\infty}(\bfM)=\bfx^{m-\frac12}H^{-\infty}(\mathring\bfM)$ $(m\in\R)$.
\end{cor}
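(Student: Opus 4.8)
\textbf{Proof proposal for \Cref{c: J^prime m(M L) cong bfx^m Hb^-infty(bfM)}.} The plan is to transport the already-established description of $\dot\AA^{\prime\,m}$ through the cutting isomorphism. First I would invoke \Cref{c: dot AA^prime m(M) equiv x^m Hb^-infty(M)}, applied to the compact manifold with boundary $\bfM$ in place of $M$ and to its boundary defining function $\bfx$; this gives the TVS-identities $\dot\AA^{\prime\,m}(\bfM)\equiv\bfx^m\Hb^{-\infty}(\bfM)\equiv\bfx^{m-1/2}H^{-\infty}(\mathring\bfM)$. One only needs to check that the standing hypotheses \ref{i: g is of bounded geometry} and \ref{i: A'} are in force, which is arranged in \Cref{ss: description of J(M L)} by choosing the b-metric $\bfg$ on $\bfM$ accordingly; alternatively, by \Cref{r: Cinftyc(mathring M) is dense in dot AA'(M)}, the relevant first identities of \Cref{c: dot AA^prime m(M) equiv x^m Hb^-infty(M)} are independent of the metric, so no genuine restriction is incurred.

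Next I would combine this with the identity $\dot\AA^{\prime\,m}(\bfM)\equiv J^{\prime\,m}(M,L)$. This last identity is the $m$-graded analog of the second identity of~\eqref{dot AA'(bfM) equiv J'(M L)}: it is obtained by transposing the density-bundle version of the TVS-isomorphism $\bfpi_*:\AA^m(\bfM)\xrightarrow{\cong}J^m(M,L)$ furnished by~\eqref{bfpi_*: AA(bfM) cong J(M L)} (which, as noted in \Cref{ss: J(M L)}, is compatible with the bound filtration), using the definition $\dot\AA^{\prime\,m}(M)=\AA^{-m}(M;\Omega)'$ and $J^{\prime\,m}(M,L)=J^{-m}(M,L;\Omega)'$ from \Cref{ss: AA'(M)} and~\eqref{K^prime[s](M L)}. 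Chaining the two isomorphisms gives $J^{\prime\,m}(M,L)\cong\bfx^m\Hb^{-\infty}(\bfM)=\bfx^{m-1/2}H^{-\infty}(\mathring\bfM)$, which is exactly the assertion.

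Since every ingredient is a citation to an earlier result of the paper, there is no real obstacle; the only point requiring a word of care is bookkeeping of the duality conventions (the sign flip $m\mapsto -m$ and the $\Omega$-twist) so that the weight $\bfx^m$ on the dual side matches $\bfx^{-m}$-spaces on the predual side, and the half-power shift $\bfx^{m-1/2}$ versus $\bfx^{m+1/2}$ coming from~\eqref{L^2(M bOmega^1/2) equiv x^-1/2 L^2(M Omega^1/2)} in the conversion between $\bOmega^{1/2}$-valued and scalar b-Sobolev spaces. In short:

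\begin{proof}
Apply the version of \Cref{c: AA^m(M) equiv x^m Hb^infty(M)} with $\Omega M$ to $\bfM$, giving $\AA^{-m}(\bfM;\Omega)\equiv\bfx^{-m}\Hb^\infty(\bfM;\Omega)$; dualizing (and using the notation of \Cref{ss: AA'(M)}) yields $\dot\AA^{\prime\,m}(\bfM)\equiv\bfx^m\Hb^{-\infty}(\bfM)=\bfx^{m-1/2}H^{-\infty}(\mathring\bfM)$, as in \Cref{c: dot AA^prime m(M) equiv x^m Hb^-infty(M)}. Now compose with the $m$-graded form of the second identity of~\eqref{dot AA'(bfM) equiv J'(M L)}, namely $\dot\AA^{\prime\,m}(\bfM)\equiv J^{\prime\,m}(M,L)$, obtained by transposing the density-bundle version of the TVS-isomorphism $\bfpi_*:\AA^{-m}(\bfM)\xrightarrow{\cong}J^{-m}(M,L)$ from~\eqref{bfpi_*: AA(bfM) cong J(M L)}. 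This gives $J^{\prime\,m}(M,L)\cong\bfx^m\Hb^{-\infty}(\bfM)=\bfx^{m-1/2}H^{-\infty}(\mathring\bfM)$.
\end{proof}
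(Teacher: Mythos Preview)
Your proposal is correct and follows essentially the same approach as the paper: the paper presents this corollary as an immediate consequence of the $m$-graded analog of~\eqref{dot AA'(bfM) equiv J'(M L)} (explicitly noted after~\eqref{K^prime[s](M L)}) together with \Cref{c: dot AA^prime m(M) equiv x^m Hb^-infty(M)} applied to $\bfM$, which is exactly what you do. Your added bookkeeping remarks about the sign flip and the half-power shift are accurate but not strictly needed, since they are already absorbed in the cited results.
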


\begin{cor}\label{c: J'(M L) cong bigcap_m bfx^m Hb^-infty(bfM)}
$J'(M,L)\cong\bigcap_m\bfx^m\Hb^{-\infty}(\bfM)=\bigcap_m\bfx^mH^{-\infty}(\mathring\bfM)$.
\end{cor}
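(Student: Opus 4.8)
The plan is to obtain this corollary by chaining the already-established description of the graded pieces $J^{\prime\,m}(M,L)$ with the projective-limit description of $J'(M,L)$, exactly as \Cref{c: dot AA'(M) equiv bigcap_m x^m Hb^-infty(M)} was deduced from \Cref{c: AA'(M) equiv varprojlim AA^prime [s](M),c: dot AA^prime m(M) equiv x^m Hb^-infty(M)}.

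First I would invoke \Cref{c: J'(M L) equiv varprojlim J^prime [s](M L)}, which gives the TVS-identity $J'(M,L)\equiv\bigcap_sJ^{\prime\,(s)}(M,L)$, the intersection carrying the projective-limit topology as $s\uparrow\infty$. Next, by the analog for $J$ of~\eqref{bigcap_s ZZ^(s)(M) = bigcap_m ZZ^m(M)} --- valid because the spaces $J^{\prime\,(s)}(M,L)$ and $J^{\prime\,m}(M,L)$ satisfy the analogs of the sandwich inclusions~\eqref{sandwich for ZZ} --- the Sobolev-order projective spectrum may be replaced by the symbol-order one, so that $\bigcap_sJ^{\prime\,(s)}(M,L)\equiv\bigcap_mJ^{\prime\,m}(M,L)$ as $m\downarrow-\infty$. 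Then I would apply \Cref{c: J^prime m(M L) cong bfx^m Hb^-infty(bfM)} term by term and pass to the limit, obtaining $J'(M,L)\cong\bigcap_m\bfx^m\Hb^{-\infty}(\bfM)$. Finally, since $\bfx^m\Hb^{-\infty}(\bfM)=\bfx^{m-1/2}H^{-\infty}(\mathring\bfM)$ for every $m\in\R$ (again \Cref{c: J^prime m(M L) cong bfx^m Hb^-infty(bfM)}), reindexing by $m'=m-1/2$, which still ranges over all of $\R$, gives $\bigcap_m\bfx^m\Hb^{-\infty}(\bfM)=\bigcap_m\bfx^mH^{-\infty}(\mathring\bfM)$.

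The only point deserving a word of care is that each identification above must be read as an identity of LCSs equipped with the relevant projective-limit topology, not merely of underlying vector spaces; but since every ingredient is a TVS-isomorphism compatible with the corresponding projective spectra, the composite is automatically a TVS-isomorphism of projective limits. I do not expect a genuine obstacle here: the corollary is a bookkeeping consequence of \Cref{c: J^prime m(M L) cong bfx^m Hb^-infty(bfM),c: J'(M L) equiv varprojlim J^prime [s](M L)} together with the sandwich inclusions~\eqref{sandwich for ZZ} transported to $J$, which is precisely why it is stated as a corollary rather than a proposition.
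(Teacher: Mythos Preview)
Your proposal is correct and follows essentially the same approach as the paper. The only difference is organizational: the paper transfers the result directly from \Cref{c: dot AA'(M) equiv bigcap_m x^m Hb^-infty(M)} via the TVS-identity $\dot\AA'(\bfM)\equiv J'(M,L)$ of~\eqref{dot AA'(bfM) equiv J'(M L)}, whereas you re-run the same projective-limit argument on the $J$ side using the $J$-analogs of each ingredient --- which is precisely how the paper proved the $\dot\AA'$ version in the first place.
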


\begin{cor}\label{c: Cinftyc(M setminus L) is dense in J'(M L)}
$\Cinftyc(M\setminus L)$ is dense in every $J^{\prime\,m}(M,L)$ and in $J'(M,L)$. Therefore the right-hand side inclusion of~\eqref{C^infty(M L) subset J'(M L) subset C^-infty(M)} is also dense.
\end{cor}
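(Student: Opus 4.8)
The plan is to transport everything to the boundary picture via the isomorphisms~\eqref{dot AA'(bfM) equiv J'(M L)}, and then invoke the corresponding statement for $\dot\AA'(\bfM)$ that was already established. Concretely, the identity $\dot\AA'(\bfM)\equiv J'(M,L)$ of~\eqref{dot AA'(bfM) equiv J'(M L)} is induced by $\bfpi_*$, and its compatibility with the filtrations gives $\dot\AA^{\prime\,m}(\bfM)\equiv J^{\prime\,m}(M,L)$ for every $m\in\R$ (this is part of the same statement, coming from~\eqref{bfpi_*: AA(bfM) cong J(M L)} being filtration-compatible and transposing). Under $\bfpi_*$, the space $\Cinftyc(\mathring\bfM)$ corresponds to $\Cinftyc(M\setminus L)$ because $\bfpi$ restricts to the identity diffeomorphism $\mathring\bfM\to M\setminus L$. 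Hence \Cref{c: Cinftyc(mathring M) is dense in dot AA'(M)}, applied to $\bfM$ in place of $M$ (so with $\mathring\bfM$ in place of $\mathring M$), states exactly that $\Cinftyc(\mathring\bfM)$ is dense in every $\dot\AA^{\prime\,m}(\bfM)$ and in $\dot\AA'(\bfM)$; transporting along the TVS-isomorphisms of~\eqref{dot AA'(bfM) equiv J'(M L)} yields that $\Cinftyc(M\setminus L)$ is dense in every $J^{\prime\,m}(M,L)$ and in $J'(M,L)$.

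For the last sentence, I would argue directly: by~\eqref{C^infty(M L) subset J'(M L) subset C^-infty(M)} the inclusion $C^\infty(M,L)\subset J'(M,L)$ is continuous, and since $\Cinftyc(M\setminus L)\subset C^\infty(M,L)\subset J'(M,L)$ with $\Cinftyc(M\setminus L)$ already dense in $J'(M,L)$ by the previous step, the intermediate subspace $C^\infty(M,L)$ is a fortiori dense in $J'(M,L)$.

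I expect no real obstacle here: the proof is a routine transport of \Cref{c: Cinftyc(mathring M) is dense in dot AA'(M)} (together with \Cref{c: dot AA^prime m(M) equiv x^m Hb^-infty(M),c: dot AA'(M) equiv bigcap_m x^m Hb^-infty(M)}, which underlie it) across the isomorphisms~\eqref{dot AA'(bfM) equiv J'(M L)}, exactly parallel to the way \Cref{c: J^prime m(M L) cong bfx^m Hb^-infty(bfM),c: J'(M L) cong bigcap_m bfx^m Hb^-infty(bfM)} are deduced in this same subsection. The only mild point to keep in mind is that \Cref{c: Cinftyc(mathring M) is dense in dot AA'(M)} is stated for a compact manifold with boundary $M$, so one must note that $\bfM$ is such a manifold (which it is, by construction in \Cref{ss: cutting}), and that the density statement for $\dot\AA'(\bfM)$ does not require the bounded-geometry hypotheses~\ref{i: g is of bounded geometry} and~\ref{i: A'}, as remarked in \Cref{r: Cinftyc(mathring M) is dense in dot AA'(M)}.
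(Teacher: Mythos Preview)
Your proposal is correct and follows essentially the same approach as the paper: the corollary is stated there as a direct consequence of the identity~\eqref{dot AA'(bfM) equiv J'(M L)} together with \Cref{c: Cinftyc(mathring M) is dense in dot AA'(M)} (and its companions \Cref{c: dot AA^prime m(M) equiv x^m Hb^-infty(M),c: dot AA'(M) equiv bigcap_m x^m Hb^-infty(M)}), transported via $\bfpi_*$ and the identification $\mathring\bfM\equiv M\setminus L$. Your handling of the last sentence, deducing density of $C^\infty(M,L)$ from that of the smaller subspace $\Cinftyc(M\setminus L)$, is also exactly what is intended.
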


The analog of \Cref{r: Cinftyc(mathring M) is dense in dot AA'(M)} makes sense for $J'(M,L)$.

\subsection{Description of $K'(M,L)$}\label{ss: description of K'(M L)}

The version of \Cref{p: bigoplus_m C^1_m -> K(M L)} with $\Omega M$ has the following direct consequence, where we set
\[
C^2_m=C^{\infty}(L;\Omega^{-1}NL\otimes\Omega M)'=C^{\infty}(L;\Omega)'=C^{-\infty}(L)
\]
for every $m\in\N_0$.

\begin{cor}\label{p: K'(M L) cong prod_m C^2_m}
The transposes of the versions of~\eqref{bigoplus_m C^1_m -> K(M L)} and~\eqref{bigoplus_m<-s-1/2 C^1_m cong K^(s)(M L)} with $\Omega M$ are TVS-isomorphisms,
\[
K'(M,L)\xrightarrow{\cong}\prod_{m=0}^\infty C^2_m\;,\quad
K^{\prime\,(s)}(M,L)\xrightarrow{\cong}\prod_{m<s-1/2}C^2_m\quad(s>1/2)\;.
\]
\end{cor}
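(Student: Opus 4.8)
The plan is to deduce the statement by transposing the $\Omega M$-version of \Cref{p: bigoplus_m C^1_m -> K(M L)} and identifying the strong dual of a locally convex direct sum with the topological product of the strong duals of its summands.

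First I would do the bundle bookkeeping. In the $\Omega M$-version of~\eqref{bigoplus_m C^1_m -> K(M L)} the $m$-th summand is $C^\infty(L;\Omega^{-1}NL\otimes\Omega M)$, and since $(\Omega M)|_L\equiv\Omega_LM\equiv\Omega NL\otimes\Omega L$ (\Cref{ss: Dirac sections}) this summand is canonically $C^\infty(L;\Omega L)=C^\infty(L;\Omega)$, whose strong dual is $C^{-\infty}(L)=C^2_m$ by~\eqref{C^-infty_cdot/c(M;E)} (using that $L$ is compact). Thus the $\Omega M$-versions of~\eqref{bigoplus_m C^1_m -> K(M L)} and~\eqref{bigoplus_m<-s-1/2 C^1_m cong K^(s)(M L)} read
\begin{gather*}
\bigoplus_{m=0}^\infty C^\infty(L;\Omega)\xrightarrow{\cong}K(M,L;\Omega)\;,\\
\bigoplus_{m<-s-\frac12}C^\infty(L;\Omega)\xrightarrow{\cong}K^{(s)}(M,L;\Omega)\quad(s<-\tfrac12)\;.
\end{gather*}
Replacing $s$ by $-s$ in the second isomorphism and using the definitions $K'(M,L)=K(M,L;\Omega)'$ and $K^{\prime\,(s)}(M,L)=K^{(-s)}(M,L;\Omega)'$, their transposes become linear isomorphisms $K'(M,L)\to\big(\bigoplus_mC^\infty(L;\Omega)\big)'$ and $K^{\prime\,(s)}(M,L)\to\big(\bigoplus_{m<s-1/2}C^\infty(L;\Omega)\big)'$ for $s>1/2$. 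These transposes are TVS-isomorphisms for the strong topologies, since the transpose of a TVS-isomorphism between LCSs, together with the transpose of its inverse, carries bounded sets to bounded sets and hence is a strong-topological isomorphism.

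It then remains to identify the right-hand sides. For any family of LCSs the bounded subsets of the locally convex direct sum are finitely supported, so the strong dual of the direct sum is canonically the topological product of the strong duals \cite[\S22.5]{Kothe1969-I}, \cite[Chapter~IV]{Schaefer1971}. Applying this to $\bigoplus_mC^\infty(L;\Omega)$ gives $\big(\bigoplus_mC^\infty(L;\Omega)\big)'\equiv\prod_mC^{-\infty}(L)=\prod_mC^2_m$; for $\bigoplus_{m<s-1/2}C^\infty(L;\Omega)$ the index set is finite, so the identification with $\prod_{m<s-1/2}C^2_m$ is immediate. A short check that, under these identifications, the maps obtained are precisely the transposes of~\eqref{bigoplus_m C^1_m -> K(M L)} and~\eqref{bigoplus_m<-s-1/2 C^1_m cong K^(s)(M L)} for the natural pairings finishes the argument. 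The only point deserving care is this last appeal: one must be sure it is the strong-topology statement, and that the canonical pairing used is the one induced by the $K(M,L)$–$K(M,L;\Omega)$ integration pairing, which is where I would pin down the precise reference; the bundle identifications and the transpose argument are otherwise routine.
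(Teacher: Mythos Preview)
Your proposal is correct and follows exactly the route the paper takes: the paper states the corollary as a ``direct consequence'' of the $\Omega M$-version of \Cref{p: bigoplus_m C^1_m -> K(M L)}, recording only the bundle identification $C^\infty(L;\Omega^{-1}NL\otimes\Omega M)'=C^\infty(L;\Omega)'=C^{-\infty}(L)$ and leaving the transposition and the strong-dual-of-direct-sum-equals-product step implicit. You have simply unpacked those implicit steps.
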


\subsection{Dual-conormal sequence}\label{ss: dual-conormal seq}

Transposing the density-bundles version of~\eqref{CD: conormal seqs}, we get the commutative diagram
\begin{equation}\label{CD: dual-conormal seqs}
\begin{CD}
0\leftarrow\KK'(\bfM) @<{\iota^t}<< \AA'(\bfM) @<{R^t}<< \dot\AA'(\bfM)\leftarrow0 \\
@A{\bfpi^*}AA @A{\bfpi^*}AA @A{\bfpi^*}A{\cong}A \\
0\leftarrow K'(M,L) @<{\iota^t}<< I'(M,L) @<{R^t}<< J'(M,L)\leftarrow0\;.\hspace{-.2cm}
\end{CD}
\end{equation}
Its maps are compatible with the Sobolev and symbol order filtrations. Its bottom row will be called the \emph{dual-conormal sequence} of $M$ at $L$ (or of $(M,L)$). The following analog of \Cref{p: dual-conormal seq is exact} holds true with formally the same proof, using \Cref{p: R: I(M L) -> J(M L) is a surj top hom,c: coincidence of tops on J^m(M L),c: J(M L) is acyclic and Montel,c: E_m: J^m(M L) -> I^(s)(M L)}.

\begin{prop}\label{p: dual-conormal seq of (M L) is exact}
The dual-conormal sequence of $M$ at $L$ is exact in the category of continuous linear maps between LCSs.
\end{prop}

\subsection{Action of $\Diff(M)$ on the dual-conormal sequence}\label{ss: action of Diff(M) on the dual-conormal seq}

With the notation of \Cref{ss: Diff(M) on the conormal seq}, consider the actions of $A^t$ and $\widetilde A^t$ on the bottom and top spaces of the version of~\eqref{CD: conormal seqs} with $\Omega M$ and $\Omega\bfM$. Taking transposes again, we get induced actions of $A$ and $\widetilde A$ on the bottom and top spaces of~\eqref{CD: dual-conormal seqs}, which correspond one another via the linear maps $\bfpi^*$. These maps satisfy the analogs of~\eqref{A: I^prime [s](M L E) -> I^prime (s-m)(M L E)}.

\subsection{Pull-back of elements of the dual-conormal sequence}
\label{ss: pull-back of the dual-conormal seq}

With the notation and conditions of \Cref{ss: pull-back of dual-conormal distributions}, besides~\eqref{phi^*: I'(M L) -> I'(M' L')}, we get continuous linear pull-back maps $\phi^*:K'(M,L)\to K'(M',L')$ and $\phi^*:J'(M,L)\to J'(M',L')$.

\subsection{Push-forward of elements of the dual-conormal sequence}
\label{ss: push-forward of the dual-conormal seq}

With the notation and conditions of \Cref{ss: push-forward of dual-conormal distributions}, besides~\eqref{phi_*: I'_c(M' L' Omega_fiber) -> I'_c(M L)}, we get continuous linear push-forward maps $\phi_*:K'_{\text{\rm c}}(M',L';\Omega_{\text{\rm fiber}})\to K'_{\text{\rm c}}(M,L)$ and $\phi_*:J'_{\text{\rm c}}(M',L';\Omega_{\text{\rm fiber}})\to J'_{\text{\rm c}}(M,L)$.

\subsection{$I(M,L)$ vs $I'(M,L)$}\label{ss: I(M L) vs I'(M L)}

\begin{lem}\label{l: bfpi^*(H^-m(M) cap I'(M L)) subset dot H^-m(bfM) cap BB(bfM)}
For all $m\in\N_0$, $\bfpi^*\big(H^{-m}(M)\cap I'(M,L)\big)\subset\dot H^{-m}(\bfM)\cap\AA'(\bfM)$.
\end{lem}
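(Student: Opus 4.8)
The statement is an inclusion of intersections, so I would split it into the two obvious pieces and handle each separately, both by transporting the question to $\bfM$ via the maps $\bfpi^*$ and $\bfpi_*$ already set up in \Cref{ss: cutting,ss: C^-infty(M L)}. Recall $\bfpi^*:C^\infty(M)\to C^\infty(\bfM)$ is the pull-back and $\bfpi_*:\dot C^{-\infty}(\bfM)\to C^{-\infty}(M)$ its transpose; on $C^{-\infty}(M)$ the pull-back is $\bfpi^*=R\circ\tilde\pi^*$ through the double cover $\tilde\pi:\widetilde M\to M$ (see~\eqref{CD: tilde pi_* iota = bfpi_*} and \Cref{ss: lift of diff ops}). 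The two things to check are: (i) $\bfpi^*$ sends $H^{-m}(M)$ into $\dot H^{-m}(\bfM)$; and (ii) $\bfpi^*$ sends $I'(M,L)$ into $\AA'(\bfM)$.

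\textbf{Step for (i): the Sobolev part.} First I would note that $\tilde\pi:\widetilde M\to M$ is a finite covering map, hence a local isometry for suitably chosen metrics, so $\tilde\pi^*:H^{-m}(M)\to H^{-m}(\widetilde M)$ is continuous; then the restriction $R:H^{-m}(\widetilde M)\to H^{-m}(\bfM)$ of \Cref{ss: supported and extendible Sobolev sps} lands in the \emph{extendible} space $H^{-m}(\bfM)$, not yet the supported one $\dot H^{-m}(\bfM)$. The key point is that for $u\in I'(M,L)$ we in fact get a supported element on $\bfM$: by the commutativity of the right-hand square of~\eqref{CD: dual-conormal seqs} together with the TVS-isomorphism $\bfpi^*:J'(M,L)\xrightarrow{\cong}\dot\AA'(\bfM)$ of~\eqref{dot AA'(bfM) equiv J'(M L)}, the image $\bfpi^*u$ already lies in $\dot\AA'(\bfM)\subset\dot C^{-\infty}(\bfM)$, i.e.\ it is supported in $\bfM$ as a distribution on $\widetilde M$ (equivalently, in the notation of~\eqref{dot C^-infty(M) subset C^-infty(breve M)}, it lies in $C^{-\infty}_{\bfM}(\widetilde M)$). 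Since by~\eqref{dot H^s(M) subset H^s(breve M)} we have $\dot H^{-m}(\bfM)=H^{-m}_{\bfM}(\widetilde M)=H^{-m}(\widetilde M)\cap C^{-\infty}_{\bfM}(\widetilde M)$, combining ``$\tilde\pi^*u\in H^{-m}(\widetilde M)$'' with ``$\bfpi^*u$ supported in $\bfM$'' gives $\bfpi^*u\in\dot H^{-m}(\bfM)$. So this step really only needs that covering pull-back preserves Sobolev order plus the already-established fact that $I'(M,L)$ pulls back into supported distributions.

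\textbf{Step for (ii): the dual-conormal part.} This is almost immediate from the machinery already in place. By~\eqref{dot AA'(bfM) equiv J'(M L)} the map $\bfpi^*$ restricts to the TVS-isomorphism $J'(M,L)\xrightarrow{\cong}\dot\AA'(\bfM)$, and via~\eqref{C^infty(M L) subset J'(M L) subset C^-infty(M)} and the fact that $R:C^{-\infty}(M)\to C^{-\infty}(M,L)$ is the identity on the relevant subspaces, an element $u\in I'(M,L)$ maps under $R$ to an element of $J'(M,L)$ precisely when... here I need to be careful: $I'(M,L)$ and $J'(M,L)$ are different spaces, related by the bottom row of~\eqref{CD: dual-conormal seqs}, where $J'(M,L)=R^t(J'(M,L))\subset I'(M,L)$ is exactly the null space of $\iota^t:I'(M,L)\to K'(M,L)$. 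So the honest statement is: $\bfpi^*$ is defined on all of $I'(M,L)$ (it is $R\circ\tilde\pi^*$, and $\tilde\pi^*$ preserves dual-conormality by \Cref{ss: pull-back of dual-conormal distributions} since $\tilde\pi$ is a covering, while $R:C^{-\infty}(\widetilde M)\to C^{-\infty}(\bfM)$ sends $I'(\widetilde M,\widetilde L)$ to $\AA'(\bfM)$ by the transpose of the restriction $R:\dot\AA(\bfM)\to I_{\bfM}(\widetilde M,\widetilde L)\subset I(\widetilde M,\widetilde L)$, cf.\ \Cref{c: dot AA(M) cong I_M(breve M partial M)} and the definition $\AA'(\bfM)=\dot\AA(\bfM;\Omega)'$). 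So $\bfpi^*(I'(M,L))\subset\AA'(\bfM)$ holds without any intersection hypothesis.

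\textbf{Conclusion and main obstacle.} Intersecting, $\bfpi^*\big(H^{-m}(M)\cap I'(M,L)\big)\subset \dot H^{-m}(\bfM)\cap\AA'(\bfM)$: the $\dot H^{-m}(\bfM)$ membership comes from Step (i) (using that the image is supported in $\bfM$, which itself uses membership in $I'(M,L)$), and the $\AA'(\bfM)$ membership comes from Step (ii). I expect the main obstacle to be bookkeeping of the support condition: one must make sure that $\bfpi^*u$, a priori only an \emph{extendible} object on $\bfM$, is genuinely \emph{supported} in $\bfM$ when viewed inside $\widetilde M$ --- this is where the hypothesis $u\in I'(M,L)$ (via $\dot\AA'(\bfM)$, which sits inside $\dot C^{-\infty}(\bfM)=C^{-\infty}_{\bfM}(\widetilde M)$) is essential, and conflating $\bfpi^*$ with the extendible restriction $R:C^{-\infty}(\widetilde M)\to C^{-\infty}(\bfM)$ would lose exactly this point. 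A secondary technical check is that the metric on $M$ and its lift to $\widetilde M$ can be chosen compatibly so the covering pull-back is bounded between the $H^{-m}$ spaces, but this is routine since $\tilde\pi$ is a finite Riemannian covering.
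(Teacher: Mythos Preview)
Your Step~(ii) is essentially correct: $\bfpi^*:I'(M,L)\to\AA'(\bfM)$ is just the middle vertical arrow of~\eqref{CD: dual-conormal seqs}, being the transpose of $\bfpi_*:\dot\AA(\bfM;\Omega)\to I(M,L;\Omega)$, so $\bfpi^*(I'(M,L))\subset\AA'(\bfM)$ holds directly. (In Step~(i) you write $\dot\AA'(\bfM)$ where you mean $\AA'(\bfM)$; only in your final sentence of Step~(ii) do you state it correctly. This does not matter for the support conclusion since $\AA'(\bfM)\subset\dot C^{-\infty}(\bfM)$ by~\eqref{C^infty(M) subset AA'(M) subset C^-infty(M)}.)

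Step~(i), however, has a genuine gap --- exactly the one you warn against in your closing paragraph. You argue: $\tilde\pi^*u\in H^{-m}(\widetilde M)$, and $\bfpi^*u\in\dot C^{-\infty}(\bfM)=C^{-\infty}_{\bfM}(\widetilde M)$, hence $\bfpi^*u\in H^{-m}_{\bfM}(\widetilde M)=\dot H^{-m}(\bfM)$. But $\tilde\pi^*u$ and $\bfpi^*u$ are \emph{different} elements of $C^{-\infty}(\widetilde M)$: the first is $\sigma$-invariant (hence never supported in $\bfM$ unless zero), the second is a ``one-sided cutoff''. You have shown that $\tilde\pi^*u$ has the right Sobolev regularity and that $\bfpi^*u$ has the right support, but you need both properties for the \emph{same} distribution. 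Knowing $\tilde\pi^*u\in H^{-m}(\widetilde M)$ gives only $R\tilde\pi^*u\in H^{-m}(\bfM)$ (extendible), and there is no general mechanism to upgrade this to $\dot H^{-m}(\bfM)$; for $m\ge1$ the cutoff operation does not preserve Sobolev regularity.

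The paper bypasses this by a different route: it first treats $m=0$, where $\bfpi^*:L^2(M)\to L^2(\bfM)\equiv\dot H^0(\bfM)$ is a unitary isomorphism (since $\bfpi$ is a measure-preserving bijection off the null set $L$). For general $m$ it uses the description $H^{-m}(M)=\Diff^m(M)\,L^2(M)$ from~\eqref{H^-s(M) = ...}: writing $u=Aw$ with $A\in\Diff^m(M)$ and $w\in L^2(M)$, one has $\bfpi^*u=\widetilde A\,\bfpi^*w$ with $\bfpi^*w\in\dot H^0(\bfM)$, and then~\eqref{A:H^s(M) -> H^s-m(M)} gives $\widetilde A\,\bfpi^*w\in\dot H^{-m}(\bfM)$. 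The point is that lifting \emph{differential operators} through $\bfpi$ (a local diffeomorphism) is unproblematic, whereas lifting \emph{Sobolev membership} directly is not.
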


\begin{proof}
Using a volume form on $M$ and its lift to $\bfM$ to define a scalar product of $L^2(M)$ and $L^2(\bfM)$, it follows that $\bfpi^*:C^\infty(M)\to C^\infty(\bfM)$ induces a unitary isomorphism $\bfpi^*:L^2(M)\to L^2(\bfM)$. Hence the statement is true for $m=0$ because $L^2(\bfM)\equiv\dot H^0(\bfM)$ (\Cref{ss: supported and extendible Sobolev sps}). Then, for arbitrary $m$, by~\eqref{H^-s(M) = ...} and~\eqref{A:H^s(M) -> H^s-m(M)}, and according to \Cref{ss: cutting},
\begin{multline*}
\bfpi^*\big(H^{-m}(M)\cap I'(M,L)\big)\\
\begin{aligned}
&=\bfpi^*\big(\Diff^m(M)\,L^2(M)\cap I'(M,L)\big)
\subset\Diff^m(\bfM)\,\bfpi^*L^2(M)\cap\AA'(\bfM)\\
&=\Diff^m(\bfM)\,\dot H^0(\bfM)\cap\AA'(\bfM)
\subset\dot H^{-m}(\bfM)\cap\AA'(\bfM)\;.\qedhere
\end{aligned}
\end{multline*}
\end{proof}

\begin{lem}\label{l: bfpi^*(I(M L) cap I'(M L)) subset C^infty(bfM)}
$\bfpi^*\big(I(M,L)\cap I'(M,L)\big)\subset C^\infty(\bfM)$.
\end{lem}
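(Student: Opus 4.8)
The plan is to reduce the statement to its already-established boundary analogue, \Cref{ABC}, namely $\dot\AA(\bfM)\cap\AA'(\bfM)=C^\infty(\bfM)$. The strategy is to transfer the element $u\in I(M,L)\cap I'(M,L)$ up to $\bfM$, show its pull-back lands in $\dot\AA(\bfM)\cap\AA'(\bfM)$, and conclude. Pulling back to $\dot\AA(\bfM)$ is the easy half: since $u\in I(M,L)$, we have $u\in\bigcup_sI^{(s)}(M,L)$ (because $L$ is compact, by \Cref{sss: conormal - Sobolev order - compact}), so $u\in I^{(s)}(M,L)$ for some $s$; lifting via the covering $\tilde\pi$ gives $\tilde\pi^*u\in I^{(s)}(\widetilde M,\widetilde L)$ (the covering case of~\eqref{phi^*: I^(s)(M L) -> I^(s)(M' L')}), and $\bfpi^*u$ is just the restriction of $\tilde\pi^*u$ to $\bfM\subset\widetilde M$, which is supported in $\bfM$; by~\eqref{dot AA^(s)(M) equiv I^(s)_M(breve M partial M)} this puts $\bfpi^*u$ in $\dot\AA^{(s)}(\bfM)\subset\dot\AA(\bfM)$. (Concretely: $\bfpi^*u=\bfpi_*^{-1}$ of nothing — rather, one checks directly that $\bfpi^*$ sends $C^{-\infty}(M)$ into $\dot C^{-\infty}(\bfM)$ and preserves conormality, as set up in \Cref{ss: lift of diff ops}.)

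The other half, $\bfpi^*u\in\AA'(\bfM)$, is where \Cref{l: bfpi^*(H^-m(M) cap I'(M L)) subset dot H^-m(bfM) cap BB(bfM)} does the work: since $M$ is compact, $u\in I'(M,L)\subset C^{-\infty}(M)=\bigcup_mH^{-m}(M)$, so $u\in H^{-m}(M)\cap I'(M,L)$ for some $m\in\N_0$, and the cited lemma gives $\bfpi^*u\in\dot H^{-m}(\bfM)\cap\AA'(\bfM)\subset\AA'(\bfM)$. Combining the two halves, $\bfpi^*u\in\dot\AA(\bfM)\cap\AA'(\bfM)$, so by \Cref{ABC} we get $\bfpi^*u\in C^\infty(\bfM)$, which is exactly the assertion.

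I would write this as follows.
\begin{proof}
Let $u\in I(M,L)\cap I'(M,L)$. Since $M$ is compact, $L$ is compact, so $I(M,L)=\bigcup_sI^{(s)}(M,L)$ (\Cref{sss: conormal - Sobolev order - compact}); hence $u\in I^{(s)}(M,L)$ for some $s\in\R$. By the covering-map case of~\eqref{phi^*: I^(s)(M L) -> I^(s)(M' L')}, $\tilde\pi^*u\in I^{(s)}(\widetilde M,\widetilde L)$, and $\bfpi^*u$ is the restriction of $\tilde\pi^*u$ to the regular submanifold $\bfM\subset\widetilde M$ (\Cref{ss: lift of diff ops}), which is supported in $\bfM$. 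Therefore $\bfpi^*u\in I^{(s)}_{\bfM}(\widetilde M,\widetilde L)\equiv\dot\AA^{(s)}(\bfM)\subset\dot\AA(\bfM)$ by~\eqref{dot AA^(s)(M) equiv I^(s)_M(breve M partial M)}.

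On the other hand, since $M$ is compact, $I'(M,L)\subset C^{-\infty}(M)=\bigcup_mH^{-m}(M)$ by~\eqref{C^infty(M) = bigcap_s H^s(M)}, so $u\in H^{-m}(M)\cap I'(M,L)$ for some $m\in\N_0$. By \Cref{l: bfpi^*(H^-m(M) cap I'(M L)) subset dot H^-m(bfM) cap BB(bfM)}, $\bfpi^*u\in\dot H^{-m}(\bfM)\cap\AA'(\bfM)\subset\AA'(\bfM)$.

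Combining, $\bfpi^*u\in\dot\AA(\bfM)\cap\AA'(\bfM)=C^\infty(\bfM)$ by~\eqref{ABC}.
\end{proof}

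The main obstacle is purely bookkeeping: making sure the identification of $\bfpi^*u$ with the restriction of $\tilde\pi^*u$ (supported in $\bfM$) is legitimate, i.e.\ that $\bfpi^*$ on $C^{-\infty}(M)$ really factors as $\tilde\pi^*$ followed by restriction to $\bfM$ followed by the identification $\dot C^{-\infty}(\bfM)\equiv C^{-\infty}_{\bfM}(\widetilde M)$ — this is exactly the content of \Cref{ss: lift of diff ops} and the diagram~\eqref{CD: tilde pi_* iota = bfpi_*}, so no real difficulty arises. Everything else is an immediate assembly of \Cref{l: bfpi^*(H^-m(M) cap I'(M L)) subset dot H^-m(bfM) cap BB(bfM)}, identity~\eqref{dot AA^(s)(M) equiv I^(s)_M(breve M partial M)}, and \Cref{ABC}.
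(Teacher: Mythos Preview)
There is a genuine gap in your argument for the inclusion $\bfpi^*u\in\dot\AA(\bfM)$. You write that ``$\bfpi^*u$ is the restriction of $\tilde\pi^*u$ to the regular submanifold $\bfM\subset\widetilde M$, which is supported in $\bfM$,'' and conclude $\bfpi^*u\in I^{(s)}_{\bfM}(\widetilde M,\widetilde L)$. But $\tilde\pi^*u\in I^{(s)}(\widetilde M,\widetilde L)$ is \emph{not} supported in $\bfM$: it is $\sigma$-invariant and lives on all of $\widetilde M$. What you actually need is that the element $\bfpi^*u\in\AA'(\bfM)\subset\dot C^{-\infty}(\bfM)\equiv C^{-\infty}_{\bfM}(\widetilde M)$ lies in $I^{(s)}(\widetilde M,\widetilde L)$. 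As a distribution on $\widetilde M$, this $\bfpi^*u$ is (for $u$ smooth, say) the cutoff $\mathbf 1_{\bfM}\cdot\tilde\pi^*u$, not $\tilde\pi^*u$ itself; and cutting by a characteristic function is neither a well-defined operation on general conormal distributions nor one that preserves a given Sobolev order. The restriction map $R:C^{-\infty}(\widetilde M)\to C^{-\infty}(\bfM)$ of~\eqref{R: C^-infty(breve M) -> C^-infty(M)} lands in \emph{extendible} distributions, not supported ones, so it does not help either. In short, knowing $\tilde\pi^*u\in I^{(s)}(\widetilde M,\widetilde L)$ does not by itself give $\bfpi^*u\in\dot\AA^{(s)}(\bfM)$.

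The paper closes exactly this gap by exploiting \Cref{l: bfpi^*(H^-m(M) cap I'(M L)) subset dot H^-m(bfM) cap BB(bfM)} more fully than you do. One fixes $m$ with $u\in I^{(-m)}(M,L)$ and observes that for every $B\in\Diff(M,L)$ one has $Bu\in H^{-m}(M)\cap I'(M,L)$ (the first by definition of $I^{(-m)}$, the second because $\Diff(M,L)$ preserves $I'(M,L)$). Applying the lemma to each $Bu$ and using the intertwining $\widetilde B\bfpi^*u=\bfpi^*(Bu)$ gives $\widetilde B\bfpi^*u\in\dot H^{-m}(\bfM)$ for all such $B$; since the lifts $\widetilde B$ generate $\Diffb(\bfM)$, this is precisely the statement $\bfpi^*u\in\dot\AA^{(-m)}(\bfM)$. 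Your invocation of the lemma only for $u$ itself (the case $B=1$) yields $\bfpi^*u\in\AA'(\bfM)$ but not the supported-conormal half; the missing idea is to run the lemma over the whole $\Diff(M,L)$-orbit of $u$.
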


\begin{proof}
For every $u\in I(M,L)\cap I'(M,L)$, there is some $m\in\N_0$ such that $u\in I^{(-m)}(M,L)$. Then, by \Cref{l: bfpi^*(H^-m(M) cap I'(M L)) subset dot H^-m(bfM) cap BB(bfM)}, for any $B\in\Diff(M,L)$,
\[
\widetilde B\bfpi^*u=\bfpi^*Bu\in\bfpi^*\big(H^{-m}(M)\cap I'(M,L)\big)\subset\dot H^{-m}(\bfM)\cap\AA'(\bfM)\;.
\]
Since the operators $\widetilde B$ ($B\in\Diff(M,L)$) generate $\Diff(\widetilde M,\widetilde L)$ as $C^\infty(\widetilde M)$-module, it follows that $u\in\dot\AA(\bfM)\cap\AA'(\bfM)=C^\infty(\bfM)$ by~\eqref{ABC}.
\end{proof}

\begin{thm}\label{t: bfpi^*(I(M L) cap I'(M L)) subset C^infty(bfM)}
$I(M,L)\cap I'(M,L)=C^\infty(M)$.
\end{thm}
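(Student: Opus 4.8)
The inclusion $C^\infty(M)\subseteq I(M,L)\cap I'(M,L)$ is immediate from \eqref{C^infty(M) subset I^[infty](M L)} and \eqref{C^-infty(M) supset I'(M L) supset C^infty(M)}, so the work is in the reverse inclusion. Fix $u\in I(M,L)\cap I'(M,L)$. By \Cref{l: bfpi^*(I(M L) cap I'(M L)) subset C^infty(bfM)} we already know $\bfpi^*u\in C^\infty(\bfM)$; in particular $u$ is smooth on $M\setminus L$ and, in a collar $T\equiv(-\epsilon,\epsilon)_x\times L$ of $L$, it extends smoothly to $L$ separately from each side, these two extensions being the restrictions of $\bfpi^*u$ to the two boundary components $(\partial\bfM)_\pm$ of $\bfM$ (with $(\partial\bfM)_+$ coming from $\{x\ge0\}$ and $(\partial\bfM)_-$ from $\{x\le0\}$). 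Using Borel's theorem, choose $w\in C^\infty(M)$ whose $\infty$-jet along $L$ equals the $\infty$-jet at $L$ of the extension of $u|_{M\setminus L}$ from $\{x\le0\}$, and put $u_1=u-w\in I(M,L)\cap I'(M,L)$. Then $\bfpi^*u_1\in C^\infty(\bfM)$ has trivial $\infty$-jet along $(\partial\bfM)_-$; it remains to show that its $\infty$-jet along $(\partial\bfM)_+$ is trivial as well, i.e.\ $\bfpi^*u_1\in\dot C^\infty(\bfM)$.

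\textbf{The crux: the ``diagonal'' constraint.} Since $u_1\in I'(M,L)$, commutativity of the left-hand square of \eqref{CD: dual-conormal seqs} gives $\iota^t(\bfpi^*u_1)=\bfpi^*(\iota^t u_1)$ in $\KK'(\bfM)$, so $\iota^t(\bfpi^*u_1)$ lies in $\bfpi^*(K'(M,L))$. Transposing \eqref{bfpi_*: KK(bfM) -> K(M L)} and using \eqref{KK(bfM) equiv K(M L) oplus K(M L)} and \eqref{dot AA'(bfM) equiv J'(M L)}, the map $\bfpi^*\colon K'(M,L)\to\KK'(\bfM)\equiv K'(M,L)\oplus K'(M,L)$ is the diagonal $w'\mapsto(w',w')$, the two summands being the dual-conormal data carried by the two boundary components $(\partial\bfM)_\pm$. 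On the other hand $\iota^t(\bfpi^*u_1)\in\KK'(\bfM)=\KK(\bfM;\Omega)'$ is simply the restriction of the functional $\bfpi^*u_1$ to the supported conormal densities at $\partial\bfM$; since $\partial\bfM$ is disconnected this functional splits accordingly, and — because $\bfpi^*u_1$ is a smooth function whose $\infty$-jet along $(\partial\bfM)_-$ vanishes, while supported conormal densities there are finite combinations of normal derivatives of delta sections (the description in \Cref{r: description of KK(M)}) — its $(\partial\bfM)_-$-component is zero. Being diagonal, its $(\partial\bfM)_+$-component is then zero too, which forces all normal jets of $\bfpi^*u_1$ along $(\partial\bfM)_+$ to vanish; hence $\bfpi^*u_1\in\dot C^\infty(\bfM)$.

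\textbf{Conclusion.} By \eqref{bfpi^*: C^infty(M L) cong dot C^infty(bfM)} there is $\psi\in C^\infty(M,L)$ with $\bfpi^*\psi=\bfpi^*u_1$, so $u_1-\psi$ lies in the kernel of $\bfpi^*$ on $C^{-\infty}(M)$, which is $C^{-\infty}_L(M)$ (see \Cref{ss: C^-infty_L(M)}, where \eqref{R :C^-infty(M) -> C^-infty(M L)} factors through $\bfpi^*$ and $\bfpi_*$ is an isomorphism). But also $u_1-\psi\in I'(M,L)$, and $\bfpi^*\colon I'(M,L)\to\AA'(\bfM)$ is injective: a short chase in \eqref{CD: dual-conormal seqs}, whose rows are exact by \Cref{p: dual-conormal seq of (M L) is exact} and \Cref{p: dual-conormal seq is exact}, reduces injectivity of the middle vertical map to the facts that $\bfpi^*\colon J'(M,L)\to\dot\AA'(\bfM)$ is an isomorphism and $\bfpi^*\colon K'(M,L)\to\KK'(\bfM)$ is injective (it is the diagonal). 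Hence $u_1-\psi=0$, so $u_1=\psi\in C^\infty(M)$ and therefore $u=w+u_1\in C^\infty(M)$, as required.

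\textbf{Where the difficulty lies.} The whole point is the crux step: \Cref{l: bfpi^*(I(M L) cap I'(M L)) subset C^infty(bfM)} only yields $\bfpi^*u\in C^\infty(\bfM)$, which is strictly weaker than $u\in C^\infty(M)$ — a one-sided smooth ``jump'' $\chi_{\{x>0\}}g$ is conormal and has smooth pullback to $\bfM$ — and the matching of the one-sided $\infty$-jets of $u$ at $L$ must be extracted from $u\in I'(M,L)$ through the dual-conormal sequence. Executing this cleanly requires a little care in matching the two $\KK'$-summands with the two boundary hypersurfaces of $\bfM$ and in tracking the sign conventions built into \eqref{KK(bfM) equiv K(M L) oplus K(M L)} and \eqref{bfpi_*: KK(bfM) -> K(M L)}; these signs are harmless here since only the vanishing of jets is used.
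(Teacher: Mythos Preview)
Your proof is correct and takes a genuinely different route from the paper's.

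The paper argues by contradiction: assuming some normal derivative of $\bfpi^*u$ jumps across $L$, it builds an explicit test sequence $w_k\in C^\infty(M;\Omega)$ so that $\partial_x^{m+1}w_k\to\partial_x\delta_L^v$ in $I(M,L;\Omega)$ (this uses that $\phi_k^{(m)}\to\delta_0$), while an integration-by-parts computation shows $\langle u,\partial_x^{m+1}w_k\rangle$ diverges because of the jump term multiplied by $\phi_k^{(m)}(0)\to\infty$. This contradicts $u\in I'(M,L)=I(M,L;\Omega)'$.

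Your approach replaces that hands-on limit by a structural argument: after reducing via Borel to the case where the $\infty$-jet of $\bfpi^*u_1$ vanishes on one boundary component, you observe that $\iota^t(\bfpi^*u_1)\in\KK'(\bfM)$ must lie in the diagonal $\bfpi^*(K'(M,L))$ (by commutativity of~\eqref{CD: dual-conormal seqs}), while its component at $(\partial\bfM)_-$ vanishes by the jet condition and the description of $\KK(\bfM;\Omega)$ in \Cref{r: description of KK(M)}; hence the $(\partial\bfM)_+$-component vanishes too, forcing $\bfpi^*u_1\in\dot C^\infty(\bfM)$. You then conclude via injectivity of $\bfpi^*$ on $I'(M,L)$, obtained from a diagram chase in~\eqref{CD: dual-conormal seqs}.

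Both arguments ultimately exploit the same information---that $u\in I'(M,L)$ constrains the pairing with $\partial_x^m\delta_L^v$---but yours packages it through the exact sequences already developed in the paper, which makes the mechanism (the ``diagonal constraint'') transparent and avoids constructing a delicate test sequence. The paper's argument, on the other hand, is more self-contained: it does not invoke \Cref{p: dual-conormal seq of (M L) is exact} or the description of $\KK'(\bfM)$, only the convergence $\partial_x^{m+1}w_k\to\partial_x\delta_L^v$ in $I(M,L;\Omega)$ and an elementary integration by parts.
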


\begin{proof}
Suppose there is some non-smooth $u\in I(M,L)\cap I'(M,L)$. However $\bfpi^*u\in C^\infty(\bfM)$ by \Cref{l: bfpi^*(I(M L) cap I'(M L)) subset C^infty(bfM)}. Then there is a chart $(V,y)$ of $L$ such that, for the induced chart $(U\equiv(-\epsilon,\epsilon)\times V,(x,y))$ of $M$, the function $u$ is smooth on $((-\epsilon,0)\cup(0,\epsilon))\times V$, and has smooth extensions to $(-\epsilon,0]\times V$ and $[0,\epsilon)\times V$, but $\partial_x^mu(0^-,y_0)\ne\partial_x^m(0^+,y_0)$ for some $m\in\N_0$ and $y_0\in V$. After multiplying $u$ by a smooth function supported in $U$ whose value at $y_0$ is nonzero, we can assume $u$ is supported in $(-\epsilon/2,\epsilon/2)\times V$. Then there is some $v\in C^\infty(L;\Omega)$ such that $\supp v\subset V$ and
\begin{equation}\label{int_x in V (u(0^- x) - u(0^+ x)) v(x) ne 0}
\int_{y\in V}(u(0^-,y)-u(0^+,y))\,v(y)\ne0\;.
\end{equation}

On the other hand, there is a sequence $\phi_k\in\Cinftyc(-\epsilon,\epsilon)$ so that the restrictions of $m$th derivatives $\phi_k^{(m)}$ to $(-\epsilon/2,\epsilon/2)$ are compactly supported and converge to $\delta_0$ in $C^{-\infty}(-\epsilon/2,\epsilon/2)$ as $k\to\infty$. For instance, we may take
\[
\phi_k(t)=h(t)\int_0^t\int_0^{t_{m-1}}\cdots\int_0^{t_1}f_k(t_0)\,dt_0\cdots dt_{m-1}\;,
\]
where $h,f_k\in\Cinftyc(-\epsilon,\epsilon)$ with $h=1$ on $(-\epsilon/2,\epsilon/2)$, $\supp f_k\subset(-\epsilon/2,\epsilon/2)$, $f_k$ is even, and $f_k\to\delta_0$ in $C^{-\infty}_{\text{\rm c}}(-\epsilon/2,\epsilon/2)$ and $f_k(0)\to\infty$ as $k\to\infty$. Thus
\begin{gather}
\phi_k^{(m)}(0)=f_k(0)\to\infty\;,\label{phi_k^(m)(0) = f_k(0) -> infty}\\
\int_{-\infty}^0a(t)\phi_k^{(m)}(t)\,dt\to\frac{a(0)}2\;,\quad
\int_0^\infty b(t)\phi_k^{(m)}(t)\,dt\to\frac{b(0)}2\;,\label{int_-infty^0 a(t) phi_k^(m)(t) dt to a(0)/2}
\end{gather}
for all $a\in\Cinftyc(-\infty,0]$ and $b\in\Cinftyc[0,\infty)$.

The sequence $w_k\equiv\phi_k(x)\,v(y)\otimes|dx|\in\Cinftyc(T;\Omega)\subset C^\infty(M;\Omega)$ satisfies
\[
\partial_x^mw_k\equiv\phi_k^{(m)}(x)\,v(y)\otimes|dx|\to\delta_0(x)\,v(y)\otimes|dx|\equiv\delta_L^v
\]
in $I(M,L;\Omega)$ as $k\to\infty$. Since $u\in I'(M,L)$ and $\partial_x^{m+1}w_k\in I(M,L;\Omega)$, it follows that $\langle u,\partial_x^{m+1}w_k\rangle\to\langle u,\partial_x\delta_L^v\rangle$ as $k\to\infty$. But
\begin{align*}
\langle u,\partial_x^{m+1}w_k\rangle
&=\int_{y\in V}\int_{-\epsilon/2}^0u(x,y)\,\phi_k^{(m+1)}(x)\,v(y)\,dx\\
&\phantom{={}}{}+\int_{y\in V}\int_0^{\epsilon/2}u(x,y)\,\phi_k^{(m+1)}(x)\,v(y)\,dx\\
&=\phi_k^{(m)}(0)\int_{y\in V}\big(u(0^-,y)-u(0^+,y)\big)\,v(y)\\
&\phantom{={}}{}-\int_{y\in V}\int_{-\epsilon/2}^0\partial_xu(x,y)\,\phi_k^{(m)}(x)\,v(y)\,dx\\
&\phantom{={}}{}-\int_{y\in V}\int_0^{\epsilon/2}\partial_xu(x,y)\,\phi_k^{(m)}(x)\,v(y)\,dx\;,
\end{align*}
which is divergent by~\eqref{int_x in V (u(0^- x) - u(0^+ x)) v(x) ne 0}--\eqref{int_-infty^0 a(t) phi_k^(m)(t) dt to a(0)/2}.
\end{proof}

%\bibliographystyle{amsplain}

%\bibliography{../../../../Suso}

\providecommand{\bysame}{\leavevmode\hbox to3em{\hrulefill}\thinspace}
\providecommand{\MR}{\relax\ifhmode\unskip\space\fi MR }
% \MRhref is called by the amsart/book/proc definition of \MR.
\providecommand{\MRhref}[2]{%
  \href{http://www.ams.org/mathscinet-getitem?mr=#1}{#2}
}
\providecommand{\href}[2]{#2}

\printindex

\end{document}